\title{Inverse semigroup actions on groupoids}
\author{Alcides Buss}
\email{alcides.buss@ufsc.br}
\address{Departamento de Matem\'atica\\
 Universidade Federal de Santa Catarina\\
 88.040-900 Florian\'opolis-SC\\
 Brazil}
\author{Ralf Meyer}
\email{rmeyer2@uni-goettingen.de}
\address{Mathematisches Institut\\
 Georg-August-Universit\"at G\"ottingen\\
 Bunsenstra\ss e 3--5\\
 37073 G\"ottingen\\
 Germany}
\keywords{Inverse semigroups, groupoids, actions, partial equivalences, Fell bundles, stabilisation trick}
\thanks{Supported by CNPq/CsF (Brazil) and the German Research Foundation (Deutsche Forschungsgemeinschaft (DFG)) through the grant ``Actions of \(2\)\nb-groupoids on C*\nb-algebras.''}
\subjclass[2010]{46L55, 20M18, 22A22}
\newcommand*{\MRref}[2]{ \href{http://www.ams.org/mathscinet-getitem?mr=#1}{MR \textbf{#1}}}
\newcommand*{\arxiv}[1]{\href{http://www.arxiv.org/abs/#1}{arXiv: #1}}
\renewcommand{\PrintDOI}[1]{\href{http://dx.doi.org/\detokenize{#1}}{doi: \detokenize{#1}}}
\setlist[enumerate,1]{label=\textup{(\arabic*)}}% ensure enumerates in theorems are upright
\tikzset{node distance=2cm, auto}
\tikzset{cd/.style=matrix of math nodes,row sep=2em,column sep=2em, text height=1.5ex, text depth=0.5ex}
\tikzset{cdar/.style=->,auto}
\tikzset{mid/.style={anchor=mid}} % put labels on the arrow
\tikzset{narrowfill/.style={inner sep=1pt, fill=white}}% style for nodes with filled background
\theoremstyle{plain}
\newtheorem{theorem}{Theorem}
\newtheorem{lemma}[theorem]{Lemma}
\newtheorem{proposition}[theorem]{Proposition}
\newtheorem{corollary}[theorem]{Corollary}
\theoremstyle{definition}
\newtheorem{definition}[theorem]{Definition}
\theoremstyle{remark}
\newtheorem{remark}[theorem]{Remark}
\newtheorem{example}[theorem]{Example}
\numberwithin{theorem}{section}
\DeclareMathOperator{\PHomeo}{pHomeo}% partial homeomorphism
\DeclareMathOperator{\Prim}{Prim}    % primitive ideal space
\DeclareMathOperator{\supp}{supp}    % support
\DeclareMathOperator{\Map}{Map}      % bibundle maps
\DeclareMathOperator{\Bis}{Bis}      % inverse semigroup of bisections
\newcommand*{\nb}{\nobreakdash}
\newcommand{\idealin}{\mathrel{\triangleleft}} % relation of being an ideal
\newcommand*{\Star}{\(^*\)\nobreakdash-}
\newcommand*{\blank}{\textup{\textvisiblespace}}% for arguments in functors
\newcommand*{\cstar}{\texorpdfstring{$C^*$\nobreakdash-\hspace{0pt}}{*-}}
\newcommand*{\into}{\hookrightarrow}
\newcommand*{\onto}{\twoheadrightarrow}
\newcommand*{\C}{\mathbb C}
\newcommand*{\Z}{\mathbb Z}
\newcommand*{\R}{\mathbb R}
\newcommand*{\N}{\mathbb N}
\newcommand*{\Bound}{\mathbb B}   % adjointable operators on a Hilbert module
\newcommand*{\Comp}{\mathbb K}    % compact operators on a Hilbert module
\newcommand*{\Mat}{\mathbb M}     % matrices
\newcommand*{\Open}{\mathbb O}    % lattice of open subsets
\newcommand*{\Ideal}{\mathbb I}   % ideal lattice of a C*-algebra
\newcommand*{\Banb}{\mathfrak B}  % bundle of Banach spaces
\newcommand*{\Sect}{\mathfrak S}  % space of quasi-continuous sections
\newcommand*{\OCover}{\mathfrak U}% open cover
\newcommand*{\Peq}{\mathfrak{peq}}% partial equivalence bicategory
\newcommand*{\Mult}{\mathcal M}       % multiplier algebra
\newcommand*{\Cat}{\mathcal C}        % category
\newcommand*{\Hilm}[1][H]{\mathcal #1}% Hilbert module
\newcommand*{\A}{\mathcal A}          % C*-bundle
\newcommand*{\red}{\textup r}         % reduced
\newcommand*{\op}{\textup{op}}        % opposite
\newcommand*{\Cst}{\textup C^*}       % C*-algebra
\newcommand*{\Cont}{\textup C}        % continuous functions
\newcommand*{\Contc}{\Cont_\textup c} % compactly supporte continuous functions
\newcommand*{\dd}{\mathrm d}  % integration
\newcommand*{\Id}{\textup{Id}}% identity
\newcommand*{\defeq}{\mathrel{\vcentcolon=}}% used for definitions
\newcommand*{\congto}{\xrightarrow\sim}
\newcommand*{\norm}[1]{\lVert#1\rVert}% norm
\newcommand*{\cl}[1]{\overline{#1}}% closure
\newcommand*{\braket}[2]{\langle#1{\mid}#2\rangle}% right inner products
\newcommand*{\BRAKET}[2]{\langle\!\langle#1{\mid}#2\rangle\!\rangle}% left inner products
\newcommand*{\ket}[1]{\lvert#1\rangle}% parts of rank-one operators
\newcommand*{\bra}[1]{\langle#1\rvert}
\newcommand*{\s}{s} % source map of groupoids
\newcommand*{\rg}{r}% range map of groupoids
\newcommand{\Link}{L} % linking groupoid
\begin{document}
\begin{abstract}
  We define inverse semigroup actions on topological groupoids by
  partial equivalences.  From such actions, we construct saturated
  Fell bundles over inverse semigroups and non-Hausdorff étale
  groupoids.  We interpret these as actions on \cstar{}algebras by
  Hilbert bimodules and describe the section algebras of these Fell
  bundles.

  Our constructions give saturated Fell bundles over non-Hausdorff
  étale groupoids that model actions on locally Hausdorff spaces.  We
  show that these Fell bundles are usually not Morita equivalent to an
  action by automorphisms.  That is, the Packer--Raeburn Stabilisation
  Trick does not generalise to non-Hausdorff groupoids.
\end{abstract}
\maketitle

\tableofcontents

\section{Introduction}
\label{sec:introduction}

Two of the most obvious actions of a groupoid~\(G\) are those by left
and right translations on its arrow space~\(G^1\).  If~\(G\) is
Hausdorff, they induce continuous actions of~\(G\) on the
\(\Cst\)\nb-algebra \(\Cont_0(G^1)\).  What happens if~\(G\) is
non-Hausdorff?

Let~\(G\) be a non-Hausdorff, étale groupoid with Hausdorff, locally
compact object space~\(G^0\).  Then~\(G^1\) is locally Hausdorff, that
is, it has an open covering \(\OCover = (U_i)_{i\in I}\) by
Hausdorff subsets: we may choose~\(U_i\) so that the range and source
maps restrict to homeomorphisms from~\(U_i\) onto open subsets of the
Hausdorff space~\(G^0\).

The covering~\(\OCover\) yields an étale, locally compact,
Hausdorff groupoid~\(H\) with object space \(H^0 \defeq
\bigsqcup_{i\in I} U_i\), arrow space \(H^1\defeq \bigsqcup_{i,j\in I}
U_i\cap U_j\), range and source maps \(\rg(i,j,x)\defeq (i,x)\) and
\(\s(i,j,x)\defeq (j,x)\), and multiplication \((i,j,x)\cdot (j,k,x) =
(i,k,x)\).  The groupoid~\(H\) is known as the \emph{\v{C}ech
  groupoid} for the covering~\(\OCover\).  In noncommutative
geometry, we view the groupoid \(\Cst\)\nb-algebra \(\Cst(H)\) as the
algebra of functions on the non-Hausdorff space~\(G^1\).  Is there
some kind of action of~\(G\) on~\(\Cst(H)\) that corresponds to the
translation action of~\(G\) on~\(G^1\)?

There is no action of~\(G\) on \(\Cst(H)\) in the usual sense because
there is no action of~\(G\) on~\(H\) by automorphisms.  The problem is
that arrows \(g\in G^1\) have many liftings \((i,g)\in H^0\).  To let
\(g\in G\) act on~\(H\), we must choose \(k\in I\) with \(gh\in U_k\)
for \(h\in U_j\) with \(\rg(h)=\s(g)\).  It may, however, be
impossible to choose~\(k\) continuously when~\(h\) varies in~\(U_j\).
This article introduces \emph{actions by partial equivalences} in
order to make sense of the actions of~\(G\) on \(H\) and~\(\Cst(H)\).

At first, we replace~\(G\) by its inverse semigroup of bisections
\(S=\Bis(G)\).  This inverse semigroup cannot act on~\(H\) by partial
groupoid \emph{isomorphisms} for the same reasons as above.  It does,
however, act on~\(H\) by partial \emph{equivalences} because the
equivalence class of~\(H\) is independent of the covering (see also
\cite{Kasparov-Skandalis:Buildings}*{Lemma 4.1}); thus partial
homeomorphisms on~\(G^1\) lift to partial equivalences of~\(H\) in a
canonical way.  We will see that an \(S\)\nb-action by partial
equivalences on a \v{C}ech groupoid for a locally Hausdorff
space~\(Z\) is equivalent to an \(S\)\nb-action on~\(Z\) by partial
homeomorphisms.

Let~\(S\) act on a groupoid~\(H\) by partial equivalences.  Then we
build a \emph{transformation groupoid} \(H\rtimes S\).  Special cases
of this construction are the groupoid of germs for an action of~\(S\)
on a space by partial homeomorphisms, the semidirect product for a
group(oid) action on another group(oid) by automorphisms, and the
linking groupoid of a single Morita--Rieffel equivalence.  The
original action is encoded in the transformation groupoid
\(L\defeq H\rtimes S\) and open subsets \(L_t\subseteq L\) with
\[
L_t\cdot L_u = L_{tu},\quad
L_t^{-1} = L_{t^*},\quad
L_t\cap L_u = \bigcup_{v\le t,u} L_v,\quad
L^1=\bigcup_{t\in S} L_t
\]
and \(H=L_1\).  We call such a family of subsets an
\emph{\(S\)\nb-grading} on~\(L\) with \emph{unit fibre}~\(H\).  Any
\(S\)\nb-graded groupoid is a transformation groupoid for an
essentially unique action of~\(S\) by partial equivalences on its unit
fibre.  This is a very convenient characterisation of actions by
partial equivalences.

An action of an inverse semigroup~\(S\) on~\(H\) by partial
equivalences cannot induce, in general, an action of~\(S\)
on~\(\Cst(H)\) by partial automorphisms in the usual sense (as defined by
Sieben~\cite{Sieben:crossed.products}).  But we do get an action by
partial Morita--Rieffel equivalences, that is, by Hilbert bimodules.
We show that actions of~\(S\) by Hilbert bimodules are equivalent to
(saturated) Fell bundles over~\(S\).  Along the way, we also
drastically simplify the definition of Fell bundles over inverse
semigroups in~\cite{Exel:noncomm.cartan}.  Our approach clarifies in
what sense a Fell bundle over an inverse semigroup is an ``action'' of
the inverse semigroup on a \(\Cst\)\nb-algebra.

In the end, we want an action of the groupoid~\(G\) itself, not of the
inverse semigroup~\(\Bis(G)\).  For actions by automorphisms, Sieben
and Quigg~\cite{Sieben-Quigg:ActionsOfGroupoidsAndISGs} characterise
which actions of~\(\Bis(G)\) come from actions of~\(G\).  We extend
this characterisation to Fell bundles: a Fell bundle over~\(\Bis(G)\)
comes from a Fell bundle over~\(G\) if and only if the restriction of
the action to idempotents in~\(\Bis(G)\) commutes with suprema of
arbitrarily large subsets.  This criterion only works for~\(\Bis(G)\)
itself.  In practice, we may want to ``model''~\(G\) by a smaller
inverse semigroup~\(S\) such that \(G^0\rtimes S\cong G\).  We
characterise which Fell bundles over such~\(S\) come from Fell bundles
over~\(G\).

In particular, our action of~\(\Bis(G)\) on~\(\Cst(H)\) for the
\v{C}ech groupoid associated to~\(G^1\) does come from an action
of~\(G\), so we get Fell bundles over~\(G\) that describe the left and
the right translation actions on~\(G^1\).  For these Fell bundles, we
show that the section \(\Cst\)\nb-algebras are Morita equivalent to
\(\Cont_0(G^0)\).  More generally, for any principal \(G\)\nb-bundle
\(X\to Z\), the section algebra of the Fell bundle over~\(G\) that
describes the action of~\(G\) on a \v{C}ech groupoid for~\(X\) is
Morita--Rieffel equivalent to~\(\Cont_0(Z)\), just as in the more
classical Hausdorff case (see
Proposition~\ref{pro:Cstar_action_on_G1}).

For any action of an inverse semigroup~\(S\) on a locally compact
groupoid~\(H\) by partial equivalences, we identify the section
\(\Cst\)\nb-algebra of the resulting Fell bundle over~\(S\) with the
groupoid \(\Cst\)\nb-algebra of the transformation groupoid.  In brief
notation,
\[
C^*(H)\rtimes S\cong C^*(H\rtimes S).
\]
This generalises the well-known isomorphism
\[
\Cont_0(X)\rtimes S\cong C^*(X\rtimes S)
\]
for inverse semigroup actions on Hausdorff locally compact spaces by
partial homeomorphisms.

For a Hausdorff locally compact groupoid, any Fell bundle is
equivalent to an ordinary action on a stabilisation (Packer--Raeburn
Stabilisation Trick, see also
\cite{Buss-Meyer-Zhu:Higher_twisted}*{Proposition~5.2}).  In contrast,
our Theorem~\ref{the:no-go-theorem} shows that a non-Hausdorff
groupoid has no action by automorphisms that describes its translation
action on~\(G^1\).  Thus we really need Fell bundles to treat these
actions of a non-Hausdorff groupoid.

Now we explain the results of the individual sections of the paper.

In Section~\ref{sec:partial_equivalence_sub}, we study partial
equivalences between topological groupoids.  We show, in particular, that the
involution that exchanges the left and right actions on a partial
equivalence behaves like the involution in an inverse semigroup.

Section~\ref{sec:ActionsGroupoidsByIso} introduces inverse semigroup
actions by partial equivalences.  We show that the rather
simple-minded definition implies further structure, which is needed
to construct the transformation groupoid.  Once we know that actions
by partial equivalences are essentially the same as \(S\)\nb-graded
groupoids, we treat many examples.  This includes actions on spaces
and \v{C}ech groupoids; in particular, an \(S\)\nb-action on a space
by partial homeomorphisms induces an action by partial equivalences on
any \v{C}ech groupoid for a covering of the space.  We describe a
group action by (partial) equivalences as a kind of extension by the
group.  We show that any (locally) proper Lie groupoid is a
transformation groupoid for an inverse semigroup action on a very
simple kind of groupoid: a disjoint union of transformation groupoids
of the form \(V\rtimes K\), where~\(V\) is a vector space, \(K\) a
compact Lie group, and the \(K\)\nb-action on~\(V\) is by an
\(\R\)\nb-linear representation.  This is meant as an example for
gluing together groupoids along partial equivalences.

In Section~\ref{sec:S_acts_on_Cstar} we define inverse semigroup
actions on \cstar{}algebras by Hilbert bimodules.  The theory is
parallel to that for actions on groupoids by partial equivalences
because both cases have the same crucial algebraic features.  We show
that actions by Hilbert bimodules are equivalent to saturated Fell
bundles.  This simplifies the original definition of Fell bundles over
inverse semigroups in~\cite{Exel:noncomm.cartan}.

In Section~\ref{sec:Fell_from_action}, we turn inverse semigroup
actions on groupoids by partial equivalences into actions on groupoid
\cstar{}algebras by Hilbert bimodules.  We do this in two different
(but equivalent) ways, by using transformation groupoids and abstract
functorial properties of our constructions.  The approach using
transformation groupoids suggests that the section \(\Cst\)\nb-algebra
of the resulting Fell bundle is simply the groupoid
\(\Cst\)\nb-algebra of the transformation groupoid: \(\Cst(H)\rtimes S
\cong \Cst(H\rtimes S)\).  We prove this, and a more general result
for Fell bundles over~\(H\rtimes S\).

In Section~\ref{sec:isg_to_groupoid} we relate inverse semigroup
actions to actions of corresponding étale groupoids.  In particular,
we characterise when an action of~\(\Bis(G)\) comes from an action
of~\(G\).  Finally, we can then treat our motivating example and turn
a groupoid action on a locally Hausdorff space~\(Z\) into a Fell
bundle over the groupoid.  We may also describe the section
\(\Cst\)\nb-algebra in this case, which plays the role of the crossed
product.  If the action is free and proper, then the result is
Morita--Rieffel equivalent to \(\Cont_0(Z/G)\).  We also define
``proper actions'' of inverse semigroups on groupoids.  We show that
a free and proper action can only occur on a groupoid that is
equivalent to a locally Hausdorff and locally quasi-compact space.

Section~\ref{sec:automorphisms_fail} shows that the translation action
of a non-Hausdorff étale groupoid on its arrow space cannot be
described by a groupoid action by automorphisms in the usual sense.
Our previous theory shows, however, that we may describe such actions
by groupoid Fell bundles.  Thus the no-go theorem in
Section~\ref{sec:automorphisms_fail} shows that the Packer--Raeburn
Stabilisation Trick fails for non-Hausdorff groupoids, so Fell bundles
are really more general than ordinary actions in that case.

In Section~\ref{sec:explicit_example}, we examine a very simple
explicit example to illustrate the no-go theorem and to see how our
main results avoid it.

Appendix~\ref{sec:preliminaries} deals with topological groupoids,
their actions on spaces and equivalences between them.  The main point
is to define principal bundles and (Morita) equivalence for
\emph{non-Hausdorff} groupoids in such a way that the theory works
just as well as in the Hausdorff case.  Among others, we show that a
non-Hausdorff space is equivalent to a Hausdorff, locally compact
groupoid if and only if it is locally Hausdorff and locally
quasi-compact, answering a question
in~\cite{Clark-Huef-Raeburn:Fell_algebras}.

Appendix~\ref{sec:Banach_fields} contains a general technical result
about upper semicontinuous fields of Banach spaces over locally
Hausdorff spaces and uses it to prove \(\Cst(H)\rtimes S \cong
\Cst(H\rtimes S)\) for inverse semigroup actions on groupoids and a
more general statement involving Fell bundles over \(H\rtimes S\).

\section{Partial equivalences}
\label{sec:partial_equivalence_sub}

In this section and the next one, we work in the category of
topological spaces and continuous maps, without assuming spaces to
be Hausdorff or locally compact.  Appendix~\ref{sec:preliminaries}
shows how topological groupoids, their actions, principal bundles,
and equivalences between them should be defined so that the theory
goes through smoothly without extra assumptions on the underlying
topological spaces.

Our main applications deal with groupoids that have a Hausdorff,
locally compact object space and a locally Hausdorff, locally
quasi-compact arrow space.  We care about actions of such
groupoids~\(G\) on locally Hausdorff spaces~\(Z\).  It is very
convenient to encode such an action by the transformation groupoid
\(G\ltimes Z\).  Its object space~\(Z\) is only locally Hausdorff.
When we allow such topological groupoids, the usual definition of
equivalence for topological groupoids breaks down because orbit
spaces of proper actions are always Hausdorff, so the actions on an
equivalence bispace cannot be proper unless the object spaces of the
two groupoids are Hausdorff.  Jean-Louis Tu's definition
in~\cite{Tu:Non-Hausdorff} works -- it is equivalent to what we do.
But the theory becomes more elegant if we also drop the local
compactness assumption and thus no longer use proper maps in our
basic definitions.  The replacement for free and proper actions are
``basic'' actions, which are characterised by the map
\[
G\times_{\s,G^0,\rg} X \to X\times X,\qquad
(g,x)\mapsto (gx,x),
\]
being a homeomorphism onto its image with the subspace topology
from~\(X\times X\).

Readers already familiar with the usual theory of locally compact
groupoids may read on and only turn to
Appendix~\ref{sec:preliminaries} in cases of doubt; they should note
that range and source maps of groupoids are assumed to be open,
whereas anchor maps of groupoid actions are not assumed open.  Less
experienced readers should read Appendix~\ref{sec:preliminaries}
first.

\begin{definition}
  \label{def:partial_equivalence}
  Let \(G\) and~\(H\) be topological groupoids.  A \emph{partial
    equivalence} from~\(H\) to~\(G\) is a topological space with
  \emph{anchor maps} \(\rg\colon X\to G^0\) and \(\s\colon X\to
  H^0\) and multiplication maps \(G^1\times_{\s,G^0,\rg} X\to X\)
  and \(X\times_{\s,H^0,\rg} H^1\to X\), which we write
  multiplicatively, that satisfy the following conditions:
  \begin{enumerate}[label=\textup{(P\arabic*)}]
  \item \label{enum:Peq1} \(\s(g\cdot x)=\s(x)\), \(\rg(g\cdot
    x)=\rg(g)\) for all \(g\in G^1\), \(x\in X\) with
    \(\s(g)=\rg(x)\), and \(\s(x\cdot h)=\s(h)\), \(\rg(x\cdot
    h)=\rg(x)\) for all \(x\in X\), \(h\in H^1\) with
    \(\s(x)=\rg(h)\);
  \item \label{enum:Peq2} associativity: \(g_1\cdot (g_2\cdot x)=
    (g_1\cdot g_2)\cdot x\), \(g_2\cdot (x\cdot h_1)= (g_2\cdot
    x)\cdot h_1\), \(x\cdot (h_1\cdot h_2)= (x\cdot h_1)\cdot h_2\)
    for all \(g_1,g_2\in G^1\), \(x\in X\), \(h_1,h_2\in H^1\) with
    \(\s(g_1)=\rg(g_2)\), \(\s(g_2)=\rg(x)\), \(\s(x)=\rg(h_1)\),
    \(\s(h_1)=\rg(h_2)\);
  \item \label{enum:Peq3} the following two maps are homeomorphisms:
    \begin{align*}
      G^1\times_{\s,G^0,\rg} X &\to X\times_{\s,H^0,\s} X,&\qquad
      (g,x)&\mapsto (x,g\cdot x),\\
      X\times_{\s,H^0,\rg} H^1 &\to X\times_{\rg,G^0,\rg} X,&\qquad
      (x,h)&\mapsto (x,x\cdot h);
    \end{align*}
  \item \label{enum:Peq4} \(\s\) and~\(\rg\) are open.
  \end{enumerate}
  The first two conditions say that~\(X\) is a \(G,H\)\nb-bispace.
  The only difference between a partial and a global equivalence is
  whether the anchor maps are assumed surjective or not: conditions
  \ref{enum:Peq1}--\ref{enum:Peq4} are the same as conditions
  \ref{enum:Eq1}--\ref{enum:Eq4} in Proposition~\ref{pro:equivalence}.
\end{definition}

We view a partial equivalence~\(X\) from~\(H\) to~\(G\) as a
generalised map from~\(H\) to~\(G\).  Indeed, there is a bicategory
with partial equivalences as arrows \(H\to G\)
(Theorem~\ref{the:bicategory_peq}).

\begin{definition}
  \label{def:invariant_subset}
  Let~\(G\) be a groupoid.  A subset \(U\subseteq G^0\) is
  \emph{\(G\)\nb-invariant} if \(\rg^{-1}(U)=\s^{-1}(U)\).  In this case,
  \(U\) and \(\rg^{-1}(U)=\s^{-1}(U)\) are the object and arrow
  spaces of a subgroupoid of~\(G\), which we denote by~\(G_U\).
\end{definition}

The canonical projection \(p\colon G^0\to G^0/G\) induces a bijection
between \(G\)\nb-invariant subsets \(U\subseteq G^0\) and subsets
\(p(U)\subseteq G^0/G\).  We are mainly interested in open invariant
subsets.  Since~\(p\) is open and continuous, open \(G\)\nb-invariant
subsets of~\(G^0\) correspond to open subsets of~\(G^0/G\).

\begin{lemma}
  \label{lem:partial_equivalence}
  Let \(G\) and~\(H\) be topological groupoids.  A partial
  equivalence~\(X\) from~\(H\) to~\(G\) is the same as an equivalence
  from~\(H_V\) to~\(G_U\) for open, invariant subsets \(U\subseteq
  G^0\), \(V\subseteq H^0\).  Here \(U=\rg(X)\), \(V=\s(X)\).
\end{lemma}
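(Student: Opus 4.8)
The plan is to show that a partial equivalence $X$ from $H$ to $G$ restricts to an honest equivalence once we throw away the parts of $G^0$ and $H^0$ not hit by the anchor maps, and conversely that an equivalence between restricted subgroupoids is trivially a partial equivalence of the ambient groupoids. The first thing I would verify is that $U \defeq \rg(X) \subseteq G^0$ and $V \defeq \s(X) \subseteq H^0$ are open and invariant. Openness is immediate from \ref{enum:Peq4}. For invariance of $U$: if $g \in G^1$ with $\s(g) \in U$, pick $x \in X$ with $\rg(x) = \s(g)$; then $g\cdot x$ is defined and $\rg(g\cdot x) = \rg(g) \in U$ by \ref{enum:Peq1}, so $\rg(g) \in \rg(X) = U$; this shows $\s^{-1}(U) \subseteq \rg^{-1}(U)$, and the reverse inclusion is symmetric (given $\rg(g) \in U$, use $g^{-1}$). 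The same argument with the right $H$-action gives invariance of $V$.

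Next I would check that the bispace and homeomorphism conditions for $X$ over $G,H$ are literally the same data as for $X$ over $G_U, H_V$. The $G$-action on $X$ factors through $G^1 \times_{\s,G^0,\rg} X = (G_U)^1 \times_{\s,U,\rg} X$ because whenever $(g,x)$ is composable we have $\s(g) = \rg(x) \in U$, hence $g \in (G_U)^1$; similarly for the $H$-action. So \ref{enum:Peq1}, \ref{enum:Peq2} make $X$ a $G_U, H_V$-bispace, and the fibre products appearing in \ref{enum:Peq3}, \ref{enum:Peq4} are unchanged (the source and range of $G_U$ agree with those of $G$ on the relevant domains). The only genuinely new feature of an equivalence versus a partial equivalence is surjectivity of the anchor maps; but $\rg\colon X \to U$ and $\s\colon X \to V$ are surjective by the very definition of $U$ and $V$. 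Comparing with the characterisation of equivalences in Proposition~\ref{pro:equivalence}, whose conditions \ref{enum:Eq1}--\ref{enum:Eq4} are stated in the excerpt to coincide with \ref{enum:Peq1}--\ref{enum:Peq4}, we conclude $X$ is an equivalence from $H_V$ to $G_U$.

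For the converse, suppose $U \subseteq G^0$, $V \subseteq H^0$ are open invariant and $X$ is an equivalence from $H_V$ to $G_U$. Composing the anchor maps $X \to U$, $X \to V$ with the inclusions $U \into G^0$, $V \into H^0$ gives anchor maps to $G^0, H^0$. The $G_U$-action extends to a partial $G$-action: the domain $G^1 \times_{\s,G^0,\rg} X$ of a putative action equals $(G_U)^1 \times_{\s,U,\rg} X$ since $\rg(X) = U$ forces $\s(g) \in U$, i.e.\ $g \in (G_U)^1$; so no extension is needed, the action is already defined on the right set. Likewise for $H$. Conditions \ref{enum:Peq1}, \ref{enum:Peq2} hold because they held for $G_U, H_V$ and the anchor maps take the same values; \ref{enum:Peq3} holds because the fibre products and the maps are identical; \ref{enum:Peq4} holds since $\rg\colon X \to U$ is open and $U \into G^0$ is open (as $U$ is open in $G^0$), so the composite is open, and similarly for $\s$. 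Finally $\rg(X) = U$ and $\s(X) = V$ recovers the stated subgroupoids, and the correspondence in the two directions is manifestly mutually inverse.

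The construction is essentially a bookkeeping exercise, so there is no deep obstacle; the one point requiring a little care is the openness in \ref{enum:Peq4} for the converse direction, where one must use that an open subset's inclusion is an open map rather than merely that the anchor map onto the restricted object space is open — which is exactly why the hypothesis that $U$ and $V$ be \emph{open} invariant subsets is needed, not just invariant.
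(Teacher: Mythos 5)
Your proof is correct and follows essentially the same route as the paper: identify actions of $G_U$, $H_V$ with actions of $G$, $H$ whose anchor maps land in $U$, $V$, observe that the relevant fibre products coincide, and match conditions \textup{(P1)}--\textup{(P4)} against \textup{(E1)}--\textup{(E5)}, with surjectivity of the anchor maps onto $U=\rg(X)$, $V=\s(X)$ holding by definition. Your explicit verification of the invariance of $U$ and $V$ via \textup{(P1)} spells out a step the paper leaves to the reader, and is correct.
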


\begin{proof}
  Let \(U\subseteq G^0\) be \(G\)\nb-invariant.  A left
  \(G_U\)\nb-action is the same as a left \(G\)\nb-action for which
  the anchor map takes values in~\(U\) because \(G_U^0=U\) and
  \(G^1\times_{\s,G^0,\rg} X \cong G_U^1\times_{\s,G^0,\rg} X\) if
  \(\rg(X)\subseteq U\).  Thus the commuting actions of \(G_U\)
  and~\(H_V\) for an equivalence from~\(H_V\) to~\(G_U\) may also be
  viewed as commuting actions of \(G\) and~\(H\), respectively.
  This gives a partial equivalence (see
  Definition~\ref{def:partial_equivalence}).  Conversely, let~\(X\)
  be a partial equivalence.  Let \(U\defeq \rg(X)\subseteq G^0\) and
  \(V\defeq \s(X)\subseteq H^0\).  These are open subsets because
  \(\rg\) and~\(\s\) are open, and they are invariant
  by~\ref{enum:Peq1}.  The actions of \(G\) and~\(H\) are equivalent
  to actions of \(G_U\) and~\(H_V\), respectively.  After replacing
  \(G\) and~\(H\) by \(G_U\) and~\(H_V\), respectively, all
  conditions \ref{enum:Eq1}--\ref{enum:Eq5} in
  Proposition~\ref{pro:equivalence} hold; thus~\(X\) is an
  equivalence from~\(H_V\) to~\(G_U\).
\end{proof}

\begin{lemma}
  \label{lem:restrict_peq}
  Let~\(X\) be a partial equivalence from~\(H\) to~\(G\) and let
  \(U\subseteq G^0\) and \(V\subseteq H^0\) be invariant open
  subsets.  Then
  \[
  {}_U|X|_V \defeq \{x\in X\mid \rg(x)\in U,\ \s(x)\in V\}
  \]
  is again a partial equivalence from~\(H\) to~\(G\).
\end{lemma}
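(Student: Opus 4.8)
The plan is to verify directly that the restriction ${}_U|X|_V$ inherits all the structure of a partial equivalence, using Lemma~\ref{lem:partial_equivalence} to cut the work down. By that lemma, $X$ is an equivalence from $H_{\s(X)}$ to $G_{\rg(X)}$, so after replacing $G$ and $H$ by these subgroupoids we may as well assume $\rg$ and~$\s$ are surjective, i.e.\ $X$ is a (global) equivalence from~$H$ to~$G$. Shrinking~$U$ and~$V$ to $U\cap\rg(X)$ and $V\cap\s(X)$ changes nothing, so assume $U\subseteq\rg(X)$, $V\subseteq\s(X)$. Now the key observation is that $\rg^{-1}(U)$ is a $G$-invariant open subset of~$X$ for the left $G$-action: invariance is \ref{enum:Peq1} together with the invariance of~$U$ in~$G^0$, and openness is clear since $\rg$ is continuous. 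Likewise $\s^{-1}(V)$ is open and $H$-invariant. Hence ${}_U|X|_V = \rg^{-1}(U)\cap\s^{-1}(V)$ is open in~$X$ and invariant under both the $G$- and the $H$-action, and these two actions still commute on it. So ${}_U|X|_V$ is a $G,H$-bispace, giving conditions \ref{enum:Peq1} and~\ref{enum:Peq2}.

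For \ref{enum:Peq3} I would argue that the two maps in question for ${}_U|X|_V$ are restrictions of the corresponding maps for~$X$ to open subsets of the domain and codomain, and that these restrictions again land bijectively onto the expected fibred products. Concretely, for the left-action map $(g,x)\mapsto(x,g\cdot x)$: if $x\in{}_U|X|_V$ and $\s(g)=\rg(x)$, then $\rg(g)=\rg(x)$ lies in~$U$ (using invariance of~$U$, since $\rg(x)\in U$ and $g$ connects $\rg(x)$ to $\rg(g)$ in~$G_U$), and $\s(g\cdot x)=\s(x)\in V$, so $g\cdot x\in{}_U|X|_V$; conversely any pair $(x,y)\in {}_U|X|_V\times_{\s,H^0,\s}{}_U|X|_V$ comes from such a $(g,x)$ by the homeomorphism for~$X$, and then $g\in G_U^1$ automatically. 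Thus the left-action map for ${}_U|X|_V$ is obtained from the homeomorphism for~$X$ by restricting domain and codomain to full preimages of the open set ${}_U|X|_V$ under continuous maps; a restriction of a homeomorphism to such matching open subsets is again a homeomorphism. The same reasoning handles the right-action map. Finally, \ref{enum:Peq4}: the anchor maps of ${}_U|X|_V$ are the restrictions of $\rg$ and $\s$ to the open set ${}_U|X|_V = \rg^{-1}(U)\cap\s^{-1}(V)$, with codomains $U$ and $V$; restricting an open map to an open subset of its domain (and co-restricting to an open subset of its codomain) keeps it open, so we are done.

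The only point needing a little care—and the step I expect to be the main (minor) obstacle—is the bookkeeping in \ref{enum:Peq3}: one must check not merely that the restricted maps are injective and continuous with continuous inverse, but that their codomains genuinely shrink to $({}_U|X|_V)\times_{\s,H^0,\s}({}_U|X|_V)$ and $({}_U|X|_V)\times_{\rg,G^0,\rg}({}_U|X|_V)$, i.e.\ that the image really is the full fibred product over the restricted space and not something smaller. This is where the invariance of~$U$ and~$V$ is used in an essential way (an arrow of~$G$ with source in~$U$ has range in~$U$), and it is the reason the statement requires $U,V$ to be invariant rather than arbitrary open sets. Once that is in place, the homeomorphism property is the formal fact that if $f\colon A\to B$ is a homeomorphism and $B'\subseteq B$ is open with $A'\defeq f^{-1}(B')$, then $f|_{A'}\colon A'\to B'$ is a homeomorphism, applied twice.
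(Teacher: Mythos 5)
Your proof is correct and follows essentially the same route as the paper's: openness and invariance of \({}_U|X|_V\) give \ref{enum:Peq1}, \ref{enum:Peq2} and~\ref{enum:Peq4}, and \ref{enum:Peq3} is obtained by restricting the two homeomorphisms to the matching open fibred products, which is exactly where the invariance of \(U\) and~\(V\) enters. (The initial reduction to a global equivalence via Lemma~\ref{lem:partial_equivalence} is harmless but unnecessary, and the slip ``\(\rg(g)=\rg(x)\)'' should read ``\(\s(g)=\rg(x)\in U\), hence \(\rg(g)\in U\) by invariance,'' as your own parenthetical already indicates.)
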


We also write \({}_U|X\) and~\(X|_V\) for \({}_U|X|_{H^0}\)
and~\({}_{G^0}|X|_V\), respectively.

\begin{proof}
  The subset~\({}_U|X|_V\) is open because \(\rg\) and~\(\s\) are
  continuous and \(U\) and~\(V\) are open, and it is invariant under
  the actions of \(G\) and~\(H\) because \(U\) and~\(V\) are
  invariant and the two anchor maps are either invariant or
  equivariant with respect to the two actions.  Hence we may
  restrict the actions of \(G\) and~\(H\) to~\({}_U|X|_V\).
  Conditions \ref{enum:Peq1}--\ref{enum:Peq2} and~\ref{enum:Peq4} in
  Definition~\ref{def:partial_equivalence} are inherited by an open
  invariant subspace.  The inverse to the first homeomorphism
  in~\ref{enum:Peq3} maps \({}_U|X|_V \times_{\s,H^0,\s} {}_U|X|_V\)
  into \(G^1\times_{\s,G^0,\rg} {}_U|X|_V\), and the inverse to the
  second one maps \({}_U|X|_V \times_{\rg,G^0,\rg} {}_U|X|_V\) into
  \({}_U|X|_V\times_{\s,H^0,\rg} H^1\).  Thus~\({}_U|X|_V\) also
  inherits~\ref{enum:Peq3} and is a partial equivalence from~\(H\)
  to~\(G\).
\end{proof}

Equivalences are partial equivalences, of course.
In particular, the \emph{identity equivalence}~\(G^1\) with
\(G\)~acting by left and right multiplication is also a partial
equivalence.

Let \(X\) and~\(Y\) be partial equivalences from~\(H\) to~\(G\) and
from~\(K\) to~\(H\), respectively.  Their composite is defined as for
global equivalences, and still denoted by~\(\times_H\):
\[
X\times_H Y \defeq X\times_{\s,H^0,\rg} Y\mathbin{/}
(x\cdot h,y)\sim (x,h\cdot y),
\]
equipped with the quotient topology and the induced actions of \(G\)
and~\(K\) by left and right multiplication.  The canonical map
\(X\times_{\s,H^0,\rg} Y \to X\times_H Y\) is a principal
\(H\)\nb-bundle for the \(H\)\nb-action defined by \((x,y)\cdot
h\defeq (x\cdot h,h^{-1}\cdot y)\); this follows from the general
theory in~\cite{Meyer-Zhu:Groupoids}.

\begin{example}
  \label{exa:iso_to_partial_equivalence}
  We associate an equivalence~\(H_f\) from~\(G\) to~\(H\) to a
  groupoid isomorphism \(f\colon G\to H\).  The functor~\(f\) consists
  of homeomorphisms \(f^i\colon G^i\to H^i\) for \(i=0,1\).  We take
  \(X=H^1\) with the usual left \(H\)\nb-action and the right
  \(G\)\nb-action by \(h\cdot g\defeq h\cdot f^1(g)\) for all \(h\in
  H^1\), \(g\in G^1\) with \(\s(h) = \rg(f^1(g)) = f^0(\rg(g))\); so
  the right anchor map is~\((f^0)^{-1}\circ\s = \s\circ (f^1)^{-1}\).

  We claim that an equivalence is of this form if and only if it is
  isomorphic to~\(H^1\) as a left \(H\)\nb-space.  Since \(H\backslash
  H^1\cong H^0\), the right anchor map gives a homeomorphism \(H^0\to
  G^0\) in this case; let \(f^0\colon G^0\to H^0\) be its inverse.
  The right action of \(g\in G^1\) on \(h\in H^1\) with
  \(\s(h)=f^0(\rg(g))\) must be of the form \(h\cdot g = h\cdot
  f^1(g)\) for a unique \(f^1(g)\in H^1\) with
  \(\rg(f^1(g))=f^0(\rg(g))\) and \(\s(f^1(g))= \s(h\cdot g) =
  f^0(\s(g))\).  It is routine to check that \(f^0\) and~\(f^1\) give
  a topological groupoid isomorphism.

  When do two isomorphisms \(f,\varphi\colon G\to H\) give isomorphic
  equivalences?  Let \(u\colon H^1_f\congto H^1_\varphi\) be an
  isomorphism.  Define a continuous map \(\sigma\colon G^0\to H^1\) by
  \(\sigma(x)\defeq u(1_{f^0(x)})\) for all \(x\in G^0\).  This
  satisfies \(\rg(\sigma(x))=f^0(x)\) and
  \(\s(\sigma(x))=\varphi^0(x)\) for all \(x\in G^0\) because~\(u\) is
  compatible with anchor maps.  Since~\(u\) is left
  \(H\)\nb-invariant, \(u(h) = u(h\cdot 1_{\s(h)}) = h\cdot
  (\sigma\circ (f^0)^{-1}\circ \s)(h)\) for all \(h\in H^1\),
  so~\(\sigma\) determines~\(u\).  The right \(G\)\nb-invariance
  of~\(u\) translates to \(\sigma(\rg(g))\cdot \varphi^1(g)
  = f^1(g)\cdot \sigma(\s(g))\) for all \(g\in G\).  Thus
  \begin{equation}
    \label{eq:equivalent_isomorphisms}
    \varphi^0(x) = \s(\sigma(x)),\qquad
    \varphi^1(g)
    = \sigma(\rg(g))^{-1} \cdot f^1(g)\cdot \sigma(\s(g)).
  \end{equation}
  Roughly speaking, \(f\) and~\(\varphi\) differ by an inner
  automorphism.

  Let an equivalence \(f\colon G\to H\) and a continuous map \(\sigma\colon
  G^0\to H^1\) with \(\rg(\sigma(x))=f^0(x)\) for all \(x\in H^0\) be
  given.  Assume that \(H^0\to G^0\), \(x\mapsto \s(\sigma(x))\), is a
  homeomorphism.  Then~\eqref{eq:equivalent_isomorphisms} defines an
  isomorphism \(\varphi\colon G\to H\) such that \(h\mapsto h\cdot
  \sigma((f^0)^{-1}(\s(h)))\) is an isomorphism between the equivalences \(H_f\)
  and~\(H_\varphi\).
\end{example}

\begin{example}
  \label{exa:minimal_no_partial}
  If \(G\) and~\(H\) are \emph{minimal} groupoids in the sense that
  \(G^0\) and~\(H^0\) have no proper open invariant subsets, then any
  partial equivalence is either empty or a full equivalence \(G\congto
  H\).  This holds, in particular, if \(G\) and~\(H\) are groups.
\end{example}

\begin{example}
  \label{exa:groups_pe}
  Any non-empty (partial) equivalence between two groups is isomorphic
  to one coming from a group isomorphism \(G\cong H\).  Indeed, since
  \(X/H\cong G^0\) and \(G\backslash X\cong H^0\) are a single point,
  both actions on~\(X\) are free and transitive.  Fix \(x_0\in X\).
  Since the actions are free and transitive and part of principal
  bundles, the maps \(G\to X\), \(g\mapsto g\cdot x_0\), and \(H\to
  X\), \(h\mapsto x_0\cdot h\), are homeomorphisms.  The composite map
  \(G\congto X\congto H\) is an isomorphism of topological groups.
  This isomorphism depends on the choice of~\(x_0\).  The isomorphisms
  \(G\congto H\) for different choices of~\(x_0\) differ by an inner
  automorphism.
\end{example}

\begin{lemma}
  \label{lem:peq_composition_associative_unital}
  The composition~\(\times_H\) is associative and unital with the
  identity equivalence as unit, up to the usual canonical bibundle
  isomorphisms
  \[
  (X\times_H Y)\times_K Z\cong X\times_H (Y\times_K Z),\qquad
  G^1\times_G X\cong X\cong X\times_H H^1.
  \]
\end{lemma}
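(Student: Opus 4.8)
The plan is to write down the canonical bibundle maps explicitly and to check that they are homeomorphisms; the only point that is not purely formal is the behaviour of the quotient topologies, which is governed by the statement quoted just before the lemma, namely that the orbit projection \(X\times_{\s,H^0,\rg}Y\to X\times_H Y\) is a principal \(H\)\nb-bundle, in particular an open surjection. Compatibility of every map below with the residual left \(G\)\nb- and right \(K\)\nb-actions is immediate from the formulas, so I shall not comment on it further. (Along the way one also uses that \(X\times_H Y\) is again a partial equivalence from~\(K\) to~\(G\); this follows either directly from \ref{enum:Peq1}--\ref{enum:Peq4}, or from Lemma~\ref{lem:partial_equivalence} by restricting to the invariant open subsets \(\rg(X\times_H Y)\subseteq G^0\) and \(\s(X\times_H Y)\subseteq K^0\) and invoking the corresponding statement for global equivalences.)

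For the left unit law I would take \(G^1\times_G X\to X\), \([g,x]\mapsto g\cdot x\); it is well defined by~\ref{enum:Peq2} and continuous because it is induced on the quotient by the multiplication \(G^1\times_{\s,G^0,\rg}X\to X\). Its inverse is \(x\mapsto[1_{\rg(x)},x]\), which is continuous as the composite of \(x\mapsto(1_{\rg(x)},x)\) — continuous since the unit section \(G^0\to G^1\) and the anchor~\(\rg\) are continuous — with the quotient map. These two maps are mutually inverse because \(1_{\rg(x)}\cdot x=x\) and because every class of \(G^1\times_G X\) satisfies \([g,x]=[1_{\rg(g)},g\cdot x]\) by the defining relation. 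The right unit law \(X\times_H H^1\cong X\) is the mirror image, using \([x,h]\mapsto x\cdot h\) and \(x\mapsto[x,1_{\s(x)}]\).

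For associativity, both \((X\times_H Y)\times_K Z\) and \(X\times_H(Y\times_K Z)\) arise from the triple fibre product \(P\defeq X\times_{\s,H^0,\rg}Y\times_{\s,K^0,\rg}Z\) by dividing out the two relations \((x\cdot h,y,z)\sim(x,h\cdot y,z)\) and \((x,y\cdot k,z)\sim(x,y,k\cdot z)\) — in one order for the left-hand side, in the other for the right-hand side — so the two sides are canonically in bijection. To upgrade this bijection to a homeomorphism it suffices to show that the two composite maps \(P\to(X\times_H Y)\times_K Z\) and \(P\to X\times_H(Y\times_K Z)\) are both quotient maps; then the two spaces carry the quotient topology on~\(P\) for one and the same equivalence relation and hence coincide. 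For the first map, \(P\to(X\times_H Y)\times_{\s,K^0,\rg}Z\) is the product of the orbit projection \(X\times_{\s,H^0,\rg}Y\to X\times_H Y\) with \(\mathrm{id}_Z\); since that projection is open, so is this product, hence it is a quotient map, and composing it with the quotient map \((X\times_H Y)\times_{\s,K^0,\rg}Z\to(X\times_H Y)\times_K Z\) again gives a quotient map. The argument for the other bracketing is symmetric, using that \(Y\times_{\s,K^0,\rg}Z\to Y\times_K Z\) is open.

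The main obstacle is precisely this last step: one must know that the orbit projections of the relevant fibre products are open maps, so that dividing by the combined relation one step at a time yields the same topology as dividing all at once. This is where the principal-bundle property — equivalently, the general theory of~\cite{Meyer-Zhu:Groupoids} — really enters; everything else is bookkeeping with the axioms \ref{enum:Peq1}--\ref{enum:Peq2}.
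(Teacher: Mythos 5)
Your proof is correct, but it takes a genuinely different route from the paper. The paper disposes of the lemma by citing the global case from \cite{Meyer-Zhu:Groupoids}*{Proposition 7.10} and reducing the partial case to the global one via the restriction trick of Lemma~\ref{lem:restrict_peq} and the identity \({}_U|(X\times_H Y)|_V\cong({}_U|X)\times_H(Y|_V)\); the details are explicitly left to the reader. You instead give a direct, self-contained verification, and you correctly isolate the one non-formal point: since the orbit projections \(X\times_{\s,H^0,\rg}Y\to X\times_H Y\) are \emph{open} surjections (Proposition~\ref{pro:Top_open_orbit} and the principal-bundle structure), both composites out of the triple fibre product \(P\) are quotient maps, so the two bracketings are quotients of \(P\) by the same equivalence relation and hence canonically homeomorphic. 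The only step you pass over quickly is that the two composite projections really do induce the \emph{same} equivalence relation on \(P\) --- one should note that \((x,y,z)\sim(x',y',z')\) in either iterated quotient exactly when there are \(h\in H^1\), \(k\in K^1\) with \(x'=xh\), \(y'=h^{-1}yk\), \(z'=k^{-1}z\) --- but this is a routine computation. Your approach buys a proof that does not lean on the external reference for the global case; the paper's approach buys brevity and reuses machinery (restriction to open invariant subsets) that is needed elsewhere anyway.
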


\begin{proof}
  For global equivalences with arbitrary topological spaces, this is
  contained in \cite{Meyer-Zhu:Groupoids}*{Proposition 7.10}.  The proofs
  in~\cite{Meyer-Zhu:Groupoids} can be extended to the partial case as
  well.  Alternatively, we may reduce the partial to the global case
  by restricting our partial equivalences to global equivalences
  between open subgroupoids as in Lemma~\ref{lem:restrict_peq}.  This
  works because
  \[
  {}_U |(X\times_H Y)|_V \cong ({}_U|X)\times_H (Y|_V)
  \]
  for \(U\subseteq G^0\), \(V\subseteq K^0\) open and invariant and
  partial equivalences~\(X\) from~\(H\) to~\(G\) and~\(Y\) from~\(K\)
  to~\(H\).  Details are left to the reader.
\end{proof}

\begin{proposition}
  \label{pro:characterise_bibundle_isom}
  Let \(G\) and~\(H\) be topological groupoids.  Let \(X_1\)
  and~\(X_2\) be partial equivalences from~\(H\) to~\(G\).  There is
  no bibundle map \(X_1\to X_2\) unless \(\rg(X_1)\subseteq \rg(X_2)\)
  and \(\s(X_1)\subseteq \s(X_2)\).  Any \(G,H\)-bibundle map
  \(\varphi\colon X_1\to X_2\) is an isomorphism onto the open
  sub-bibundle \({}_{\rg(X_1)}|X_2 =X_2|_{\s(X_1)}\).  The
  map~\(\varphi\) is invertible if \(\rg(X_2)\subseteq\rg(X_1)\) or
  \(\s(X_2)\subseteq \s(X_1)\).  In this case, \(\rg(X_2)=\rg(X_1)\)
  and \(\s(X_2)= \s(X_1)\).
\end{proposition}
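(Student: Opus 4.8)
The plan is to derive everything from the compatibility of bibundle maps with the anchor maps together with the homeomorphism and openness conditions~\ref{enum:Peq3}--\ref{enum:Peq4}; a reduction to global equivalences via Lemma~\ref{lem:partial_equivalence} is possible but not needed.

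First I would observe that every \(G,H\)-bibundle map \(\varphi\colon X_1\to X_2\) satisfies \(\rg\circ\varphi=\rg\) and \(\s\circ\varphi=\s\): \(H\)\nb-equivariance needs \(\s(\varphi(x))=\rg(h)\) whenever \(\s(x)=\rg(h)\), and taking \(h=1_{\s(x)}\) gives \(\s(\varphi(x))=\s(x)\); the other identity is symmetric.  This already yields \(\rg(X_1)\subseteq\rg(X_2)\) and \(\s(X_1)\subseteq\s(X_2)\), which are open and invariant by Lemma~\ref{lem:partial_equivalence}; write \(U_i\defeq\rg(X_i)\), \(V_i\defeq\s(X_i)\), so \(U_1\subseteq U_2\) and \(V_1\subseteq V_2\).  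Since \(\rg(\varphi(x))=\rg(x)\in U_1\), the map \(\varphi\) takes values in the open sub-bibundle \({}_{U_1}|X_2=\rg^{-1}(U_1)\).

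Next I would show \(\varphi\colon X_1\to{}_{U_1}|X_2\) is a homeomorphism.  Given \(y\in{}_{U_1}|X_2\), choose \(x\in X_1\) with \(\rg(x)=\rg(y)\) (possible since \(\rg(y)\in U_1=\rg(X_1)\)); the second homeomorphism in~\ref{enum:Peq3} for \(X_2\) provides a unique \(h\in H^1\) with \(\varphi(x)\cdot h=y\), and then \(\varphi(x\cdot h)=y\), so \(\varphi\) is onto \({}_{U_1}|X_2\).  If \(\varphi(x_1)=\varphi(x_2)\), then~\ref{enum:Peq3} for \(X_1\) gives a unique \(h\) with \(x_1\cdot h=x_2\); applying \(\varphi\) gives \(\varphi(x_1)\cdot h=\varphi(x_1)\), and~\ref{enum:Peq3} for \(X_2\) then forces \(h\) to be a unit, so \(x_1=x_2\).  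For continuity of \(\varphi^{-1}\) I would not try to choose local sections of \(\rg\colon X_1\to U_1\) (which may not exist); instead, \((x,y)\mapsto(\varphi(x),y)\) followed by the inverse of the second map in~\ref{enum:Peq3} for \(X_2\) and a projection defines a continuous map \(c\colon X_1\times_{\rg,G^0,\rg}X_2\to H^1\), \((x,y)\mapsto h\) with \(\varphi(x)\cdot h=y\), so \((x,y)\mapsto x\cdot c(x,y)\) is continuous and equals \(\varphi^{-1}(y)\) for every admissible \(x\).  The projection \(X_1\times_{\rg,G^0,\rg}X_2\to{}_{U_1}|X_2\) is a pullback of the open surjection \(\rg\colon X_1\to U_1\) (which is open by~\ref{enum:Peq4}), hence an open surjection and a quotient map, so \(\varphi^{-1}\) is continuous.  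Thus \(\varphi\) is an isomorphism onto \({}_{\rg(X_1)}|X_2\).

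It remains to see \({}_{\rg(X_1)}|X_2=X_2|_{\s(X_1)}\) and to handle the invertibility statements.  Since the image \(\varphi(X_1)={}_{U_1}|X_2\) has \(\s\)\nb-image \(\s(X_1)=V_1\), we get \({}_{U_1}|X_2\subseteq X_2|_{V_1}\).  Conversely, for \(y\in X_2\) with \(\s(y)\in V_1\) choose \(x\in X_1\) with \(\s(x)=\s(y)\); the first homeomorphism in~\ref{enum:Peq3} for \(X_2\) gives \(g\in G^1\) with \(g\cdot\varphi(x)=y\), and \(\s(g)=\rg(\varphi(x))=\rg(x)\in U_1\), so invariance of \(U_1\) forces \(\rg(y)=\rg(g)\in U_1\), i.e.\ \(y\in{}_{U_1}|X_2\).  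Finally, if \(\rg(X_2)\subseteq\rg(X_1)\) then \(U_1=U_2\) and \({}_{U_1}|X_2=X_2\), while if \(\s(X_2)\subseteq\s(X_1)\) then \(V_1=V_2\) and \(X_2|_{V_1}=X_2\); in either case \(\varphi\) is an isomorphism onto all of \(X_2\), hence invertible, and then \(\varphi^{-1}\) is a bibundle map \(X_2\to X_1\), so the first assertion applied to \(\varphi^{-1}\) gives the reverse inclusions and hence \(\rg(X_1)=\rg(X_2)\), \(\s(X_1)=\s(X_2)\).  The only genuinely delicate point in all of this is the continuity of \(\varphi^{-1}\), which is why I would route it through the quotient-map argument rather than through sections.
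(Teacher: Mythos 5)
Your proof is correct, but it follows a genuinely different and more self-contained route than the paper's.  The paper dispatches the whole proposition by reducing to the case \(\rg(X_1)=\rg(X_2)\), viewing each \(X_i\) as a principal \(H\)\nb-bundle over \(X_i/H\cong\rg(X_i)\), and citing \cite{Meyer-Zhu:Groupoids}*{Proposition 5.7} (a bundle map of principal bundles inducing a homeomorphism on base spaces is a homeomorphism); the remaining assertions are then formal consequences.  You instead re-prove that topological input from scratch: surjectivity and injectivity come directly from the two homeomorphisms in~\ref{enum:Peq3}, and continuity of \(\varphi^{-1}\) comes from factoring it through the open surjection \(X_1\times_{\rg,G^0,\rg}X_2\to{}_{\rg(X_1)}|X_2\) (a pullback of the open map \(\rg\colon X_1\to\rg(X_1)\)), which is exactly the right way to avoid the non-existent local sections.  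Your direct verification that \({}_{\rg(X_1)}|X_2=X_2|_{\s(X_1)}\) via~\ref{enum:Peq3} and invariance of \(\rg(X_1)\) replaces the paper's implicit argument that both restrictions coincide with the image of \(\varphi\).  What your approach buys is independence from the external reference; what it costs is length, since you are essentially reproving a special case of the cited proposition.  One small caveat: your opening ``derivation'' of \(\s\circ\varphi=\s\) from \(H\)\nb-equivariance is circular as stated, because the identity \(\varphi(x\cdot h)=\varphi(x)\cdot h\) only makes sense once \(\s(\varphi(x))=\rg(h)\) is already known; compatibility with the anchor maps is part of the \emph{definition} of a bibundle map (as the paper's own first sentence takes for granted), so you should simply assume it rather than pretend to deduce it.  This does not affect the validity of the rest of the argument.
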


\begin{proof}
  Since \(\rg_{X_2}\circ\varphi=\rg_{X_1}\) and
  \(\s_{X_2}\circ\varphi=\s_{X_1}\), we must have \(\rg(X_1)\subseteq
  \rg(X_2)\) and \(\s(X_1)\subseteq \s(X_2)\) if there is a bibundle
  map \(\varphi\colon X_1\to X_2\).  Assume this from now on.  The
  image of a
  bibundle map is contained in \({}_{\rg(X_1)}|X_2\) and
  in~\(X_2|_{\s(X_1)}\).  Since \(\rg(X_1)\subseteq \rg(X_2)\) and
  \(\s(X_1)\subseteq \s(X_2)\), we have \(\rg({}_{\rg(X_1)}|X_2)=
  \rg(X_1)\) and \(\s(X_2|_{\s(X_1)})=\s(X_1)\).  All remaining
  assertions now follow once we prove that a bibundle map
  \(\varphi\colon X_1\to X_2\) is invertible if \(\rg(X_1)=\rg(X_2)\)
  or \(\s(X_1)=\s(X_2)\).  We treat the case \(\rg(X_2)=\rg(X_1)\);
  the other one is proved in the same way, exchanging left and right.

  Since~\(X_i\) is a partial equivalence, it is a principal
  \(H\)\nb-bundle over \(X_i/H\cong \rg(X_i)\).  The map~\(\varphi\)
  induces a homeomorphism on the base spaces because
  \(\rg(X_2)=\rg(X_1)\) both carry the subspace topology from~\(G^0\).
  Hence~\(\varphi\) is a homeomorphism by
  \cite{Meyer-Zhu:Groupoids}*{Proposition 5.9}.
\end{proof}

In particular, the restricted multiplication maps \(G^1_U\times_H
X\subseteq G^1\times_G X\to X\) and \(X\times_H H^1_V \subseteq
X\times_H H^1\to X\) are bibundle maps.
Proposition~\ref{pro:characterise_bibundle_isom} shows that they
induce bibundle isomorphisms
\begin{equation}
  \label{eq:restrict_as_product}
  G^1_U \times_G X \cong {}_U|X,\qquad
  X \times_H H^1_V \cong X|_V.
\end{equation}

Partial equivalences carry extra structure similar to an inverse
semigroup.  The adjoint operation is the following:

\begin{definition}
  \label{def:dual_equivalence}
  Given a partial equivalence \(X\) from~\(H\) to~\(G\), we define the
  \emph{dual partial equivalence}~\(X^*\) by exchanging the left and
  right actions on~\(X\).  More precisely, \(X^*\)~is~\(X\) as a
  space, the anchor maps \(\rg^*\colon X^*\to H^0\) and \(\s^*\colon
  X^*\to G^0\) are \(\rg^*=\s_X\) and \(\s^*=\rg_X\), and the
  left \(H\)- and right \(G\)\nb-actions are defined by \(h\cdot^*
  x=x\cdot h^{-1}\) and \(x\cdot^* g\defeq g^{-1}\cdot x\),
  respectively.
\end{definition}

If~\(X\) gives an equivalence from~\(H_V\) to~\(G_U\) for open
invariant subsets \(U\subseteq G^0\), \(V\subseteq H^0\), then~\(X^*\)
gives the ``inverse'' equivalence from~\(G_U\) to~\(H_V\).

The following properties of duals are trivial:
\begin{itemize}
\item naturality: a bibundle map \(X\to Y\) induces a bibundle map
  \(X^*\to Y^*\);
\item \((X^*)^*=X\);
\item there is a natural isomorphism \(\sigma\colon (X\times_H
  Y)^*\cong Y^*\times_H X^*\), \((x,y)\mapsto (y,x)\), with
  \(\sigma^2=\Id\).
\end{itemize}

Let \(\Map(Y_1,Y_2)\) be the space of bibundle maps between two
partial equivalences \(Y_1, Y_2\) from~\(H\) to~\(G\).

\begin{proposition}
  \label{pro:properties_of_dual}
  Let~\(X\) be a partial equivalence from~\(H\) to~\(G\).
  Then there are natural isomorphisms
  \[
  X\times_H X^*\cong G_{\rg(X)}^1,\qquad
  X^*\times_G X\cong H_{\s(X)}^1
  \]
  that make the following diagrams of isomorphisms commute:
  \begin{equation}
    \label{eq:XXX_to_X}
    \begin{gathered}
      \begin{tikzpicture}[baseline=(current bounding box.west)]
        \matrix (m) [cd,column sep=1em] {
          X\times_H X^*\times_G X & X\times_H H^1_{\s(X)} \\
          G^1_{\rg(X)} \times_G X & X,\\
        };
        \draw[cdar] (m-1-1) -- (m-1-2);
        \draw[cdar] (m-1-1) -- (m-2-1);
        \draw[cdar] (m-2-1) -- (m-2-2);
        \draw[cdar] (m-1-2) -- (m-2-2);
      \end{tikzpicture}\\
      \begin{tikzpicture}[baseline=(current bounding box.west)]
        \matrix (m) [cd,column sep=1em] {
          X^*\times_G X\times_H X^* & X^*\times_G G^1_{\rg(X)} \\
          H^1_{\s(X)} \times_G X^* & X^*.\\
        };
        \draw[cdar] (m-1-1) -- (m-1-2);
        \draw[cdar] (m-1-1) -- (m-2-1);
        \draw[cdar] (m-2-1) -- (m-2-2);
        \draw[cdar] (m-1-2) -- (m-2-2);
      \end{tikzpicture}
    \end{gathered}
  \end{equation}
  If~\(K\) is another groupoid and \(Y\) and~\(Z\) are partial
  equivalences from~\(K\) to~\(G\) and from~\(K\) to~\(H\),
  respectively, with \(\rg(Y) \subseteq \rg(X)\) and \(\rg(Z)
  \subseteq \s(X)\), then there are natural isomorphisms
  \begin{align*}
    \Map(X\times_H Z,Y) &\cong \Map(Z, X^*\times_G Y),\\
    \Map(Y,X\times_H Z) &\cong \Map(X^*\times_G Y,Z).
  \end{align*}
  Both map the subsets of bibundle isomorphisms onto each other.
\end{proposition}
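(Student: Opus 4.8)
The plan is to construct the two isomorphisms $X\times_H X^*\cong G^1_{\rg(X)}$ and $X^*\times_G X\cong H^1_{\s(X)}$ first, then deduce the commuting diagrams and the adjunction (``tensor--hom'') isomorphisms essentially formally, using only the composition and duality bookkeeping already set up in this section. By Lemma~\ref{lem:partial_equivalence} we may replace $G,H$ by $G_{\rg(X)}, H_{\s(X)}$ and thereby assume $X$ is a \emph{global} equivalence; the general statement then follows by the same reduction used in the proof of Lemma~\ref{lem:peq_composition_associative_unital}, since restriction to invariant open subsets is compatible with $\times_H$ and with the dual. So it suffices to produce, for a global equivalence $X$ from~$H$ to~$G$, a natural isomorphism $X\times_H X^*\cong G^1$ of $G,G$\nb-bibundles; the other one is the same statement applied to $X^*$, using $(X^*)^*=X$.

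For the first isomorphism I would write down the map $X\times_{\s,H^0,\rg_{X^*}} X^*\to G^1$ explicitly. A point of $X\times_H X^*$ is a class $[x,y]$ with $\s_X(x)=\s_X(y)$ (recall $\rg_{X^*}=\s_X$), subject to $(x\cdot h,y)\sim(x,h^{-1}\cdot y)$, i.e.\ $(x\cdot h,y)\sim(x,y\cdot h^{-1})$ after unwinding the $X^*$ action. Since~$X$ is a partial equivalence, condition~\ref{enum:Peq3} says $(g,x)\mapsto(x,g\cdot x)$ is a homeomorphism $G^1\times_{\s,G^0,\rg}X\to X\times_{\s,H^0,\s}X$; so for any pair $(x,y)$ with $\s_X(x)=\s_X(y)$ there is a \emph{unique} $g\in G^1$ with $g\cdot y=x$, and I set $X\times_H X^*\ni[x,y]\mapsto g$. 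Well-definedness on $\sim$-classes, continuity, and the inverse are all immediate from~\ref{enum:Peq3}; $G$\nb-bi\-equi\-variance is a direct check, and surjectivity onto $G^1$ uses that $\rg_X$ is surjective (here is where globality is used). Naturality in~$X$ is routine. This handles both displayed isomorphisms.

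For the two commuting diagrams in~\eqref{eq:XXX_to_X}, the top one asserts that the two obvious ways of collapsing $X\times_H X^*\times_G X$ — first $X\times_H X^*\cong G^1_{\rg(X)}$ then act on $X$, versus first $X^*\times_G X\cong H^1_{\s(X)}$ then act on $X$ — agree after the identifications $G^1_{\rg(X)}\times_G X\cong {}_{\rg(X)}|X=X$ and $X\times_H H^1_{\s(X)}\cong X|_{\s(X)}=X$ of~\eqref{eq:restrict_as_product}. Tracing $[x,y,z]$ (with $\s_X(x)=\s_X(y)$, $\rg_X(y)=\rg_X(z)$) through both routes: the first sends it to $g\cdot z$ where $g\cdot y=x$; the second sends it to $x\cdot h$ where $y\cdot h=z$ — wait, one must be careful with which anchor maps match, but both routes yield the \emph{unique} element of~$X$ that maps to $[x,y,z]$ under the evident quotient, so they coincide. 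The cleanest way to see this is to note that $X\times_H X^*\times_G X\cong X$ as a $G,H$\nb-bibundle (it is $X$ tensored with the unit on each side, using the just-proven isomorphisms), that both composites are bibundle \emph{maps} $X\to X$, and that by Proposition~\ref{pro:characterise_bibundle_isom} any bibundle endomorphism of a partial equivalence is the identity; hence the two composites agree, so the square commutes. The second diagram is the first applied to $X^*$.

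Finally, the adjunction isomorphisms follow from associativity and unitality of~$\times$ (Lemma~\ref{lem:peq_composition_associative_unital}) together with $X\times_H X^*\cong G^1_{\rg(X)}$ and $X^*\times_G X\cong H^1_{\s(X)}$, in the standard way a duality in a bicategory produces an adjunction. Given $f\colon X\times_H Z\to Y$ I produce $Z\cong G^1_{\rg(Z)}\times_G Z\hookrightarrow$ (using $\rg(Z)\subseteq\s(X)$ and Proposition~\ref{pro:characterise_bibundle_isom}) $X^*\times_G X\times_H Z\xrightarrow{1\otimes f} X^*\times_G Y$; conversely, given $g\colon Z\to X^*\times_G Y$, form $X\times_H Z\xrightarrow{1\otimes g}X\times_H X^*\times_G Y\cong G^1_{\rg(X)}\times_G Y\to Y$, where the last map uses $\rg(Y)\subseteq\rg(X)$. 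The hypotheses $\rg(Y)\subseteq\rg(X)$ and $\rg(Z)\subseteq\s(X)$ are exactly what make these composites land in the right restricted bibundles so that Proposition~\ref{pro:characterise_bibundle_isom} turns the relevant restriction maps into isomorphisms. That the two constructions are mutually inverse, and preserve isomorphisms, is the triangle-identity computation, which reduces via Proposition~\ref{pro:characterise_bibundle_isom} (bibundle endomorphisms are identities) to the commuting squares~\eqref{eq:XXX_to_X}. The second adjunction is proved symmetrically, or obtained from the first by dualising. I expect the main obstacle to be purely notational: keeping straight which of the four anchor maps is glued in each tensor factor and checking that the various open-invariant-subset inclusions compose correctly so that Proposition~\ref{pro:characterise_bibundle_isom} applies; the mathematical content is entirely in~\ref{enum:Peq3} plus the ``endomorphisms are identities'' principle.
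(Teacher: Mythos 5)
Your construction of the isomorphism $X\times_H X^*\cong G^1_{\rg(X)}$ via~\ref{enum:Peq3} (send $[x,y]$ to the unique $g$ with $x=g\cdot y$) is exactly the paper's, and your direct trace of $[x,y,z]$ through both routes of~\eqref{eq:XXX_to_X} does work: route one gives $g\cdot z$ with $x=g\cdot y$, route two gives $x\cdot h$ with $z=y\cdot h$, and these agree because $g\cdot(y\cdot h)=(g\cdot y)\cdot h$ by~\ref{enum:Peq2} (the paper instead observes that every class has a representative $(w,w,w)$ and both composites send $[w,w,w]\mapsto w$). But the principle you call the ``cleanest way'' --- that Proposition~\ref{pro:characterise_bibundle_isom} forces any bibundle endomorphism of a partial equivalence to be the identity --- is false. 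That proposition only says such an endomorphism is an \emph{automorphism}; Lemma~\ref{lem:centralisers_bibundles} identifies $\Map(X,X)$ with the group of local centralisers of~$G$ on~$\rg(X)$, which is nontrivial whenever, say, $G$ is a group with nontrivial centre (see also Remark~\ref{rem:PHomeo_versus_Peq}, where non-inner automorphisms of $\Z/3$ give distinct bibundle maps). This is harmless for the squares, since your direct computation stands on its own, but it is load-bearing where you write that the ``mutually inverse'' step of the adjunction ``reduces via Proposition~\ref{pro:characterise_bibundle_isom} (bibundle endomorphisms are identities) to the commuting squares'': as stated, that justification is broken.

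The repair is not hard, and it is what the paper does. The round trip $f\mapsto\hat f\mapsto\check{\hat f}$ equals $f$ composed with the endomorphism of $X\times_H Z$ obtained by tensoring the first square of~\eqref{eq:XXX_to_X} with $\Id_Z$; that square says this endomorphism \emph{is} the identity, so no appeal to rigidity of $\Map(X\times_H Z,X\times_H Z)$ is needed --- you must just carry out the naturality/associator bookkeeping rather than waving at it. (The paper phrases this as: $\varphi\mapsto\Id_{X^*}\times_G\varphi$ is injective because tensoring back with $X$ and using $X\times_H X^*\cong G^1_{\rg(X)}$ recovers $\varphi$, and the same argument shows $\psi\mapsto\Id_X\times_H\psi$ is an injective one-sided inverse, whence both are bijections.) One further point you should make explicit: before running this argument the paper first replaces $Y$ by $Y|_{\s(Z)}$, using Proposition~\ref{pro:characterise_bibundle_isom} and the identities $\s(X\times_H Z)=\s(Z)$, $\s(X^*\times_G Y)=\s(Y)$ forced by the hypotheses $\rg(Z)\subseteq\s(X)$ and $\rg(Y)\subseteq\rg(X)$, so that every map in sight is an isomorphism; this is also what makes the final claim that isomorphisms correspond to isomorphisms immediate.
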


\begin{proof}
  Lemma~\ref{lem:partial_equivalence} shows that~\(X\) is an
  equivalence from~\(H_{\s(X)}\) to~\(G_{\rg(X)}\).  Hence the usual
  theory of groupoid equivalence gives canonical isomorphisms
  \(X\times_H X^*\cong G_{\rg(X)}^1\) and \(X^*\times_G X\cong
  H_{\s(X)}^1\).  The first one maps the class of \((x_1,x_2)\) with
  \(\s(x_1)=\s(x_2)\) to the unique \(g\in G^1\) with \(x_1 = g\cdot
  x_2\).  In particular, it maps \([x,x]\mapsto 1_{\rg(x)}\).  The
  second one maps the class of \((x_1,x_2)\) with
  \(\rg(x_1)=\rg(x_2)\) to the unique \(h\in H^1\) with \(x_2 =
  x_1\cdot h\).  In particular, it maps \([x,x]\mapsto 1_{\s(x)}\).
  Then the composite isomorphisms \(X\times_H X^*\times_G X\to X\) and
  \(X^*\times_G X\times_H X^*\to X^*\) map \([x,x,x]\mapsto x\),
  respectively.  Since any element in \(X\times_H X^*\times_G X\) or
  \(X^*\times_G X\times_H X^*\) has a representative of the form
  \((x,x,x)\), we get the two commuting diagrams
  in~\eqref{eq:XXX_to_X}.

  The assumption \(\rg(Y) \subseteq \rg(X)\) implies \(\s(X^*\times_G
  Y) = \s(Y)\) because for any \(y\in Y\) there is \(x\in X^*\) with
  \((x,y)\in X^*\times_G Y\).  Similarly, \(\rg(Z) \subseteq \s(X)\)
  implies \(\s(X\times_H Z) = \s(Z)\).  By
  Proposition~\ref{pro:characterise_bibundle_isom}, a bibundle map
  \(X\times_H Z\to Y\) exists only if \(\s(Z)\subseteq \s(Y)\),
  and then it is an isomorphism onto \(Y|_{\s(Z)}\); and a bibundle map
  \(Z\to X^*\times_G Y\) exists only if \(\s(Z)\subseteq
  \s(Y)\), and then it is an isomorphism onto \(X^*\times_G Y|_{\s(Z)}\).
  Thus we may as well replace~\(Y\) by~\(Y|_{\s(Z)}\) to achieve
  \(\s(Y)=\s(Z)\); then all bibundle maps \(X\times_H Z\to Y\) or
  \(Z\to X^*\times_G Y\) are bibundle isomorphisms.  The second
  isomorphism reduces in a similar way to the case where also
  \(\s(Y)=\s(Z)\) and where we are dealing only with bibundle
  isomorphisms.

  A bibundle map \(\varphi\colon X\times_H Z \to Y\) induces
  \(\Id_{X^*}\times_G\varphi\colon X^*\times_G X\times_H Z\to
  X^*\times_G Z\); we compose this with the natural isomorphism
  \[
  X^*\times_G X \times_H Z \cong H^1_{\s(X)}\times_H Z \cong {}_{\s(X)}|Z
  = Z
  \]
  to get a bibundle map \(Z\to X^*\times_G Y\); here we used
  \(\s(X)\supseteq \rg(Z)\).  We claim that this construction gives
  the desired bijection between \(\Map(X\times_H Z,Y)\) and \(\Map(Z,
  X^*\times_G Y)\).  Since composing with an isomorphism is certainly
  a bijection, it remains to show that
  \[
  \Map(X\times_H Z,Y) \to
  \Map(X^*\times_G X\times_H Z, X^*\times_G Y),\quad
  \varphi\mapsto \Id_{X^*}\times_G\varphi,
  \]
  is bijective.  Since \(X\times_H X^*\cong G^1_{\rg(X)}\) and
  \(\rg(X)\supseteq \rg(Y)\), we have natural isomorphisms \(X\times_H
  X^*\times_G Y\cong Y\) and \(X\times_H X^*\times_G X\times_H Z\cong
  X\times_H Z\).  Naturality means that they intertwine
  \(\varphi\mapsto \Id_{X\times_H X^*}\times_G \varphi\)
  and~\(\varphi\).  Since \(\Id_{X\times_H X^*}\times_G \varphi =
  \Id_X\times_H \Id_{X^*}\times_G \varphi\), we see that
  \(\varphi\mapsto \Id_{X^*}\times_G \varphi\) is injective and has
  \(\psi\mapsto \Id_X\times_H \psi\) for \(\psi\colon Z\to X^*\times_G
  Y\) as a one-sided inverse.  The same argument also shows that
  \(\psi\mapsto \Id_X\times_H \psi\) is injective, so both
  constructions are bijective.
\end{proof}

Applying duality, we also get bijections \(\Map(Z^*\times_H X^*,Y^*)
\cong \Map(Z^*, Y^*\times_G X)\) and \(\Map(Y^*,Z^*\times_H X^*) \cong
\Map(Y^*\times_G X, Z^*)\) under the same hypotheses.

The canonical isomorphisms
\begin{equation}
  \label{eq:dual_like_inverse_sg}
  X\times_H X^*\times_G X \cong X,\qquad
  X^*\times_G X\times_H X^* \cong X^*
\end{equation}
from Proposition~\ref{pro:properties_of_dual} characterise~\(X^*\)
uniquely in the following sense:

\begin{proposition}
  \label{pro:dual_pe_unique}
  Let \(X\) and~\(Y\) be partial equivalences from~\(H\) to~\(G\) and
  from~\(G\) to~\(H\), respectively.  If there are bibundle
  isomorphisms
  \[
  X\times_H Y\times_G X \cong X,\qquad
  Y\times_H X\times_G Y \cong Y,
  \]
  then there is a unique bibundle isomorphism \(X^*\cong Y\) such that
  the composite map
  \begin{equation}
    \label{eq:composite_determines_dual}
    X\cong X\times_H X^*\times_G X \cong X\times_H Y\times_G X \cong X
  \end{equation}
  is the identity map.
\end{proposition}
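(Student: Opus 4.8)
The plan is to reduce to the case where $X$, $X^*$ and~$Y$ are \emph{global} equivalences --- where $X^*$ is a genuine inverse of~$X$ and the composites $X\times_H X^*\cong G^1$, $X^*\times_G X\cong H^1$ are the unit equivalences --- and then to identify the set $\Map(X^*,Y)$ of bibundle maps with $\Aut(X)$ by tensoring with~$X$ on both sides and cancelling, so that the required~$u$ is the unique preimage of~$\Id_X$.

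First I would pin down the anchor images. Fix bibundle isomorphisms $\alpha\colon X\times_H Y\times_G X\congto X$ and $\beta\colon Y\times_G X\times_H Y\congto Y$. By Proposition~\ref{pro:characterise_bibundle_isom}, $\alpha$~forces $\rg(X\times_H Y\times_G X)=\rg(X)$ and $\s(X\times_H Y\times_G X)=\s(X)$. Since~$X$ is an equivalence from~$H_{\s(X)}$ to~$G_{\rg(X)}$ (Lemma~\ref{lem:partial_equivalence}), the fibre $\rg_X^{-1}(g)$ is a single $H$\nb-orbit for each $g\in\rg(X)$, mapped by~$\s_X$ onto a single $H$\nb-orbit inside~$\s(X)$; a point of $X\times_H Y\times_G X$ can lie over~$g$ only if this orbit meets the $H$\nb-invariant set~$\rg(Y)$, hence only if it is contained in~$\rg(Y)$. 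Letting~$g$ run over $\rg(X)$ gives $\s(X)\subseteq\rg(Y)$, and the symmetric computation from $\s(X\times_H Y\times_G X)=\s(X)$ gives $\rg(X)\subseteq\s(Y)$. Running the same argument for~$\beta$ gives $\rg(Y)\subseteq\s(X)$ and $\s(Y)\subseteq\rg(X)$, so $\rg(Y)=\s(X)$ and $\s(Y)=\rg(X)$. By Lemma~\ref{lem:partial_equivalence} I may then pass to $G_{\rg(X)}$ and~$H_{\s(X)}$ and assume that $X$, $X^*$ and~$Y$ are global equivalences.

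Next I would consider the map
\[
c\colon\Map(X^*,Y)\to\Map(X,X),\qquad
c(u)\defeq\alpha\circ(\Id_X\times_H u\times_G\Id_X)\circ\kappa ,
\]
where $\kappa\colon X\congto X\times_H X^*\times_G X$ is the canonical isomorphism from Proposition~\ref{pro:properties_of_dual} (the first isomorphism in~\eqref{eq:composite_determines_dual}). Since $X^*$ and~$Y$ now have equal anchor images, $\Map(X^*,Y)$ consists of isomorphisms and $\Map(X,X)=\Aut(X)\ni\Id_X$ (Proposition~\ref{pro:characterise_bibundle_isom}). As~$c$ is the composite of $u\mapsto\Id_X\times_H u\times_G\Id_X$ with the bijection $\varphi\mapsto\alpha\circ\varphi\circ\kappa$, it is enough to check that $u\mapsto\Id_X\times_H u\times_G\Id_X$ is a bijection onto $\Map(X\times_H X^*\times_G X, X\times_H Y\times_G X)$, with inverse sending~$v$ to the composite
\[
X^*\congto X^*\times_G X\times_H X^*
\xrightarrow{\Id_{X^*}\times_G v\times_H\Id_{X^*}}
X^*\times_G X\times_H Y\times_G X\times_H X^*
\congto Y ,
\]
the outer maps coming from $X^*\times_G X\cong H^1$, $X\times_H X^*\cong G^1$ and unitality (Lemma~\ref{lem:peq_composition_associative_unital}); that these two constructions are mutually inverse is exactly the coherence of these canonical isomorphisms encoded in the cancellation statements of Proposition~\ref{pro:properties_of_dual}. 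Once~$c$ is known to be a bijection, $\Map(X^*,Y)$ is non-empty and contains exactly one~$u$ with $c(u)=\Id_X$; as $c(u)$ is by definition the composite~\eqref{eq:composite_determines_dual}, this~$u$ is the claimed unique bibundle isomorphism.

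The main obstacle I anticipate is the bookkeeping in the second and third paragraphs: verifying carefully that a bibundle isomorphism $X\times_H Y\times_G X\cong X$ really forces all four anchor inclusions through the orbit-wise argument (this is the only place both hypotheses~$\alpha$ and~$\beta$ enter), and then checking that all the unit and associativity isomorphisms invoked in the last display fit together coherently. Both are routine given Propositions~\ref{pro:characterise_bibundle_isom} and~\ref{pro:properties_of_dual}, but this is where the actual work sits.
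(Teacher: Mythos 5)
Your proposal is correct and follows essentially the same route as the paper: first pin down \(\rg(Y)=\s(X)\) and \(\s(Y)=\rg(X)\) from the two hypotheses, then use the adjunction-type bijections of Proposition~\ref{pro:properties_of_dual} to identify \(\Map(X^*,Y)\) with \(\Map(X,X)\) via the composite~\eqref{eq:composite_determines_dual}, so that the desired isomorphism is the unique preimage of \(\Id_X\). The only cosmetic differences are that you prove the anchor-image equalities by a direct orbitwise argument instead of tensoring the given isomorphism with \(X^*\) on both sides to get \(X^*\cong {}_{\s(X)}|Y|_{\rg(X)}\), and that you extract existence from surjectivity of your bijection \(c\) rather than from the paper's explicit construction of the isomorphism followed by a diagram chase with~\eqref{eq:XXX_to_X}.
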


\begin{proof}
  When we multiply the inverse of the isomorphism \(X\times_H
  Y\times_G X \cong X\) on both sides with~\(X^*\) and
  use~\eqref{eq:restrict_as_product}, we get an isomorphism
  \begin{multline*}
    X^*\cong X^*\times_G X\times_H X^*
    \cong X^*\times_G X\times_H Y\times_G X\times_H X^*
    \\\cong H_{\s(X)}^1 \times_H Y\times_G G_{\rg(X)}^1
    \cong {}_{\s(X)}|Y|_{\rg(X)}.
  \end{multline*}
  This implies \(\s(X)=\rg(X^*)\subseteq \rg(Y)\) and
  \(\rg(X)=\s(X^*)\subseteq \s(Y)\) by
  Proposition~\ref{pro:characterise_bibundle_isom}.  Exchanging \(X\)
  and~\(Y\), the isomorphism \(Y\times_H X\times_G Y \cong Y\) gives
  \(\s(Y)\subseteq \rg(X)\) and \(\rg(Y)\subseteq \s(X)\).  Hence
  \(\rg(Y)=\s(X)\) and \(\s(Y)=\rg(X)\), so \({}_{\s(X)}|Y|_{\rg(X)} =
  Y\).  This gives an isomorphism \(\alpha\colon X^*\congto Y\).

  A diagram chase using the commuting diagrams in~\eqref{eq:XXX_to_X}
  shows that the composite of the map \(X\times_H X^*\times_G X\to
  X\times_H Y\times_G X\) induced by the isomorphism~\(\alpha\) and
  the given isomorphism \(X\times_H Y\times_G X\to Y\) (which we used
  to construct~\(\alpha\)) is the canonical map \(X\times_H
  X^*\times_G X\to X\) as in~\eqref{eq:dual_like_inverse_sg}.  Hence
  the composite in~\eqref{eq:composite_determines_dual} is the
  identity map for the isomorphism~\(\alpha\).

  The isomorphisms in Proposition~\ref{pro:properties_of_dual} give a
  canonical bijection
  \begin{multline*}
    \Map(X^*,Y)
    \cong \Map(X^*\times_G X \times_H X^*,Y)
    \cong \Map(X\times_H X^*,X\times_H Y)
    \\\cong \Map(X,X \times_H Y \times_G X)
    \cong \Map(X,X).
  \end{multline*}
  Inspection shows that it maps an isomorphism \(X^*\congto Y\) to the
  composite map in~\eqref{eq:composite_determines_dual}.  Hence there
  is only one isomorphism \(X^*\congto Y\) for which the composite map
  in~\eqref{eq:composite_determines_dual} is the identity map.
\end{proof}

\begin{proposition}
  \label{pro:idempotents}
  Let~\(X\) be a partial equivalence from~\(G\) to itself and let
  \(\mu\colon X\times_G X\to X\) be a bibundle isomorphism.  Then
  there is a unique isomorphism \(\varphi\colon X\congto G^1_U\) for
  an open \(G\)\nb-invariant subset \(U\subseteq G^0\) such that the
  following diagram commutes:
  \begin{equation}
    \label{eq:idempotent_mu_diagram}
    \begin{tikzpicture}[baseline=(current bounding box.west)]
      \matrix (m) [cd] {
        X\times_G X& X\\
        G^1_U\times_G G^1_U&G^1_U,&\mu_0(g_1,g_2)=g_1\cdot g_2.\\
      };
      \draw[cdar] (m-1-1) -- node {\(\mu\)} (m-1-2);
      \draw[cdar] (m-2-1) -- node {\(\mu_0\)} (m-2-2);
      \draw[cdar] (m-1-1) -- node[swap] {\(\varphi\times_G\varphi\)} (m-2-1);
      \draw[cdar] (m-1-2) -- node {\(\varphi\)} (m-2-2);
    \end{tikzpicture}
  \end{equation}
  Hence \(\rg(X)=\s(X)\) and~\(\mu\) is associative.
\end{proposition}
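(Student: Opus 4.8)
The plan is to reduce~\(X\) to the identity equivalence on an open invariant set. First I would apply Proposition~\ref{pro:dual_pe_unique} with \(Y\defeq X\): since~\(\mu\) is an isomorphism, \(\mu_L\defeq\mu\circ(\mu\times_G\Id_X)\colon X\times_G X\times_G X\to X\) is a bibundle isomorphism, and it serves for both hypotheses of that proposition. We therefore obtain a canonical bibundle isomorphism \(\beta\colon X^*\congto X\), characterised by the requirement that
\[
X\xrightarrow{\ \cong\ }X\times_G X^*\times_G X\xrightarrow{\ \Id_X\times_G\beta\times_G\Id_X\ }X\times_G X\times_G X\xrightarrow{\ \mu_L\ }X
\]
be the identity map; since the first isomorphism sends \(x\mapsto[x,x,x]\), this reads \(\mu\bigl(\mu(x,\beta(x)),x\bigr)=x\) for all \(x\in X\). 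In particular \(X\cong X^*\), so Proposition~\ref{pro:characterise_bibundle_isom} forces \(\rg(X)=\rg(X^*)=\s(X)\); write~\(U\) for this open invariant subset of~\(G^0\), so that~\(X\) is an equivalence from~\(G_U\) to~\(G_U\) by Lemma~\ref{lem:partial_equivalence}.

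Next I would define~\(\varphi\) to be the inverse of the bibundle isomorphism
\[
\nu\colon G^1_U\xrightarrow{\ \cong\ }X\times_G X^*\xrightarrow{\ \Id_X\times_G\beta\ }X\times_G X\xrightarrow{\ \mu\ }X,
\]
where the first map is the inverse of the canonical isomorphism \(X\times_G X^*\congto G^1_U\) of Proposition~\ref{pro:properties_of_dual}; explicitly, \(\nu(g)=\mu(g\cdot\xi,\beta(\xi))\) for any \(\xi\in X\) with \(\rg(\xi)=\s(g)\). The heart of the proof is to check that~\(\nu\) is multiplicative, that is, \(\mu(\nu(g_1),\nu(g_2))=\nu(g_1g_2)\), which is precisely the commutativity of diagram~\eqref{eq:idempotent_mu_diagram}. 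For this, fixing \(\xi\) with \(\rg(\xi)=\s(g_2)\) and putting \(s\defeq\mu(\xi,\beta(\xi))\), one first checks from the bibundle-map properties of~\(\mu\) and~\(\beta\) that \(\rg(s)=\s(s)=\rg(\xi)\), and then, using the identity \(\mu(\mu(x,\beta(x)),x)=x\) together with the fact that \(\rg^{-1}(\rg(\xi))\) is a single right \(G_U\)-orbit, that \(\mu(s,y)=y\) for \emph{every} \(y\in X\) with \(\rg(y)=\rg(\xi)\). Combining this with the equivariance of~\(\mu\), the relations \(\beta(g\cdot\xi)=\beta(\xi)\cdot g^{-1}\) and \(\beta(\xi\cdot g)=g^{-1}\cdot\beta(\xi)\), and the defining relation of \(X\times_G X\) (choosing the representative~\(\xi\) for~\(g_2\) and \(g_2\cdot\xi\) for~\(g_1\)), one computes \(\mu(\nu(g_1),\nu(g_2))=(g_1g_2)\cdot s=\nu(g_1g_2)\).

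For uniqueness, a second such isomorphism~\(\varphi'\) differs from~\(\varphi\) by a bibundle automorphism \(u\defeq\varphi'\circ\varphi^{-1}\) of the identity equivalence~\(G^1_U\) satisfying \(u(g_1g_2)=u(g_1)u(g_2)\); by Example~\ref{exa:iso_to_partial_equivalence} and~\eqref{eq:equivalent_isomorphisms} (with \(f=\varphi=\Id\)), such~\(u\) has the form \(u(g)=g\cdot\sigma(\s g)\) for a continuous section~\(\sigma\) of the isotropy bundle over~\(U\) with \(\sigma(\rg g)\cdot g=g\cdot\sigma(\s g)\), and multiplicativity forces \(\sigma(\s g)^2=\sigma(\s g)\), hence \(\sigma\equiv1\) and \(u=\Id\). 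Finally, \(\rg(X)=\s(X)=U\) by the first paragraph, and~\(\mu\) is associative because the commuting diagram identifies it, via~\(\varphi\), with the multiplication map~\(\mu_0\) of~\(G^1_U\). The main obstacle is the multiplicativity of~\(\nu\): the obvious candidate for~\(\varphi\), built from \(\mu\times_G\Id_{X^*}\) and the canonical unit isomorphisms, is \emph{not} multiplicative in general, so it is essential to use the canonical~\(\beta\) produced by Proposition~\ref{pro:dual_pe_unique}; the verification itself is elementary but requires careful bookkeeping of the left and right actions and a judicious choice of representatives.
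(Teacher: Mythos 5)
Your argument is correct, and its first half coincides with the paper's: both apply Proposition~\ref{pro:dual_pe_unique} with \(Y=X\) and the isomorphism \(\mu\circ(\mu\times_G\Id_X)\) to get the normalised \(\beta\colon X^*\congto X\) (hence \(\rg(X)=\s(X)\)), and your \(\varphi=\nu^{-1}\) is exactly the paper's composite \(X\cong X\times_G X\cong X\times_G X^*\cong G^1_{\rg(X)}\). Where you diverge is in the verification. The paper observes that, since \(\varphi\) is a bibundle map, commutativity of \eqref{eq:idempotent_mu_diagram} is equivalent to \(\mu\) being the composite of \(\varphi\times_G\Id_X\) with the left multiplication map \(G^1_{\rg(X)}\times_G X\cong X\), and that the assignment \(\varphi\mapsto(\text{left multiplication})\circ(\varphi\times_G\Id_X)\) is precisely one of the natural bijections \(\Map(X,G^1_{\s(X)})\cong\Map(X\times_G X,X)\) of Proposition~\ref{pro:properties_of_dual}; existence and uniqueness then come simultaneously and without touching elements. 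You instead prove commutativity by the explicit computation \(\nu(g)=g\cdot s\), \(\mu(s,y)=y\) on the fibre \(\rg^{-1}(\rg(\xi))\) (using the normalisation \(\mu(\mu(x,\beta(x)),x)=x\) and that this fibre is a single free right orbit), and you handle uniqueness separately by identifying \(\varphi'\circ\varphi^{-1}\) as a multiplicative automorphism of \(G^1_U\), which is trivial by Example~\ref{exa:iso_to_partial_equivalence} (equivalently, by the local-centraliser description in Lemma~\ref{lem:centralisers_bibundles}); both of these steps check out. The trade-off: your route is more elementary and makes the map \(\varphi\) completely concrete, but it is element-wise, whereas the paper's abstract argument is deliberately phrased so that it transfers verbatim to Hilbert bimodules (Proposition~\ref{pro:idempotent_Hilbert_bimodules} is proved ``exactly like'' this one); to adapt your computation there one would have to redo it with elementary tensors and density arguments.
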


\begin{proof}
  The isomorphism~\(\mu\) induces an isomorphism
  \[
  X\times_G X\times_G X
  \xrightarrow{\mu\times_G\Id_X} X\times_G X
  \xrightarrow{\mu} X
  \]
  Hence \(Y=X\) satisfies the two conditions in
  Proposition~\ref{pro:properties_of_dual} that ensure \(X=Y\cong
  X^*\).  This gives an isomorphism \(\varphi\colon X\cong X\times_G X
  \cong X\times_G X^*\cong G^1_{\rg(X)}\).  Since~\(\varphi\) is a
  bibundle map, the diagram~\eqref{eq:idempotent_mu_diagram} commutes
  if and only if~\(\mu\) is the composite map
  \[
  X\times_G X \xrightarrow{\varphi\times_G \Id_X}
  G^1_{\rg(X)} \times_G X \cong X,
  \]
  where the map \(G^1_{\rg(X)} \times_G X \cong X\) is the left
  multiplication map, \([g,x]\mapsto g\cdot x\).  Sending an
  isomorphism \(\varphi\colon X\to G^1_{\rg(X)} \cong X\times_G X^*\)
  to this composite map is one of the bijections in
  Proposition~\ref{pro:properties_of_dual}, namely, the first one for
  \(X=Y=Z\):
  \[
  \Map(X,G^1_{\s(X)}) \cong \Map(X,X^*\times_G X)
  \cong \Map(X\times_G X, X).
  \]
  Hence there is exactly one isomorphism~\(\varphi\) that corresponds
  under this bijection to~\(\mu\).
\end{proof}

Proposition~\ref{pro:properties_of_dual} implies that isomorphism
classes of partial equivalences from~\(G\) to itself form an inverse
semigroup~\(\widetilde{\Peq}(G)\).  The idempotents in this inverse
semigroup are in bijection with \(G\)\nb-invariant open subsets
of~\(G^0\) by Proposition~\ref{pro:idempotents}.  These are, in turn,
in bijection with open subsets of the orbit space~\(G^0/G\) by the
definition of the quotient topology on~\(G^0/G\).  These also
correspond to the idempotents of the inverse
semigroup~\(\PHomeo(G^0/G)\) of partial homeomorphisms of the
topological space~\(G^0/G\).

A partial equivalence~\(X\) from~\(H\) to~\(G\) induces a partial
homeomorphism
\[
X_*\colon H^0/H\subseteq \s(X)\to \rg(X)\subseteq G^0/G
\]
by \(X_*([h]) = [g]\) if there is \(x\in X\) with \(\s(x)\in[h]\),
\(\rg(x)\in[g]\).  If~\(Y\) is another partial equivalence from~\(K\)
to~\(H\), then \((X\times_H Y)_* = X_*\circ Y_*\) by definition.  This
gives a canonical homomorphism of inverse semigroups
\[
\widetilde{\Peq}(G)\to \PHomeo(G^0/G).
\]

\begin{remark}
  \label{rem:PHomeo_versus_Peq}
  The homomorphism \(\widetilde{\Peq}(G)\to \PHomeo(G^0/G)\) is
  neither injective nor surjective in general, although it is always
  an isomorphism on the semilattice of idempotents.  Consider, for
  instance, the disjoint union \(G=\Z/3\sqcup \{\textup{pt}\}\).  This
  groupoid is a group bundle, and~\(G^0/G\) has two points.  The
  partial homeomorphism that maps one point to the other does not lift
  to a partial equivalence because the stabilisers are not the same
  and equivalences must preserve the stabiliser groups.  The
  group~\(\Z/3\) has non-inner automorphisms, so there are
  non-isomorphic partial equivalences of~\(G\) defined on~\(\Z/3\)
  that induce the same partial homeomorphism on~\(G^0/G\).
\end{remark}

In our definition of an inverse semigroup action
(see Sections~\ref{sec:ActionsGroupoidsByIso} and~\ref{sec:S_acts_on_Cstar} below), certain isomorphisms
of partial equivalences are a crucial part of the data.  We could not
construct transformation groupoids and Fell bundles without them.  If
we identify isomorphic partial equivalences as above, then we can no
longer talk about two isomorphisms of partial equivalences being
equal.  The correct way to take into account isomorphisms of partial
equivalences is through a bicategory (see \cites{Benabou:Bicategories,
  Leinster:Basic_Bicategories, Buss-Meyer-Zhu:Higher_twisted}).  The
following remarks are intended for readers familiar with bicategories.

Our bicategory has topological groupoids as objects and partial
equivalences as arrows.  Let \(G\) and~\(H\) be topological groupoids
and let \(X_1\) and~\(X_2\) be partial equivalences from~\(H\)
to~\(G\).  As \(2\)\nb-arrows \(X_1\Rightarrow X_2\), we take all
\(G,H\)-bibundle isomorphisms \(X_1\to X_2\), so all \(2\)\nb-arrows
are invertible.  The vertical product of \(2\)\nb-arrows is the
composition of bibundle maps.  Unit \(2\)\nb-arrows are identity maps
on partial equivalences.  The composition of arrows is~\(\times_H\).
The unit arrow on a topological groupoid~\(G\) is~\(G^1\) with the
standard bibundle structure.
Lemma~\ref{lem:peq_composition_associative_unital} provides invertible
\(2\)\nb-arrows
\[
(X\times_H Y)\times_K Z\Rightarrow X\times_H (Y\times_K Z),\qquad
G^1\times_G X\Rightarrow X\Leftarrow X\times_H H^1,
\]
which we take as associator and left and right unit transformations.
Let \(X_1,X_2\) be partial equivalences from~\(H\) to~\(G\) and let
\(Y_1,Y_2\) be partial equivalences from~\(K\) to~\(H\).  The
horizontal product of two bibundle maps \(f\colon X_1\to X_2\) and
\(g\colon Y_1\to Y_2\) is \(f\times_H g\colon X_1\times_H Y_1\to
X_2\times_H Y_2\).

\begin{theorem}
  \label{the:bicategory_peq}
  The data above defines a bicategory~\(\Peq\).
\end{theorem}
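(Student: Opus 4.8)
The data listed above is a candidate bicategory, and all that remains is to verify the bicategory axioms; most of the work has already been done in the preceding lemmas. First, for each ordered pair of topological groupoids $(H,G)$ the partial equivalences from $H$ to $G$ and the $G,H$-bibundle isomorphisms between them form a category: vertical composition is ordinary composition of maps, which is associative and has the identity maps as units, and every such map is invertible, so this Hom-category is in fact a groupoid. Next, horizontal composition must be a bifunctor on Hom-categories; on objects it is $\times_H$, and on a pair of bibundle isomorphisms $f\colon X_1\to X_2$, $g\colon Y_1\to Y_2$ it is $f\times_H g$. The identities $\Id_X\times_H\Id_Y=\Id_{X\times_H Y}$ and $(f'\circ f)\times_H(g'\circ g)=(f'\times_H g')\circ(f\times_H g)$ hold on the nose because $X\times_H Y$ is a quotient of $X\times_{\s,H^0,\rg}Y$ and $f\times_H g$ is computed componentwise on representatives; this also encodes the interchange law.

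For the associator and the left and right unit transformations we take the canonical bibundle isomorphisms
\[
(X\times_H Y)\times_K Z\cong X\times_H(Y\times_K Z),\qquad
G^1\times_G X\cong X\cong X\times_H H^1
\]
supplied by Lemma~\ref{lem:peq_composition_associative_unital}. Each of these is given on representatives by an evident reshuffling of the coordinates, so naturality -- compatibility of the associator with the horizontal composites $(f\times_H g)\times_K h$ and $f\times_H(g\times_K h)$, and of the unitors with $\Id_{G^1}\times_G f$, with $f\times_H\Id_{H^1}$, and with $f$ -- follows once more by evaluating on representatives. Hence the associator and unitors are natural isomorphisms between the appropriate composite functors.

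The only axioms left are the coherence conditions: the pentagon identity for the associator and the triangle identity relating the associator to the two unitors. These are equalities of bibundle isomorphisms between products of three or four partial equivalences, and the cleanest way to establish them is to reduce to the case of global equivalences. By Lemma~\ref{lem:partial_equivalence}, a partial equivalence $X$ from $H$ to $G$ is an equivalence between the open subgroupoids $H_{\s(X)}$ and $G_{\rg(X)}$; and, as observed in the proof of Lemma~\ref{lem:peq_composition_associative_unital}, the composition $\times_H$ and all the structure isomorphisms are compatible with the restriction operations ${}_U|(X\times_H Y)|_V\cong({}_U|X)\times_H(Y|_V)$. Hence the pentagon and triangle for $\Peq$ follow from the corresponding identities for topological groupoids and global equivalences, which in turn come down to a direct computation on representatives exactly as in the classical Hausdorff setting of \cite{Meyer-Zhu:Groupoids}. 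I expect this last step to be the main obstacle -- not because it is deep, but because it is the one point where one must check that the two prescribed ways of re-associating or of absorbing a unit genuinely produce the \emph{same} coordinate shuffle; the reduction to the global case (equivalently, the explicit check on triples of representatives) is what makes this transparent.
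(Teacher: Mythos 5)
Your proposal is correct and follows essentially the same route as the paper, which likewise checks that the Hom-categories and horizontal composition are well defined, takes the associator and unitors from Lemma~\ref{lem:peq_composition_associative_unital}, and declares the naturality, pentagon and triangle identities to be routine verifications (on representatives). Your extra remarks on reducing the coherence conditions to the global case via Lemma~\ref{lem:partial_equivalence} and the restriction isomorphisms are a reasonable way to organise that routine check, but they do not constitute a different argument.
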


\begin{proof}
  It is routine to check that partial equivalences from~\(H\) to~\(G\)
  with bibundle maps between them form a category \(\Cat(G,H)\) for
  the vertical product of bibundle maps, and that the composition of
  partial equivalences with the horizontal product of bibundle maps is
  a functor \(\Cat(G,H)\times\Cat(H,K)\to\Cat(G,K)\).  The associator
  and both unit transformations are natural isomorphisms of functors;
  the associator is clearly compatible with unit transformations and
  makes the usual pentagon commute, see
  \cite{Leinster:Basic_Bicategories}*{p.~2}.
\end{proof}

\begin{remark}
  \label{rem:groupoid_action_bicategory}
  We still get a bicategory if we allow all bibundle maps as
  \(2\)\nb-arrows.  We restrict to invertible \(2\)\nb-arrows to get
  the correct notion of inverse semigroup actions below.
\end{remark}

An arrow \(f\colon x\to y\) in a bicategory is called an
\emph{equivalence} if there are an arrow \(g\colon y\to x\) and
invertible \(2\)\nb-arrows \(g\circ f\Rightarrow \Id_x\) and \(f\circ
g\Rightarrow \Id_y\).  The equivalences in~\(\Peq\) are precisely the
global bibundle equivalences.

The duality \(X\mapsto X^*\) with the canonical flip maps \((X\times_H
Y)^* \congto Y^*\times_H X^*\) gives a functor \(I\colon
\Peq\to\Peq^\op\) with \(I^2=\Id_{\Peq}\).  It seems useful to
formalise the properties of this functor and look for examples in more
general bicategories.  But we shall not go into this question here.

\section{Inverse semigroup actions on groupoids}
\label{sec:ActionsGroupoidsByIso}

We give two equivalent definitions for actions of inverse semigroups
on topological groupoids by partial equivalences.  The first is
exactly what it promises to be.  The second, more elementary, definition does
not mention groupoids or partial equivalences.

Let~\(S\) be an inverse semigroup with unit~\(1\).  Let~\(G\) be a
topological groupoid.

\begin{definition}
  \label{def:S_act_groupoid}
  An \emph{action} of~\(S\) on~\(G\) \emph{by partial equivalences}
  consists of
  \begin{itemize}
  \item partial equivalences~\(X_t\) from~\(G\) to~\(G\) for
    \(t\in S\);

  \item bibundle isomorphisms \(\mu_{t,u}\colon X_t \times_G X_u
    \congto X_{tu}\) for \(t,u\in S\);
  \end{itemize}
  satisfying
  \begin{enumerate}[label=\textup{(A\arabic*)}]
  \item \label{enum:APE2} \(X_1\) is the identity equivalence~\(G^1\)
    on~\(G\);
  \item \label{enum:APE3} \(\mu_{t,1}\colon X_t\times_G G^1\congto
    X_t\) and \(\mu_{1,u}\colon G^1\times_G X_u\congto X_u\) are the
    canonical isomorphisms, that is, the left and right
    \(G\)\nb-actions, for all \(t,u\in S\);
  \item \label{enum:APE4} associativity: for all \(t,u,v\in S\), the
    following diagram commutes:
    \[
    \begin{tikzpicture}[baseline=(current bounding box.west)]
      \node (1) at (0,1) {\((X_t\times_G X_u) \times_G X_v\)};
      \node (1a) at (0,0) {\(X_t\times_G (X_u \times_G X_v)\)};
      \node (2) at (5,1) {\(X_{tu} \times_G X_v\)};
      \node (3) at (5,0) {\(X_t\times_G X_{uv}\)};
      \node (4) at (7,.5) {\(X_{tuv}\)};
      \draw[<->] (1) -- node[swap] {ass} (1a);
      \draw[cdar] (1) -- node {\(\mu_{t,u}\times_G \Id_{X_v}\)} (2);
      \draw[cdar] (1a) -- node[swap] {\(\Id_{X_t}\times_G\mu_{u,v}\)} (3);
      \draw[cdar] (3.east) -- node[swap] {\(\mu_{t,uv}\)} (4);
      \draw[cdar] (2.east) -- node {\(\mu_{tu,v}\)} (4);
    \end{tikzpicture}
    \]
  \end{enumerate}
  If~\(S\) has a zero object~\(0\), then we may also ask
  \(X_0=\emptyset\).
\end{definition}

\begin{remark}
  \label{rem:adjoin_01}
  Let~\(S\) be an inverse semigroup possibly without~\(1\).  We may
  add a unit \(1\) formally and extend the multiplication by \(1\cdot s =
  s=s\cdot 1\) for all \(s\in S\cup\{1\}\).  If partial
  equivalences~\((X_t)_{t\in S}\) and bibundle
  isomorphisms~\((\mu_{t,u})_{t,u\in S}\) are given satisfying
  associativity for all \(t,u,v\in S\), then we may extend this
  uniquely to an action of \(S\cup\{1\}\): we put \(X_1\defeq G^1\)
  and let \(\mu_{t,1}\) and~\(\mu_{1,u}\) be the right and left
  \(G\)\nb-action, respectively.  The associativity condition is
  trivial if one of \(t,u,v\) is~\(1\), so associativity holds for all
  \(t,u,v\in S\cup\{1\}\).  As a result, an action of \(S\cup\{1\}\)
  by partial equivalences is the same as \((X_t)_{t\in S}\) and
  \((\mu_{t,u})_{t,u\in S}\) satisfying only
  Condition~\ref{enum:APE4}.

  Similarly, we may add a zero~\(0\) to~\(S\) and extend the
  multiplication by \(0\cdot s=0=s\cdot 0\) for all \(s\in
  S\cup\{0\}\).  We extend an \(S\)\nb-action by \(X_0\defeq
  \emptyset\), so that \(X_0\times_G X_t=\emptyset=X_t\times_G X_0\),
  leaving no choice for the maps \(\mu_{t,0},\mu_{0,u}\colon
  \emptyset\to\emptyset\).  This gives an action of \(S\cup\{0\}\)
  with \(X_0=\emptyset\).

  If \(0,1\in S\) and we ask no conditions on \(X_0\) and~\(X_1\),
  then \(\rg(X_t),\s(X_t)\subseteq \rg(X_1)=\s(X_1)\) for all \(t\in
  S\), and~\(X_t\) restricted to \(\rg(X_0)=\s(X_0)\) is the trivial
  action where all~\(X_t\) act by the identity equivalence.  Hence all
  the action is on the locally closed, invariant subset
  \(\rg(X_1)\setminus \rg(X_0)\subseteq G^0\).  The conditions on
  \(X_0\) and~\(X_1\) merely rule out such degeneracies.
\end{remark}

\begin{remark}
  \label{rem:bicategory_action}
  An inverse semigroup may be viewed as a special kind of category
  with only one object, which is also a very special kind of
  bicategory.  An inverse semigroup action by partial equivalences is
  exactly the same as a functor from this category to the
  bicategory~\(\Peq\) of partial equivalences
  (see~\cite{Leinster:Basic_Bicategories}).
\end{remark}

\begin{lemma}
  \label{lem:domains_invsg-action}
  For an inverse semigroup action \((X_t,\mu_{t,u})\), we have
  \(\rg(X_t)=\rg(X_{tt^*}) = \s(X_{tt^*}) = \s(X_{t^*})\) and
  \(\s(X_t)=\s(X_{t^*t})=\rg(X_{t^*t}) = \rg(X_{t^*})\) for each
  \(t\in S\).
\end{lemma}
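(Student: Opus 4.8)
The plan is to read off all four equalities from the structural isomorphisms of the action together with Proposition~\ref{pro:properties_of_dual} and Lemma~\ref{lem:partial_equivalence}. The key tool is that for any partial equivalence, the images of the two anchor maps are determined by the ``diagonal'' equivalences \(X\times_G X^*\) and \(X^*\times_G X\): by Proposition~\ref{pro:properties_of_dual} these are canonically \(G^1_{\rg(X)}\) and \(G^1_{\s(X)}\), and the object space of \(G^1_W\) is exactly~\(W\). So identifying anchor ranges reduces to identifying the relevant diagonal equivalences up to isomorphism.

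First I would show \(\rg(X_t)=\rg(X_{tt^*})\). Using the multiplication isomorphisms \(\mu\) of the action, we have bibundle isomorphisms
\[
X_{tt^*} \cong X_t\times_G X_{t^*},\qquad X_{t^*t}\cong X_{t^*}\times_G X_t.
\]
Next, from \(\mu_{t^*,t}\) and \(\mu_{t,t^*}\) and associativity~\ref{enum:APE4} one gets that \(Y=X_{t^*}\) satisfies the two hypotheses of Proposition~\ref{pro:dual_pe_unique} relative to \(X=X_t\) (the composites \(X_t\times_G X_{t^*}\times_G X_t\cong X_{tt^*t}=X_t\) and symmetrically), so there is a bibundle isomorphism \(X_t^*\cong X_{t^*}\). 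Feeding this into \(X_{tt^*}\cong X_t\times_G X_{t^*}\cong X_t\times_G X_t^*\cong G^1_{\rg(X_t)}\) by Proposition~\ref{pro:properties_of_dual}, and comparing object spaces via Lemma~\ref{lem:partial_equivalence}, we conclude \(\rg(X_{tt^*})=\rg(X_t)\). Since \(tt^*\) is idempotent, Proposition~\ref{pro:idempotents} (applied to \(X_{tt^*}\) with \(\mu_{tt^*,tt^*}\)) gives \(\rg(X_{tt^*})=\s(X_{tt^*})\), so \(X_{tt^*}\cong G^1_U\) for an invariant open~\(U\), which must then be \(\rg(X_t)\); this also yields the middle equality \(\rg(X_{tt^*})=\s(X_{tt^*})\). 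Finally \(\s(X_{t^*})=\rg(X_t)\) and \(\rg(X_{t^*})=\s(X_t)\) are immediate from \(X_{t^*}\cong X_t^*\) and the definition of the dual (Definition~\ref{def:dual_equivalence}), which swaps the two anchor maps. The second chain \(\s(X_t)=\s(X_{t^*t})=\rg(X_{t^*t})=\rg(X_{t^*})\) follows from the first one by replacing~\(t\) with~\(t^*\) and using \((t^*)^*=t\).

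The only delicate point is verifying that \(X_{t^*}\) really is the dual of~\(X_t\) rather than merely ``a'' partial equivalence satisfying a single sandwich identity; this is where Proposition~\ref{pro:dual_pe_unique} is essential, and checking its two hypotheses requires the associativity coherence~\ref{enum:APE4} for the triples \((t,t^*,t)\) and \((t^*,t,t^*)\). I expect that to be the main (though still routine) obstacle. Everything else is a direct application of the already-established identities \(X\times_G X^*\cong G^1_{\rg(X)}\), \(X^*\times_G X\cong G^1_{\s(X)}\), and the bijection between invariant open subsets and idempotent partial equivalences.
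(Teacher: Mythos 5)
Your argument is correct, but it takes a genuinely different route from the paper's. The paper's proof is a short containment-chasing argument: the mere existence of the bibundle isomorphisms \(\mu_{t,t^*}\colon X_t\times_G X_{t^*}\congto X_{tt^*}\), \(\mu_{tt^*,t}\colon X_{tt^*}\times_G X_t\congto X_t\), \(\mu_{t^*,tt^*}\), etc., yields inclusions such as \(\rg(X_{tt^*})\subseteq\rg(X_t)\subseteq\rg(X_{tt^*})\) (because \(\rg(X\times_G Y)\subseteq\rg(X)\) and \(\s(X\times_G Y)\subseteq\s(Y)\), and bibundle isomorphisms preserve anchor images), and these two-sided containments together with Proposition~\ref{pro:idempotents} applied to \(\mu_{e,e}\) for \(e=tt^*,t^*t\) give all four chains. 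You instead first identify \(X_{t^*}\) with the dual \(X_t^*\) via Proposition~\ref{pro:dual_pe_unique}, whose hypotheses are supplied by the composites \(X_t\times_G X_{t^*}\times_G X_t\congto X_{tt^*t}=X_t\) and its mirror, and then read off all the equalities from \(X_t\times_G X_t^*\cong G^1_{\rg(X_t)}\) and the anchor swap in Definition~\ref{def:dual_equivalence}; applying the result to \(t^*\) gives the second chain. This is valid and non-circular (everything you cite is established in Section~2, before the lemma), but it is heavier machinery: you are essentially proving the existence part of the later Proposition~\ref{pro:S_action_involution} in order to get the lemma, whereas the paper postpones that and keeps the lemma's proof at the level of elementary inclusions. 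One small remark: your appeal to associativity~\ref{enum:APE4} is not actually needed here, since Proposition~\ref{pro:dual_pe_unique} only requires the \emph{existence} of the two sandwich isomorphisms, not any compatibility between them (associativity only matters later, when one wants the canonical choice of \(J_t\)); likewise your separate invocation of Proposition~\ref{pro:idempotents} for \(\rg(X_{tt^*})=\s(X_{tt^*})\) is redundant once you have \(X_{tt^*}\cong G^1_{\rg(X_t)}\).
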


\begin{proof}
  If \(e\in S\) idempotent, then Proposition~\ref{pro:idempotents}
  applied to the isomorphism \(\mu_{e,e}\colon X_e\times_G X_e\cong
  X_e\) gives \(\rg(X_e)=\s(X_e)\).  The existence of an isomorphism
  \(\mu_{t,t^*}\colon X_t\times_G X_{t^*} \cong X_{tt^*}\) implies
  \(\rg(X_t)\supseteq \rg(X_{tt^*})\) and \(\s(X_{t^*})\supseteq
  \s(X_{tt^*})\).  Similarly, the isomorphism \(\mu_{tt^*,t}\) gives
  \(\rg(X_{tt^*})\supseteq \rg(X_t)\), and \(\mu_{t,t^*t}\) gives
  \(\s(X_{t^*t})\supseteq \s(X_t)\).  Now everything follows.
\end{proof}

\begin{definition}
  \label{def:S_act_groupoid_simple}
  Let~\(S\) be an inverse semigroup with unit.  A
  \emph{simplified action} of~\(S\) on a topological groupoid
  consists of
  \begin{itemize}
  \item a topological space~\(G^0\);
  \item topological spaces~\(X_t\) for \(t\in S\);
  \item continuous maps \(\s,\rg\colon X_t\to G^0\);
  \item continuous maps
    \[
    \mu_{t,u}\colon X_t \times_{\s,G^0,\rg} X_u \to X_{tu},
    \qquad (x,y)\mapsto x\cdot y,
    \]
    for \(t,u\in S\);
  \end{itemize}
  satisfying
  \begin{enumerate}[label=\textup{(S\arabic*)}]
  \item \label{e:SAS1} \(\s(x\cdot y)=\s(y)\), \(\rg(x\cdot y)=\rg(x)\) for all
    \(t,u\in S\), \(x\in X_t\), \(y\in X_u\) with \(\s(x)=\rg(y)\);
  \item \label{e:SAS2} \(\rg\colon X_t\to G^0\) and~\(\s\colon X_t\to G^0\) are
    open for all \(t\in S\);
  \item \label{e:SAS3} the maps \(\rg,\s\colon X_1\to G^0\) are surjective;
  \item \label{e:SAS4} \(\mu_{t,u}\) is surjective for each \(t,u\in S\);
  \item \label{e:SAS5} the map
    \[
    X_t\times_{\s,G^0,\rg} X_u\to X_u\times_{\s,G^0,\s} X_{tu},
    \qquad (x,y)\mapsto (y,x\cdot y),
    \]
    is a homeomorphism if \(t=1\) and \(u\in S\);
  \item \label{e:SAS6} the map
    \[
    X_t\times_{\s,G^0,\rg} X_u\to X_t\times_{\rg,G^0,\rg} X_{tu},
    \qquad (x,y)\mapsto (x,x\cdot y),
    \]
    is a homeomorphism if \(t\in S\) and \(u=1\);
  \item \label{e:SAS7} for all \(t,u,v\in S\), the following diagram commutes:
    \begin{equation}
      \label{eq:associativity}
      \begin{tikzpicture}[xscale=1.2,yscale=2,baseline=(current bounding box.west)]
        \node (1) at (0,1) {\((X_t\times_{\s,G^0,\rg} X_u) \times_{\s,G^0,\rg} X_v\)};
        \node (1a) at (0,0) {\(X_t\times_{\s,G^0,\rg} (X_u \times_{\s,G^0,\rg} X_v)\)};
        \node (2) at (5,1) {\(X_{tu} \times_{\s,G^0,\rg} X_v\)};
        \node (3) at (5,0) {\(X_t\times_{\s,G^0,\rg} X_{uv}\)};
        \node (4) at (6,.5) {\(X_{tuv}\)};
        \draw[<->] (1) -- node[swap] {ass} (1a);
        \draw[cdar] (1) -- node {\(\mu_{t,u}\times_{\s,G^0,\rg} \Id_{X_v}\)} (2);
        \draw[cdar] (1a) -- node[swap] {\(\Id_{X_t}\times_{\s,G^0,\rg}\mu_{u,v}\)} (3);
        \draw[cdar] (3) -- node[inner sep=0pt] {\(\mu_{t,uv}\)} (4);
        \draw[cdar] (2) -- node[inner sep=0pt,swap] {\(\mu_{tu,v}\)} (4);
      \end{tikzpicture}
    \end{equation}
  \end{enumerate}
  If~\(S\) has a zero element, we may also ask \(X_0=\emptyset\).
\end{definition}

This definition is more elementary because it does not mention
groupoids or partial equivalences.  It seems less elegant than
Definition~\ref{def:S_act_groupoid}, but is simpler because much of
the complexity of Definition~\ref{def:S_act_groupoid} is hidden in the
conditions \ref{enum:Peq1}--\ref{enum:Peq4} defining partial
equivalences of topological groupoids.

It is clear that an inverse semigroup action by partial equivalences
gives a simplified action: forget the multiplication on~\(G^1\) and
the left and right actions of~\(G\) on the spaces~\(X_t\).  The
isomorphisms in~\ref{e:SAS5} for \(t=1\) and in~\ref{e:SAS6} for
\(u=1\) are those in~\ref{enum:Peq3}, and all other conditions in
Definition~\ref{def:S_act_groupoid_simple} are evident.  The converse
is more remarkable:

\begin{proposition}
  \label{pro:partial_actions_simplify}
  Any simplified inverse semigroup action on groupoids comes from a
  unique action by partial equivalences.  Thus actions and simplified
  actions of inverse semigroups by partial equivalences are
  equivalent.  Furthermore, the maps in \ref{e:SAS5} and~\ref{e:SAS6}
  are isomorphisms and the maps~\(\mu_{t,u}\) are open for all
  \(t,u\in S\).
\end{proposition}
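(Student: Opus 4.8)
The plan is to recover, from a simplified action $(G^0,(X_t),(\mu_{t,u}))$, the left and right $G$-actions on each $X_t$, where the groupoid $G$ is itself reconstructed from the unit fibre $X_1$. The first step is to build $G$ out of $X_1$: since $X_1$ carries maps $\rg,\s\colon X_1\to G^0$ that are open and surjective by \ref{e:SAS2}--\ref{e:SAS3}, and a continuous multiplication $\mu_{1,1}\colon X_1\times_{\s,G^0,\rg}X_1\to X_1$, I would check that $X_1$ with these data is a topological groupoid with object space $G^0$. Associativity of $\mu_{1,1}$ is the case $t=u=v=1$ of \ref{e:SAS7}; the homeomorphism conditions \ref{e:SAS5} and \ref{e:SAS6} for $t=u=1$ say exactly that the two maps $(x,y)\mapsto(y,xy)$ and $(x,y)\mapsto(x,xy)$ on $X_1\times X_1$ are homeomorphisms, which is the standard way to phrase that left and right translations are bijective; from these one extracts the unit section and the inversion, and verifies the groupoid axioms. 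This identifies $X_1$ with the identity equivalence $G^1$, giving \ref{enum:APE2}.

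Next, for each $t\in S$ I would define the left $G$-action on $X_t$ by transporting the multiplication $\mu_{1,t}\colon X_1\times_{\s,G^0,\rg}X_t\to X_{1t}=X_t$ through the groupoid structure just built, and the right $G$-action via $\mu_{t,1}\colon X_t\times_{\s,G^0,\rg}X_1\to X_t$. Conditions \ref{e:SAS5} for $u=t$ and \ref{e:SAS6} for $t$ arbitrary, $u=1$, give precisely the two homeomorphisms in \ref{enum:Peq3}; \ref{e:SAS1} gives \ref{enum:Peq1}; \ref{e:SAS2} gives \ref{enum:Peq4}; and the relevant instances of \ref{e:SAS7} (with one of the indices equal to $1$) give the compatibility \ref{enum:Peq2} of the two actions with each other and with multiplication in $G$, together with \ref{enum:APE3}. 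So each $X_t$ becomes a partial equivalence from $G$ to $G$. It then remains to see that $\mu_{t,u}\colon X_t\times_{\s,G^0,\rg}X_u\to X_{tu}$ descends to a bibundle \emph{isomorphism} $X_t\times_G X_u\to X_{tu}$: it is $G$-bi-equivariant and $H$-invariant (where $H=G$ acts diagonally on the middle) by the $t,1,u$ and $1,t,u$ and $t,u,1$ instances of \ref{e:SAS7}, hence factors through a bibundle map $X_t\times_G X_u\to X_{tu}$, which by Proposition~\ref{pro:characterise_bibundle_isom} is automatically an isomorphism onto an open sub-bibundle; surjectivity of $\mu_{t,u}$ from \ref{e:SAS4} forces it to be onto all of $X_{tu}$. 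Associativity \ref{enum:APE4} is then the general case of \ref{e:SAS7}, now read in the bicategory $\Peq$.

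For uniqueness, I would argue that any action by partial equivalences inducing the given simplified action must have $X_1=G^1$ with its canonical structure (forced by \ref{enum:APE2}), hence the groupoid $G$ is determined; and then the left and right $G$-actions on each $X_t$ are forced because they must equal $\mu_{1,t}$ and $\mu_{t,1}$ by \ref{enum:APE3}, so the whole structure is determined by the underlying data. Finally, for the supplementary claims: the maps in \ref{e:SAS5} and \ref{e:SAS6} for general $t,u$ are the homeomorphisms \ref{enum:Peq3} of the partial equivalences $X_u$ and $X_t$ respectively (they do not actually depend on the second index once the $G$-actions are in place), so they are isomorphisms; and $\mu_{t,u}$ is open because it is the composite of the quotient map $X_t\times_{\s,G^0,\rg}X_u\to X_t\times_G X_u$, which is open (it is a principal $G$-bundle projection, by the remark following Lemma~\ref{lem:peq_composition_associative_unital}), with the homeomorphism $X_t\times_G X_u\cong X_{tu}$.

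The main obstacle I anticipate is the first step: carefully extracting a \emph{topological groupoid} structure on $X_1$ from just the two homeomorphism conditions \ref{e:SAS5}--\ref{e:SAS6} (for $t=u=1$) plus associativity and the openness/surjectivity of the anchor maps — in particular producing a continuous unit section $G^0\to X_1$ and a continuous, involutive inversion, and checking these are compatible. Once $X_1$ is known to be $G^1$, everything else is a matter of matching up the labelled axioms of Definition~\ref{def:S_act_groupoid_simple} with those of Definitions~\ref{def:partial_equivalence} and~\ref{def:S_act_groupoid}, using Proposition~\ref{pro:characterise_bibundle_isom} to upgrade bibundle maps to isomorphisms.
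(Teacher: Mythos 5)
Most of your proposal follows the paper's own route: you reconstruct the groupoid $G$ from $X_1$ via the characterisation of topological groupoids without explicit unit and inversion (Proposition~\ref{pro:Top_open_groupoid}), obtain the $G$-actions on $X_t$ from $\mu_{1,t}$ and $\mu_{t,1}$, descend $\mu_{t,u}$ to a surjective bibundle map $X_t\times_G X_u\to X_{tu}$ which is an isomorphism by Proposition~\ref{pro:characterise_bibundle_isom} together with \ref{e:SAS4}, and get uniqueness and the openness of $\mu_{t,u}$ exactly as in the paper. All of that is correct.

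There is, however, a genuine gap in your treatment of the last assertion, that the maps in \ref{e:SAS5} and~\ref{e:SAS6} are homeomorphisms for \emph{all} $t,u\in S$. You claim these maps ``are the homeomorphisms \ref{enum:Peq3} of the partial equivalences $X_u$ and $X_t$ respectively'' and ``do not actually depend on the second index.'' That is not so: for general $t$ the map in \ref{e:SAS5} goes from $X_t\times_{\s,G^0,\rg}X_u$ to $X_u\times_{\s,G^0,\s}X_{tu}$, whereas the homeomorphism in \ref{enum:Peq3} for $X_u$ has domain $G^1\times_{\s,G^0,\rg}X_u$ and codomain $X_u\times_{\s,G^0,\s}X_u$; these coincide only when $t=1$. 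One can check by hand that the general map is a continuous bijection (injectivity from freeness of the actions and injectivity of $\bar\mu_{t,u}$; surjectivity using that any two elements of $X_u$ with the same source differ by a unique element of $G^1$), but continuity of the inverse does not come for free. The paper's argument is genuinely different: it observes that the map is $G$-equivariant for suitable actions making both sides total spaces of principal $G$-bundles, that it induces the homeomorphism $\bar\mu_{t,u}$ on the base spaces, and then invokes the fact that an equivariant map of principal bundles inducing a homeomorphism on bases is itself a homeomorphism (\cite{Meyer-Zhu:Groupoids}*{Proposition 5.7}). Some argument of this kind is needed to close your proof.
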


\begin{proof}
  The spaces \(G^0\) and \(G^1\defeq X_1\) with range and source maps
  \(\rg\) and~\(\s\) and multiplication~\(\mu_{1,1}\) satisfy the
  conditions \ref{enum:Gr1}--\ref{enum:Gr4} in
  Proposition~\ref{pro:Top_open_groupoid} because these are special
  cases of our conditions \ref{e:SAS1}--\ref{e:SAS7}.  Hence this data
  defines a topological groupoid.  Similarly, the anchor maps
  \(\rg\colon X_t\to G^0\) and \(\s\colon X_t\to G^0\) and the
  multiplication maps \(\mu_{1,t}\) and~\(\mu_{t,1}\) satisfy
  conditions \ref{enum:Peq1}--\ref{enum:Peq4} in
  Definition~\ref{def:partial_equivalence} and thus turn~\(X_t\)
  into a partial equivalence from~\(G\) to itself.

  Let \(t,u\in S\).  The associativity of the maps~\(\mu\) for
  \(t,1,u\), \(1,t,u\) and \(t,u,1\) implies that~\(\mu_{t,u}\)
  descends to a \(G,G\)-bibundle map \(\bar\mu_{t,u}\colon X_t\times_G
  X_u\to X_{tu}\).  Since~\(\mu_{t,u}\) is surjective by~\ref{e:SAS4},
  so is~\(\bar\mu_{t,u}\).  Hence it is a bibundle isomorphism by
  Proposition~\ref{pro:characterise_bibundle_isom}.

  The groupoid structure on~\(X_1\) and the left and right actions
  on~\(X_t\) are defined so that~\(X_1\) is the identity equivalence
  on~\(G\) and the maps \(\bar\mu_{1,u}\) and~\(\bar\mu_{t,1}\) are
  the canonical isomorphisms.  The associativity condition for the
  bibundle isomorphisms~\(\bar\mu_{t,u}\) follows from the
  corresponding property of the maps~\(\mu_{t,u}\).  Thus we have got
  an action by partial equivalences.  This is the only action that
  simplifies to the given data because of the assumptions about
  \(X_1\), \(\mu_{1,u}\), and~\(\mu_{t,1}\) in
  Definition~\ref{def:S_act_groupoid}.

  By definition, \(X_t\times_G X_u\) is the orbit space of the
  \(G\)\nb-action on \(X_t\times_{\s,G^0,\rg} X_u\) by
  \((x_1,x_2)\cdot g \defeq (x_1\cdot g,g^{-1}\cdot x_2)\).  The
  canonical projection \(X_t\times_{\s,G^0,\rg} X_u\to X_t\times_G
  X_u\) is open by Proposition~\ref{pro:Top_open_orbit}.  The
  map~\(\mu_{t,u}\) is the composite of this projection with the
  homeomorphism \(\bar\mu_{t,u}\colon X_t\times_G X_u\to X_{tu}\),
  hence it is also open.

  Finally, we check that the maps in~\ref{e:SAS5} are isomorphisms for
  all \(t,u\in S\); exchanging left and right gives the same for the
  maps in~\ref{e:SAS6}.  The map in~\ref{e:SAS5} is
  \(G\)\nb-equivariant if we let~\(G\) act on \(X_t\times_{\s,G^0,\rg}
  X_u\) by \(g\cdot (x,y)\defeq (xg^{-1},gy)\) and on
  \(X_u\times_{\s,G^0,\s} X_{tu}\) by \(g\cdot (y,x)\defeq (gy,x)\).
  Both actions are part of principal bundles: the bundle projection on
  \(X_t\times_{\s,G^0,\rg} X_u\) is the canonical map to \(X_t\times_G
  X_u\), and the bundle projection on \(X_u\times_{\s,G^0,\s} X_{tu}\)
  is \(\s\times_{G^0,\s} \Id_{X_{tu}}\) to~\(X_{tu}|_{\rg(X_u)}\).
  Our \(G\)\nb-equivariant map induces the map~\(\mu_{t,u}\) on the
  base spaces, which is a homeomorphism.  Hence so is the map on the
  total spaces by \cite{Meyer-Zhu:Groupoids}*{Proposition 5.9}.
\end{proof}

\subsection{Compatibility with order and involution}
\label{sec:order_involution}

Let~\(S\) be an inverse semigroup with unit.  Define a partial order
on~\(S\) by \(t\le u\) if \(t=tt^*u\) or, equivalently,
\(t=ut^*t\).  The multiplication and involution preserve this order:  \(t_1t_2\le
u_1u_2\) and \(t_1^*\le u_1^*\) if \(t_1\le u_1\) and \(t_2\le u_2\) (see \cite{Lawson:InverseSemigroups}).

Let \((X_t)_{t\in S}\), \((\mu_{t,u})_{t,u\in S}\) be an action
of~\(S\) on~\(G\).  We are going to prove that the action is
compatible with this partial order and the involution on~\(S\).  To
prepare for the proofs of analogous statements for inverse semigroup
actions on \(\Cst\)\nb-algebras, we give rather abstract proofs, which
carry over literally to the \(\Cst\)\nb-algebraic case.

\begin{proposition}
  \label{pro:inclusions_from_action}
  There are unique bibundle maps \(j_{u,t}\colon X_t \to
  X_u\) for \(t,u\in S\) with \(t\le u\) such that the following
  diagrams commute for all \(t_1,t_2,u_1,u_2\in S\) with \(t_1\le
  u_1\), \(t_2\le u_2\):
  \begin{equation}
    \label{eq:inclusions_from_action}
    \begin{tikzpicture}[baseline=(current bounding box.west)]
      \matrix (m) [cd,row sep=1.8em,column sep=4em] {
        X_{t_1}\times_G X_{t_2}& X_{t_1t_2}\\
        X_{u_1}\times_G X_{u_2}& X_{u_1u_2}\\
      };
      \draw[cdar] (m-1-1) -- node {\(\mu_{t_1,t_2}\)} (m-1-2);
      \draw[cdar] (m-2-1) -- node {\(\mu_{u_1,u_2}\)} (m-2-2);
      \draw[cdar] (m-1-1) -- node[swap] {\(j_{u_1,t_1}\times_G
        j_{u_2,t_2}\)} (m-2-1);
      \draw[cdar] (m-1-2) -- node {\(j_{u_1u_2,t_1t_2}\)} (m-2-2);
    \end{tikzpicture}
  \end{equation}
  The map~\(j_{u,t}\) is a bibundle isomorphism onto \(X_u|_{\s(X_t)}
  = {}_{\rg(X_t)}|X_u\).  We have \(j_{t,t}=\Id_{X_t}\) for all \(t\in
  S\) and \(j_{v,u}\circ j_{u,t} = j_{v,t}\) for \(t\le u\le v\)
  in~\(S\).
\end{proposition}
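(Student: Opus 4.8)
The plan is to build the maps $j_{u,t}$ directly from the multiplication isomorphisms and Lemma~\ref{lem:domains_invsg-action}, then verify uniqueness and the compatibility relations. First I would observe that $t\le u$ means $t = u t^*t$, and that $t^*t$ is idempotent; by Lemma~\ref{lem:domains_invsg-action} the partial equivalence $X_{t^*t}$ has $\rg(X_{t^*t}) = \s(X_{t^*t}) = \s(X_t)$. The idea is to define
\[
  j_{u,t}\colon X_t \cong X_{ut^*t} \xleftarrow{\ \mu_{u,t^*t}\ } X_u \times_G X_{t^*t} \cong {}_{\s(X_t)}|X_u,
\]
where the last isomorphism is the restriction-as-product identity~\eqref{eq:restrict_as_product} applied with the invariant open subset $U = \s(X_t)\subseteq G^0$ (using $X_{t^*t}\cong G^1_{\s(X_t)}$, which comes from Proposition~\ref{pro:idempotents} applied to $\mu_{t^*t,t^*t}$). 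Composing, $j_{u,t}$ is a bibundle isomorphism from $X_t$ onto ${}_{\rg(X_t)}|X_u = X_u|_{\s(X_t)}$; the equality of these two restrictions follows since $X_u$ is a partial equivalence, so its left- and right- anchor preimages of corresponding orbit-saturated subsets agree, and $\rg(X_t)=\s(X_t)$ under $X_*$ matches up via Lemma~\ref{lem:domains_invsg-action}. I should double-check the precise identity $t^*t\le$ bookkeeping so that $ut^*t = t$ really holds on the nose, which it does by definition of $\le$.

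Next I would prove the commuting square~\eqref{eq:inclusions_from_action}. The clean way is to reduce everything to the associativity axiom~\ref{enum:APE4} for triples of the form $(u_1,\; u_1^* u_1 \ldots)$ — more precisely, writing $t_i = u_i t_i^* t_i$ and inserting the idempotents, the square becomes an instance of the pentagon/associativity coherence applied to $\mu_{u_1,u_2}$, $\mu_{u_1, t_1^*t_1}$, $\mu_{u_2,t_2^*t_2}$ and the multiplication on the idempotent factors, together with the fact that $(t_1^*t_1)(t_2^*t_2)$ and $(t_1 t_2)^*(t_1 t_2)$ differ by an idempotent that is absorbed. Concretely: both composites around the square are bibundle maps $X_{t_1}\times_G X_{t_2}\to X_{u_1 u_2}$, and by Proposition~\ref{pro:characterise_bibundle_isom} any two bibundle maps between fixed partial equivalences with the same source-and-range images coincide; so it suffices to check they have the same image, which is forced by the anchor-map computations. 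This observation — that bibundle maps are \emph{unique} once source and range images are pinned down — is what makes the whole proposition almost formal, and I would use it repeatedly, including for uniqueness of $j_{u,t}$ itself: $j_{u,t}$ is characterized by commuting with $\mu_{1,t}=\mu_{1,u}$-type squares (take $t_1=u_1=1$), and any candidate must be the bibundle isomorphism onto ${}_{\rg(X_t)}|X_u$, of which there is only one.

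Finally, $j_{t,t}=\Id_{X_t}$ follows because the construction with $u=t$ uses $\mu_{t,t^*t}$, which by axiom~\ref{enum:APE3}-style normalization (together with $\mu_{t,t^*t}$ being the restriction of the right $G$-action, via $X_{t^*t}\cong G^1_{\s(X_t)}$) is exactly the canonical identification $X_t\times_G X_{t^*t}\cong X_t$; and the cocycle identity $j_{v,u}\circ j_{u,t} = j_{v,t}$ for $t\le u\le v$ is again a uniqueness statement — both sides are bibundle isomorphisms $X_t\to {}_{\rg(X_t)}|X_v$, hence equal — but to be safe I would also derive it from~\eqref{eq:inclusions_from_action} by setting $(t_1,u_1)=(u,v)$, $(t_2,u_2)=(t,t)$ and using $j_{t,t}=\Id$. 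The main obstacle I anticipate is purely bookkeeping: carefully tracking which idempotent ($t^*t$ versus $(t_1t_2)^*(t_1t_2)$, etc.) appears where and confirming that the discrepancies are absorbed by the restriction identities~\eqref{eq:restrict_as_product}; the conceptual content is light because Proposition~\ref{pro:characterise_bibundle_isom} collapses all diagram chases to comparisons of anchor-map images.
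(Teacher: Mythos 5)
Your construction of \(j_{u,t}\) via \(\mu_{u,t^*t}\) and \eqref{eq:restrict_as_product} is one of the two candidates the paper itself uses, so the definition is fine. The gap is in your central shortcut: you claim that ``by Proposition~\ref{pro:characterise_bibundle_isom} any two bibundle maps between fixed partial equivalences with the same source-and-range images coincide,'' and you use this to dispose of the commuting square, the uniqueness of \(j_{u,t}\), and the cocycle identity. Proposition~\ref{pro:characterise_bibundle_isom} does not say that: it says every bibundle map is an isomorphism onto \({}_{\rg(X_1)}|X_2\), not that there is only one such map. In fact Lemma~\ref{lem:centralisers_bibundles} identifies \(\Map(X,X)\) with the group of local centralisers of \(G\) on \(\rg(X)\), which is non-trivial in general (see Remark~\ref{rem:PHomeo_versus_Peq}, e.g.\ \(G=\Z/3\sqcup\{\textup{pt}\}\)); Theorem~\ref{the:no_local_centralisers} is devoted precisely to the special case where your uniqueness claim \emph{does} hold. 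So none of your ``hence equal'' steps are valid as stated, and the proposition is not ``almost formal'': its whole content is that the specific maps built from the \(\mu_{t,u}\) cohere, which must be checked by diagram chases using associativity.

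Concretely, the missing work is: (i) show that \(\mu_{t,e}\) and \(\mu_{e,u}\) agree with the left/right \(G\)\nb-action maps \(\mu^0\) whenever \(e\) is idempotent with \(t^*t\le e\), resp.\ \(uu^*\le e\) (this uses associativity for \((t,e,u)\) and cancellation of a \(G^1_{U_e}\)-factor via \eqref{eq:restrict_as_product}); (ii) show that your right-handed candidate via \(\mu_{u,t^*t}\) agrees with the left-handed one via \(\mu_{tt^*,u}\) (the paper's diagram \eqref{eq:associativity2}); (iii) derive the square \eqref{eq:inclusions_from_action} and the identity \(j_{v,u}\circ j_{u,t}=j_{v,t}\) from associativity of the \(\mu\)'s and the fact that they are bibundle maps, keeping track of which idempotent (\(t_i^*t_i\) versus \((t_1t_2)^*(t_1t_2)\)) occurs where. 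Your instinct that the bookkeeping of idempotents is the delicate point is right, but the mechanism that resolves it is associativity of the given isomorphisms, not an a priori uniqueness of bibundle maps. Uniqueness of the \(j_{u,t}\) then comes from the commuting square with \((e,t)\le(1,u)\) together with the genuinely unique isomorphism \(X_e\cong G^1_{U_e}\) supplied by Proposition~\ref{pro:idempotents} (which is unique because it is required to intertwine the multiplications, not merely because it is a bibundle map).
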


\begin{proof}
  Let \(E(S)\subseteq S\) be the subset of idempotents and let \(e\in
  E(S)\).  Proposition~\ref{pro:idempotents} gives a unique
  isomorphism \(X_e\cong G^1_{U_e}\) intertwining \(\mu_{e,e}\colon
  X_e\times_G X_e\to X_e\) and the multiplication
  in~\(G^1_{U_e}\); here \(U_e\defeq \rg(X_e)=\s(X_e)\) is an open
  \(G\)\nb-invariant subset of~\(G^0\).  The
  diagram~\eqref{eq:inclusions_from_action} for \((e,e)\le (1,1)\)
  shows that~\(j_{1,e}\) has to be this particular isomorphism
  \(X_e\cong G^1_{U_e}\subseteq G^1\).  To simplify notation, we now
  identify~\(X_e\) with~\(G^1_{U_e}\) for all \(e\in E(S)\) using
  these unique isomorphisms, and we transfer the multiplication
  maps~\(\mu_{s,t}\) for idempotent  \(s\), \(t\) or~\(s t\) accordingly.  This
  gives an isomorphic action of~\(S\) by partial equivalences.  So we
  may assume that \(X_e=G_{U_e}^1\) and that \(\mu_{e,e}\colon
  X_e\times_G X_e\to X_e\) is the usual multiplication map
  on~\(G_{U_e}^1\) for all \(e\in E(S)\).

  Let \(e\in E(S)\) and let \(t,u\in S\) satisfy \(t^*t\le e\) and
  \(uu^*\le e\).  Thus \(te=t\), \(eu=u\) and \(teu=tu\).  We show
  that \(\mu_{t,e}\colon X_t\times_G G^1_{U_e}\to X_t\) and
  \(\mu_{e,u}\colon G^1_{U_e}\times_G X_u\congto X_u\) are the obvious
  maps \(\mu^0_{t,e}\) or~\(\mu^0_{e,u}\) from the left and right
  \(G\)\nb-actions in this case.  Associativity of the multiplication
  maps gives us a commuting diagram of isomorphisms
  \[
  \begin{tikzpicture}[baseline=(current bounding box.west)]
    \matrix (m) [cd,row sep=1em, column sep=6em] {
      X_t\times_G X_e\times_G X_u&X_t\times_G X_u\\
      X_t\times_G X_u& X_{tu}\\
    };
    \draw[cdar] (m-1-1) -- node {\(\Id_{X_t}\times_G\mu_{e,u}\)} (m-1-2);
    \draw[cdar] (m-1-2) -- node {\(\mu_{t,u}\)} (m-2-2);
    \draw[cdar] (m-1-1) -- node[swap] {\(\mu_{t,e}\times_G\Id_{X_u}\)} (m-2-1);
    \draw[cdar] (m-2-1) -- node[swap] {\(\mu_{t,u}\)} (m-2-2);
    \draw[double,double equal sign distance] (m-1-2) -- (m-2-1);
  \end{tikzpicture}
  \]
  We may cancel the isomorphism~\(\mu_{t,u}\) to get
  \(\Id_{X_t}\times_G\mu_{e,u} = \mu_{t,e}\times_G\Id_{X_u}\).  Now we
  consider two cases: \(t=e\) or \(e=u\).  If \(t=e\), then
  \(\mu_{t,e} = \mu^0_{t,e}\) is the multiplication map
  on~\(G^1_{U_e}\).  Hence so is \(\mu_{t,e}\times_G\Id_{X_u}\).  Thus
  \(\mu_{e,u}\) and \(\mu^0_{e,u}\) induce the same map \(G^1_{U_e}
  \times_G G^1_{U_e} \times_G X_u\to G^1_{U_e}\times_G X_u\).  We may
  use~\eqref{eq:restrict_as_product} to cancel the
  factor~\(G^1_{U_e}\) because \(\s(X_e)= U_e\supseteq \rg(X_u)
  \supseteq \rg(G^1_{U_e}\times_G X_u)\).  Thus
  \(\mu_{e,u}=\mu^0_{e,u}\) if~\(e\) is idempotent and \(e\ge uu^*\).
  A similar argument in the other case \(e=u\) gives
  \(\mu_{t,e}=\mu^0_{t,e}\) if \(t^*t\le e\).

  Now let \(t\le u\), that is, \(t=tt^*u=ut^*t\).  Then we get two
  candidates for the bibundle map \(j_{u,t}\colon X_t\to X_u\):
  \begin{equation}
    \label{eq:construct_j}
    \begin{gathered}
      X_t \xleftarrow[\cong]{\mu_{tt^*,u}} X_{tt^*} \times_G X_u
      = G^1_{U_{tt^*}} \times_G X_u
      \xrightarrow[\cong]{\mu^0_{tt^*,u}} {}_{U_{tt^*}}|X_u\subseteq X_u,\\
      X_t \xleftarrow[\cong]{\mu_{u,t^*t}} X_u \times_G X_{t^*t}
      = X_u \times_G G^1_{U_{t^*t}}
      \xrightarrow[\cong]{\mu^0_{u,t^*t}} X_u|_{U_{t^*t}} \subseteq X_u.\\
    \end{gathered}
  \end{equation}
  We claim that both maps \(X_t\to X_u\) are equal, so we get only one
  map \(j_{u,t}\colon X_t\to X_u\).  Let \(e=tt^*\) and \(f= t^*t\).
  Then there is a commuting diagram of isomorphisms
  \begin{equation}
    \label{eq:associativity2}
    \begin{tikzpicture}[baseline=(current bounding box.west)]
      \matrix (m) [cd,column sep=.1em,row sep=1.5em] {
        X_e\times_G X_u\times_G X_f&&& X_e\times_G X_t\\
        &&{}_{U_e}|X_u\times_G X_f\\
        &X_e\times_G X_u|_{U_f}\\
        X_t\times_G X_f&&& X_t\\
      };
      \draw[cdar] (m-1-1) -- node {\(\Id_{X_e}\times_G\mu_{u,f}\)} (m-1-4);
      \draw[cdar,mid] (m-1-4) -- node[narrowfill]
      {\(\mu_{e,t} = \mu^0_{e,t}\)} (m-4-4);
      \draw[cdar,mid] (m-1-1) -- node[narrowfill]
      {\(\mu_{e,u}\times_G\Id_{X_f}\)} (m-4-1);
      \draw[cdar] (m-4-1) -- node[swap]
      {\(\mu_{t,f}=\mu^0_{t,f}\)} (m-4-4);
      \draw[cdar,mid] (m-1-1) -- node[narrowfill]
      {\(\Id_{X_e}\times_G\mu^0\)} (m-3-2);
      \draw[cdar,mid] (m-1-1) -- node[fill=white]
      {\(\mu^0\times_G\Id_{X_f}\)} (m-2-3);
      \draw[cdar,mid] (m-3-2) -- node[narrowfill]
      {\(\mu_{e,u}\)} (m-4-4);
      \draw[cdar,mid] (m-2-3) -- node[narrowfill]
      {\(\mu_{u,f}\)} (m-4-4);
    \end{tikzpicture}
  \end{equation}
  The large rectangle commutes by associativity.  The argument above
  gives \(\mu_{e,t} = \mu^0_{e,t}\) and \(\mu_{t,f}=\mu^0_{t,f}\).
  The lower left and upper right triangles commute because
  \(\mu_{e,u}\) and~\(\mu_{u,f}\) are bibundle maps, so they are
  compatible with~\(\mu^0\).  Hence the interior quadrilateral
  commutes.  Thus the two definitions of~\(j_{u,t}\)
  in~\eqref{eq:construct_j} are equal.

  The first construction of~\(j_{u,t}\) in~\eqref{eq:construct_j}
  gives the unique map for which the
  diagram~\eqref{eq:inclusions_from_action} commutes for \((e,t)\le
  (1,u)\) and the inclusion map~\(j_{1,e}\).  Since we already saw
  that~\(j_{1,e}\) is unique, the
  diagrams~\eqref{eq:inclusions_from_action} characterise the bibundle
  maps~\(j_{u,t}\) uniquely for all \(t\le u\) in~\(S\).  The
  map~\(j_{t,t}\) is the identity on~\(X_t\) because
  \(\mu_{tt^*,t}=\mu^0_{tt^*,t}\).

  Now let \(t\le u\le v\), define \(e=t t^*\) and \(f=u u^*\) and
  identify \(X_e\) and~\(X_f\) with subsets of~\(G^1\).  In the
  following diagram, we abbreviate~\(\times_G\) to~\(\ast\), and
  \(\mu^0\) denotes the left and right actions for subsets of~\(G^1\):
  \[
  \begin{tikzpicture}[baseline=(current bounding box.west)]
    \matrix (m) [cd,column sep=4.3em,row sep=3em] {
      X_t & X_e * X_u & {}_{U_e}|X_u\\
      X_e*X_v & X_e * X_f*X_v & {}_{U_e}|X_f*X_v\\
      {}_{U_e}| X_v & X_e * ({}_{U_f}| X_v) & {}_{U_e}|X_v\\
    };
    \draw[cdar] (m-1-2) -- node[swap] {\(\mu_{e,u}\)} (m-1-1);
    \draw[cdar] (m-1-2) -- node {\(\mu^0\)} (m-1-3);
    \draw[cdar] (m-2-2) -- node[swap] {\(\mu_{e,f}*\Id\)} (m-2-1);
    \draw[cdar] (m-2-2) -- node {\(\mu^0*\Id\)} (m-2-3);
    \draw[cdar] (m-3-2) -- node[swap] {\(\mu^0\)} (m-3-1);
    \draw[cdar] (m-3-2) -- node {\(\mu^0\)} (m-3-3);

    \draw[cdar] (m-2-1) -- node[swap] {\(\mu_{e,v}\)} (m-1-1);
    \draw[cdar] (m-2-2) -- node {\(\Id*\mu_{f,v}\)} (m-1-2);
    \draw[cdar] (m-2-3) -- node {\({}_{U_e}|\mu_{f,v}\)} (m-1-3);
    \draw[cdar] (m-2-1) -- node {\(\mu^0\)} (m-3-1);
    \draw[cdar] (m-2-2) -- node[swap] {\(\Id*\mu^0\)} (m-3-2);
    \draw[cdar] (m-2-3) -- node[swap] {\(\mu^0\)} (m-3-3);

    \draw[->,bend left] (m-1-1) to node {\(j_{u,t}\)} (m-1-3);
    \draw[->,bend left] (m-1-3) to[out=50,in=130] node {\({}_{U_e}|j_{v,u}\)} (m-3-3);
    \draw[->] (m-1-1) to[out=220,in=140] node[swap] {\(j_{v,t}\)} (m-3-1);
    \draw[double,double equal sign distance,bend right] (m-3-1) to (m-3-3);
  \end{tikzpicture}
  \]
  The top left square commutes because the multiplication maps are
  associative, the top right square because they are bibundle maps.
  The bottom left square commutes because \(\mu_{e,f}=\mu^0\), and the
  bottom right square commutes for trivial reasons.  The bent
  composite arrows are the maps~\(j\) by construction.  Thus the whole
  diagram commutes, and this means that \(j_{v,u}\circ j_{u,t} =
  j_{v,t}\).

  If \(t_1\le u_1\) and \(t_2\le u_2\) in~\(S\), then there is a
  commuting diagram of isomorphisms
  \begin{equation}
    \label{eq:associativity3}
    \begin{tikzpicture}[baseline=(current bounding box.west)]
      \matrix (m) [cd,row sep=2.5em,column sep=5.6em] {
        X_{t_1}* X_{t_2}& X_{t_1t_2}\\
        X_{t_1t_1^*}* X_{u_1} * X_{u_2} *
        X_{t_2^*t_2} &
        X_{t_1t_1^*}* X_{u_1u_2} * X_{t_2^*t_2} \\
        {}_{U_{t_1t_1^*}}|X_{u_1} * X_{u_2} |_{U_{t_2^*t_2}}&
        {}_{U_{t_1t_1^*}}|X_{u_1u_2} |_{U_{t_2^*t_2}}\\
      };
      \draw[cdar] (m-2-1) -- node[swap] {\(\scriptstyle\mu_{t_1t_1^*,u_1}*
        \mu_{u_2,t_2^*t_2}\)} (m-1-1);
      \draw[cdar] (m-2-1) -- node {\(\scriptstyle\mu^0\)} (m-3-1);
      \draw[cdar] (m-2-2) -- node
      {\(\scriptstyle\mu_{t_1t_1^*,u_1u_2,t_2^*t_2}\)} (m-1-2);
      \draw[cdar] (m-2-2) -- node[swap] {\(\scriptstyle\mu^0\)} (m-3-2);
      \draw[cdar] (m-1-1) -- node {\(\scriptstyle\mu_{t_1,t_2}\)} (m-1-2);
      \draw[cdar] (m-2-1) -- node[swap]
      {\(\scriptstyle\Id*\mu_{u_1,u_2}*\Id\)} (m-2-2);
      \draw[cdar] (m-3-1) -- node[swap]
      {\(\scriptstyle\mu_{u_1,u_2}\)} (m-3-2);
    \end{tikzpicture}
  \end{equation}
  Here we abbreviate~\(\times_G\) to~\(\ast\), \(\mu^0\) denotes the
  left and right actions for subsets of~\(G^1\),
  and~\(\mu_{t_tt_t^*,u_1,u_2,t_2^*t_2}\) denotes the appropriate
  combination of two multiplication maps, which is well-defined by
  associativity.  The upper square commutes by associativity.  The
  lower square commutes because~\(\mu_{u_1,u_2}\) is a bibundle map.
  The left vertical isomorphism from \(X_{t_1}* X_{t_2}\) to
  \({}_{U_{t_1t_1^*}}|X_{u_1} * X_{u_2} |_{U_{t_2^*t_2}}\) is
  \(j_{u_1,t_1}*j_{u_2,t_2}\) because the two constructions
  in~\eqref{eq:construct_j} coincide.  It remains to see that the
  right vertical isomorphism from~\(X_{t_1t_2}\) to
  \({}_{U_{t_1t_1^*}}|X_{u_1u_2}|_{U_{t_2^*t_2}}\)
  is~\(j_{u_1u_2,t_1t_2}\).

  The proof of this is similar to the proof that the two maps
  in~\eqref{eq:construct_j} coincide.  Let \(e=(t_1t_2)(t_1t_2)^*\),
  so \(e\le t_1t_1^*\).  Since \(\rg(X_{t_1t_2})=U_e\)
  and~\eqref{eq:associativity3} is a diagram of isomorphisms, we have
  \(X_e*X_{t_1t_1^*}* X_{u_1u_2} * X_{t_2^*t_2} \cong X_{t_1t_1^*}*
  X_{u_1u_2} * X_{t_2^*t_2}\).  Furthermore, the isomorphism
  \[
  \mu_{e,t_1t_1^*}*\Id\colon X_e*X_{t_1t_1^*}* X_{u_1u_2} * X_{t_2^*t_2}
  \to X_e* X_{u_1u_2} * X_{t_2^*t_2}
  \]
  is equal to the standard multiplication map
  \(\mu^0_{e,t_1t_1^*}*\Id\) because \(e\le t_tt_1^*\).  This fact and
  associativity show that the right vertical isomorphism
  in~\eqref{eq:associativity3} is equal to the composite map
  \[
  X_{t_1t_2} \xleftarrow[\cong]{\mu_{e,u_1u_2,t_2t_2^*}}
  X_e* X_{u_1u_2} * X_{t_2^*t_2}
  \xrightarrow[\cong]{\mu^0}
  {}_{U_e}|X_{u_1u_2}|_{U_{t_2^*t_2}}
  = {}_{U_e}|X_{u_1u_2}.
  \]
  Similarly, we get the same composite map if we
  replace~\(t_2^*t_2\) on the right by the smaller idempotent
  \(f=(t_1t_2)^*(t_1t_2)\).  Now the diagram~\eqref{eq:associativity2}
  shows that the map we get is~\(j_{u_1u_2,t_1t_2}\) as desired.
  Hence~\eqref{eq:inclusions_from_action} commutes.
\end{proof}

\begin{remark}
  \label{rem:ActionsSemilattice}
  Let~\(E\) be a semilattice with unit~\(1\), viewed as an inverse
  semigroup.  An \(E\)\nb-action on a topological groupoid~\(G\) is
  the same as a unital semilattice map from~\(E\) to the lattice of
  open \(G\)\nb-invariant subsets of~\(G^0\), that is, a map
  \(e\mapsto U_e\) satisfying \(U_1=G^0\) and \(U_e\cap U_f=U_{ef}\)
  for all \(e,f\in E\).  The corresponding action by partial
  equivalences is defined by \(X_e\defeq G^1_{U_e}\) and
  \(\mu_{e,f}=\mu^0\colon G^1_{U_e}\times_{G} G^1_{U_f}\to
  G^1_{U_{ef}}\).  Proposition~\ref{pro:inclusions_from_action}
  implies that every action of~\(E\) is isomorphic to one of this
  form.
\end{remark}

\begin{proposition}
  \label{pro:S_action_involution}
  There are unique bibundle isomorphisms \(J_t\colon X^*_t\to
  X_{t^*}\) for which the following composite map is the identity:
  \begin{equation}
    \label{eq:characterise_Jt_1}
    X_t \cong X_t\times_G X_t^*\times_G X_t
    \xrightarrow{\Id_{X_t}\times_G J_t\times_G \Id_{X_t}}
    X_t\times_G X_{t^*}\times_G X_t \xrightarrow{\mu_{t,t^*,t}}
    X_t.
  \end{equation}
  These involutions also make the following diagrams commute:
  \begin{equation}
    \label{eq:characterise_Jt_2}
    \begin{tikzpicture}[baseline=(current bounding box.west)]
      \matrix (m) [cd,row sep=1.5em] {
        X_t \times_G X_t^* & G^1_{U_{tt^*}}\\
        X_t \times_G X_{t^*} & X_{tt^*}\\
      };
      \draw[cdar] (m-2-1) -- node[swap] {\(\mu_{t,t^*}\)} (m-2-2);
      \draw[cdar] (m-1-1) -- node[swap] {\(\Id_{X_t}\times_G J_t\)} (m-2-1);
      \draw[cdar] (m-1-2) -- (m-2-2);
      \draw[cdar] (m-1-1) -- (m-1-2);
    \end{tikzpicture}
    \quad
    \begin{tikzpicture}[baseline=(current bounding box.west)]
      \matrix (m) [cd,row sep=1.5em] {
        X_t^* \times_G X_t & G^1_{U_{t^*t}}\\
        X_{t^*} \times_G X_t & X_{t^*t}\\
      };
      \draw[cdar] (m-2-1) -- node[swap] {\(\mu_{t^*,t}\)} (m-2-2);
      \draw[cdar] (m-1-1) -- node[swap] {\(J_t\times_G \Id_{X_t}\)} (m-2-1);
      \draw[cdar] (m-1-2) -- (m-2-2);
      \draw[cdar] (m-1-1) -- (m-1-2);
    \end{tikzpicture}
  \end{equation}
  Here the unlabelled arrows are the canonical isomorphisms from
  Propositions \textup{\ref{pro:properties_of_dual}}
  and~\textup{\ref{pro:idempotents}}.  Furthermore, \((J_{t^*})^*\circ
  J_t\colon X_t^*\to X_{t^*} \to X_t^*\) is the identity map for all
  \(t\in S\) and the following diagrams commute for all \(t,u,v\in S\)
  with \(t\le u\):
  \begin{equation}
    \label{eq:Jt_3}
    \begin{tikzpicture}[baseline=(current bounding box.west)]
      \matrix (m) [cd,row sep=1.5em] {
        X_u^* \times_G X_v^* & X_{vu}^*\\
        X_{u^*} \times_G X_{v^*} & X_{u^*v^*}\\
      };
      \draw[cdar] (m-1-1) -- node {\(\mu_{v,u}^*\)} (m-1-2);
      \draw[cdar] (m-1-1) -- node[swap] {\(J_u\times_G J_v\)} (m-2-1);
      \draw[cdar] (m-2-1) -- node[swap] {\(\mu_{u^*,v^*}\)} (m-2-2);
      \draw[cdar] (m-1-2) -- node {\(J_{vu}\)} (m-2-2);
    \end{tikzpicture}
    \quad
    \begin{tikzpicture}[baseline=(current bounding box.west)]
      \matrix (m) [cd,row sep=1.5em] {
        X_t^* & X_u^*|_{U_{tt^*}}\\
        X_{t^*} & X_{u^*}|_{U_{tt^*}}\\
      };
      \draw[cdar] (m-1-1) -- node {\(j_{u,t}^*\)} (m-1-2);
      \draw[cdar] (m-1-1) -- node[swap] {\(J_t\)} (m-2-1);
      \draw[cdar] (m-2-1) -- node[swap] {\(j_{u^*,t^*}\)} (m-2-2);
      \draw[cdar] (m-1-2) -- node {\(J_u|_{U_{tt^*}}\)} (m-2-2);
    \end{tikzpicture}
  \end{equation}
\end{proposition}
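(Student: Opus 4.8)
The plan is to derive everything from the uniqueness statement of Proposition~\ref{pro:dual_pe_unique}: each map \(J_t\), and then each identity claimed for the family \((J_t)\), will be pinned down by exhibiting a candidate map and checking that it satisfies a relation of the type~\eqref{eq:composite_determines_dual}. For fixed \(t\), apply Proposition~\ref{pro:dual_pe_unique} with \(X=X_t\) and \(Y=X_{t^*}\). The two bibundle isomorphisms needed as input, \(X_t\times_G X_{t^*}\times_G X_t\cong X_t\) and \(X_{t^*}\times_G X_t\times_G X_{t^*}\cong X_{t^*}\), are the threefold multiplications \(\mu_{t,t^*,t}\defeq\mu_{tt^*,t}\circ(\mu_{t,t^*}\times_G\Id)\) and \(\mu_{t^*,t,t^*}\defeq\mu_{t^*t,t^*}\circ(\mu_{t^*,t}\times_G\Id)\); these are well defined and bracketing-independent by~\ref{enum:APE4}, since \(tt^*t=t\) and \(t^*tt^*=t^*\) in~\(S\). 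Proposition~\ref{pro:dual_pe_unique} then yields a unique bibundle isomorphism \(J_t\colon X_t^*\congto X_{t^*}\) making the composite~\eqref{eq:composite_determines_dual} the identity, and for this input that composite is exactly~\eqref{eq:characterise_Jt_1}.

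For the diagrams~\eqref{eq:characterise_Jt_2} the strategy is to reduce to an identity of maps into~\(X_t\). It suffices to check the left diagram after multiplying on the right by \(X_t\) and composing with the left \(G\)\nb-action isomorphism, because that operation is injective on bibundle maps by the bijections in Proposition~\ref{pro:properties_of_dual}; the right diagram then follows by exchanging left and right. After this reduction the route through \(X_{t^*}\) becomes \(\mu_{tt^*,t}\circ(\mu_{t,t^*}\times_G\Id_{X_t})\circ(\Id_{X_t}\times_G J_t\times_G\Id_{X_t})=\mu_{t,t^*,t}\circ(\Id_{X_t}\times_G J_t\times_G\Id_{X_t})\), which by~\eqref{eq:characterise_Jt_1} is the canonical map \(X_t\times_G X_t^*\times_G X_t\to X_t\); by the top diagram in~\eqref{eq:XXX_to_X} this factors as the canonical isomorphism \(c_t\colon X_t\times_G X_t^*\congto G^1_{U_{tt^*}}\) of Proposition~\ref{pro:properties_of_dual} (using \(\rg(X_t)=U_{tt^*}\) from Lemma~\ref{lem:domains_invsg-action}) followed by the left \(G\)\nb-action on~\(X_t\). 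The other route is \(\mu_{tt^*,t}\) precomposed with \(c_t\) and the identification \(G^1_{U_{tt^*}}\cong X_{tt^*}\) from Proposition~\ref{pro:idempotents}; but that identification turns \(\mu_{tt^*,t}\) into the left \(G\)\nb-action — this is precisely the equality \(\mu_{e,u}=\mu^0_{e,u}\) for idempotent \(e\ge uu^*\) established inside the proof of Proposition~\ref{pro:inclusions_from_action}, with \(e=tt^*\) and \(u=t\). So both routes agree.

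For \((J_{t^*})^*\circ J_t=\Id\) and for the two squares in~\eqref{eq:Jt_3}, the target map is again uniquely characterised by a relation of the form~\eqref{eq:composite_determines_dual}, so I would check that the other composite around the diagram satisfies that relation. For \((J_{t^*})^*\) this means dualising the defining relation of \(J_{t^*}\) by the functor \(X\mapsto X^*\), using that \((-)^*\) is strictly involutive and direction-preserving on bibundle maps, that it carries the canonical isomorphism \(X\congto X\times_G X^*\times_G X\) to the one for \(X^*\) (a consequence of \(\sigma^2=\Id\) and the explicit formulas in the proof of Proposition~\ref{pro:properties_of_dual}), and that it sends \(\mu_{t^*,t,t^*}\) to a flip of \(\mu_{t,t^*,t}\); the dualised relation then coincides with the one characterising \((J_t)^{-1}\), and uniqueness forces \((J_{t^*})^*=(J_t)^{-1}\). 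For the first square of~\eqref{eq:Jt_3} one checks that \(\mu_{u^*,v^*}\circ(J_u\times_G J_v)\circ(\mu_{v,u}^*)^{-1}\colon X_{vu}^*\to X_{(vu)^*}\) satisfies~\eqref{eq:characterise_Jt_1} with \(t=vu\), by expanding the threefold multiplication via~\ref{enum:APE4} and inserting the relations~\eqref{eq:characterise_Jt_1} for \(J_u\) and \(J_v\); for the second square, since \(t\le u\), one uses the description~\eqref{eq:construct_j} of the inclusions \(j\) together with \(j_{v,u}\circ j_{u,t}=j_{v,t}\) from Proposition~\ref{pro:inclusions_from_action}. In each case naturality of \(\sigma\) is what allows the dualised or restricted data to be recombined correctly.

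The conceptual content is entirely carried by Proposition~\ref{pro:dual_pe_unique} and by facts already isolated in the proofs of Propositions~\ref{pro:idempotents} and~\ref{pro:inclusions_from_action}; the genuine difficulty is notational, namely verifying that several large diagrams built from associators, unit constraints, the flip maps \(\sigma\), and the identifications \(X_e\cong G^1_{U_e}\) commute. The hardest single point will be the interaction of the duality functor with the threefold product isomorphism \(X\cong X\times_G X^*\times_G X\) and with the multiplication maps, which is what makes \((J_{t^*})^*\circ J_t=\Id\) the least routine part; once that compatibility is made precise, the squares in~\eqref{eq:Jt_3} are of the same type and follow by the same bookkeeping.
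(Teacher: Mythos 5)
Your construction of \(J_t\) via Proposition~\ref{pro:dual_pe_unique} from the two threefold multiplications, your verification of \eqref{eq:characterise_Jt_2} (tensor on the right with \(X_t\), use that \(\mu_{tt^*,t}\) becomes the left \(G\)\nb-action under the identification \(X_{tt^*}\cong G^1_{U_{tt^*}}\), then cancel \(X_t\)), and your uniqueness-based treatment of the two squares in \eqref{eq:Jt_3} are in substance the paper's proof. The genuine problem is your argument for \((J_{t^*})^*\circ J_t=\Id\). You propose to dualise the defining relation of \(J_{t^*}\) and assert that the duality functor ``sends \(\mu_{t^*,t,t^*}\) to a flip of \(\mu_{t,t^*,t}\)''. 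That is false as stated: the flip-dual of \(\mu_{t,t^*,t}\) is a map \(X_t^*\times_G X_{t^*}^*\times_G X_t^*\to X_t^*\) between the \emph{dual} bundles, whereas \(\mu_{t^*,t,t^*}\) is a map \(X_{t^*}\times_G X_t\times_G X_{t^*}\to X_{t^*}\); these live on different spaces, and the only way to compare them is through the isomorphisms \(J_t\) and \(J_{t^*}\) themselves. Consequently ``the dualised relation coincides with the one characterising \((J_t)^{-1}\)'' presupposes exactly the compatibility of the involutions with the multiplication maps that is under proof (essentially the content of the first square in \eqref{eq:Jt_3}); as written this step is circular, or at best leaves a hole where two relations on different bundles are declared equal.

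What closes the gap is an ingredient you never invoke here: the naturality of the canonical pairing \(X\times_G X^*\to G^1_{\rg(X)}\) under bibundle isomorphisms, a property of the duality alone, independent of the action. This is the paper's route: having established \eqref{eq:characterise_Jt_2}, one checks that \((J_t^{-1})^*\) makes the second diagram in \eqref{eq:characterise_Jt_2} for \(t^*\) commute, by pasting the naturality square of the pairing for the isomorphism \(J_t^{-1}\colon X_{t^*}\to X_t^*\) (that is, for \((J_t^{-1})^*\times_G J_t^{-1}\)) on top of the first diagram in \eqref{eq:characterise_Jt_2} for \(t\); since that diagram characterises \(J_{t^*}\) uniquely, \(J_{t^*}=(J_t^{-1})^*\), which is the assertion. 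Naturality of the flip \(\sigma\) and compatibility of duality with the canonical isomorphism \(X\congto X\times_G X^*\times_G X\), which you do cite, cannot substitute for this, because they never relate the dual bundles \(X_s^*\) back to the action's bundles \(X_{s^*}\).
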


Write \(x^*\defeq J_t(x)\) for \(x\in X_t\) and
\(\mu_{t,u}(x,y)=x\cdot y\) for \(x\in X_t\), \(y\in X_u\) with
\(\s(x)=\rg(y)\).  The above diagrams and equations of maps mean that
the involution is characterised by \(x\cdot x^*\cdot x=x\) for all
\(x\in X_t\) and has the properties \(x\cdot x^*=1_{\rg(x)}\),
\(x^*\cdot x=1_{\s(x)}\), \((x^*)^*=x\), \((x\cdot y)^* = y^*\cdot
x^*\), and \(j_{u^*,t^*}(x^*) = j_{u,t}(x)^*\).

\begin{proof}
  The two isomorphisms \(\mu_{t,t^*,t}\colon X_t\times_G
  X_{t^*}\times_G X_t\to X_t\) and \(\mu_{t^*,t,t^*}\colon
  X_{t^*}\times_G X_t\times_G X_{t^*}\to X_{t^*}\) that we may build
  from~\(\mu\) are equal by associativity.
  Proposition~\ref{pro:dual_pe_unique} for these isomorphisms gives a
  unique isomorphism \(J_t\colon X_t^*\cong X_{t^*}\) for
  which~\eqref{eq:characterise_Jt_1} becomes the identity map.

  We claim that~\eqref{eq:characterise_Jt_1} is the identity if and
  only if either of the diagrams in~\eqref{eq:characterise_Jt_2}
  commutes.  The proofs for both cases differ only by exchanging left
  and right, so we only write down one of them.  Assume that the first
  diagram in~\eqref{eq:characterise_Jt_2} commutes.  Applying the
  functor \(\blank \times_G \Id_{X_t}\) to it, we get that the
  isomorphism~\eqref{eq:characterise_Jt_1} is the identity map because
  the multiplication map \(\mu_{tt^*,t}\colon X_{tt^*}\times_G X_t\to
  X_t\) is just the left action if we identify \(X_{tt^*}\cong
  G^1_{U_{tt^*}}\) as usual.  Conversely, assume that the isomorphism
  in~\eqref{eq:characterise_Jt_1} is the identity map.  Take a further
  product with~\(X_{t^*}\) and then identify \(X_t\times_G
  X_{t^*}\cong X_{tt^*}\) via~\(\mu_{t,t^*}\).  Using again that the
  multiplication with~\(X_{tt^*}\) is just the \(G\)\nb-action, this
  gives the first diagram in~\eqref{eq:characterise_Jt_2}.

  Next we show that \(J_{t^*} = (J_t^{-1})^*\), which implies
  \(J_{t^*}^*\circ J_t = \Id_{X_t}\).  We use the commuting diagram
  \[
  \begin{tikzpicture}[baseline=(current bounding box.west)]
    \matrix (m) [cd,row sep=1.5em] {
      X^*_{t^*} \times_G X_{t^*} & G^1_{U_{tt^*}}\\
      X_t \times_G X^*_t & G^1_{U_{tt^*}}\\
      X_t \times_G X_{t^*} & X_{tt^*}\\
    };
    \draw[cdar] (m-1-1) -- (m-1-2);
    \draw[cdar] (m-2-1) -- (m-2-2);
    \draw[cdar] (m-3-1) -- node {\(\mu_{t,t^*}\)} (m-3-2);
    \draw[cdar] (m-1-1) -- node[swap] {\((J_t^{-1})^*\times_G
      J_t^{-1}\)} (m-2-1);
    \draw[cdar] (m-2-1) -- node[swap] {\(\Id_{X_t}\times_G J_t\)} (m-3-1);
    \draw[double,double equal sign distance] (m-1-2) -- (m-2-2);
    \draw[cdar] (m-2-2) -- (m-3-2);
  \end{tikzpicture}
  \]
  The top rectangle commutes because the pairing \(X\times_G X^*\to
  G^1_{\rg(X)}\) is natural.  The bottom diagram is the first one
  in~\eqref{eq:characterise_Jt_2}.  The large rectangle is the second
  diagram in~\eqref{eq:characterise_Jt_2} for~\(t^*\) with
  \((J_t^{-1})^*\) instead of \(J_{t^*}\).  Since this diagram
  characterises~\(J_{t^*}\), we get \(J_{t^*} = (J_t^{-1})^*\) as
  asserted.

  Since the involution~\(J_{vu}\) is uniquely characterised by a
  diagram like the first one in~\eqref{eq:characterise_Jt_2}, we may
  prove the first diagram in~\eqref{eq:Jt_3} by showing that the
  composite map \(\mu_{u^*,v^*}\circ (J_u\times_G J_v)\circ
  (\mu_{v,u}^*)^{-1}\colon X_{vu}^* \to X_{u^*v^*}\) also makes the
  diagram in~\eqref{eq:characterise_Jt_2} for \(t=vu\) commute.  This
  is a routine computation using the same diagrams for \(J_u\)
  and~\(J_v\) and that the multiplication maps involving~\(X_e\) for
  idempotent \(e\in S\) are always given by the left or right action
  because of the compatibility with~\(j_{1,e}\).  This proof is a
  variant of the usual proof that \((xy)^{-1}=y^{-1}x^{-1}\) in a
  group because \(y^{-1}x^{-1}\cdot (xy) = 1\).

  Similarly, we get the second diagram in~\eqref{eq:Jt_3} by showing
  that the composite map \(j_{u^*,t^*}^{-1} \circ J_u\circ
  j_{u,t}^*\colon X_t^*\to X_{t^*}\) satisfies the defining condition
  for~\(J_t\) because \(j_{u,t}\) and~\(j_{u^*,t^*}\) are compatible
  with the multiplication maps.
\end{proof}

\subsection{Transformation groupoids}
\label{sec:transformation}

Let~\((X_t,\mu_{t,u})_{t,u\in S}\) be an action of a unital inverse
semigroup~\(S\) on a topological groupoid~\(G\) by partial
equivalences.  Define the embeddings \(j_{u,t}\colon X_t\to X_u\) for
\(t\le u\) in~\(S\) and the involutions \(X_t^*\to X_{t^*}\) as in
Propositions \ref{pro:inclusions_from_action}
and~\ref{pro:S_action_involution}.

Let \(X\defeq \bigsqcup_{t\in S} X_t\) and define a relation~\(\sim\)
on~\(X\) by \((t,x)\sim (u,y)\) for \(x\in X_t\), \(y\in X_u\) if
there are \(v\in S\) with \(v\le t,u\) and \(z\in X_v\) with
\(j_{t,v}(z)=x\) and \(j_{u,v}(z)=y\).

\begin{lemma}
  \label{lem:transformation_gp_open_equivalence}
  The relation~\(\sim\) is an equivalence relation.  Equip \(X_\sim
  \defeq X/{\sim}\) with the quotient topology.  The quotient map \(\pi\colon
  X\to X_\sim\) is a local homeomorphism.  It restricts to a
  homeomorphism from~\(X_t\) onto an open subset of~\(X_\sim\) for
  each \(t\in S\).  Thus~\(X_\sim\) is locally quasi-compact or
  locally Hausdorff if and only if all~\(X_t\) are.
\end{lemma}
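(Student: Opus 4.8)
The plan is to dispose of the equivalence-relation axioms first and then read off the topological assertions from an explicit description of the saturations $\pi^{-1}(\pi(W))$ of subsets $W\subseteq X$.

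\emph{The relation $\sim$ is an equivalence relation.} Reflexivity is clear from $j_{t,t}=\Id_{X_t}$ (take $v=t$, $z=x$), and symmetry from the symmetric shape of the definition. The only real work is transitivity, and here is how I would do it. Suppose $(t,x)\sim(u,y)$ via $v\le t,u$ and $z\in X_v$ with $j_{t,v}(z)=x$, $j_{u,v}(z)=y$, and $(u,y)\sim(w,y')$ via $v'\le u,w$ and $z'\in X_{v'}$ with $j_{u,v'}(z')=y$, $j_{w,v'}(z')=y'$. Set $g\defeq v^*v$, $g'\defeq v'^*v'$, and $v''\defeq u g g'$. Using $v=ug$, $v'=ug'$ and the commutativity of idempotents in~$S$, I would check the identities $v''=vg'=v'g$ and $(v'')^*v''=gg'$, and then deduce from the order relation on~$S$ that $v''\le v\le t$ and $v''\le v'\le w$. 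By Lemma~\ref{lem:domains_invsg-action} and the fact that $e\mapsto\s(X_e)$ is a semilattice map on the idempotents of~$S$ (Remark~\ref{rem:ActionsSemilattice}), $\s(X_{v''})=\s(X_{gg'})=\s(X_g)\cap\s(X_{g'})=\s(X_v)\cap\s(X_{v'})$. Since $\s(y)$ lies in $\s(X_v)\cap\s(X_{v'})$, Proposition~\ref{pro:inclusions_from_action} gives a unique $z''\in X_{v''}$ with $j_{u,v''}(z'')=y$. The cocycle identity $j_{u,v}\circ j_{v,v''}=j_{u,v''}$ together with injectivity of~$j_{u,v}$ then forces $j_{v,v''}(z'')=z$, and symmetrically $j_{v',v''}(z'')=z'$; composing with $j_{t,v}$ and with $j_{w,v'}$ and invoking the cocycle identities once more gives $j_{t,v''}(z'')=x$ and $j_{w,v''}(z'')=y'$, so $(t,x)\sim(w,y')$.

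\emph{Topological claims.} Each $X_t$ is clopen in $X=\bigsqcup_{t} X_t$. The restriction $\pi|_{X_t}$ is injective, since $(t,x)\sim(t,x')$ forces, through a single witness $(v,z)$, that $x=j_{t,v}(z)=x'$; and it is continuous. For the remaining points I would compute, for each $u\in S$,
\[
\pi^{-1}\bigl(\pi(X_t)\bigr)\cap X_u=\bigcup_{v\le u,t} j_{u,v}(X_v)=\bigcup_{v\le u,t} X_u|_{\s(X_v)},
\]
which is open in $X_u$; hence $\pi^{-1}(\pi(X_t))$ is open in $X$ and $\pi(X_t)$ is open in $X_\sim$. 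Replacing $X_t$ by an arbitrary open $W\subseteq X_t$, the same bookkeeping yields $\pi^{-1}(\pi(W))\cap X_u=\bigcup_{v\le u,t} j_{u,v}\bigl(j_{t,v}^{-1}(W)\bigr)$, which is open because the $j_{t,v}$ are continuous and the $j_{u,v}$ are open embeddings (Proposition~\ref{pro:inclusions_from_action}); so $\pi|_{X_t}$ is an open map onto the open set $\pi(X_t)$ and therefore a homeomorphism. As the clopen sets $X_t$ cover $X$, $\pi$ is a local homeomorphism. Finally $X_\sim=\bigcup_{t\in S}\pi(X_t)$ is an open cover by sets homeomorphic to the $X_t$, and local quasi-compactness and local Hausdorffness are properties that pass to open subsets and are verified on any open cover; this gives the last sentence.

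The step I expect to be the main obstacle is transitivity, specifically producing the common lower bound $v''\in S$ and verifying that $y$ genuinely lifts to $X_{v''}$ — which is exactly where the domain identity $\s(X_{v''})=\s(X_v)\cap\s(X_{v'})$ and the cocycle relations for the maps $j_{u,t}$ are needed. Everything after that is routine manipulation of the sets $\pi^{-1}(\pi(\cdot))$.
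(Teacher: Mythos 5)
Your proof is correct and follows essentially the same route as the paper: a common lower bound for transitivity (your $v''=ugg'$ is the same element as the paper's $t_{12}t_{23}^*t_{23}$, and lifting $y$ versus lifting $z$ makes no difference given the injectivity of the maps $j_{u,v}$), and the same computation of $\pi^{-1}(\pi(W))$ as a union of sets $j_{u,v}(j_{t,v}^{-1}(W))$ for the openness of $\pi$. No gaps.
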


\begin{proof}
  It is clear that~\(\sim\) is reflexive and symmetric.  For
  transitivity, take \((t_1,x_1)\sim (t_2,x_2)\sim (t_3,x_3)\).  Then
  there are \(t_{12}\le t_1,t_2\), \(t_{23}\le t_2,t_3\), \(x_{12}\in
  X_{t_{12}}\), and \(x_{23}\in X_{t_{23}}\) with
  \(j_{t_i,t_{12}}(x_{12}) = x_i\) for \(i=1,2\) and
  \(j_{t_i,t_{23}}(x_{23}) = x_i\) for \(i=2,3\).  Thus \(\s(x_{12}) =
  \s(x_2) = \s(x_{23})\in \s(t_{23})= \s(t_{23}^*t_{23})\).  Let
  \(t\defeq t_{12}t_{23}^*t_{23}\), so that \(t\le t_{12}\) and \(t\le
  t_2t_2^* t_{23} = t_{23}\).  We have \(x_{12}\in
  X_{t_{12}}|_{U_{t_{23}^*t_{23}}} \cong X_{t_{12}}\times_G
  X_{t_{23}^*t_{23}} \cong X_t\).  Let~\(x\) be the image
  of~\(x_{12}\) under this isomorphism.  Then
  \(j_{t_{12},t}(x)=x_{12}\).  Hence \(j_{t_i,t}(x)=j_{t_i,t_{12}}
  (j_{t_{12},t}(x)) = x_i\) for \(i=1,2\).  Since
  \(j_{t_2,t_{23}}(x_{23})=x_2= j_{t_2,t_{23}}(j_{t_{23},t}(x))\) and
  \(j_{t_2,t_{23}}\) is injective by
  Proposition~\ref{pro:characterise_bibundle_isom}, we get
  \(j_{t_{23},t}(x)=x_{23}\) and hence also \(j_{t_3,t}(x)=x_3\).
  Thus \(x_1\sim x_3\) as desired.

  We prove that~\(\pi\) is open.  Any open subset of~\(X\) is a
  disjoint union of open subsets of the spaces~\(X_t\); so~\(\pi\) is
  open if and only if all the maps \(X_t\to X_\sim\) are open.  Let
  \(U\subseteq X_t\) be open, then we must check that
  \(\pi^{-1}(\pi(U))\) is open.  This set is a union over the set of
  triples \(t,v,w\in S\) with \(w\le t,v\), where the set for
  \(t,v,w\) is contained in~\(X_v\) and consists of all~\(j_{v,w}(x)\)
  with \(x\in j_{t,w}^{-1}(U)\).  The map~\(j_{v,w}\) is open by
  Proposition~\ref{pro:characterise_bibundle_isom}, and~\(j_{t,w}\) is
  continuous, so \(j_{v,w}(j_{t,w}^{-1}(U))\) is open.  Hence
  \(\pi^{-1}(\pi(U))\) is open as a union of open subsets of~\(X\),
  showing that~\(\pi\) is open.

  If \((t,x)\sim (t,y)\), then there are \(u\le t\) and \(z\in X_u\)
  with \(x=j_{t,u}(z)=y\); so the map from~\(X_t\) to~\(X_\sim\) is
  injective.  Since~\(\pi\) is open and continuous, it restricts to a
  homeomorphism from~\(X_t\) onto an open subset of~\(X_\sim\).
  Thus~\(\pi\) is a local homeomorphism.  Since being locally
  Hausdorff or locally quasi-compact are local properties and~\(\pi\)
  is a local homeomorphism, \(X_\sim\) has one of these two properties
  if and only if~\(X\) has, if and only if each~\(X_t\) has.
\end{proof}

The space~\(X_\sim\) need not be Hausdorff, just as for étale
groupoids constructed from inverse semigroup actions on spaces,
where~\(X_\sim\) will be the groupoid of germs of the action (by
Theorem~\ref{the:action_on_space}).

From now on, we identify~\(X_t\) with its image in~\(X_\sim\), using
that \(\pi|_{X_t}\colon X_t\to X_\sim\) is a homeomorphism onto an
open subset by Lemma~\ref{lem:transformation_gp_open_equivalence}.

We are going to turn~\(X_\sim\) into a topological groupoid with the
same object space~\(G^0\) as~\(G\).  Since~\(j_{u,t}\) is a bibundle
map, it is compatible with range and source maps.  So the maps
\(\rg,\s\colon X_t\rightrightarrows G^0\) induce well-defined maps
\(\rg,s\colon X_\sim\rightrightarrows G^0\).

The multiplication maps~\(\mu_{t,u}\) give a continuous map
\(X\times_{\s,G^0,\rg} X\to X\), by mapping the \(t,u\)-component of
\(X\times_{\s,G^0,\rg} X\) to the \(tu\)-component of~\(X\)
by~\(\mu_{t,u}\).  Equation~\eqref{eq:inclusions_from_action} shows
that this descends to a well-defined continuous map \(\mu\colon
X_\sim\times_{\s,G^0,\rg} X_\sim\to X_\sim\).

\begin{lemma}
  \label{lem:transformation_gp_groupoid}
  The maps \(\rg,\s\colon X_\sim\rightrightarrows G^0\) and
  \(\mu\colon X_\sim\times_{\s,G^0,\rg} X_\sim\to X_\sim\) define a
  topological groupoid~\(X_\sim\).  It contains~\(G\) as an open
  subgroupoid.  Hence~\(X_\sim\) is étale if and only if~\(G\) is.
\end{lemma}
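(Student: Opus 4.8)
The plan is to verify the axioms of a topological groupoid for \(X_\sim\) one at a time, since almost all of the necessary coherence has already been prepared in Propositions~\ref{pro:inclusions_from_action} and~\ref{pro:S_action_involution} and in Lemma~\ref{lem:transformation_gp_open_equivalence}. The text preceding the lemma records that \(\rg,\s\colon X_\sim\rightrightarrows G^0\) are well defined and continuous and that \(\mu\colon X_\sim\times_{\s,G^0,\rg}X_\sim\to X_\sim\) is well defined (via~\eqref{eq:inclusions_from_action}) and continuous; moreover \(\rg\) and~\(\s\) are open, since they are open on each~\(X_t\) (Condition~\ref{enum:Peq4} in Definition~\ref{def:partial_equivalence}) and \(\pi\) restricts to open embeddings \(X_t\hookrightarrow X_\sim\) by Lemma~\ref{lem:transformation_gp_open_equivalence}. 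Associativity of~\(\mu\) then follows by descending Condition~\ref{enum:APE4} for the maps~\(\mu_{t,u}\) to the quotient; here one uses that \(\pi\times\pi\times\pi\) restricts to a quotient map on the triple fibre product, which holds because \(\pi\) is open and surjective.

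For the units I would use the image of \(G^0\subseteq G^1=X_1\) in~\(X_\sim\); writing \(1_u\in X_1\subseteq X_\sim\) for the unit of~\(G\) at \(u\in G^0\), one has \(\rg(1_u)=\s(1_u)=u\) and \(1_{\rg(z)}\cdot z=z=z\cdot 1_{\s(z)}\) for every \(z\in X_t\), because \(\mu_{1,t}\) and~\(\mu_{t,1}\) are, by Condition~\ref{enum:APE3}, the left and right \(G\)\nb-actions on the bispace~\(X_t\), for which the units of~\(G\) act trivially. For inverses I would set \(z^{-1}\defeq J_t(z)\in X_{t^*}\) for \(z\in X_t\). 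Since \(J_t\colon X_t^*\to X_{t^*}\) is a bibundle isomorphism, \(\rg(z^{-1})=\s(z)\) and \(\s(z^{-1})=\rg(z)\); and by Proposition~\ref{pro:S_action_involution} (in the notation there, \(x\cdot x^*=1_{\rg(x)}\) and \(x^*\cdot x=1_{\s(x)}\)) one has \(z\cdot z^{-1}=1_{\rg(z)}\) and \(z^{-1}\cdot z=1_{\s(z)}\) in~\(X_\sim\), after identifying \(X_{tt^*}=G^1_{U_{tt^*}}\) and \(X_{t^*t}=G^1_{U_{t^*t}}\) with their images in~\(X_1\); involutivity of the inverse follows from \((J_{t^*})^*\circ J_t=\Id\), also part of Proposition~\ref{pro:S_action_involution}. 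It remains to see that \(z\mapsto z^{-1}\) is well defined on~\(X_\sim\) and continuous. For well-definedness, suppose \(z\in X_t\) and \(z'\in X_u\) represent the same point of~\(X_\sim\), i.e.\ \((t,z)\sim(u,z')\) through some \(v\le t,u\) and \(w\in X_v\) with \(j_{t,v}(w)=z\) and \(j_{u,v}(w)=z'\); then \(v^*\le t^*,u^*\) (the involution on~\(S\) preserves the order) and the second diagram in~\eqref{eq:Jt_3} gives \(j_{t^*,v^*}(J_v(w))=J_t(z)\) and \(j_{u^*,v^*}(J_v(w))=J_u(z')\), so \((t^*,J_t(z))\sim(u^*,J_u(z'))\). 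Continuity is then immediate, since \(\bigsqcup_{t}J_t\colon X\to X\) is a homeomorphism and \(\pi\colon X\to X_\sim\) is a quotient map. This establishes that \(X_\sim\) is a topological groupoid.

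Next, \(G\) embeds into~\(X_\sim\) through the identification of its arrow space with \(X_1\subseteq X_\sim\): this subset is open by Lemma~\ref{lem:transformation_gp_open_equivalence}, it contains~\(G^0\), it is stable under~\(\mu\) and under the inversion because \(1\cdot 1=1\) and \(1^*=1\) in~\(S\), and on \(X_1\times_{\s,G^0,\rg}X_1\) the induced multiplication is~\(\mu_{1,1}\), that is, the multiplication of~\(G\); hence \(G\) is an open subgroupoid of~\(X_\sim\) with object space~\(G^0\). For the last assertion, recall that a topological groupoid with open range and source maps is étale exactly when its object space is open in its arrow space. If \(X_\sim\) is étale, then so is its open subgroupoid~\(G\). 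Conversely, since \(G^0\subseteq X_1\subseteq X_\sim\) with \(X_1\) open in~\(X_\sim\), the set~\(G^0\) is open in~\(X_\sim\) if and only if it is open in \(G^1=X_1\), i.e.\ if and only if \(G\) is étale. (Alternatively one can argue piecewise: \(\pi\) is a local homeomorphism onto the open cover \((X_t)_{t\in S}\) of~\(X_\sim\), and by Lemma~\ref{lem:partial_equivalence} each~\(X_t\) is a principal \(G\)\nb-bundle over \(\rg(X_t)\) with bundle projection~\(\rg\), which is a local homeomorphism whenever \(G\) is étale.)

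As for the main difficulty: there is no deep obstacle, since Propositions~\ref{pro:inclusions_from_action} and~\ref{pro:S_action_involution} were proved precisely so that these checks become mechanical. The only point that needs some care is the bookkeeping of descent---confirming that associativity, the unit laws, and inversion, which are visibly present on the individual pieces~\(X_t\), pass correctly to the quotient~\(X_\sim\). This is exactly where the coherence diagrams~\eqref{eq:inclusions_from_action} and~\eqref{eq:Jt_3} and the openness of~\(\pi\) are used, and where one must verify that the quotient map behaves well with respect to the various fibre products appearing in the groupoid axioms.
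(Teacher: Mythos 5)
Your proof is correct and follows essentially the same route as the paper: associativity descends from~\ref{enum:APE4}, units come from \(X_1\) via~\ref{enum:APE3}, inverses are given by the involutions~\(J_t\) from Proposition~\ref{pro:S_action_involution}, and the étale statement reduces to \(G\) being an open subgroupoid. Your extra checks (well-definedness of inversion on the quotient via the second diagram in~\eqref{eq:Jt_3}, and the characterisation of étale via openness of the object space, which is legitimate here since the source and multiplication maps are open) only make explicit points the paper leaves implicit.
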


\begin{proof}
  The multiplication is associative already on~\(X\)
  by~\ref{enum:APE4} and the associativity of~\(S\).  The maps \(\rg\)
  and~\(\s\) are open on~\(X_\sim\) because they are so on
  each~\(X_t\).  The maps \(\rg,\s,\mu\) restricted to \(G^1=X_1\)
  reproduce the groupoid structure on~\(G\) by~\ref{enum:APE2}.  Even
  more, \ref{enum:APE3} implies that multiplication
  in~\(X_\sim\) with elements of~\(X_1\) is the same as the
  \(G\)\nb-action.  In particular, unit elements in~\(G^1\) act
  identically, so they remain unit elements in~\(X_\sim\).  If \(x\in
  X_t\), then \(x^*\in X_{t^*}\) satisfies
  \(\mu_{t,t^*}(x,x^*)=1_{\rg(x)}\) by
  Proposition~\ref{pro:S_action_involution}.  Hence
  \[
  \pi(x,t)\cdot \pi(x^*,t^*)
  \defeq \pi(\mu_{t,t^*}(x,x^*),tt^*)
  = \pi(1_{\rg(x)},tt^*).
  \]
  This is equivalent to the unit element~\((1_{\rg(x)},1)\)
  in~\(X_\sim\) because~\(j_{1,tt^*}\) is the usual inclusion map
  (more precisely, the computation above assumes that we identify
  \(X_{tt^*}\cong G^1_{U_{tt^*}}\subseteq G^1\) using~\(j_{1,tt^*}\)).
  Similarly, \(\pi(x,t)\cdot \pi(x^*,t^*) \sim (1_{\s(x)},1)\) is a
  unit element.  Thus \(\pi(x^*,t^*)\) is inverse to~\(\pi(x,t)\).
  The map \(\pi(x,t)\mapsto \pi(x^*,t^*)\) is continuous.  Thus we
  have a topological groupoid.  We have seen above that it
  contains~\(G\) as an open subgroupoid.  Therefore, \(X_\sim\) is
  étale if and only if~\(G\) is.
\end{proof}

\begin{definition}
  \label{def:transformation_gp}
  The groupoid~\(X_\sim\) is called the \emph{transformation
    groupoid} of the \(S\)\nb-action \((X_t,\mu_{t,u})\) on~\(G\) and
  denoted by \(G\rtimes S\), or by \(G\rtimes_{X_t,\mu_{t,u}} S\) if
  the action must be specified.
\end{definition}

Our proof shows that~\(G\rtimes S\) with the family of open
subsets~\((X_t)_{t\in S}\) encodes all the algebraic structure of our
action by partial equivalences.  The next definition characterises
when a groupoid~\(H\) with a family of subsets~\((H_t)_{t\in S}\) is
the transformation groupoid of an inverse semigroup action.

\begin{definition}
  \label{def:graded_groupoid}
  Let~\(S\) be an inverse semigroup.  A (saturated)
  \emph{\(S\)\nb-grading} on a topological groupoid~\(H\) is a family
  of open subsets~\((H_t)_{t\in S}\) of~\(H^1\) such that
  \begin{enumerate}[label=\textup{(Gr\arabic*)}]
  \item \label{enum:Gra1} \(H_t\cdot H_u=H_{tu}\) for all \(t,u\in S\);
  \item \label{enum:Gra2} \(H_t^{-1}=H_{t^*}\) for all \(t\in S\);
  \item \label{enum:Gra4} \(H_t\cap H_u = \bigcup_{v\le t,u} H_v\) for
    all \(t,u\in S\);
  \item \label{enum:Gra6} \(H^1=\bigcup_{t\in S} H_t\).
  \end{enumerate}
  If~\(S\) has a zero element~\(0\), we may also require
  \(H_0=\emptyset\).
\end{definition}

The conditions \ref{enum:Gra1} and~\ref{enum:Gra2} imply that~\(H_1\)
is a subgroupoid of~\(H\), called the \emph{unit fibre} of the
grading.  \ref{enum:Gra6} and~\ref{enum:Gra1} imply that
\(\s(H_1)=\rg(H_1)=H^0\).  \ref{enum:Gra4} implies \(H_v\subseteq
H_u\) for \(v\le u\).

A non-saturated \(S\)\nb-grading would be defined by
weakening~\ref{enum:Gra1} to \(H_t\cdot H_u\subseteq H_{tu}\) for all
\(t,u\in S\).  We only use saturated gradings and drop the adjective.

\begin{theorem}
  \label{the:graded_groupoid_versus_action}
  Let~\(S\) be an inverse semigroup with unit.  The transformation
  groupoid~\(G\rtimes S\) of an \(S\)\nb-action on a groupoid~\(G\) by
  partial equivalences is an \(S\)\nb-graded groupoid.  Any
  \(S\)\nb-graded groupoid \((H,(H_t)_{t\in S})\) is isomorphic to one
  of this form, where \(G^0=H^0\) and \(G^1=H_1\subseteq H\).  Two
  actions by partial equivalences are isomorphic if and only if their
  transformation groupoids are isomorphic in a grading-preserving way.
\end{theorem}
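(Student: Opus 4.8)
The plan is to establish the three assertions in order, reducing the last one to the first two. For the first, that the transformation groupoid $G\rtimes S=X_\sim$ with the family $(X_t)_{t\in S}$ is $S$-graded, I identify each $X_t$ with its open image in $X_\sim$ as in Lemma~\ref{lem:transformation_gp_open_equivalence}. Then \ref{enum:Gra6} holds by construction. For~\ref{enum:Gra1}, the product in $X_\sim$ of $\pi(t,x)$ and $\pi(u,y)$ with $\s(x)=\rg(y)$ is $\pi(tu,\mu_{t,u}(x,y))\in X_{tu}$, and $\mu_{t,u}\colon X_t\times_{\s,G^0,\rg}X_u\to X_{tu}$ is surjective since it factors through the bibundle isomorphism $\bar\mu_{t,u}$, so $X_t\cdot X_u=X_{tu}$. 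For~\ref{enum:Gra2}, the inverse of $\pi(t,x)$ is $\pi(t^*,J_t(x))$ by Lemma~\ref{lem:transformation_gp_groupoid}, and $J_t\colon X_t\to X_{t^*}$ is bijective by Proposition~\ref{pro:S_action_involution}, so $X_t^{-1}=X_{t^*}$. For~\ref{enum:Gra4}, unravelling the definition of~$\sim$ together with the identities $\pi(t,j_{t,v}(z))=\pi(v,z)$ shows $X_t\cap X_u=\bigcup_{v\le t,u}X_v$ inside $X_\sim$; the case $0\in S$ with $X_0=\emptyset$ is immediate.

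For the second assertion, given an $S$-graded groupoid $(H,(H_t)_{t\in S})$ I would put $G\defeq H_1$ and consider, on the spaces $X_t\defeq H_t$, the restrictions of $\rg,\s$ from~$H$ and the maps $\mu_{t,u}$ given by the multiplication of~$H$ (landing in $H_t\cdot H_u=H_{tu}$ by~\ref{enum:Gra1}). The simplified-action axioms \ref{e:SAS1}--\ref{e:SAS7} of Definition~\ref{def:S_act_groupoid_simple} then hold: \ref{e:SAS1} and~\ref{e:SAS7} hold in any groupoid, \ref{e:SAS2} because the $H_t$ are open, \ref{e:SAS3} because $\rg(H_1)=\s(H_1)=H^0$, \ref{e:SAS4} because $H_t\cdot H_u=H_{tu}$, and \ref{e:SAS5},~\ref{e:SAS6} via the inverse maps $(y,z)\mapsto(zy^{-1},y)$ and $(x,z)\mapsto(x,x^{-1}z)$, using $zy^{-1}\in H_uH_{u^*}=H_{uu^*}\subseteq H_1$ and $x^{-1}z\in H_{t^*}H_t=H_{t^*t}\subseteq H_1$. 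By Proposition~\ref{pro:partial_actions_simplify} this yields an action of~$S$ on $G=H_1$ by partial equivalences. To identify its transformation groupoid with~$H$, I would first observe that for idempotent~$e$ the subgroupoid $H_e\subseteq H_1$ equals $(H_1)_{U_e}$ with $U_e\defeq\rg(H_e)=\s(H_e)$, so the isomorphism $X_e\cong G^1_{U_e}$ of Proposition~\ref{pro:idempotents} is the identity of~$H_e$; feeding this into the construction~\eqref{eq:construct_j} and using that all multiplications and $G$-actions of the action are restrictions of that of~$H$, one gets that $j_{u,t}\colon X_t\to X_u$ is just the inclusion $H_t\hookrightarrow H_u$. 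Hence $(t,x)\sim(u,y)$ iff $x=y$ in~$H$ with $x\in H_v$ for some $v\le t,u$, i.e.\ iff $x=y\in H_t\cap H_u$ by~\ref{enum:Gra4}, so $\Phi\colon G\rtimes S\to H$, $\pi(t,x)\mapsto x$, is a well-defined bijection; it is a homeomorphism because $\pi$ is an open quotient map and $\Phi\circ\pi|_{X_t}$ is the open inclusion $H_t\hookrightarrow H$, a groupoid homomorphism because $\pi(t,x)\cdot\pi(u,y)=\pi(tu,xy)$, and grading-preserving because $\Phi(X_t)=H_t$; moreover $G^0=H^0$ and $G^1=H_1$ by construction.

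For the third assertion, an isomorphism of $S$-actions $(X_t,\mu_{t,u})$ on~$G$ and $(X'_t,\mu'_{t,u})$ on~$G'$ consists of a groupoid isomorphism $G\cong G'$ together with bibundle isomorphisms $\psi_t\colon X_t\to X'_t$ intertwining the $\mu_{t,u}$ with the $\mu'_{t,u}$; by the uniqueness clause of Proposition~\ref{pro:inclusions_from_action} these automatically intertwine the embeddings $j_{u,t}$, so they descend to a grading-preserving isomorphism $G\rtimes S\to G'\rtimes S$. Conversely, a grading-preserving isomorphism $\Psi\colon G\rtimes S\to G'\rtimes S$ restricts to homeomorphisms $\psi_t\defeq\Psi|_{X_t}$; since the unit fibre of the grading is the subgroupoid~$G$, the restriction $\psi_1$ is a groupoid isomorphism $G\to G'$, and since multiplication in $G\rtimes S$ restricts to the $G$-actions and to the maps $\mu_{t,u}$ (Lemma~\ref{lem:transformation_gp_groupoid}), multiplicativity of~$\Psi$ makes $(\psi_t)$ an isomorphism of the two actions.

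I expect the only step that is not routine bookkeeping around the definitions already set up to be the identification of $j_{u,t}$ with the inclusion $H_t\hookrightarrow H_u$ in the second part, which forces one to trace through the somewhat intricate construction~\eqref{eq:construct_j} in the proof of Proposition~\ref{pro:inclusions_from_action}; this is the main technical point on which the isomorphism $G\rtimes S\cong H$ rests.
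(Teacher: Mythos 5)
Your proof is correct and follows essentially the same route as the paper: the same reconstruction of a simplified action from a grading (including the inverse map $(y,z)\mapsto(zy^{-1},y)$ for \ref{e:SAS5} via $zy^{-1}\in H_{tuu^*}\subseteq H_t$) and the same observation that condition \ref{enum:Gra4} is exactly what makes $\pi(t,x)\mapsto x$ a well-defined bijection onto~$H$. The one point you flag as technical --- that $j_{u,t}$ is the inclusion $H_t\hookrightarrow H_u$ --- follows at once from the uniqueness clause of Proposition~\ref{pro:inclusions_from_action}, since the inclusions are bibundle maps making~\eqref{eq:inclusions_from_action} commute when all $\mu_{t,u}$ are restrictions of the groupoid multiplication, so there is no need to trace through~\eqref{eq:construct_j}.
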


Here an isomorphism between actions \((X_t)_{t\in S}\)
and~\((Y_t)_{t\in S}\) by partial equivalences on two groupoids \(G\)
and~\(H\) means the obvious thing: a family of homeomorphisms \(X_t
\cong Y_t\) compatible with the range, source, and multiplication maps
in Definition~\ref{def:S_act_groupoid_simple}.

\begin{proof}
  It follows directly from our construction that the subspaces
  \(X_t\subseteq G\rtimes S\) for an \(S\)\nb-action by partial
  equivalences satisfy \ref{enum:Gra1}--\ref{enum:Gra6}.  It is also
  clear that the transformation groupoid construction is natural for
  isomorphisms of \(S\)\nb-actions.

  Let~\(H\) with the subspaces~\(H_t\) for \(t\in S\) be an
  \(S\)\nb-graded topological groupoid.  Then \(G^1\defeq H_1\) with
  \(G^0=H^0\) is an open subgroupoid of~\(H\).  Let \(X_t=H_t\) with
  the restriction of the range and source map of~\(H\), and with the
  \(G\)\nb-action and maps \(\mu_{t,u}\colon X_t\times_{\s,G^0,\rg}
  X_u\to X_{tu}\) from the multiplication map in~\(H\).  This
  satisfies \ref{e:SAS3} by definition, \ref{e:SAS4} because
  \(H_t\cdot H_u= H_{tu}\), \ref{e:SAS1} and~\ref{e:SAS7}
  because~\(H\) is a groupoid, and~\ref{e:SAS2} because~\(X_t\) is
  open in~\(H\) and the range and source maps of~\(H\) are open.  If
  \((y,z)\in X_u\times_{\s,G^0,\s} X_{tu}\), then \(zy^{-1}\in
  X_{tu}X_{u^*}= X_{tuu^*}\subseteq X_t\) because \(tuu^*\le t\).
  Hence \((y,z)\mapsto (zy^{-1},y)\) gives a continuous inverse for
  the map in~\ref{e:SAS5}, so that the latter is a homeomorphism.  A
  similar argument shows that the map in~\ref{e:SAS6} is a
  homeomorphism.  Thus we get an \(S\)\nb-action by partial
  equivalences.  This construction is natural in the sense that
  isomorphic \(S\)\nb-graded groupoids give isomorphic actions by partial
  equivalences actions.

  If we start with an action by partial equivalences, turn it into a
  graded groupoid, and then back into an action by partial
  equivalences, then we get an isomorphic action by construction.
  When we start with a graded groupoid, go to an action by partial
  equivalences and back to a graded groupoid, then we also get back
  our original \(S\)\nb-graded groupoid.  The only non-trivial point
  is that the map \(\pi\colon \bigsqcup_{t\in S} H_t\to
  \bigl(\bigsqcup_{t\in S} H_t\bigr)_\sim\) identifies \(x\in H_t\)
  and \(y\in H_u\) for \(t,u\in S\) if and only if \(x=y\) in~\(H\);
  this is exactly the meaning of~\ref{enum:Gra4}.
\end{proof}

\subsection{Examples: group actions and actions on spaces}
\label{sec:exa_action}

The equivalence between actions by partial equivalences and graded
groupoids makes it easy to describe all actions of groups on groupoids
and all actions of inverse semigroups on spaces.

\begin{theorem}
  \label{the:group_action_extension}
  Let~\(G\) be a topological groupoid and let~\(S\) be a group, viewed
  as an inverse semigroup.  Then an \(S\)\nb-action on~\(G\) by
  \textup{(}partial\textup{)} equivalences is equivalent to a
  groupoid~\(H\) containing~\(G\) as an open subgroupoid with
  \(H^0=G^0\), and with a continuous groupoid homomorphism \(\pi\colon
  H\onto S\) such that \(\pi^{-1}(1)=G\) and, for each \(x\in H^0\)
  and \(t\in S\) there is \(h\in H^1\) with \(\s(h)=x\) and
  \(\pi(h)=t\).  In this situation, \(H\) is the transformation
  groupoid~\(G\rtimes S\).  If~\(G\) is also a group, this is the same
  as a group extension \(G\rightarrowtail H\onto S\).
\end{theorem}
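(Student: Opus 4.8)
The plan is to reduce everything to Theorem~\ref{the:graded_groupoid_versus_action}, which identifies \(S\)\nb-actions by partial equivalences on~\(G\) with \(S\)\nb-graded groupoids having unit fibre~\(G\). It therefore suffices to show that, when~\(S\) is a \emph{group}, an \(S\)\nb-grading \((H_t)_{t\in S}\) on a topological groupoid~\(H\) with \(H_1=G\) amounts to exactly the extra data in the statement, with \(H_t=\pi^{-1}(t)\). The point that makes this work is that the natural order on a group~\(S\) is trivial: \(t\le u\) means \(t=tt^*u\), and \(tt^*=1\) in a group, so \(t\le u\) if and only if \(t=u\). (In particular, Lemma~\ref{lem:domains_invsg-action} gives \(\rg(X_t)=\rg(X_{tt^*})=\rg(X_1)=G^0\) and likewise \(\s(X_t)=G^0\), so every~\(X_t\) is automatically a global equivalence; this is why ``partial'' may be dropped.)

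\emph{From a grading to~\(\pi\).} Let \((H_t)_{t\in S}\) be an \(S\)\nb-grading with \(H_1=G\). Since the order on~\(S\) is trivial, \ref{enum:Gra4} says precisely that \(H_t\cap H_u=\emptyset\) for \(t\ne u\); with~\ref{enum:Gra6} this writes \(H^1\) as the disjoint union of the open sets~\(H_t\). Hence there is a map \(\pi\colon H^1\to S\) with \(\pi^{-1}(t)=H_t\); it is continuous because each~\(H_t\) is open and~\(S\) is discrete, it is a homomorphism by~\ref{enum:Gra1}, and \(\pi(h^{-1})=\pi(h)^{-1}\) by~\ref{enum:Gra2}. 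By the remarks after Definition~\ref{def:graded_groupoid}, \(\pi^{-1}(1)=H_1=G\) is an open subgroupoid of~\(H\) with \(\s(H_1)=\rg(H_1)=H^0\); being a subgroupoid with \(\s(H_1)=H^0\), it contains all units, so \(H^0\subseteq H_1\). For surjectivity, \ref{enum:Gra1} gives \(H_1=H_{tt^*}=H_t\cdot H_{t^*}\), so for \(x\in H^0\) the unit~\(1_x\) factors as \(1_x=h_1h_2\) with \(h_1\in H_t\), \(h_2\in H_{t^*}\), \(\rg(h_1)=x=\s(h_2)\). Since \(t\mapsto t^*\) is a bijection of~\(S\), both \(\rg\) and~\(\s\) restrict to surjections \(H_t\to H^0\) for every \(t\in S\); in particular \(H_t\ne\emptyset\) (the case \(G^0=\emptyset\) being trivial), so~\(\pi\) is onto.

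\emph{From~\(\pi\) to a grading.} Conversely, given \(H\supseteq G\) with \(H^0=G^0\) and \(\pi\colon H\onto S\) as in the statement, put \(H_t\defeq\pi^{-1}(t)\). These are open because~\(\pi\) is continuous and~\(S\) is discrete; \ref{enum:Gra2} and~\ref{enum:Gra6} hold since~\(\pi\) respects inverses and is defined on all of~\(H^1\); and~\ref{enum:Gra4} holds because \(\pi^{-1}(t)\cap\pi^{-1}(u)=\emptyset\) for \(t\ne u\), while both sides equal~\(H_t\) when \(t=u\). For~\ref{enum:Gra1}, \(H_t\cdot H_u\subseteq H_{tu}\) because~\(\pi\) is a homomorphism; conversely, given \(h\in H_{tu}\), use the surjectivity hypothesis to choose \(h_2\in H_u\) with \(\s(h_2)=\s(h)\) and set \(h_1\defeq h h_2^{-1}\): then \(\pi(h_1)=tu\cdot u^{-1}=t\), so \(h_1\in H_t\) and \(h_1h_2=h\). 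The two constructions are mutually inverse and compatible with isomorphisms, and \(H_1=\pi^{-1}(1)=G\). So \(S\)\nb-actions by partial equivalences on~\(G\) correspond to pairs \((H,\pi)\) as described, and by Theorem~\ref{the:graded_groupoid_versus_action} the graded groupoid \((H,(H_t)_{t\in S})\) is isomorphic to \(G\rtimes S\) by a grading-preserving isomorphism, which is the meaning of ``\(H\) is the transformation groupoid \(G\rtimes S\)''.

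\emph{The case of a group~\(G\).} If~\(G\) is a group, then \(H^0=G^0\) is a single point, so~\(H\) is a group and \(\pi\colon H\onto S\) is a surjective continuous group homomorphism with \(\ker\pi=\pi^{-1}(1)=G\); the condition on source maps becomes mere surjectivity of~\(\pi\), and \(G=\ker\pi\) is automatically open since~\(S\) is discrete. Thus the data is precisely a topological group extension \(G\rightarrowtail H\onto S\), and conversely every such extension yields it. The only step carrying real content is the reverse inclusion in~\ref{enum:Gra1} in the converse direction, where source-surjectivity is used to lift the given arrow to a factorisation; once the triviality of the order on the group~\(S\) is noticed, the remaining work is the (routine) translation between the grading axioms and the properties of~\(\pi\).
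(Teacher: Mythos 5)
Your proposal is correct and follows essentially the same route as the paper: reduce via Theorem~\ref{the:graded_groupoid_versus_action} to a correspondence between \(S\)\nb-gradings with group~\(S\) and pairs \((H,\pi)\), observe that triviality of the order on a group turns~\ref{enum:Gra4} into disjointness of the~\(H_t\) so that~\(\pi\) is well defined, and recover~\ref{enum:Gra1} in the converse direction by the same lifting \(h_1\defeq hh_2^{-1}\). Your verification that \(\s\colon H_t\to H^0\) is onto (via factoring units through \(H_t\cdot H_{t^*}\)) is a slightly more explicit version of a point the paper states tersely, but the argument is the same.
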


\begin{proof}
  Since \(tt^*=1\) for any \(t\in S\), any action of~\(S\) by partial
  equivalences will be an action by global equivalences.  By
  Theorem~\ref{the:graded_groupoid_versus_action}, we may replace an
  \(S\)\nb-action by partial equivalences by an \(S\)\nb-graded
  groupoid \((H,(H_t)_{t\in S})\).  We have \(H^0=G^0\) by
  construction.  Since~\(S\) is a group, \ref{enum:Gra4} says that
  \(H_t\cap H_u = \emptyset\) for \(t\neq u\).  Thus we get a
  well-defined map \(\pi\colon H^1\to S\) with \(\pi^{-1}(t)=H_t\); in
  particular, \(G=\pi^{-1}(1)\).  The map~\(\pi\) is continuous
  because the subsets~\(H_t\) are open.  The condition on the
  existence of~\(h\) for given~\(x,t\) says that the map \(\s\colon
  H_t\to H^0\) is onto, that is, \(H_t\) is a global equivalence.
  Thus an \(S\)\nb-action on~\(G\) gives \(\pi\colon H\to S\) with the
  asserted properties.

  For the converse, let \(\pi\colon H\to S\) be a groupoid
  homomorphism as in the statement.  Define \(H_t\defeq
  \pi^{-1}(t)\subseteq H^1\).  These are open subsets because~\(\pi\)
  is continuous.  If \(t,u\in S\), then \(H_t H_u\subseteq H_{tu}\) is
  trivial.  If \(h\in H_{tu}\), then our technical assumption gives
  \(h_2\in H_u\) with \(\s(h_2)=\s(h)\).  Then \(h_1\defeq
  hh_2^{-1}\in H_t\), so \(h\in H_tH_u\).  Thus~\ref{enum:Gra1} holds.
  The remaining conditions for an \(S\)\nb-grading are trivial in this
  case, and~\(H\) is the transformation groupoid~\(G\rtimes S\) by
  construction.

  If~\(G\) is also a group, then so is~\(H\) because \(G^0=H^0\), and
  then the condition on~\(\pi\) simply says that it is a surjection
  \(\pi\colon H\onto S\) with kernel~\(G\).  This is the same as a
  group extension.
\end{proof}

The obvious definition of a group action by automorphisms on another
group only covers \emph{split} group extensions.  We need some kind of
twisted action by automorphisms to allow for non-trivial group
extensions as well.  Our notion of action by equivalences achieves
this very naturally.

For groupoid extensions, one usually requires the kernel to be a group
bundle; this need not be the case here.  There are many examples of
groupoid homomorphisms (or \(1\)\nb-cocycles) with the properties
required in Theorem~\ref{the:group_action_extension}.  We mention one
typical case:

\begin{example}
  \label{exa:groupoid_of_self-cover}
  Let~\(H\) be the groupoid associated to a self-covering
  \(\sigma\colon X\to X\) of a compact space~\(X\) as
  in~\cite{Deaconu:Endomorphisms}.  The canonical \(\Z\)\nb-valued
  cocycle \(\pi\colon H\to \Z\) on it clearly has the properties needed to define a
  \(\Z\)\nb-grading on~\(H\).  The subgroupoid \(G\defeq \pi^{-1}(0)\)
  is the groupoid that describes the equivalence relation generated by
  \(x\sim y\) if \(\sigma^k(x)=\sigma^k(y)\) for some \(k\in\N\).  The
  action of~\(\sigma\) on~\(X\) preserves this equivalence relation
  and hence gives an endomorphism of~\(G\); this endomorphism is an
  equivalence, and our \(\Z\)\nb-action on~\(G\) by equivalences is
  generated by this self-equivalence of~\(G\).  But unless~\(\sigma\)
  is a homeomorphism, \(\sigma\) is not invertible on~\(G\), so it
  gives no action of~\(\Z\) by automorphisms.
\end{example}

Now we turn to actions of inverse semigroups on topological spaces.
Let~\(S\) be an inverse semigroup with unit and let~\(Z\) be a
topological space.  First we recall Exel's construction of the
\emph{groupoid of germs} for an inverse semigroup action by partial
homeomorphisms~\cite{Exel:Inverse_combinatorial}.

Let \(\PHomeo(Z)\) be the inverse semigroup of partial homeomorphisms
of~\(Z\).  An action of~\(S\) on~\(Z\) by partial homeomorphisms is a
monoid homomorphism \(\theta\colon S\to \PHomeo(Z)\).  This gives
partial homeomorphisms \(\theta_{t}\colon D_{t^*t}\to D_{tt^*}\) for
\(t\in S\) with open subsets \(D_e\subseteq Z\) for \(e\in E(S)\).
The groupoid of germs has object space~\(Z\), and its arrows are the
``germs'' \([t,z]\) for \(t\in S\), \(z\in D_{t^*t}\); by definition,
\([t,z]=[u,z']\) if and only if there is \(e\in E(S)\) with \(z=z'\in
D_e\) and \(te=ue\).  The groupoid structure is defined by
\(\s[t,z]=z\), \(\rg[t,z]=\theta_t(z)\), \([t,z]\cdot [u,z']=[tu,z']\)
if \(z=\theta_u(z')\), and \([t,z]^{-1}=[t^*,\theta_t(z)]\).  The
subsets \(\{[t,z]\mid z\in U\}\) for \(t\in S\) and an open subset
\(U\subseteq D_{t^*t}\) form a basis for the topology on the arrow
space.

\begin{remark}
  Many authors use another germ relation that only requires an open
  subset~\(V\) of~\(Z\) with \(z\in V\) and \(\theta_t|_V =
  \theta_u|_V\).  This may give a different groupoid, of course.
  Exel's germ groupoids need not be essentially principal (see \cite{Renault:Cartan.Subalgebras}).
\end{remark}

\begin{theorem}
  \label{the:action_on_space}
  Let~\(Z\) be a topological space viewed as a topological groupoid,
  and let~\(S\) be an inverse semigroup with unit.  Isomorphism
  classes of actions of~\(S\) on~\(Z\) by partial equivalences are in
  natural bijection with actions of~\(S\) on~\(Z\) by partial
  homeomorphisms.  The transformation groupoid \(Z\rtimes S\) for an
  action by partial equivalences is the groupoid of germs
  defined by Exel~\textup{\cite{Exel:Inverse_combinatorial}}.
\end{theorem}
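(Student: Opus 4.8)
The plan is to reduce everything to the case of \emph{spaces}, where partial equivalences become completely rigid. First I would observe that, since $Z$ regarded as a groupoid has only unit arrows, every left or right $Z$-action on a $Z,Z$-bispace~$X$ is trivial ($g\cdot x=x$ and $x\cdot h=x$), so the two homeomorphisms required in~\ref{enum:Peq3} of Definition~\ref{def:partial_equivalence} say precisely that the anchor maps $\rg,\s\colon X\to Z$ are injective; combined with~\ref{enum:Peq4} and with $U\defeq\rg(X)$, $V\defeq\s(X)$ open, this makes $\rg$ and~$\s$ homeomorphisms onto $U$ and~$V$. Hence a partial equivalence from~$Z$ to~$Z$ is canonically the graph $\{(\theta(z),z)\mid z\in V\}$ of a partial homeomorphism $\theta\colon V\to U$ of~$Z$, and $x\mapsto(\rg(x),\s(x))$ is the \emph{unique} bibundle map into it from any partial equivalence with the matching images; by Proposition~\ref{pro:characterise_bibundle_isom} such a map is an isomorphism onto the restricted graph with the corresponding image.

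Given an $S$-action $(X_t,\mu_{t,u})$ on~$Z$ by partial equivalences, write $X_t$ as the graph of a partial homeomorphism $\theta_t\colon D_{t^*t}\to D_{tt^*}$, where $D_{tt^*}=\rg(X_t)$ and $D_{t^*t}=\s(X_t)$ by Lemma~\ref{lem:domains_invsg-action}, and $\theta_e=\mathrm{id}_{D_e}$ for $e\in E(S)$ by Proposition~\ref{pro:idempotents}. Condition~\ref{enum:APE2} forces $\theta_1=\mathrm{id}_Z$. Since the quotient defining $X_t\times_G X_u$ is trivial, $X_t\times_G X_u$ is canonically the graph of the composite partial homeomorphism $\theta_t\circ\theta_u$; the existence of the bibundle \emph{isomorphism}~$\mu_{t,u}$ onto~$X_{tu}$ then forces $\mathrm{dom}(\theta_t\theta_u)=\mathrm{dom}(\theta_{tu})$ and $\theta_{tu}=\theta_t\circ\theta_u$ by the rigidity above. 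Thus $\theta\colon S\to\PHomeo(Z)$, $t\mapsto\theta_t$, is a monoid homomorphism, i.e.\ an action by partial homeomorphisms. Moreover $\mu_{t,u}$ carries no data: it is the unique bibundle map $X_t\times_G X_u\to X_{tu}$, determined by the $X_t$; and an isomorphism of $S$-actions on the fixed space~$Z$ is a family of homeomorphisms $X_t\cong Y_t$ respecting anchor maps, which---graphs being rigid---exists iff $\theta_t=\theta'_t$ for all~$t$. Hence isomorphism classes of $S$-actions by partial equivalences on~$Z$ inject into actions by partial homeomorphisms. For surjectivity, from a monoid homomorphism $\theta$ I would set $X_t\defeq\{(\theta_t(z),z)\mid z\in D_{t^*t}\}$ with the coordinate projections as anchor maps, trivial $Z$-actions, and $\mu_{t,u}$ the evident composition map; conditions \ref{enum:Peq1}--\ref{enum:Peq4} are immediate and \ref{enum:APE2}--\ref{enum:APE4} follow from $\theta_1=\mathrm{id}$, triviality of the associator and unitors for $\times_G$ on spaces, and associativity of composition of partial homeomorphisms. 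The two constructions are mutually inverse up to the canonical isomorphisms, naturally in~$S$.

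Finally I would compute the transformation groupoid $Z\rtimes S=X_\sim$ of Section~\ref{sec:transformation} explicitly. Identifying $X_t$ with the graph of~$\theta_t$, label the point $(\theta_t(z),z)\in X_t$ by $(t,z)$, $z\in D_{t^*t}$. Since $t\le u$ implies $\theta_t=\theta_u|_{D_{t^*t}}$, Proposition~\ref{pro:inclusions_from_action} (or merely uniqueness of bibundle maps into graphs) identifies $j_{u,t}$ with the graph inclusion, so $j_{u,t}(t,z)=(u,z)$. As the source anchor of $(t,z)$ is the $z$-coordinate and the $j$'s preserve it, $(t,z)\sim(u,z')$ forces $z=z'$; and $(t,z)\sim(u,z)$ holds iff there is $v\le t,u$ with $z\in D_{v^*v}$, which---writing $v=t(v^*v)=u(v^*v)$, and conversely taking $v=te=ue$ for an idempotent $e$ with $z\in D_e$, so that $v^*v=et^*t$ and $z\in D_e\cap D_{t^*t}=D_{v^*v}$---is exactly Exel's germ relation $[t,z]=[u,z]\iff\exists\,e\in E(S)\colon z\in D_e,\ te=ue$. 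Reading off $\rg,\s,\mu$ and the inversion from the graphs gives $\s[t,z]=z$, $\rg[t,z]=\theta_t(z)$, $[t,z]\cdot[u,z']=[tu,z']$ when $z=\theta_u(z')$, and $[t,z]^{-1}=[t^*,\theta_t(z)]$; and the topology, generated by the open images of the $X_t$ and their restrictions to open $U\subseteq D_{t^*t}$, is Exel's basis. Hence $Z\rtimes S$ is the groupoid of germs. The only step needing genuine care is this last identification of~$\sim$ with the germ relation: it rests on the inverse-semigroup bookkeeping $v\le t\iff v=te$ for a suitable idempotent~$e$ together with matching the domains $D_{v^*v}=D_e\cap D_{t^*t}$; everything else reduces essentially formally to the triviality of bispaces over spaces and the rigidity statements already established in Section~\ref{sec:partial_equivalence_sub}.
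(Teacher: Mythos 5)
Your argument is correct and is essentially the ``pedestrian'' proof the paper itself gives right after Corollary~\ref{cor:wide_action}: triviality of $Z$-actions plus \ref{enum:Peq3}--\ref{enum:Peq4} force the anchor maps to be open embeddings, so partial equivalences of a space are rigid graphs of partial homeomorphisms, the $\mu_{t,u}$ carry no data, and the quotient $X_\sim$ is read off as Exel's groupoid of germs (your matching of $\sim$ with the germ relation via $v=te$, $v^*v=t^*te$ is the right bookkeeping). The paper's primary proof is shorter but less explicit, going through Theorem~\ref{the:graded_groupoid_versus_action} and the observation that the subsets $\{[t,z]\mid z\in D_{t^*t}\}$ form an $S$-grading on the germ groupoid, invoking \cite{Exel:Inverse_combinatorial}*{Proposition~5.4} for the converse.
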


\begin{proof}
  Let \(\theta\colon S\to\PHomeo(Z)\) be an action of~\(S\) by partial
  homeomorphisms.  Exel's groupoid of germs carries an obvious
  \(S\)\nb-grading by the open subsets \(X_t\defeq \{[t,z] \mid z\in
  D_{t^*t}\}\) with \(X_1=Z\).  The conditions in
  Definition~\ref{def:graded_groupoid} are trivial to check.  Hence
  Exel's groupoid is the transformation groupoid \(Z\rtimes S\) for an
  action of~\(S\) on~\(Z\) by partial equivalences by
  Theorem~\ref{the:graded_groupoid_versus_action}.  Conversely, an
  \(S\)\nb-action on~\(Z\) is equivalent to the \(S\)\nb-graded
  groupoid \(Z\rtimes S\).  This groupoid is étale.  The assumptions
  of an \(S\)\nb-grading imply that the subsets \(X_t\subseteq
  Z\rtimes S\) form an inverse semigroup of bisections that satisfies
  the assumptions in
  \cite{Exel:Inverse_combinatorial}*{Proposition~5.4}, which ensures
  that the groupoid of germs is \(Z\rtimes S\).
\end{proof}

\begin{corollary}
  \label{cor:wide_action}
  Let~\(G\) be an étale groupoid, let~\(S\) be an inverse semigroup,
  and let \(f\colon S\to\Bis(G)\) be a semigroup homomorphism.  This
  induces an isomorphism \(G^0\rtimes S\cong G\) if and only if
  \(\bigcup_{t\in S} f(t)=G\) and \(f(t)\cap f(u) = \bigcup_{v\in S,
    v\le t,u} f(v)\) for all \(t,u\in S\).
\end{corollary}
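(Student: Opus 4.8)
The plan is to reduce everything to the correspondence between $S$\nb-gradings and $S$\nb-actions by partial equivalences (Theorem~\ref{the:graded_groupoid_versus_action}), together with the identification of $G^0\rtimes S$ with Exel's germ groupoid (Theorem~\ref{the:action_on_space}). The central observation, which I would record first, is that for \emph{any} semigroup homomorphism $f\colon S\to\Bis(G)$ the family $(f(t))_{t\in S}$ of open subsets of~$G^1$ automatically satisfies \ref{enum:Gra1} and~\ref{enum:Gra2} of Definition~\ref{def:graded_groupoid}: the identity $f(t)f(u)=f(tu)$ is just the homomorphism property (the product in $\Bis(G)$ being the pointwise product of bisections), and $f(t)^{-1}=f(t^*)$ holds because a homomorphism of inverse semigroups necessarily intertwines the involutions. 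Consequently $(f(t))_{t\in S}$ is an $S$\nb-grading on~$G$ if and only if it \emph{also} satisfies \ref{enum:Gra4} and~\ref{enum:Gra6}, and these are exactly the two displayed conditions. I would also note here that idempotents map into~$G^0$ (an idempotent bisection lies in the unit space) and that $v\le u$, say $v=u(v^*v)$, forces $f(v)=f(u)f(v^*v)\subseteq f(u)$; from this I deduce that whenever $(f(t))$ is an $S$\nb-grading its unit fibre~$f(1)$ is all of~$G^0$: we have $f(1)\subseteq G^0$ since $1$ is idempotent, and for $x\in G^0$, \ref{enum:Gra6} puts $1_x$ in some $f(t)$, hence $1_x=1_x\cdot 1_x^{-1}\in f(tt^*)\subseteq f(1)$.

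Next I would make precise the isomorphism ``induced by~$f$''. The homomorphism~$f$ gives an $S$\nb-action on~$G^0$ by the partial homeomorphisms $\theta_t\defeq\rg\circ(\s|_{f(t)})^{-1}\colon\s(f(t))\to\rg(f(t))$ (this is an action because $f(t)f(u)=f(tu)$, and $\theta_1=\Id_{G^0}$ once we know $f(1)=G^0$), and hence, via Theorem~\ref{the:action_on_space}, a germ groupoid $G^0\rtimes S$ together with a canonical map $\Phi\colon G^0\rtimes S\to G$ sending the germ $[t,z]$ to the unique arrow of $f(t)$ with source~$z$. I would check the routine points: $\Phi$ is well defined on germs (if $te=ue$ with $z\in D_e=f(e)$, then the arrow over~$z$ in $f(t)$ lies in $f(t)f(e)=f(te)=f(ue)\subseteq f(u)$ and still has source~$z$); it is a continuous groupoid homomorphism that is the identity on objects; by construction it carries the canonical open set $X_t\subseteq G^0\rtimes S$ onto $f(t)$; and $\Phi|_{X_t}\colon X_t\to f(t)$ is a homeomorphism because both sides are homeomorphic to $\s(f(t))$ via their source maps, which $\Phi$ intertwines. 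Hence $\Phi$ is an open local homeomorphism with image $\bigcup_{t\in S} f(t)$.

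With this in place both implications are short. For the ``if'' direction: assuming the two conditions, $(f(t))_{t\in S}$ is an $S$\nb-grading on~$G$ with unit fibre~$G^0$, so Theorem~\ref{the:graded_groupoid_versus_action} presents~$G$ as the transformation groupoid $G^0\rtimes S$ of an $S$\nb-action by partial equivalences on the (groupoid given by the) space~$G^0$; comparing gradings identifies this action with the one coming from~$f$ and the presentation with~$\Phi$, so $\Phi$ is an isomorphism. For the ``only if'' direction: if $\Phi$ is an isomorphism, transport the canonical $S$\nb-grading $(X_t)_{t\in S}$ of the transformation groupoid $G^0\rtimes S$ (Theorem~\ref{the:graded_groupoid_versus_action}) across~$\Phi$; since $\Phi(X_t)=f(t)$ and the grading axioms are preserved by isomorphisms, $(f(t))_{t\in S}$ is an $S$\nb-grading on~$G$, so it satisfies \ref{enum:Gra4} and~\ref{enum:Gra6}, which are precisely the two asserted conditions.

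The main obstacle I anticipate is not any single deep step but bookkeeping: pinning down exactly which map ``$f$ induces $G^0\rtimes S\cong G$'' refers to and verifying that it intertwines the two gradings, and in particular dealing with the degenerate possibility $f(1)\neq G^0$, which (as noted above) is excluded both by either of the two conditions and by the existence of the isomorphism, so it causes no real trouble. Everything else --- well-definedness on germs, continuity, openness, the homomorphism identities --- is a direct unwinding of definitions, for which the pointwise descriptions of multiplication and involution in $\Bis(G)$ do essentially all the work.
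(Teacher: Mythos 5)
Your proof is correct and follows essentially the same route as the paper: both observe that the homomorphism property gives \ref{enum:Gra1} and~\ref{enum:Gra2} for free, identify the two displayed conditions with \ref{enum:Gra4} and~\ref{enum:Gra6}, and then invoke Theorem~\ref{the:graded_groupoid_versus_action} in both directions. The only (harmless) difference is that the paper simply adjoins a formal unit to~\(S\) mapped to the unit bisection~\(G^0\), whereas you derive \(f(1)=G^0\) from the grading axioms and spell out the canonical map~\(\Phi\) more explicitly.
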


Here \(G^0\rtimes S\) uses the action of~\(S\) on~\(G^0\) induced
by~\(f\) and the usual action of~\(\Bis(G)\).

\begin{proof}
  Add a unit to~\(S\) and map it to the unit bisection \(G^0\subseteq
  G\), so that we may apply
  Theorem~\ref{the:graded_groupoid_versus_action}.  For \(t\in S\),
  let \(G_t\defeq f(t)\subseteq G^1\); these are open subsets because
  each~\(f(t)\) is a bisection.  Since~\(f\) is a semigroup
  homomorphism, \ref{enum:Gra1} and~\ref{enum:Gra2} hold.  The other
  two conditions are exactly the technical assumptions of the
  corollary.  Thus these two assumptions are equivalent to
  \((G_t)_{t\in S}\) being an \(S\)\nb-grading on~\(G\).  If they
  hold, then Theorem~\ref{the:graded_groupoid_versus_action} says that
  \(G\cong G_1\rtimes S = Z\rtimes S\).  Conversely, the
  transformation groupoid \(Z\rtimes S\) is \(S\)\nb-graded by
  Theorem~\ref{the:graded_groupoid_versus_action}, so if \(G\cong
  Z\rtimes S\), then it satisfies the two technical assumptions.
\end{proof}

A subsemigroup \(S\subseteq\Bis(G)\) with the properties required in
Corollary~\ref{cor:wide_action} is called \emph{wide}.
Corollary~\ref{cor:wide_action} explains why they appear so frequently
(see, for instance, \cites{Exel:Inverse_combinatorial,
  Sieben-Quigg:ActionsOfGroupoidsAndISGs,BussExel:Regular.Fell.Bundle}).
\cite{Exel:Inverse_combinatorial}*{Proposition~5.4} already shows that
\(Z\rtimes S=G\) if~\(S\) is wide, but we have not seen the converse
statement yet.

Since the proof of Theorem~\ref{the:action_on_space} is not explicit,
we give another pedestrian proof.

Let \(\theta_t\colon D_{t^*t}\to D_{tt^*}\) for \(t\in S\) give an
action of~\(S\) on~\(Z\) by partial homeomorphisms.  This is a
groupoid isomorphism from~\(D_{t^*t}\) to~\(D_{tt^*}\), which we turn
into a partial equivalence from~\(Z\) to itself as in
Example~\ref{exa:iso_to_partial_equivalence}.  Here this means that we
take \(X'_t\defeq D_{tt^*}\) with anchor maps \(\rg'(z)\defeq z\),
\(\s'(z)\defeq \theta_t^{-1}(z)\).  Since all arrows in~\(Z\) are
units, the range and source maps determine the partial equivalence.
The homeomorphism~\(\theta_t\) gives a bibundle isomorphism
from~\(X_t'\) to \(X_t= D_{t^*t}\) with \(\rg(z)\defeq \theta_t(z)\)
and \(\s(z)\defeq z\).  The comparison with Exel's groupoid is more
obvious for the second choice, which we take from now on.

There is an obvious homeomorphism
\[
X_t\times_Z X_u\congto \{z\in D_{tt^*}\mid
\theta_t(z)\in D_{uu^*}\} = D_{(tu)^*(tu)},
\]
such that the range and source maps are \(\theta_{tu}\) and the
inclusion map, respectively.  We choose this isomorphism
for~\(\mu_{t,u}\) to define our action by partial equivalences.
Actually, this is no choice at all because the range and source maps
are injective here, so there is at most one bibundle map between any
two partial equivalences.  (We will see more groupoids with this
property in Section~\ref{sec:local_centraliser}.)  Hence the
associativity condition in the definition of an inverse semigroup
action holds automatically.  Thus we have turned an action by partial
homeomorphisms on~\(Z\) into an action by partial equivalences
on~\(Z\), viewed as a topological groupoid.

Our construction of the transformation groupoid above is
\emph{exactly} the construction of the groupoid of germs in this
special case, so the isomorphism between \(Z\rtimes S\) and the
groupoid of germs from~\cite{Exel:Inverse_combinatorial} is trivial.

Next we check that every partial equivalence~\(X\) of~\(Z\) is
isomorphic to one coming from a partial homeomorphism.  Since all
arrows in~\(Z\) are units, we have \(X/Z=X=Z\backslash X\).  Hence the
anchor maps \(G^0\leftarrow X\to H^0\) are continuous, open and
injective by condition~\ref{enum:Peq3} in
Proposition~\ref{pro:equivalence}.  The map \(\theta\defeq
\rg\circ\s^{-1}\colon \s(X)\to \rg(X)\) is a partial homeomorphism
from~\(G^0\) to~\(H^0\), and \(\rg\colon X\to \rg(X)\) is an
isomorphism of partial equivalences from~\(X\) to~\(X_\theta\).

Since there is always only one isomorphism between partial
equivalences coming from the same partial homeomorphism, an inverse
semigroup action on~\(Z\) is determined uniquely by the isomorphism
classes of the~\(X_t\), which are in bijection with partial
homeomorphisms of~\(Z\).  This proves the first statement in
Theorem~\ref{the:action_on_space}.

\subsection{Morita invariance of actions by partial equivalences}
\label{sec:Morita_invariance}

\begin{proposition}
  \label{pro:actions_equivalent}
  Let~\(Y\) be an equivalence from~\(H\) to~\(G\), and let
  \((X_t,\mu_{t,u})\) be an action of an inverse semigroup with unit
  on~\(G\).  Let \(X'_t\defeq Y\times_G X_t\times_G Y^*\) and let
  \(\mu'_{t,u}\colon X'_t\times_H X'_u\to X'_{tu}\) be the composite
  isomorphism
  \begin{multline*}
    Y\times_G X_t \times_G Y^*\times_H Y\times_G X_u\times_G Y^*
    \congto Y\times_G X_t \times_G G^1\times_G X_u\times_G Y^*
    \\ \congto Y\times_G X_t \times_G X_u\times_G Y^*
    \xrightarrow[\cong]{\mu_{t,u}} Y\times_G X_{tu}\times_G Y^*,
  \end{multline*}
  where the first two isomorphisms are the canonical ones from
  Proposition~\textup{\ref{pro:properties_of_dual}} and
  Lemma~\textup{\ref{lem:peq_composition_associative_unital}}.
  Then~\(\mu'_{t,u}\) is an action of~\(S\) on~\(H\) by partial
  equivalences.  Its transformation groupoid \(H\rtimes S\) is
  equivalent to \(G\rtimes S\).

  When we translate the action on~\(Y\) back to~\(X\) using the
  inverse equivalence~\(Y^*\), we get an action on~\(G\) that is
  isomorphic to the original one.
\end{proposition}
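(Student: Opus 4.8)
The plan is to recognise \((X'_t,\mu'_{t,u})\) as the conjugate of the \(S\)\nb-action \((X_t,\mu_{t,u})\) by the equivalence~\(Y\), and to prove its three asserted properties using the bicategory~\(\Peq\) (Theorem~\ref{the:bicategory_peq}) and the structural results of Sections~\ref{sec:partial_equivalence_sub} and~\ref{sec:ActionsGroupoidsByIso}; for the equivalence of transformation groupoids I would pass through the linking groupoid of~\(Y\).

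\emph{That \((X'_t,\mu'_{t,u})\) is an action.}  Each~\(X'_t\) is a composite of the arrows \(Y\), \(X_t\), \(Y^*\) in~\(\Peq\), hence a partial equivalence from~\(H\) to~\(H\); each~\(\mu'_{t,u}\) is a composite of canonical coherence \(2\)\nb-isomorphisms (from Lemma~\ref{lem:peq_composition_associative_unital} and Proposition~\ref{pro:properties_of_dual}, notably \(Y^*\times_H Y\cong G^1\)) with \(\Id_Y\times_G\mu_{t,u}\times_G\Id_{Y^*}\), hence a bibundle isomorphism since~\(\mu_{t,u}\) is one and all \(2\)\nb-arrows in~\(\Peq\) are invertible.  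For~\ref{enum:APE2} one has \(X'_1=Y\times_G G^1\times_G Y^*\cong Y\times_G Y^*\cong H^1\), using the unit law of~\(\times_G\) and \(Y\times_G Y^*\cong H^1\) for the global equivalence~\(Y\) (Proposition~\ref{pro:properties_of_dual}, \(\s(Y)=H^0\)); as in Remark~\ref{rem:adjoin_01} I would identify~\(X'_1\) with~\(H^1\) along this isomorphism, after which~\ref{enum:APE3} is the triangle coherence of~\(\Peq\) and~\ref{enum:APE4} is a diagram chase reducing the associativity pentagon for the~\(\mu'_{t,u}\) to that for the~\(\mu_{t,u}\) together with the pentagon and triangle axioms of~\(\Peq\).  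Conceptually, an action by partial equivalences is a morphism from the one-object bicategory~\(S\) to~\(\Peq\) (Remark~\ref{rem:bicategory_action}), and \((X'_t,\mu'_{t,u})\) is its composite with the pseudofunctor ``conjugation by the equivalence~\(Y\)''; this makes the coherence checks automatic.

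\emph{Equivalence of transformation groupoids.}  Let~\(L\) be the linking groupoid of~\(Y\): \(L^0=G^0\sqcup H^0\), \(L^1=G^1\sqcup Y\sqcup Y^*\sqcup H^1\), with multiplication assembled from the groupoid structures of \(G\) and~\(H\), the left and right actions on~\(Y\), and the pairings \(Y\times_H Y^*\cong G^1\), \(Y^*\times_G Y\cong H^1\) of Proposition~\ref{pro:properties_of_dual}.  The restrictions of~\(L\) to the open subsets \(G^0,H^0\subseteq L^0\) are~\(G\) and~\(H\), and each of \(G^0\), \(H^0\) meets every orbit of~\(L\) because~\(Y\) connects them, so \(G\sim L\sim H\).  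Now put \(X^L_t\defeq X_t\sqcup(Y\times_G X_t)\sqcup(X_t\times_G Y^*)\sqcup X'_t\) — the partial equivalence from~\(L\) to~\(L\) whose four ``corners'' record which of \(G^0,H^0\) each anchor lands in — with multiplication maps assembled from the~\(\mu_{t,u}\) and the equivalence data of~\(Y\); a routine check shows this is an \(S\)\nb-action on~\(L\).  By Theorem~\ref{the:graded_groupoid_versus_action}, \(L\rtimes S\) is \(S\)\nb-graded with unit fibre~\(L\); an arrow of \(L\rtimes S\) whose two anchors lie in~\(G^0\) lies in the \((G,G)\)-corner~\(X_t\) of some~\(X^L_t\), and since the embeddings~\(j^L\) restrict on these corners to the embeddings~\(j\) of the \(G\)\nb-action one gets \((L\rtimes S)|_{G^0}=G\rtimes S\), and likewise \((L\rtimes S)|_{H^0}=H\rtimes S\).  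As \(G^0\) and~\(H^0\) are open in~\(L^0\) and meet every orbit of~\(L\), hence of \(L\rtimes S\supseteq L\), each restriction is equivalent to \(L\rtimes S\) by the standard full-corner equivalence, valid for non-Hausdorff groupoids by Appendix~\ref{sec:preliminaries}.  Hence \(G\rtimes S\sim L\rtimes S\sim H\rtimes S\).  (Alternatively one can exhibit the equivalence bispace directly as \(\bigsqcup_{t\in S}(Y\times_G X_t)\) modulo the relation generated by the~\(j\)-embeddings, with the \(G\rtimes S\)- and \(H\rtimes S\)-actions induced by~\(\mu\); the linking groupoid only organises this bookkeeping.)

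\emph{Conjugating back, and the main obstacle.}  Running the construction with~\(Y^*\colon G\to H\) in place of~\(Y\) produces an action \((X''_t,\mu''_{t,u})\) on~\(G\) with \(X''_t=Y^*\times_H X'_t\times_H Y\), using \((Y^*)^*=Y\); by associativity of composition and \(Y^*\times_H Y\cong G^1\) (Proposition~\ref{pro:properties_of_dual}, \(Y\) global) this is canonically isomorphic to \((Y^*\times_H Y)\times_G X_t\times_G(Y^*\times_H Y)\cong G^1\times_G X_t\times_G G^1\cong X_t\).  These bibundle isomorphisms \(X''_t\cong X_t\) are compatible with source, range and the multiplication maps: expanding~\(\mu''_{t,u}\) into coherence isomorphisms and~\(\mu_{t,u}\), the relevant square commutes by the coherence axioms of~\(\Peq\) and the naturality of the isomorphisms of Proposition~\ref{pro:properties_of_dual}; so the round-trip action is isomorphic to \((X_t,\mu_{t,u})\) in the sense of the isomorphism of actions described after Theorem~\ref{the:graded_groupoid_versus_action}.  (Conceptually: conjugation by~\(Y\) and by~\(Y^*\) are pseudoinverse pseudofunctors, so their composite is pseudonaturally isomorphic to the identity.)  I expect the only genuinely non-formal part to be the middle step — setting up the \(S\)\nb-action on~\(L\) and identifying \((L\rtimes S)|_{G^0}\) with \(G\rtimes S\) on the nose, or, in the direct approach, checking the equivalence conditions \ref{enum:Peq1}--\ref{enum:Peq4} for \(\bigsqcup_t(Y\times_G X_t)/{\approx}\) together with its non-Hausdorff anchor maps; the remaining steps are coherence and naturality manipulations in~\(\Peq\).
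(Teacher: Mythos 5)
Your proposal is correct and follows essentially the same route as the paper: the action axioms for \((X'_t,\mu'_{t,u})\) by bicategorical coherence in \(\Peq\), the equivalence of transformation groupoids via the linking groupoid of \(Y\) with the four-corner decomposition of the transported \(S\)\nb-action and the full-corner (fully faithful plus essentially surjective) argument for the open subsets \(G^0,H^0\subseteq L^0\), and the round-trip isomorphism from \(Y^*\times Y\cong\) unit. The only cosmetic difference is that you define the action on the linking groupoid directly by its four corners, where the paper obtains it by transporting along the equivalence \(G^1\sqcup Y\).
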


\begin{proof}
  More precisely, \(X'_1\) as defined above is only isomorphic
  to~\(H^1\) in a very obvious way.  We should only use the above
  definition of~\(X'_t\) for \(t\neq1\) and let \(X'_1\defeq H^1\) for
  \(t=1\), and let \(\mu'_{1,t}\) and~\(\mu'_{t,1}\) be the canonical
  isomorphisms.  We should also put in associators for the composition
  of partial equivalences, which only cause notational complications,
  however.  Up to these technicalities, it is clear that the
  maps~\(\mu'\) inherit associativity from the maps~\(\mu\).  The
  action that we get by translating~\(\mu'\) back to~\(G\)
  with~\(Y^*\) is canonically isomorphic to the original action
  because \(Y^*\times_H Y\cong G^1\).

  It remains to prove the equivalence of the transformation groupoids
  \(G\rtimes S\) and \(H\rtimes S\).  Here we use the linking
  groupoid~\(L\) of the equivalence; its object space is
  \(L^0=G^0\sqcup H^0\), its arrow space is \(G^1\sqcup Y \sqcup
  Y^*\sqcup H^1\), its range and source maps are \(\rg\) and~\(\s\) on
  each component, and its multiplication consists of the
  multiplications in \(G\) and~\(H\), the \(G,H\)-bibundle structure
  on~\(Y\), the \(H,G\)-bibundle structure on~\(Y^*\), and the
  canonical isomorphisms \(Y\times_G Y^*\congto H^1\) and
  \(Y^*\times_H Y\congto G^1\) from
  Proposition~\ref{pro:properties_of_dual}.  This gives a topological
  groupoid~\(L\).  There is a canonical right action of~\(L\) on
  \(G^1\sqcup Y = \rg^{-1}(G^1)\subseteq L^1\) that provides an
  equivalence from~\(L\) to~\(G\) when combined with the left
  actions of~\(G\) on \(G^1\) and~\(Y\); there is a similar canonical
  equivalence \(H^1\sqcup Y^*\) from~\(L\) to~\(H\).

  We may transport the \(S\)\nb-action on~\(G\) to~\(L\) because it is
  equivalent to~\(G\).  When we transport this action on~\(L\) further
  to~\(H\), we get the action described above because the composite
  equivalence \((G^1\sqcup Y)\times_L (H^1\sqcup Y^*)^*\) from~\(H\)
  to~\(G\) is isomorphic to~\(Y\).

  The action on~\(L\) is given by bibundles
  \begin{multline*}
    (G^1\sqcup Y)^* \times_G X_t \times_G (G^1\sqcup Y)
    \\\cong X_t
    \sqcup (X_t \times_G Y)
    \sqcup (Y^* \times_G X_t)
    \sqcup (Y^* \times_G X_t \times_G Y),
  \end{multline*}
  where we cancelled factors of~\(G^1\) using
  Lemma~\ref{lem:peq_composition_associative_unital}.  When we
  restrict the transformation groupoid \(L\rtimes S\) to
  \(G^0\subseteq L^0\) or to \(H^0\subseteq L^0\), then we only pick
  the components \(X_t\) and \(Y^* \times_G X_t \times_G Y\) in the
  above decomposition, so we get the transformation groupoids
  \(G\rtimes S\) and \(H\rtimes S\), respectively.  Routine
  computations show that the other two parts \(\rg^{-1}(G^0)\cap
  \s^{-1}(H^0)\) and \(\rg^{-1}(H^0)\cap \s^{-1}(G^0)\) of \(L\rtimes
  S\) give an equivalence from~\(H\rtimes S\) to~\(G\rtimes S\), such
  that \(L\rtimes S\) is the resulting linking groupoid.

  It can be shown with less routine computations that the embedding
  \(G\rtimes S\into L\rtimes S\) is fully faithful and essentially
  surjective.  We checked ``fully faithful'' above.  Being
  ``essentially surjective'' means that the map
  \(G^0\times_{\subset,L^0,\rg} L^1\to L^0\), \((x,l)\mapsto \s(l)\),
  is open and surjective.  It is open because \(\rg\colon L^1\to L^0\)
  is open and \(G^0\subset L^0\) is open, and surjective because
  already \(G^0\times_{L^0} Y\subset G^0\times_{L^0} L^1\) surjects
  onto~\(H^0\).  Since both \(G\rtimes S\into L\rtimes S\) and
  \(H\rtimes S\into L\rtimes S\) are fully faithful and essentially
  surjective, they induce equivalence bibundles by
  \cite{Meyer-Zhu:Groupoids}*{Proposition 6.8}, which we may compose
  to an equivalence from \(H\rtimes S\) to \(G\rtimes S\).  Of course,
  this gives the same equivalence as the argument above.
\end{proof}

\begin{corollary}
  \label{cor:action_on_covering_groupoid}
  Let~\(S\) be an inverse semigroup with unit.  Let \(f\colon X\to Z\)
  be an open continuous surjection and let~\(G(f)\) be its covering
  groupoid, see Definition~\textup{\ref{def:covering_groupoid}}.  Then
  \(S\)\nb-actions by partial equivalences on~\(G(f)\)
  are canonically equivalent to \(S\)\nb-actions on~\(Z\) by partial
  homeomorphisms, such that \(G(f)\rtimes S\) is equivalent to
  \(Z\rtimes S\).
\end{corollary}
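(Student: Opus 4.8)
The plan is to reduce to the case of a space via the Morita invariance of inverse semigroup actions (Proposition~\ref{pro:actions_equivalent}) and then to quote Theorem~\ref{the:action_on_space}, using that the covering groupoid~\(G(f)\) is Morita equivalent to the space~\(Z\). First I would make this equivalence explicit: let \(Y\defeq X\), with the left \(G(f)\)\nb-action \((x_1,x_2)\cdot x_2 \defeq x_1\) and left anchor map \(\Id_X\colon X\to G(f)^0 = X\), and with the trivial right action of the space~\(Z\) along the anchor map \(f\colon X\to Z\). Then~\(Y\) is an equivalence from~\(Z\) to~\(G(f)\): the anchor maps are open because~\(f\) is open, the orbit space \(X/G(f)\) is identified with~\(Z\) by~\(f\) (here surjectivity of~\(f\) enters) while \(Z\backslash X = X\), and the two maps in~\ref{enum:Eq3} are the coordinate flip of \(G(f)^1 = X\times_{f,Z,f}X\) and the identity of~\(X\); so conditions \ref{enum:Eq1}--\ref{enum:Eq5} of Proposition~\ref{pro:equivalence} all hold. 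This is essentially the defining feature of the covering groupoid in Definition~\ref{def:covering_groupoid}.

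Next I would apply Proposition~\ref{pro:actions_equivalent} to~\(Y\), with \(G=G(f)\) and \(H=Z\): it converts any \(S\)\nb-action by partial equivalences on~\(G(f)\) into one on~\(Z\), with transformation groupoid \(Z\rtimes S\) equivalent to \(G(f)\rtimes S\). Applying the same proposition to the inverse equivalence \(Y^*=X^*\) from~\(G(f)\) to~\(Z\) converts \(S\)\nb-actions on~\(Z\) back to \(S\)\nb-actions on~\(G(f)\), and the last assertion of Proposition~\ref{pro:actions_equivalent} shows that going back and forth (in either order) returns an isomorphic action. Hence these two constructions induce a natural bijection between isomorphism classes of \(S\)\nb-actions by partial equivalences on~\(G(f)\) and on~\(Z\), under which the transformation groupoids correspond up to equivalence.

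Finally I would invoke Theorem~\ref{the:action_on_space}: isomorphism classes of \(S\)\nb-actions by partial equivalences on the space~\(Z\) are in natural bijection with \(S\)\nb-actions on~\(Z\) by partial homeomorphisms, with \(Z\rtimes S\) the resulting groupoid of germs. Composing this bijection with the one from the previous paragraph, and composing the corresponding groupoid equivalences \(G(f)\rtimes S\simeq Z\rtimes S\), gives the corollary. The only step that is not purely formal is the first one, that \(G(f)\) is genuinely equivalent to~\(Z\); but this is immediate from the construction of the covering groupoid together with the hypothesis that~\(f\) is an open continuous surjection — openness makes the anchor and quotient maps of~\(G(f)\) well-behaved, and surjectivity gives \(X/G(f)\cong Z\) — so I expect no real obstacle beyond bookkeeping.
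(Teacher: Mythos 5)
Your proof is correct and follows the same route as the paper: the paper's own proof is the one-line observation that \(G(f)\) is canonically equivalent to~\(Z\) (Proposition~\ref{pro:CharacterizationCovGroupoids}), after which Proposition~\ref{pro:actions_equivalent} and Theorem~\ref{the:action_on_space} give the claim. You have merely unpacked that canonical equivalence explicitly (your bibundle \(Y=X\) is the special case \(f_1=f\), \(f_2=\Id_Z\) of Proposition~\ref{prop:MoritaInvariantCovGroupoid}), which is fine.
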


Here ``equivalent'' means an equivalence of categories, where the
arrows are isomorphisms of \(S\)\nb-actions that fix the underlying
groupoid.

\begin{proof}
  \(G(f)\) is canonically equivalent to~\(Z\) viewed as a groupoid, so
  the assertion follows from Theorem~\ref{the:action_on_space} and
  Proposition~\ref{pro:actions_equivalent}.
\end{proof}

In particular, Corollary~\ref{cor:action_on_covering_groupoid} applies
to the \v{C}ech groupoid~\(G_\OCover\) of an open
covering~\(\OCover\) of a locally Hausdorff space~\(Z\) by
Hausdorff open subsets.  Thus we may replace an \(S\)\nb-action by
partial homeomorphisms on a locally Hausdorff space~\(Z\) by an
``equivalent'' action by partial equivalences on a Hausdorff
groupoid~\(G_\OCover\), and the resulting transformation groupoids
\(Z\rtimes S\) and \(G_\OCover\rtimes S\) are equivalent.

The quickest way to describe the resulting \(S\)\nb-action
on~\(G_\OCover\) explicitly is by describing
\(G_\OCover\rtimes S\) and an \(S\)\nb-grading on it.  Let
\(X\defeq \bigsqcup_{U\in\OCover} U\) and let \(p\colon X\to Z\)
be the canonical map, which is an open surjection.  The pull-back
\(p^*(Z\rtimes S)\) of~\(Z\rtimes S\) along~\(p\) is a groupoid
with object space~\(X\), arrow space \(X\times_{p,Z,\rg} (Z\rtimes
S)^1 \times_{\s,Z,p} X\), \(\rg(x_1,g,x_2) = x_1\),
\(\s(x_1,g,x_2)=x_2\), and \((x_1,g,x_2)\cdot (x_2,h,x_3) =
(x_1,g\cdot h,x_3)\) (see \cite{Meyer-Zhu:Groupoids}*{Example 3.13}).
Let
\[
X_t \defeq \{(x_1,g,x_2)
\in X\times_{p,Z,\rg} (Z\rtimes S)^1 \times_{\s,Z,p} X \mid
g\in t\}.
\]

\begin{proposition}
  \label{pro:desingularise_transformation_groupoid}
  The subspaces \(X_t\subseteq p^*(Z\rtimes S)^1\) form an
  \(S\)\nb-grading on~\(p^*(Z\rtimes S)\).  The resulting
  \(S\)\nb-graded groupoid is the transformation groupoid for the
  \(S\)\nb-action on~\(G_\OCover\) that we get by translating the
  \(S\)\nb-action on~\(Z\) along the equivalence to~\(G_\OCover\).
\end{proposition}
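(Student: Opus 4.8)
The plan is to verify the $S$\nb-grading axioms for $(X_t)_{t\in S}$ directly, identify the unit fibre with $G_\OCover$, and then match the action produced by Theorem~\ref{the:graded_groupoid_versus_action} with the transported one by comparing the two actions fibre by fibre. Write $(Z\rtimes S)_t\defeq\{[t,z]\mid z\in D_{t^*t}\}$ for the canonical $S$\nb-grading on $Z\rtimes S$ from Theorem~\ref{the:action_on_space}; here ``$g\in t$'' means $g\in(Z\rtimes S)_t$. Then $X_t$ is open as the preimage of the open set $(Z\rtimes S)_t$ under the continuous map $(x_1,g,x_2)\mapsto g$. Conditions \ref{enum:Gra2}, \ref{enum:Gra4} and~\ref{enum:Gra6} of Definition~\ref{def:graded_groupoid} follow at once from the corresponding properties of $(Z\rtimes S)_t$, since the coordinate $g$ of an arrow $(x_1,g,x_2)$ ranges freely over arrows with $\rg(g)=p(x_1)$, $\s(g)=p(x_2)$. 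Condition~\ref{enum:Gra1} is the only one using that $p$ is surjective: the inclusion $X_t\cdot X_u\subseteq X_{tu}$ is clear, and conversely, given $(x_1,k,x_3)\in X_{tu}$ we write $k=gh$ with $g\in(Z\rtimes S)_t$, $h\in(Z\rtimes S)_u$, $\s(g)=\rg(h)$, pick $x_2\in X$ with $p(x_2)=\s(g)$, and obtain $(x_1,k,x_3)=(x_1,g,x_2)\cdot(x_2,h,x_3)$.

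The unit fibre $X_1$ consists of the arrows with $g\in(Z\rtimes S)_1$; since $(Z\rtimes S)_1$ is the unit space $Z$ of $Z\rtimes S$, such a $g$ is $1_z$ with $z=p(x_1)=p(x_2)$, so discarding the redundant germ identifies $X_1$ with the arrow space $\{(x_1,x_2)\in X\times_{p,Z,p}X\}$ of the covering groupoid $G_\OCover$ of $p$. Hence $(p^*(Z\rtimes S),(X_t))$ is an $S$\nb-graded groupoid with unit fibre $G_\OCover$, and Theorem~\ref{the:graded_groupoid_versus_action} presents it as $G_\OCover\rtimes S$ for an essentially unique $S$\nb-action by partial equivalences on $G_\OCover$. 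It remains to identify this action with the one obtained by transporting the $S$\nb-action on $Z$ along the standard equivalence $Y$ from $G_\OCover$ to $Z$ as in Proposition~\ref{pro:actions_equivalent}.

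For this I would realise $Y$ as $X$ with $\rg_Y=\Id_X$ and $\s_Y=p$, and realise the partial equivalence $X_t^Z$ of $Z$ attached to $\theta_t$ as $D_{t^*t}$ with $\rg=\theta_t$, $\s=\Id$, exactly as in the pedestrian proof of Theorem~\ref{the:action_on_space}. The transported partial equivalences then become $Y\times_Z X_t^Z\times_Z Y^*=\{(x_1,z,x_2)\mid\theta_t(z)=p(x_1),\ z=p(x_2)\}$, and $(x_1,z,x_2)\mapsto(x_1,[t,z],x_2)$ is a homeomorphism onto $X_t$ that visibly respects the left and right $G_\OCover$\nb-actions. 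A short germ computation, using $[t,\theta_u(z)]\cdot[u,z]=[tu,z]$, shows that it intertwines the composition maps $\mu'_{t,u}$ of Proposition~\ref{pro:actions_equivalent} with the multiplication of $p^*(Z\rtimes S)$. Thus these homeomorphisms assemble into an isomorphism of simplified $S$\nb-actions (Definition~\ref{def:S_act_groupoid_simple}), which by Theorem~\ref{the:graded_groupoid_versus_action} upgrades to a grading-preserving isomorphism from the transformation groupoid of the transported action onto $(p^*(Z\rtimes S),(X_t))$; this is the assertion.

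I expect the last germ computation to be the main obstacle: one must unwind the composite isomorphism defining $\mu'_{t,u}$ in Proposition~\ref{pro:actions_equivalent} --- involving the cancellation $Y^*\times_{G_\OCover}Y\cong G_\OCover^1$ and the associators for $\times_Z$ --- and see that it reduces to ordinary multiplication of germs. This is routine but bookkeeping-heavy, and, as in the pedestrian proof of Theorem~\ref{the:action_on_space}, one may safely suppress the associators, since every partial equivalence occurring here has injective anchor maps and hence admits at most one bibundle map into any other.
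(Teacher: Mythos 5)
Your argument is correct and follows essentially the same route as the paper's (much terser) proof: the grading axioms are inherited from the canonical grading on $Z\rtimes S$, with surjectivity of~$p$ entering only in~\ref{enum:Gra1}, and $X_t$ is then recognised as the translate of the bisection~$t$ along the equivalence given by~$X$ with projection~$p$. One correction to your final paragraph: it is not true that every partial equivalence occurring here has injective anchor maps --- the source map of $X_t$ sends $(x_1,g,x_2)\mapsto x_2$, which determines~$g$ but leaves $x_1$ free to range over the fibre $p^{-1}(\rg(g))$, and likewise the anchor map $\s_Y=p$ of your bibundle~$Y$ is not injective when the cover has overlaps --- so the uniqueness of bibundle maps cannot be justified that way. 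The uniqueness you want does hold, but for a different reason: $G_\OCover$ is a covering groupoid, hence principal, hence has no nontrivial local centralisers, and Lemma~\ref{lem:centralisers_bibundles} then gives that there is at most one bibundle map between any two partial equivalences of~$G_\OCover$; with that substitution, your identification of the transported multiplication maps with the groupoid multiplication of $p^*(Z\rtimes S)$ is complete.
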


\begin{proof}
  The subspaces \(X_t\) form an \(S\)\nb-grading because the
  bisections \(t\in S\) give an \(S\)\nb-grading on \(Z\rtimes S\)
  and~\(p\) is surjective.  Hence they describe an \(S\)\nb-action
  on~\(G_\OCover\).  The equivalence from~\(G_\OCover\)
  to~\(Z\) is given by the canonical action of~\(G_\OCover\)
  on~\(G_\OCover^0=X\) and the projection \(p\colon X\to Z\).
  Hence~\(X_t\) is exactly what we get when we translate \(t\subseteq
  Z\rtimes S\) along the equivalence.
\end{proof}

For instance, let~\(H\) be an étale groupoid with locally Hausdorff
arrow space and let~\(S\) be some inverse semigroup of bisections with
\(H\cong H^0\rtimes S\); we could take \(S=\Bis(H)\).  Let \(Z=H^1\)
with the action of~\(H\) by left multiplication.  This induces an
action of~\(S\) on~\(Z\).  Its transformation groupoid \(H^1\rtimes
S\) is \(H^1\rtimes H\) with the obvious \(S\)\nb-grading by
\(H^1\times_{H^0} t\) for \(t\in S\).

The left multiplication action of~\(H\) on~\(H^1\) with the bundle
projection \(\s\colon H^1\to H^0\) is a trivial principal bundle.  In
particular, the transformation groupoid \(H^1\rtimes S \cong
H^1\rtimes H\) is isomorphic to the covering groupoid of the cover
\(\s\colon H^1\onto H^0\).  Hence it is equivalent to the
space~\(H^0\), viewed as a groupoid with only unit arrows.  The
\(S\)\nb-grading on \(H^1\rtimes S\) does, however, not carry over
to~\(H^0\).

If we replace the \(S\)\nb-action on~\(H^1\) by an equivalent
\(S\)\nb-action on~\(G_\OCover\) for a Hausdorff open cover
of~\(H^1\), then the transformation groupoid \(G_\OCover\rtimes S\) is
equivalent to \(H^1\rtimes S\) and hence also equivalent to the
space~\(H^0\).  In particular, the groupoid \(G_\OCover\rtimes S\) is
\emph{basic} (see Section~\ref{sec:basic_vs_free_proper}).  If a
groupoid is equivalent to a space, then this space has to be its orbit
space.  So if~\(H^0\) is Hausdorff, then the groupoid
\(G_\OCover\rtimes S\) is a free and proper, Hausdorff groupoid by
Proposition~\ref{pro:proper_action}.

\subsection{Local centralisers}
\label{sec:local_centraliser}

We are going to show that for many groupoids~\(G\) the
bibundles~\(X_t\) already determine the multiplication
maps~\(\mu_{t,u}\) and thus the inverse semigroup action.  This
happens, among others, for essentially principal topological groupoids
(meaning that the isotropy group bundle has no interior; see \cite{Renault:Cartan.Subalgebras}) and for
groups with trivial centre.

\begin{definition}
  \label{def:local_centraliser}
  A \emph{local centraliser} of~\(G\) is a map \(\gamma\colon U\to
  G^1\) defined on an open \(G\)\nb-invariant subset~\(U\) of~\(G^0\)
  with \(\s(\gamma(x))=\rg(\gamma(x))=x\) for all \(x\in U\) and
  \(\gamma(\rg(g))\cdot g=g\cdot \gamma(\s(g))\) for all \(g\in G\).
  We say that~\(G\) has \emph{no local centralisers} if all local
  centralisers are given by \(\gamma(x)=1_x\) for \(x\in U\) and
  some~\(U\) as above.
\end{definition}

Local centralisers defined on the same subset~\(U\) form an Abelian
group under pointwise multiplication.  All local centralisers form an
Abelian inverse semigroup.  It is the centre of~\(\Bis(G)\) if~\(G\)
is étale.

\begin{lemma}
  \label{lem:centralisers_bibundles}
  Let~\(X\) be a partial equivalence from~\(H\) to~\(G\).  Then
  \(\Map(X,X)\) is isomorphic to the group of local centralisers
  of~\(G\) defined on~\(\rg(X)\), and to the group of local
  centralisers of~\(H\) defined on~\(\s(X)\).

  If~\(G\) has no local centralisers and \(X\) and~\(Y\) are partial
  equivalences from~\(G\) or to~\(G\), then there is at most one
  bibundle map \(X\to Y\), so bibundle isomorphisms are unique if they
  exist.
\end{lemma}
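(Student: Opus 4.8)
The plan is to identify $\Map(X,X)$ with a group of bibundle self-maps of an identity equivalence, which are manifestly local centralisers, and then to transport this identification along the duality isomorphisms of Proposition~\ref{pro:properties_of_dual}; the uniqueness statement then follows because ``no local centralisers'' makes these groups trivial.

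First I would record that, by Proposition~\ref{pro:characterise_bibundle_isom} applied with $X_1=X_2=X$, every bibundle map $\varphi\colon X\to X$ is automatically invertible, so $\Map(X,X)$ is a group under composition, and that $\rg(X)\subseteq G^0$ is open and invariant by Lemma~\ref{lem:partial_equivalence}. The key special case is $X=H^1_V$ for an open invariant $V\subseteq H^0$, viewed as the identity equivalence on $H_V$. A bibundle self-map $\psi$ of $H^1_V$ is left- and right-$H_V$-equivariant and preserves anchor maps, so $\gamma(x)\defeq\psi(1_x)$ satisfies $\rg(\gamma(x))=\s(\gamma(x))=x$ for $x\in V$, and equivariance gives $\psi(h)=h\cdot\gamma(\s(h))=\gamma(\rg(h))\cdot h$ for all $h\in H^1_V$, whose consistency is precisely the local centraliser identity $\gamma(\rg(h))\cdot h=h\cdot\gamma(\s(h))$. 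Conversely, any local centraliser $\gamma$ of $H$ on $V$ yields a bibundle self-map $h\mapsto h\cdot\gamma(\s(h))$, continuity of $\psi$ and of $\gamma$ being equivalent, and composition of the $\psi$'s corresponds to pointwise multiplication of the $\gamma$'s. Thus $\Map(H^1_V,H^1_V)$ is the group of local centralisers of $H$ on $V$.

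For a general partial equivalence $X$ from $H$ to $G$, I would use the canonical isomorphism $X^*\times_G X\cong H^1_{\s(X)}$ from Proposition~\ref{pro:properties_of_dual} and the group homomorphism $\Map(X,X)\to\Map(H^1_{\s(X)},H^1_{\s(X)})$, $\varphi\mapsto\Id_{X^*}\times_G\varphi$. The cancellation argument from the proof of Proposition~\ref{pro:properties_of_dual} (multiply on the other side by $\Id_X\times_H(-)$ and use $X\times_H X^*\cong G^1_{\rg(X)}$ together with the canonical isomorphism $G^1_{\rg(X)}\times_G X\cong X$) shows that this map is bijective, with inverse $\psi\mapsto\bigl(X\cong X\times_H H^1_{\s(X)}\xrightarrow{\Id_X\times_H\psi} X\times_H H^1_{\s(X)}\cong X\bigr)$. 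Combined with the previous paragraph, this identifies $\Map(X,X)$ with the group of local centralisers of $H$ on $\s(X)$. Applying the same argument to $X^*$ (a partial equivalence from $G$ to $H$ with $\s(X^*)=\rg(X)$), or noting that $\varphi\mapsto\varphi^*$ is a group isomorphism $\Map(X,X)\cong\Map(X^*,X^*)$, gives the identification with the local centralisers of $G$ on $\rg(X)$.

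For the last claim, assume $G$ has no local centralisers. If $X$ and $Y$ are partial equivalences from $H$ to $G$ and $\varphi_1,\varphi_2\colon X\to Y$ are bibundle maps, then by Proposition~\ref{pro:characterise_bibundle_isom} both are isomorphisms onto ${}_{\rg(X)}|Y=Y|_{\s(X)}$, so $\varphi_2^{-1}\circ\varphi_1\in\Map(X,X)$; by the first part this group is isomorphic to the local centralisers of $G$ on $\rg(X)$, which is trivial by hypothesis, so $\varphi_1=\varphi_2$. If instead $X$ and $Y$ are partial equivalences from $G$ to some groupoid $K$, then $\varphi\mapsto\varphi^*$ is a bijection between bibundle maps $X\to Y$ and bibundle maps $X^*\to Y^*$, and $X^*,Y^*$ are partial equivalences from $K$ to $G$, so the previous case applies. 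The main obstacle is establishing the bijectivity of $\varphi\mapsto\Id_{X^*}\times_G\varphi$, where one must reuse the cancellation trick (and associators) from Proposition~\ref{pro:properties_of_dual}; everything else is either formal or a direct unwinding of the definition of a local centraliser.
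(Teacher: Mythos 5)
Your proof is correct and follows essentially the same route as the paper: reduce to self-maps of an identity equivalence via the cancellation bijection $\varphi\mapsto\Id_{X^*}\times_G\varphi$ from Proposition~\ref{pro:properties_of_dual}, identify those self-maps with local centralisers, and deduce uniqueness by forming $\varphi_2^{-1}\circ\varphi_1\in\Map(X,X)$. The only (cosmetic) differences are that you verify the special case $\Map(H^1_V,H^1_V)\cong\{\text{local centralisers on }V\}$ by direct computation where the paper cites Example~\ref{exa:iso_to_partial_equivalence}, and that you cancel on the source side first rather than the range side.
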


\begin{proof}
  The two descriptions of \(\Map(X,X)\) are equivalent by
  taking~\(X^*\), so we only prove one.  Every bibundle map \(X\to X\)
  is invertible by Proposition~\ref{pro:characterise_bibundle_isom}.
  The canonical group homomorphisms
  \begin{multline*}
    \Map(X,X) \xrightarrow{\blank\times_H X^*}
    \Map(X\times_H X^*,X\times_H X^*)
    \cong \Map(G^1_{\rg(X)},G^1_{\rg(X)})
    \\ \xrightarrow{\blank\times_G X}
    \Map(X,X)
  \end{multline*}
  are inverse to each other by the proof of
  Proposition~\ref{pro:properties_of_dual}.  Thus it remains to
  identify the set \(\Map(G^1_U,G^1_U)\) for an open
  \(G\)\nb-invariant subset~\(U\) of~\(G^0\) with the group of local
  centralisers defined on~\(U\).  We may view~\(G^1_U\) as the
  equivalence from~\(G^1_U\) to itself associated to the identity
  functor on~\(G^1_U\).  We described all bibundle isomorphisms
  between such equivalences in
  Example~\ref{exa:iso_to_partial_equivalence}.  Specialising
  Example~\ref{exa:iso_to_partial_equivalence} to the automorphisms of
  the identity functor gives exactly the local centralisers defined
  on~\(U\).  A quick computation shows that the composition of
  bibundle isomorphisms corresponds to the pointwise multiplication of
  local centralisers.

  Let \(f_1,f_2\colon X\to Y\) be bibundle maps.  Then both are
  bibundle isomorphisms \(X\to Y|_{\s(X)}\), and we may form a
  composite bibundle isomorphism \(f_2^{-1}\circ f_1\colon X\to X\).
  Since there are no local centralisers, the first part of the lemma
  shows that this is the identity map, so \(f_1=f_2\).  In particular,
  if two partial equivalences \(G\to H\) or \(H\to G\) are isomorphic,
  then the isomorphism is unique.
\end{proof}

Recall that~\(\widetilde{\Peq}(G)\) denotes the inverse semigroup of
\emph{isomorphism classes} of partial equivalences on~\(G\).

\begin{theorem}
  \label{the:no_local_centralisers}
  Let~\(G\) be a topological groupoid without local centralisers.  An
  action of an inverse semigroup~\(S\) on~\(G\) is equivalent to a
  homomorphism \(S\to\widetilde{\Peq}(G)\).  More precisely,
  isomorphism classes of \(S\)\nb-actions on~\(G\) by partial
  equivalences are in canonical bijection with homomorphisms
  \(S\to\widetilde{\Peq}(G)\).
\end{theorem}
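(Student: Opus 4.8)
The plan is to exploit the uniqueness of bibundle maps supplied by Lemma~\ref{lem:centralisers_bibundles}: since~$G$ has no local centralisers, there is at most one bibundle map between any two partial equivalences from~$G$ to~$G$, so the coherence isomorphisms $\mu_{t,u}$ in an action carry no information beyond their mere existence. I would first construct the map from isomorphism classes of $S$-actions to homomorphisms. Given an action $(X_t,\mu_{t,u})$, send $t\in S$ to the isomorphism class $[X_t]\in\widetilde{\Peq}(G)$. The isomorphism $\mu_{t,u}\colon X_t\times_G X_u\congto X_{tu}$ shows $[X_t]\cdot[X_u]=[X_{tu}]$ for the product on~$\widetilde{\Peq}(G)$ induced by~$\times_G$ (see the paragraph after Proposition~\ref{pro:idempotents}), and \ref{enum:APE2} gives $[X_1]=[G^1]=1$, so $t\mapsto[X_t]$ is a homomorphism $S\to\widetilde{\Peq}(G)$. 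Isomorphic actions have isomorphic partial equivalences~$X_t$ and hence yield the same homomorphism, so this construction descends to isomorphism classes of actions.

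Conversely, given a homomorphism $\varphi\colon S\to\widetilde{\Peq}(G)$, I would choose for each $t\in S$ a partial equivalence~$X_t$ from~$G$ to~$G$ representing~$\varphi(t)$, taking $X_1\defeq G^1$ (legitimate since $\varphi(1)=1=[G^1]$). As $\varphi(t)\cdot\varphi(u)=\varphi(tu)$, there is a bibundle isomorphism $\mu_{t,u}\colon X_t\times_G X_u\congto X_{tu}$, and by Lemma~\ref{lem:centralisers_bibundles} it is the \emph{only} bibundle map $X_t\times_G X_u\to X_{tu}$. Condition~\ref{enum:APE2} holds by the choice of~$X_1$; condition~\ref{enum:APE3} holds because the canonical left and right $G$-actions are themselves bibundle isomorphisms $G^1\times_G X_u\congto X_u$ and $X_t\times_G G^1\congto X_t$, hence coincide with $\mu_{1,u}$ and $\mu_{t,1}$ by uniqueness; and associativity~\ref{enum:APE4} is automatic because both composites around the pentagon in Definition~\ref{def:S_act_groupoid} are bibundle maps $(X_t\times_G X_u)\times_G X_v\to X_{tuv}$ and so agree. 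Thus $(X_t,\mu_{t,u})$ is an $S$-action on~$G$. A different choice of representatives~$X_t'$ comes with unique bibundle isomorphisms $X_t\congto X_t'$ (equalling~$\Id$ for $t=1$), and these are automatically compatible with the multiplication maps, again by uniqueness; so the resulting action is isomorphic, and we obtain a well-defined map from homomorphisms to isomorphism classes of actions.

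Finally I would verify that the two maps are mutually inverse. Building an action from~$\varphi$ and then passing to its homomorphism gives $t\mapsto[X_t]=\varphi(t)$, recovering~$\varphi$. In the other direction, starting from an action $(X_t,\mu_{t,u})$, passing to $t\mapsto[X_t]$, and rebuilding an action, one may take the \emph{same} representatives~$X_t$; the multiplication maps of the rebuilt action are then the unique bibundle isomorphisms $X_t\times_G X_u\congto X_{tu}$, hence equal to the original~$\mu_{t,u}$, so the original action is recovered on the nose and a fortiori up to isomorphism. I do not expect a genuine obstacle here once Lemma~\ref{lem:centralisers_bibundles} is available; the only point that deserves a moment's care is recognising that conditions \ref{enum:APE2}--\ref{enum:APE3} come for free rather than imposing constraints, precisely because the canonical $G$-actions are bibundle isomorphisms and bibundle maps into or out of~$G$ are unique.
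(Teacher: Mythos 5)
Your proposal is correct and follows essentially the same route as the paper: both directions rest on Lemma~\ref{lem:centralisers_bibundles} giving uniqueness of bibundle maps between partial equivalences of~\(G\), so that the multiplication maps~\(\mu_{t,u}\) exist automatically and carry no extra information, and conditions \ref{enum:APE3}--\ref{enum:APE4} hold because parallel bibundle isomorphisms coincide. Your write-up merely spells out the well-definedness checks (independence of representatives, mutual inverseness) that the paper leaves implicit.
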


\begin{proof}
  A homomorphism \(f\colon S\to\widetilde{\Peq}(G)\) gives us
  bibundles~\(X_t\) with \(X_t\times_G X_u\cong X_{tu}\) and
  \(X_1\cong G^1\); we may as well assume \(X_1=G^1\).  By
  Lemma~\ref{lem:centralisers_bibundles}, the isomorphisms
  \(\mu_{t,u}\colon X_t\times_G X_u\congto X_{tu}\) above are unique,
  so there is no need to specify them.  The conditions \ref{enum:APE3}
  and~\ref{enum:APE4} hold because any two parallel bibundle
  isomorphisms are equal.  Thus~\(f\) determines an \(S\)\nb-action by
  partial equivalences.  Conversely, an action by partial equivalences
  determines such a homomorphism by taking the isomorphism classes of
  the~\(X_t\) and forgetting the~\(\mu_{t,u}\).  Since isomorphisms of
  partial equivalences are unique if they exist, this forgetful
  functor is actually not forgetting anything here, so we get a
  bijection between isomorphism classes of actions by partial
  equivalences and homomorphisms \(S\to\widetilde{\Peq}(G)\).
\end{proof}

The results in this section are inspired by the notion of a
``quasi-graphoid'' used by Debord in~\cite{Debord:Holonomy}.  Debord
already treats partial equivalences of groupoids as arrows between
groupoids and uses them to glue together groupoids constructed
locally.  She restricts, however, to a situation where bibundle
isomorphisms are uniquely determined.  Even more, she wants the range
and source maps to already determine a partial equivalence uniquely.
For this, she assumes that a smooth map \(\gamma\colon U\to G^1\)
defined on an open subset~\(U\) of~\(G^0\) has to be the unit section
already if it only satisfies \(\s(\gamma(x))=\rg(\gamma(x))=x\) for
all \(x\in U\).  This condition holds for holonomy groupoids of
foliations -- even for the mildly singular foliations that she is
considering.

\subsection{Decomposing proper Lie groupoids}
\label{sec:proper_Lie}

A manifold may be constructed by taking a disjoint union of local
charts and gluing them together along the coordinate change maps,
which are partial homeomorphisms, or diffeomorphisms in the smooth
case.  When constructing groupoids locally, it is more likely that the
coordinate change maps are no longer partial isomorphisms but only
partial equivalences.  Actually, it may well be that the local pieces
are, to begin with, only local groupoids and not groupoids
(see~\cite{Debord:Holonomy}); this is not covered by our theory.
Therefore, we know no good examples where groupoids have been
constructed by gluing together smaller groupoids along partial
equivalences.

Instead, we take a groupoid as given and analyse it using local
information.  The local information should say that the groupoid
locally is \emph{equivalent} to one of a particularly simple form.
Then the groupoid is globally equivalent to a transformation groupoid
for an inverse semigroup action by partial equivalences on a disjoint
union of groupoids having the desired simple form.

We now get more concrete and consider a proper Lie groupoid~\(H\).  To
formulate stronger results, we shall work with (partial) equivalences
of Lie groupoids in this section; that is, spaces are replaced by
smooth manifolds, continuous maps by smooth maps, and open maps by
submersions.  This does not change the theory significantly,
see~\cite{Meyer-Zhu:Groupoids}.

First we formulate the local linearisability of proper Lie groupoids.
This was conjectured by Weinstein~\cite{Weinstein:Linearization} and
proved by Zung~\cite{Zung:Proper_linearization}.  Both authors try to
describe the local structure of proper Lie groupoids \emph{up to
  isomorphism}.  Following Trentinaglia~\cite{Trentinaglia:Thesis}, we
only aim for a description up to Morita equivalence:

\begin{theorem}
  \label{the:local_linearizable}
  Let~\(H\) be a proper Lie groupoid.  For every \(x\in H^0\) there
  are an open \(H\)\nb-invariant neighbourhood~\(U_x\) of~\(x\)
  in~\(H^0\), a linear representation of the stabiliser group~\(H_x\)
  on a finite-dimensional vector space~\(W_x\), and a Lie groupoid
  equivalence from the transformation groupoid~\(W_x\rtimes H_x\)
  to~\(H_{U_x}\).
\end{theorem}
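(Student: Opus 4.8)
The plan is to deduce the statement from Zung's linearization theorem together with a routine Morita reduction, so that the only substantial ingredient is external. Write \(K\defeq H_x\) and let \(O\subseteq H^0\) be the orbit of~\(x\). Since~\(H\) is proper, \(K\) is a compact Lie group, and it acts linearly on the finite-dimensional vector space \(W_x\defeq T_xH^0/T_xO\) through the isotropy representation; this will be the representation appearing in the statement. Moreover the source fibre \(P\defeq \s^{-1}(x)\) is a principal right \(K\)-bundle over~\(O\) with bundle projection~\(\rg\), acted on by \(K\le H_x\) by right multiplication.

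First I would invoke the linearization theorem of Zung~\cite{Zung:Proper_linearization}, conjectured by Weinstein~\cite{Weinstein:Linearization}: there is an \(H\)-invariant open neighbourhood~\(U_x\) of~\(O\) in~\(H^0\) (hence a neighbourhood of~\(x\)) together with a Lie groupoid isomorphism of~\(H_{U_x}\) onto the restriction of the \emph{linear model} of~\(H\) at~\(O\) to an invariant open neighbourhood of its zero section. The linear model is the transformation groupoid of the gauge groupoid \((P\times P)/K\) of the principal bundle \(P\onto O\) acting on the associated vector bundle \(P\times_K W_x\to O\). Using a \(K\)-invariant inner product on~\(W_x\) (which exists as~\(K\) is compact), the given invariant neighbourhood of the zero section may be shrunk to one of the form \(P\times_K B\) with \(B\subseteq W_x\) a \(K\)-invariant open ball, at the cost of shrinking~\(U_x\) correspondingly. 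Since radial rescaling gives a \(K\)-equivariant diffeomorphism \(B\cong W_x\), and therefore a gauge-equivariant isomorphism of the associated bundles, we obtain a Lie groupoid isomorphism
\[
H_{U_x}\ \cong\ (P\times_K W_x)\rtimes\bigl((P\times P)/K\bigr).
\]

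It then remains to produce a Lie groupoid equivalence from \(W_x\rtimes K\) to the right-hand side. Here~\(P\) is an equivalence bibundle from the gauge groupoid \((P\times P)/K\) to~\(K\), viewed as a Lie groupoid over a point. A Lie groupoid equivalence carries a space acted on by one groupoid to a space acted on by the other, with equivalent transformation groupoids; this follows from the smooth version of the bibundle calculus recalled in~\cite{Meyer-Zhu:Groupoids}. Applying this to the \(K\)-space~\(W_x\) along the bibundle~\(P\), the associated space \(P\times_K W_x\) carries the gauge groupoid action above and its transformation groupoid is equivalent to \(W_x\rtimes K\). Composing this equivalence with the isomorphism of the previous paragraph yields the desired Lie groupoid equivalence from \(W_x\rtimes H_x\) to~\(H_{U_x}\).

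The main obstacle is the appeal to Zung's theorem, whose proof is genuinely deep; following Trentinaglia~\cite{Trentinaglia:Thesis} one may instead argue directly for the Morita-equivalence version, but the difficulty lies there either way. The remaining steps — compactness of the stabiliser, replacing an invariant ball by the whole representation space via radial rescaling, and the equivalence between a gauge-groupoid action on an associated bundle and the underlying linear group action — are routine consequences of properness and of the groupoid-equivalence formalism already in place.
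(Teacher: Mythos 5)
Your reduction to Zung's theorem is the right general idea, and the later steps (compactness of \(H_x\) from properness, shrinking an invariant neighbourhood of the zero section to a tube \(P\times_{H_x}B\), rescaling a \(K\)\nb-invariant ball to all of \(W_x\), and transporting the \(H_x\)\nb-action on \(W_x\) along the equivalence bibundle \(P\) between \(H_x\) and the gauge groupoid) are all sound. The problem is the very first step. You invoke ``Zung's linearization theorem'' in the form: there is an \emph{\(H\)\nb-invariant} open neighbourhood \(U_x\) of the orbit and a Lie groupoid \emph{isomorphism} of \(H_{U_x}\) onto the restriction of the linear model to an invariant neighbourhood of the zero section. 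That is the invariant orbit-linearization statement, and it is exactly the statement that Weinstein and Zung could only prove under extra hypotheses (essentially \(\s\)\nb-properness, i.e.\ compactness of \(\s^{-1}(x)\), hence of the orbit); it is not available, and is in fact false, for a general proper Lie groupoid. What Zung actually proves without extra assumptions (his Theorem~2.3) is linearization at a \emph{fixed point}. So as written your argument proves the theorem only under the additional hypotheses that the present statement is specifically designed to avoid — the whole point of asking only for a Morita equivalence is to get a result valid for every proper Lie groupoid.

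The repair is to use your Morita machinery one step earlier, following Trentinaglia: choose a slice \(\Sigma\) through \(x\) transverse to the orbit. After shrinking, the restricted groupoid \(H|_\Sigma=\s^{-1}(\Sigma)\cap \rg^{-1}(\Sigma)\) is a proper Lie groupoid with \(x\) as a fixed point and isotropy \(H_x\), so Zung's fixed-point theorem applies directly and, after shrinking to an \(H_x\)\nb-invariant ball and rescaling as you do, gives an isomorphism \(H|_\Sigma\cong W_x\rtimes H_x\). Now take \(U_x\) to be the saturation of \(\Sigma\), which is open (the range map is a submersion) and invariant; the standard restriction-to-a-full-transversal bibundle \(\s^{-1}(\Sigma)\) is a Lie groupoid equivalence from \(H|_\Sigma\) to \(H_{U_x}\). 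This route never mentions the gauge groupoid or the linear model over the whole orbit and needs no invariance of the linearizing chart, which is why it works without extra assumptions.
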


The vector space~\(W_x\) is the normal bundle to the \(H\)\nb-orbit
\(H x\) of~\(x\), with its canonical representation of~\(H_x\).

Weinstein and Zung impose extra assumptions on~\(H\) to
describe~\(H_{U_x}\) up to isomorphism.  The argument in
\cite{Trentinaglia:Thesis}*{Section 4} shows how to deduce
Theorem~\ref{the:local_linearizable} quickly from
\cite{Zung:Proper_linearization}*{Theorem 2.3} without extra
assumptions.

Actually, we do not need~\(H\) to be proper.  Since we only need local
structure, it is enough for~\(H\) to be \emph{locally proper}, that
is, each \(x\in H^0\) has an \(H\)\nb-invariant open
neighbourhood~\(U\) such that~\(H_U\) is proper; this allows the orbit
space~\(H^0/H\) to be a locally Hausdorff but non-Hausdorff manifold.

Assume now that~\(H\) is a locally proper groupoid.  By
Theorem~\ref{the:local_linearizable}, there is a
covering~\(\OCover\) of~\(H^0\) by open, \(H\)\nb-invariant
subsets and, for each \(U\in\OCover\), a Lie groupoid
equivalence~\(X_U\) from a transformation groupoid \(W_U\rtimes G_U\)
for a compact Lie group~\(G_U\) and a linear representation~\(W_U\)
of~\(G_U\) to the restriction~\(H_U\).  Now let
\[
G\defeq \bigsqcup_{U\in \OCover} W_U\rtimes G_U.
\]
This disjoint union is a groupoid with object space~\(\bigsqcup W_U\).

Let~\(K\) be the covering groupoid of~\(H^0\) for the
covering~\(\OCover\).  Since \(H_U|_{U\cap V}=H_{U\cap V}=
H_V|_{U\cap V}\), the inverse semigroup \(S\defeq \Bis(K)\) acts on
\(\bigsqcup_{U\in\OCover} H_U\): each element of~\(\Bis(K)\) acts
by the identity equivalence between the appropriate restrictions of
\(H_U\) and~\(H_V\), and all the multiplication maps are the canonical
isomorphisms.  The disjoint union \(X\defeq
\bigsqcup_{U\in\OCover} X_U\) gives an equivalence from~\(G\) to
\(\bigsqcup_{U\in\OCover} H_U\), so we may transfer this
\(S\)\nb-action to~\(G\).

We make the action on~\(G\) more concrete.  Any bisection of~\(K\) is
a disjoint union of bisections of the form
\[
(U_1,D,U_2) \defeq \{(U_1,x,U_2) \mid x\in D\}
\]
for \(U_1,U_2\in\OCover\) and an open subset \(D\subseteq U_1\cap
U_2\).  The product \((U_1,D_1,U_2)\cdot (U'_2,D_2,U_3)\) is empty if
\(U_2\neq U_2'\), and is equal to \((U_1,D_1\cap D_2,U_3)\) if \(U_2=
U_2'\).

The partial equivalence~\(X_{U_1,D,U_2}\) on~\(G\) associated to
\((U_1,D,U_2)\) is the composite partial equivalence
\[
G\supseteq W_{U_1}\rtimes G_{U_1}
\xrightarrow{{}_D|X_{U_1}^*} H_D
\xrightarrow{X_{U_2}|_D} W_{U_2}\rtimes G_{U_2} \subseteq G.
\]
The composite of \(X_{U_1,D_1,U_2}\) and \(X_{U_2',D_2,U_3}\) is
clearly empty for \(U_2\neq U_2'\), as it should be.  If \(U_2=U_2'\),
then there is a canonical isomorphism of partial equivalences
\[
\mu_{(U_1,D_1,U_2),(U_2,D_2,U_3)}\colon X_{U_1,D_1,U_2}\times_G
X_{U_2,D_2,U_3} \to X_{U_1,D_1\cap D_2,U_3},
\]
using the restriction of the canonical pairing \(X_{U_2}\times_G
X_{U_2}^* \to H_{U_2}\) to remove the extra two factors in the middle.
This is exactly what happens if we translate the ``trivial'' action
of~\(S\) on \(\bigsqcup H_U\) described above to~\(G\) along the
equivalence \(\bigsqcup X_U\).

\begin{theorem}
  \label{the:global_structure_proper_Lie}
  The locally proper Lie groupoid~\(H\) is equivalent to the
  transformation groupoid~\(G\rtimes S\) for the action of~\(S\)
  on~\(G\) described above.
\end{theorem}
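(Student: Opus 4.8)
The plan is to reduce the statement to two ingredients from which it has essentially been assembled: Proposition~\ref{pro:actions_equivalent} on the Morita invariance of transformation groupoids, and an explicit description of the transformation groupoid of the ``trivial'' \(S\)\nb-action on \(\bigsqcup_{U\in\OCover} H_U\). One works throughout with Lie groupoids, (partial) equivalences of Lie groupoids, and submersions in place of open maps, which, as already remarked, changes none of the relevant arguments. By construction, the \(S\)\nb-action \((X_t,\mu_{t,u})\) on~\(G\) in the theorem is obtained from the trivial \(S\)\nb-action \((\mathcal H_t)_{t\in S}\) on \(\bigsqcup_{U} H_U\) by transporting it along the equivalence \(X=\bigsqcup_{U} X_U\) from~\(G\) to \(\bigsqcup_{U} H_U\). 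Hence Proposition~\ref{pro:actions_equivalent}, applied to this action, this pair of groupoids, and this equivalence, gives an equivalence of Lie groupoids
\[
G\rtimes S \;\simeq\; \Bigl(\bigsqcup_{U\in\OCover} H_U\Bigr)\rtimes S ,
\]
and it suffices to prove that the right-hand side is equivalent to~\(H\).

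For this I would make the transformation groupoid of the trivial action explicit. By Theorem~\ref{the:graded_groupoid_versus_action} it is the \(S\)\nb-graded groupoid with unit fibre \(\bigsqcup_U H_U\), hence with object space \(\bigsqcup_{U\in\OCover}U\); its grading assigns to a bisection \((U_1,D,U_2)\) of~\(K\) (with \(D\) open and \(H\)\nb-invariant) the copy of the identity equivalence between \(H_{U_1}|_D=H_D=H_{U_2}|_D\), that is, a copy of~\(H^1_D\). I would then unwind the relation~\(\sim\) of Lemma~\ref{lem:transformation_gp_open_equivalence}: two bisections \((U_1,D,U_2)\) and \((U_1',D',U_2')\) of~\(K\) have a nonempty common lower bound only if \(U_1=U_1'\) and \(U_2=U_2'\), in which case it is \((U_1,D\cap D',U_2)\) with the~\(j\)-maps the evident inclusions of arrow spaces, so that~\(\sim\) collapses to ``equal as elements of~\(H^1\)''. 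Since the members of~\(\OCover\) are \(H\)\nb-invariant, \(\rg(h)\in U_1\cap U_2\) as soon as \(\rg(h)\in U_1\) and \(\s(h)\in U_2\); together with the previous observation this yields a canonical isomorphism of Lie groupoids
\[
\Bigl(\bigsqcup_{U} H_U\Bigr)\rtimes S \;\cong\; p^*H ,
\]
the pull-back of~\(H\) along the canonical open surjection \(p\colon\bigsqcup_{U\in\OCover} U\to H^0\) (see \cite{Meyer-Zhu:Groupoids}*{Example 3.9}), under which the arrow \((x_1,h,x_2)\) of~\(p^*H\), with \(x_1\in U_1\) and \(x_2\in U_2\), corresponds to \(h\in H^1_D\subseteq X_{(U_1,D,U_2)}\).

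It remains to note that \(p^*H\) is equivalent to~\(H\): the functor \(p^*H\to H\), \((x_1,h,x_2)\mapsto h\), is fully faithful by the definition of the pull-back, and essentially surjective because~\(p\) is an open surjection --- the map \(\bigl(\bigsqcup_U U\bigr)\times_{p,H^0,\rg}H^1\to H^0\), \((x,h)\mapsto\s(h)\), is open since~\(\rg\) is open and surjective since~\(p\) is. Hence it induces an equivalence bibundle by \cite{Meyer-Zhu:Groupoids}*{Proposition 6.7}, and chaining the three equivalences gives \(H\simeq p^*H\cong\bigl(\bigsqcup_U H_U\bigr)\rtimes S\simeq G\rtimes S\).

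The step I expect to be the main obstacle is the identification \(\bigl(\bigsqcup_U H_U\bigr)\rtimes S\cong p^*H\): one must check carefully that the transformation-groupoid construction applied to the trivial action produces exactly the pull-back and nothing larger --- tracking the relation~\(\sim\), the quotient topology, the grading indexed by all of~\(\Bis(K)\) (whose general elements are disjoint unions of basic bisections \((U_1,D,U_2)\)), and the compatibility of all structure maps and topologies. The two outer steps are routine: the first is immediate from the construction preceding the statement, and the equivalence \(p^*H\simeq H\) is the standard invariance of a groupoid under pull-back along an open surjection.
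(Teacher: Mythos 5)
Your proposal is correct and follows exactly the same three-step route as the paper's proof: apply Proposition~\ref{pro:actions_equivalent} to transport along \(\bigsqcup_U X_U\), identify \(\bigl(\bigsqcup_U H_U\bigr)\rtimes S\) with the pull-back \(p^*(H)\), and use that \(p^*(H)\) is equivalent to~\(H\) because~\(p\) is a surjective submersion. The only difference is that you spell out the middle identification (unwinding the grading and the relation~\(\sim\)) in detail, where the paper merely asserts it is ``easy to understand.''
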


\begin{proof}
  Since we constructed the action of~\(S\) on~\(G\) by translating the
  action on \(\bigsqcup_{U\in\OCover} H_U\),
  Proposition~\ref{pro:actions_equivalent} shows that \(G\rtimes S\)
  is equivalent to \(\bigsqcup_{U\in\OCover} H_U\rtimes S\).
  Since~\(S\) acts ``trivially'' on \(\bigsqcup H_U\), this
  transformation groupoid is easy to understand: it is the
  pull-back~\(p^*(H)\) of~\(H\) for the canonical map \(p\colon
  \bigsqcup_{U\in\OCover} U\to H^0\).  Since~\(p\) is a surjective
  submersion, \(p^*(H)\) is equivalent to~\(H\).
\end{proof}

As a result, any locally proper Lie groupoid is equivalent to a
transformation groupoid for an inverse semigroup action on a disjoint
union of linear actions of compact groups.  Such transformation
groupoids need not be locally proper, however, so we do not have a
characterisation of locally proper Lie groupoids.  The groupoid
\(G\rtimes S\) is étale if and only if~\(G\) is, if and only if the
stabilisers~\(H_x\) are finite.  This means that~\(H\) is an orbifold
(see~\cite{Moerdijk:Orbifolds_groupoids}).

\section{Inverse semigroup actions on \texorpdfstring{$C^*$}{C*}-algebras}
\label{sec:S_acts_on_Cstar}

We now define inverse semigroup actions on \(\Cst\)\nb-algebras by
Hilbert bimodules, in parallel to actions on groupoids by partial
equivalences.

\begin{definition}
  \label{def:Hilbert_bimodule}
  A \emph{Hilbert \(A,B\)-bimodule}~\(\Hilm\) is a left Hilbert
  \(A\)\nb-module and a right Hilbert \(B\)\nb-module such that the
  left and right multiplications commute, and \(\BRAKET{x}{y}_A\cdot z
  = x\cdot \braket{y}{z}_B\) for all \(x,y,z\in \Hilm\).  A
  \emph{Hilbert \(A,B\)-bimodule map} is a bimodule map that also
  intertwines both inner products.
\end{definition}

Let~\(\Hilm\) be a Hilbert \(A,B\)-bimodule.  Let \(I\idealin A\)
and \(J\idealin B\) be the closed linear spans of the elements
\(\BRAKET{x}{y}_A\) and~\(\braket{x}{y}_B\) with \(x,y\in\Hilm\),
respectively.  These are closed ideals in \(A\) and~\(B\),
and~\(\Hilm\) is an \(I,J\)\nb-imprimitivity bimodule by restricting
the left multiplications to \(I\) and~\(J\).  Ideals in a
\(\Cst\)\nb-algebra are in bijection with open subsets of its
primitive ideal space, so ideals are the right analogues of open
invariant subsets of groupoids.  Hence we denote the ideals \(I\)
and~\(J\) above as \(I\defeq \rg(\Hilm)\) and \(J\defeq \s(\Hilm)\),
and we think of Hilbert \(A,B\)-bimodules as partial Morita
equivalences from~\(B\) to~\(A\).

Given an ideal \(K\idealin A\), we define the restriction of a
Hilbert bimodule~\(\Hilm\) to~\(K\) as \({}_K|\Hilm\defeq K\cdot
\Hilm\subseteq \Hilm\), which is canonically isomorphic to
\(K\otimes_A \Hilm\).  We restrict to ideals in~\(B\) in a similar
way.

The left action of~\(A\) on a Hilbert bimodule is by a nondegenerate
\Star{}ho\-mo\-mor\-phism \(A\to\Bound(\Hilm)\) into the adjointable
operators on~\(\Hilm\).  Thus a Hilbert \(A,B\)-bimodule becomes a
correspondence by forgetting the left inner
product.

\begin{lemma}
  \label{lem:corr_to_Hil_bimod}
  A correspondence~\(\Hilm\) carries a Hilbert
  bimodule structure if and only if there is an ideal \(I\idealin
  A\) such that the left action \(\varphi\colon A\to\Bound(\Hilm)\)
  restricts to an isomorphism from~\(I\) onto~\(\Comp(\Hilm)\).  This
  ideal and the left inner product are uniquely determined by the
  correspondence.
\end{lemma}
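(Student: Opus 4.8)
The plan is to characterize when the correspondence structure extends to a Hilbert bimodule structure, i.e.\ when a compatible left inner product $\BRAKET{\cdot}{\cdot}_A$ exists satisfying $\BRAKET{x}{y}_A\cdot z = x\cdot\braket{y}{z}_B$. First I would observe that the identity $\BRAKET{x}{y}_A z = x\braket{y}{z}_B$ forces $\BRAKET{x}{y}_A$ to act on $\Hilm$ exactly as the ``rank-one'' operator $\Theta_{x,y}\colon z\mapsto x\braket{y}{z}_B$, which is a compact operator in $\Comp(\Hilm)$. Since the left action $\varphi\colon A\to\Bound(\Hilm)$ is nondegenerate, the closed linear span of the $\Theta_{x,y}$, which is all of $\Comp(\Hilm)$, must lie in $\varphi(A)$; and because the assignment $\BRAKET{x}{y}_A$ is to be an $A$-valued inner product, the element $\BRAKET{x}{y}_A\in A$ is uniquely pinned down once we know that $\varphi$ is injective on the relevant ideal. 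So the natural candidate for $I$ is the preimage $\varphi^{-1}(\Comp(\Hilm))$, and the existence question reduces to whether $\varphi$ restricts to an \emph{isomorphism} $I\xrightarrow{\sim}\Comp(\Hilm)$.

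The key steps, in order. (1) \emph{Necessity.} Assume $\Hilm$ is a Hilbert $A,B$-bimodule. As above, each $\BRAKET{x}{y}_A$ acts as $\Theta_{x,y}$, so $\varphi$ maps the ideal $I\defeq\overline{\operatorname{span}}\,\{\BRAKET{x}{y}_A\}=\rg(\Hilm)$ onto $\Comp(\Hilm)$. Injectivity of $\varphi|_I$: if $a\in I$ and $\varphi(a)=0$, then $\braket{ax}{ax}_A=\BRAKET{ax}{ax}_A$-computations together with the bimodule compatibility and positivity force $a\cdot\BRAKET{x}{y}_A=0$ for all $x,y$ and hence $a\cdot I=0$, so $a=0$ since $I$ is an ideal containing an approximate unit for $I$. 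Thus $\varphi|_I$ is an isomorphism onto $\Comp(\Hilm)$. (2) \emph{Sufficiency.} Suppose $\varphi|_I\colon I\xrightarrow{\sim}\Comp(\Hilm)$ for some ideal $I\triangleleft A$. Define $\BRAKET{x}{y}_A\defeq (\varphi|_I)^{-1}(\Theta_{x,y})$; this lands in $I\subseteq A$. One checks it is conjugate-linear in the first variable (matching the convention of the paper, whichever it is), additive, satisfies $\BRAKET{x}{y}_A^*=\BRAKET{y}{x}_A$ since $\Theta_{x,y}^*=\Theta_{y,x}$ and $(\varphi|_I)^{-1}$ is a $^*$-isomorphism, is positive since $\Theta_{x,x}\ge 0$, and is nondegenerate since the $\Theta_{x,x}$ span a dense subspace of $\Comp(\Hilm)$ which acts nondegenerately on $\Hilm$. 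The compatibility $\BRAKET{x}{y}_A z = \varphi(\BRAKET{x}{y}_A)z = \Theta_{x,y}z = x\braket{y}{z}_B$ is immediate, and the $A$-module property $\varphi(a)\BRAKET{x}{y}_A\cdot(\text{acting})=\BRAKET{ax}{y}_A$ follows from $a\cdot\Theta_{x,y}=\Theta_{ax,y}$ in $\Comp(\Hilm)$. Hence $\Hilm$ is a Hilbert $A,B$-bimodule. (3) \emph{Uniqueness.} Any left inner product making $\Hilm$ a Hilbert bimodule must have $\BRAKET{x}{y}_A$ act as $\Theta_{x,y}$, and since the required ideal $I=\rg(\Hilm)$ carries a left action that is injective (by step (1)), the value $\BRAKET{x}{y}_A=(\varphi|_I)^{-1}(\Theta_{x,y})$ is forced; thus both $I$ and the inner product are uniquely determined by the correspondence.

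The main obstacle I anticipate is the \emph{injectivity} claim in step~(1): deducing $a\cdot I = 0 \Rightarrow a = 0$ requires knowing $I$ is nondegenerate as a subset of itself, which is automatic for a $\Cst$-algebra (take an approximate unit), but one must be careful that $a\in I$ rather than merely $a\in A$ — this is exactly why the statement refers to the ideal $\rg(\Hilm)$ and not all of $A$, since $\varphi$ need not be injective on $A$. A secondary subtlety is matching sesquilinearity conventions: the paper writes $\BRAKET{x}{y}_A$ as the ``left'' inner product, and one should verify that the formula $\BRAKET{x}{y}_A\defeq(\varphi|_I)^{-1}(\Theta_{x,y})$ has the correct linearity in each slot relative to the convention implicit in Definition~\ref{def:Hilbert_bimodule}. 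Both issues are routine once flagged, so the proof is short; the content is really just the observation that compact operators of a right Hilbert module \emph{are} the natural left-inner-product algebra, and a nondegenerate correspondence whose left algebra contains and is isomorphic to them is precisely a Hilbert bimodule.
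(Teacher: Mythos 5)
Your proof is correct in its main thrust and follows essentially the same route as the paper's: the compatibility condition forces \(\BRAKET{x}{y}_A\) to act as the rank-one operator \(\Theta_{x,y}\), the left action is faithful on \(\rg(\Hilm)\), and conversely one transfers the \(\Comp(\Hilm)\)-valued inner product back through \((\varphi|_I)^{-1}\). The paper packages the necessity direction as the standard fact that a Hilbert bimodule is an imprimitivity bimodule between \(\rg(\Hilm)\) and \(\s(\Hilm)\), whereas you prove the faithfulness of \(\varphi|_{\rg(\Hilm)}\) by hand with an approximate-unit argument; that is fine.

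The one point that is not quite complete is the uniqueness of the \emph{ideal}, and your step (3) is circular as written: you say the value \(\BRAKET{x}{y}_A=(\varphi|_I)^{-1}(\Theta_{x,y})\) is forced because ``the required ideal \(I=\rg(\Hilm)\)'' carries a faithful left action, but \(\rg(\Hilm)\) is defined from the inner product, so a priori two different bimodule structures could have different ideals \(I_1\neq I_2\), each mapped isomorphically onto \(\Comp(\Hilm)\), with \((\varphi|_{I_1})^{-1}(\Theta_{x,y})\neq(\varphi|_{I_2})^{-1}(\Theta_{x,y})\). The paper closes this with a minimality argument: if \(\varphi(I)=\Comp(\Hilm)\) for some ideal \(I\), then \(\varphi(\rg(\Hilm)\cdot I)=\Comp(\Hilm)\) as well, and faithfulness of \(\varphi\) on \(\rg(\Hilm)\) and on \(I\) forces \(\rg(\Hilm)\cdot I\) to coincide with both, so \(I=\rg(\Hilm)\). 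Alternatively, your own computation already yields \(\ker\varphi\cdot\rg(\Hilm)=0\) for any bimodule structure; the difference \(a\) of the two candidate inner products lies in \(\ker\varphi\cap(I_1+I_2)\) and annihilates \(I_1+I_2\), hence vanishes, which gives uniqueness of the inner product and then of the ideal. Finally, the remark in your opening paragraph that the natural candidate for \(I\) is \(\varphi^{-1}(\Comp(\Hilm))\) is misleading --- that preimage contains \(\ker\varphi\) and is in general strictly larger than \(\rg(\Hilm)\) --- but you do not actually use it, so nothing breaks.
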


\begin{proof}
  First let~\(\Hilm\) be a Hilbert bimodule.  Then~\(\Hilm\) is an
  imprimitivity bimodule from~\(\s(\Hilm)\) to~\(\rg(\Hilm)\),
  so~\(\varphi|_{\rg(\Hilm)}\) is an isomorphism from~\(\rg(\Hilm)\)
  onto~\(\Comp(\Hilm)\).  If \(I\idealin A\) is another ideal
  with \(\varphi(I) = \Comp(\Hilm)\), then \(\varphi(\rg(\Hilm)\cdot I)
  = \Comp(\Hilm)\) as well.  Thus~\(\rg(\Hilm)\) is the minimal ideal
  that~\(\varphi\) maps onto~\(\Comp(\Hilm)\), and the only one on
  which this happens isomorphically.  Thus~\(\rg(\Hilm)\) is already
  determined by the underlying correspondence.

  Let~\(\Hilm'\) be another Hilbert \(A,B\)-bimodule with the same
  underlying correspondence as~\(\Hilm\) and with left
  \(A\)\nb-valued inner product~\(\BRAKET{x}{y}'_A\).  Then
  \[
  \varphi(\BRAKET{x}{y}'_A)z = x\cdot \braket{y}{z}_B
  = \varphi(\BRAKET{x}{y}_A)z
  \]
  for all \(x,y,z\in \Hilm\).  Since \(\rg(\Hilm)=\rg(\Hilm')\)
  depends only on the correspondence and the restriction
  of~\(\varphi\) to~\(\rg(\Hilm)\) is faithful, we get
  \(\Hilm=\Hilm'\) as Hilbert bimodules.

  Now let~\(\Hilm\) be a correspondence and let \(I\idealin A\)
  be an ideal that is mapped isomorphically onto~\(\Comp(\Hilm)\).
  Transfer the usual \(\Comp(\Hilm)\)-valued left inner product
  on~\(\Hilm\) through this isomorphism to one with values in
  \(A\supseteq I\).  This turns~\(\Hilm\) into a Hilbert
  \(A,B\)-bimodule.
\end{proof}

\begin{proposition}
  \label{pro:Hilbert_bimodule_map}
  Let \(\Hilm\) and~\(\Hilm'\) be Hilbert \(A,B\)-bimodules.  If there
  is a Hilbert bimodule map \(f\colon \Hilm\to\Hilm'\), then
  \(\s(\Hilm)\subseteq \s(\Hilm')\) and \(\rg(\Hilm)\subseteq
  \rg(\Hilm')\).  Such a Hilbert bimodule map is an isomorphism
  from~\(\Hilm\) onto the submodule \(\Hilm'\cdot \s(\Hilm) =
  \rg(\Hilm)\cdot \Hilm'\) in~\(\Hilm'\).  So it is an isomorphism
  onto~\(\Hilm'\) if and only if \(\s(\Hilm')\subseteq \s(\Hilm)\), if
  and only if \(\rg(\Hilm')\subseteq \rg(\Hilm)\), if and only if the
  map \(\Comp(\Hilm)\to \Comp(\Hilm')\) induced by~\(f\) is an
  isomorphism.
\end{proposition}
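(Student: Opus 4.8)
The plan is to mimic the proof of Proposition~\ref{pro:characterise_bibundle_isom}, with principal bundles replaced by imprimitivity bimodules. First I would observe that a Hilbert bimodule map \(f\colon\Hilm\to\Hilm'\) is isometric: intertwining the right inner products gives \(\braket{f(x)}{f(x)}_B = \braket{x}{x}_B\), hence \(\norm{f(x)} = \norm{x}\) for all \(x\in\Hilm\). So \(f\) is injective with closed range and is a Hilbert bimodule isomorphism onto the closed sub-bimodule \(f(\Hilm)\subseteq\Hilm'\). Since \(f\) also intertwines the left inner product, the elements \(\BRAKET{x}{y}_A = \BRAKET{f(x)}{f(y)}_A\) and \(\braket{x}{y}_B = \braket{f(x)}{f(y)}_B\) lie in \(\rg(\Hilm')\) and \(\s(\Hilm')\); as they span \(\rg(\Hilm)\) and \(\s(\Hilm)\), this already yields \(\rg(\Hilm)\subseteq\rg(\Hilm')\) and \(\s(\Hilm)\subseteq\s(\Hilm')\).

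The core step is to identify the image as \(f(\Hilm) = \Hilm'\cdot\s(\Hilm) = \rg(\Hilm)\cdot\Hilm'\). The inclusions \(f(\Hilm)\subseteq\Hilm'\cdot\s(\Hilm)\) and \(f(\Hilm)\subseteq\rg(\Hilm)\cdot\Hilm'\) follow from \(\Hilm = \Hilm\cdot\s(\Hilm) = \rg(\Hilm)\cdot\Hilm\) (because \(\Hilm\) is an \(\rg(\Hilm),\s(\Hilm)\)-imprimitivity bimodule) together with \(f\) being a bimodule map. For the reverse inclusions I would use the imprimitivity identity \(\BRAKET{u}{v}_A\cdot w = u\cdot\braket{v}{w}_B\) from Definition~\ref{def:Hilbert_bimodule}, applied inside \(\Hilm'\): given \(\eta\in\Hilm'\) and \(b\in\s(\Hilm)\), I approximate \(b\) in norm by finite sums \(\sum_i\braket{x_i}{y_i}_B = \sum_i\braket{f(x_i)}{f(y_i)}_B\) with \(x_i,y_i\in\Hilm\) and compute \(\eta\cdot\braket{f(x_i)}{f(y_i)}_B = \BRAKET{\eta}{f(x_i)}_A\cdot f(y_i) = f\bigl(\BRAKET{\eta}{f(x_i)}_A\cdot y_i\bigr)\in f(\Hilm)\), using that \(\BRAKET{\eta}{f(x_i)}_A\in A\) and \(f\) is a left \(A\)-module map; closedness of \(f(\Hilm)\) gives \(\eta\cdot b\in f(\Hilm)\), so \(\Hilm'\cdot\s(\Hilm)\subseteq f(\Hilm)\). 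The mirror-image computation, approximating \(a\in\rg(\Hilm)\) by sums \(\sum_i\BRAKET{f(x_i)}{f(y_i)}_A\) and using that \(f\) is a right \(B\)-module map, gives \(\rg(\Hilm)\cdot\Hilm'\subseteq f(\Hilm)\). In particular \(\Hilm'\cdot\s(\Hilm) = \rg(\Hilm)\cdot\Hilm'\), and \(f\) is an isomorphism of Hilbert bimodules onto this submodule.

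Finally I would read off the equivalences. The map \(f\) is surjective exactly when \(\Hilm'\cdot\s(\Hilm) = \Hilm'\). Since \(\s(\Hilm)\subseteq\s(\Hilm')\), the \(\s\)-ideal of the restricted bimodule \(\Hilm'\cdot\s(\Hilm)\) equals \(\cl{\s(\Hilm)\,\s(\Hilm')\,\s(\Hilm)} = \s(\Hilm)\); hence \(\Hilm'\cdot\s(\Hilm) = \Hilm'\) implies \(\s(\Hilm') = \s(\Hilm)\), while conversely \(\s(\Hilm') = \s(\Hilm)\) gives \(\Hilm'\cdot\s(\Hilm) = \Hilm'\cdot\s(\Hilm') = \Hilm'\) because \(\Hilm'\) is full over \(\s(\Hilm')\). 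The symmetric argument with left inner products shows \(f\) is surjective iff \(\rg(\Hilm) = \rg(\Hilm')\). For the claim about compacts, Lemma~\ref{lem:corr_to_Hil_bimod} provides canonical isomorphisms \(\rg(\Hilm)\congto\Comp(\Hilm)\) and \(\rg(\Hilm')\congto\Comp(\Hilm')\) by left multiplication, and these conjugate the inclusion \(\rg(\Hilm)\hookrightarrow\rg(\Hilm')\) into the map \(\Comp(\Hilm)\to\Comp(\Hilm')\), \(\ket{x}\bra{y}\mapsto\ket{f(x)}\bra{f(y)}\), induced by \(f\) — this uses precisely \(\BRAKET{x}{y}_A = \BRAKET{f(x)}{f(y)}_A\). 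So that induced map is an isomorphism iff \(\rg(\Hilm) = \rg(\Hilm')\). The only real work is the identification of \(f(\Hilm)\) with the two restricted submodules in the middle step; the rest is ideal bookkeeping, parallel to Proposition~\ref{pro:characterise_bibundle_isom}.
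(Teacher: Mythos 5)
Your proof is correct and follows essentially the same route as the paper's: both rest on the fact that Hilbert bimodule maps are isometric and on the compatibility identity \(\BRAKET{u}{v}_A\cdot w = u\cdot\braket{v}{w}_B\) to show that the image is the restricted submodule \(\Hilm'\cdot\s(\Hilm)=\rg(\Hilm)\cdot\Hilm'\). The only difference is organisational -- the paper first proves surjectivity in the case \(\rg(\Hilm)=\rg(\Hilm')\) using \(\Comp(\Hilm')\cdot\Hilm'=\Hilm'\) and then restricts, whereas you identify \(f(\Hilm)\) directly in general and read off the equivalences afterwards.
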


\begin{proof}
  Since the norm on a Hilbert bimodule is generated by the
  inner products, Hilbert bimodule maps are norm isometries and thus
  injective.  Moreover,
  \[
  f(\Hilm)
  = f(\rg(\Hilm)\cdot \Hilm)
  = \rg(\Hilm)\cdot f(\Hilm)
  \subseteq \rg(\Hilm)\cdot\Hilm'.
  \]
  Thus \(\rg(\Hilm')\subseteq \rg(\Hilm)\) is necessary for~\(f\) to
  be an isomorphism.  Conversely, if
  \(\rg(\Hilm')\subseteq\rg(\Hilm)\), then even \(\rg(\Hilm') =
  \rg(\Hilm)\) because a bimodule map preserves the left inner
  product.  Then the map from \(\rg(\Hilm)\cong\Comp(\Hilm)\) to
  \(\rg(\Hilm')\cong\Comp(\Hilm')\) that sends \(\ket{\xi}\bra{\eta}\)
  to \(\ket{f(\xi)}\bra{f(\eta)}\) for \(\xi,\eta\in\Hilm\) is an
  isomorphism \(\Comp(\Hilm)\cong\rg(\Hilm')\cong\Comp(\Hilm')\).
  Since \(\Comp(\Hilm')\cdot\Hilm'=\Hilm'\), the linear span of
  elements of the form \(\ket{f(\xi)}\bra{f(\eta)}\zeta' = f(\xi)\cdot
  \braket{f(\eta)}{\zeta'}\) for \(\xi,\eta\in\Hilm\),
  \(\zeta'\in\Hilm'\) is dense in~\(\Hilm'\).  Since \(f(\Hilm)\) is a
  right \(B\)\nb-module, this implies that~\(f\) is surjective.  Hence
  it is an isomorphism of Hilbert bimodules.  A similar argument for
  the right inner product instead of the left one shows that all the
  listed conditions for~\(f\) are indeed equivalent to~\(f\) being an
  isomorphism.

  If \(\rg(\Hilm)\neq\rg(\Hilm')\), then we may restrict~\(f\) to a
  Hilbert bimodule map \(\Hilm\to \rg(\Hilm)\cdot\Hilm'\).  Since
  \(\rg(\Hilm)\cdot \rg(\Hilm)\cdot \Hilm' = \rg(\Hilm)\cdot\Hilm'\),
  this is an isomorphism by the first statement.  A similar argument
  on the other side shows that~\(f\) is an isomorphism onto
  \(\Hilm'\cdot\s(\Hilm)\), so \(\Hilm'\cdot \s(\Hilm) =
  \rg(\Hilm)\cdot \Hilm'\).
\end{proof}

A Hilbert \(A,B\)-bimodule~\(\Hilm\) has a dual Hilbert
\(B,A\)-bimodule~\(\Hilm^*\), where we exchange left and right
structures using adjoints: \(b\cdot x^* \cdot a \defeq (a^*\cdot
x\cdot b^*)^*\) for \(a\in A\), \(b\in B\), \(x\in\Hilm\), and
\(\braket{x^*}{y^*}_A = \BRAKET{y}{x}_A\), \(\BRAKET{x^*}{y^*}_B =
\braket{y}{x}_B\).  We will see that this construction has the same
formal properties as the dual for partial equivalences of groupoids.
To begin with, a Hilbert bimodule map \(X\to Y\) remains a Hilbert
bimodule map \(X^*\to Y^*\), and \((X^*)^*=X\).  Furthermore,
\((\xi\otimes\eta)^*\mapsto \eta^*\otimes\xi^*\) defines a Hilbert
bimodule map \(\sigma\colon (X\otimes_B Y)^* \to Y^*\otimes_B X^*\)
with dense range, hence an isomorphism.  Applying~\(\sigma\) twice
gives the identity map.  (More precisely,
\(\sigma_{Y^*,X^*}\circ\sigma_{X,Y}=\Id_{(X\otimes_B Y)^*}\).)

\begin{proposition}
  \label{pro:Hilbert_bimodule_dual}
  Let~\(\Hilm\) be a Hilbert \(A,B\)-bimodule.  The inner products
  on~\(\Hilm\) give Hilbert bimodule isomorphisms
  \(\Hilm\otimes_B\Hilm^* \cong \rg(\Hilm)\) and \(\Hilm^*\otimes_A
  \Hilm\cong \s(\Hilm)\), and the restrictions of the left and right
  actions give Hilbert bimodule isomorphisms
  \[
  \rg(\Hilm)\otimes_A \Hilm \cong \Hilm \cong \Hilm\otimes_B \s(\Hilm),\quad
  \s(\Hilm)\otimes_B \Hilm^* \cong \Hilm^* \cong \Hilm^*\otimes_A \rg(\Hilm).
  \]
  that make the following diagrams of isomorphisms commute:
  \begin{equation}
    \label{eq:HHH_to_H}
    \begin{tikzpicture}[baseline=(current bounding box.west)]
      \matrix (m) [cd,column sep=1em] {
        \Hilm\otimes_B \Hilm^*\otimes_A \Hilm & \Hilm\otimes_B \s(\Hilm) \\
        \rg(\Hilm) \otimes_A \Hilm & \Hilm,\\
      };
      \draw[cdar] (m-1-1) -- (m-1-2);
      \draw[cdar] (m-1-1) -- (m-2-1);
      \draw[cdar] (m-2-1) -- (m-2-2);
      \draw[cdar] (m-1-2) -- (m-2-2);
    \end{tikzpicture}
    \begin{tikzpicture}[baseline=(current bounding box.west)]
      \matrix (m) [cd,column sep=1em] {
        \Hilm^*\otimes_A \Hilm\otimes_B \Hilm^* & \Hilm^*\otimes_A \rg(\Hilm) \\
        \s(\Hilm) \otimes_B \Hilm^* & \Hilm^*.\\
      };
      \draw[cdar] (m-1-1) -- (m-1-2);
      \draw[cdar] (m-1-1) -- (m-2-1);
      \draw[cdar] (m-2-1) -- (m-2-2);
      \draw[cdar] (m-1-2) -- (m-2-2);
    \end{tikzpicture}
  \end{equation}

  Let~\(D\) be another \(\Cst\)\nb-algebra, let~\(\Hilm[K]\) be a
  Hilbert \(A,D\)-bimodule and let~\(\Hilm[L]\) be a Hilbert
  \(B,D\)-bimodule with \(\rg(\Hilm[K])\subseteq \rg(\Hilm)\) and
  \(\rg(\Hilm[L])\subseteq \s(\Hilm)\).  Then Hilbert \(A,D\)-bimodule
  maps \(\Hilm\otimes_B \Hilm[L] \to \Hilm[K]\) are naturally in
  bijection with Hilbert \(B,D\)-bimodule maps \(\Hilm[L] \to
  \Hilm^*\otimes_A \Hilm[K]\), and this bijection maps isomorphisms
  again to isomorphisms.  Similarly, Hilbert \(A,D\)-bimodule maps
  \(\Hilm\otimes_B \Hilm[L] \leftarrow \Hilm[K]\) are naturally in
  bijection with Hilbert \(B,D\)-bimodule maps \(\Hilm[L] \leftarrow
  \Hilm^*\otimes_A \Hilm[K]\).
\end{proposition}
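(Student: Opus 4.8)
The plan is to transcribe the proof of Proposition~\ref{pro:properties_of_dual} into the \(\Cst\)\nb-algebraic setting, replacing partial equivalences of groupoids by Hilbert bimodules, \(\times_G\) by the internal tensor product of Hilbert bimodules, invariant open subsets by ideals, and Proposition~\ref{pro:characterise_bibundle_isom} by Proposition~\ref{pro:Hilbert_bimodule_map}. First I would recall that restricting the left action turns a Hilbert \(A,B\)-bimodule~\(\Hilm\) into an \(\rg(\Hilm),\s(\Hilm)\)-imprimitivity bimodule. Rieffel's theory of Morita equivalence then provides canonical imprimitivity-bimodule isomorphisms \(\Hilm\otimes_B\Hilm^*\congto\rg(\Hilm)\) and \(\Hilm^*\otimes_A\Hilm\congto\s(\Hilm)\), given on elementary tensors by \(\xi\otimes\eta^*\mapsto\BRAKET{\xi}{\eta}_A\) and \(\xi^*\otimes\eta\mapsto\braket{\xi}{\eta}_B\); composing with the inclusions \(\rg(\Hilm)\into A\) and \(\s(\Hilm)\into B\) makes them Hilbert bimodule isomorphisms for the ambient algebras. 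The isomorphisms \(\rg(\Hilm)\otimes_A\Hilm\cong\Hilm\cong\Hilm\otimes_B\s(\Hilm)\) and their duals for~\(\Hilm^*\) are the obvious ones coming from the module actions, using \(\rg(\Hilm)\cdot\Hilm=\Hilm=\Hilm\cdot\s(\Hilm)\).

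Second I would verify the two commuting squares in~\eqref{eq:HHH_to_H}. Chasing an elementary tensor \(\xi\otimes\eta^*\otimes\zeta\) through the first square, the two composites give \(\BRAKET{\xi}{\eta}_A\cdot\zeta\) and \(\xi\cdot\braket{\eta}{\zeta}_B\), which coincide by the compatibility axiom in Definition~\ref{def:Hilbert_bimodule}; the second square is the same identity applied to \(\xi^*\otimes\eta\otimes\zeta^*\) inside~\(\Hilm^*\). Since the elementary tensors span a dense subspace and every map involved is bounded, the squares commute.

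Third, for the two adjunction bijections I would follow Proposition~\ref{pro:properties_of_dual} verbatim. The hypothesis \(\rg(\Hilm[K])\subseteq\rg(\Hilm)\) gives \(\s(\Hilm^*\otimes_A\Hilm[K])=\s(\Hilm[K])\), and \(\rg(\Hilm[L])\subseteq\s(\Hilm)\) gives \(\s(\Hilm\otimes_B\Hilm[L])=\s(\Hilm[L])\). By Proposition~\ref{pro:Hilbert_bimodule_map}, a Hilbert bimodule map \(\Hilm\otimes_B\Hilm[L]\to\Hilm[K]\) exists precisely when \(\s(\Hilm[L])\subseteq\s(\Hilm[K])\), and is then an isomorphism onto \(\Hilm[K]\cdot\s(\Hilm[L])\); the analogous statement holds for \(\Hilm[L]\to\Hilm^*\otimes_A\Hilm[K]\). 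Thus, replacing \(\Hilm[K]\) by \(\Hilm[K]\cdot\s(\Hilm[L])\), we reduce to the case where all relevant Hilbert bimodule maps are isomorphisms. I would then send a map \(\varphi\colon\Hilm\otimes_B\Hilm[L]\to\Hilm[K]\) to the composite
\[
\Hilm[L]\cong\Hilm^*\otimes_A\Hilm\otimes_B\Hilm[L]
\xrightarrow{\Id_{\Hilm^*}\otimes_A\varphi}\Hilm^*\otimes_A\Hilm[K],
\]
where the first isomorphism is \(\Hilm^*\otimes_A\Hilm\otimes_B\Hilm[L]\cong\s(\Hilm)\otimes_B\Hilm[L]\cong\Hilm[L]\), valid because \(\rg(\Hilm[L])\subseteq\s(\Hilm)\); in the other direction, \(\psi\) goes to \(\Id_\Hilm\otimes_B\psi\) followed by \(\Hilm\otimes_B\Hilm^*\otimes_A\Hilm[K]\cong\rg(\Hilm)\otimes_A\Hilm[K]\cong\Hilm[K]\), valid because \(\rg(\Hilm[K])\subseteq\rg(\Hilm)\). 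A diagram chase with~\eqref{eq:HHH_to_H}, exactly as in the cited proof, shows that \(\varphi\mapsto\Id_{\Hilm^*}\otimes_A\varphi\) and \(\psi\mapsto\Id_\Hilm\otimes_B\psi\) are mutually inverse and preserve isomorphisms; this gives the first bijection. The second one, with all arrows reversed, follows by applying the first with \(\Hilm^*\), \(\Hilm[K]^*\), \(\Hilm[L]^*\) in place of \(\Hilm\), \(\Hilm[K]\), \(\Hilm[L]\), or by the symmetric argument.

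The main obstacle, and the only genuine difference from the groupoid proof, is the purely analytic bookkeeping around completed internal tensor products: one must be sure that a Hilbert bimodule map into a completed tensor product prescribed on elementary tensors is well-defined and bounded, and that \(\Id_{\Hilm^*}\otimes_A\varphi\) is available when \(\varphi\) is a priori only a contraction. This is handled by the standard facts that Hilbert bimodule maps are isometric -- hence functorial for internal tensor products -- and that a bounded module map with dense range between Hilbert bimodules is automatically an isomorphism, just as was used above for the flip isomorphism \(\sigma\colon(X\otimes_B Y)^*\congto Y^*\otimes_B X^*\). Everything else is a formal consequence of the imprimitivity-bimodule structure.
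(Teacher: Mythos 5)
Your proposal is correct and follows essentially the same route as the paper: the canonical isomorphisms come from the imprimitivity-bimodule structure of \(\Hilm\) over \(\rg(\Hilm)\) and \(\s(\Hilm)\), the squares in~\eqref{eq:HHH_to_H} reduce to the axiom \(\BRAKET{x}{y}_A\cdot z = x\cdot\braket{y}{z}_B\), and the adjunction bijections are obtained by transcribing the proof of Proposition~\ref{pro:properties_of_dual} with Proposition~\ref{pro:Hilbert_bimodule_map} in place of Proposition~\ref{pro:characterise_bibundle_isom}. The paper's own proof is just a terse pointer to these same steps, so your write-up is in fact more detailed than the original.
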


\begin{proof}
  The Hilbert bimodule isomorphisms \(\Hilm\otimes_B\Hilm^* \cong
  \rg(\Hilm)\), \(\Hilm^*\otimes_A \Hilm\cong \s(\Hilm)\),
  \(\rg(\Hilm)\otimes_A \Hilm \cong \Hilm \cong \Hilm\otimes_B
  \s(\Hilm)\) and \(\s(\Hilm)\otimes_B \Hilm^* \cong \Hilm^* \cong
  \Hilm^*\otimes_A \rg(\Hilm)\) are routine to check using
  that~\(\Hilm\) is full as a Hilbert
  \(\rg(\Hilm),\s(\Hilm)\)-bimodule.  The diagrams
  in~\eqref{eq:HHH_to_H} are
  equivalent to the requirement \(\BRAKET{x}{y}_A\cdot z = x\cdot
  \braket{y}{z}_B\) in the definition of a Hilbert bimodule.  The
  claim about Hilbert bimodule maps is proved like the analogous one
  about partial equivalences of groupoids in
  Proposition~\ref{pro:dual_pe_unique}; now we use the canonical
  isomorphisms just established and
  Proposition~\ref{pro:Hilbert_bimodule_map} instead of
  Proposition~\ref{pro:characterise_bibundle_isom}.
\end{proof}

\begin{proposition}
  \label{pro:Hilbert_bimodule_dual_unique}
  Up to isomorphism, \(\Hilm^*\) is the unique Hilbert
  \(B,A\)-bimodule~\(\Hilm[K]\) for which there are isomorphisms
  \[
  \Hilm\otimes_B \Hilm[K]\otimes_A \Hilm \cong \Hilm,\qquad
  \Hilm[K]\otimes_A \Hilm\otimes_B \Hilm[K] \cong \Hilm[K].
  \]
  More precisely, if there are such isomorphisms then there is a
  unique Hilbert bimodule isomorphism \(\Hilm^*\congto \Hilm[K]\)
  such that the following map is the identity map:
  \[
  \Hilm \congto \Hilm\otimes_B \Hilm[K] \otimes_A \Hilm
  \congto \Hilm\otimes_B \Hilm^* \otimes_A \Hilm
  \congto \rg(\Hilm) \otimes_A \Hilm
  \congto \Hilm.
  \]
\end{proposition}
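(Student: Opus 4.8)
The plan is to mimic exactly the proof of Proposition~\ref{pro:dual_pe_unique} for partial equivalences of groupoids, replacing the canonical groupoid isomorphisms by the canonical Hilbert bimodule isomorphisms from Proposition~\ref{pro:Hilbert_bimodule_dual} and Proposition~\ref{pro:characterise_bibundle_isom} by Proposition~\ref{pro:Hilbert_bimodule_map}. First I would start from the given isomorphism \(\Hilm\otimes_B \Hilm[K]\otimes_A \Hilm \cong \Hilm\), invert it, and tensor with \(\Hilm^*\) on both sides; using the canonical isomorphisms \(\Hilm^*\otimes_A \Hilm\cong \s(\Hilm)\), \(\Hilm\otimes_B \Hilm^*\cong \rg(\Hilm)\), and the restriction-of-action isomorphisms, this should produce an isomorphism
\[
\Hilm^* \cong \s(\Hilm)\otimes_B \Hilm[K]\otimes_A \rg(\Hilm) = {}_{\s(\Hilm)}|\Hilm[K]|_{\rg(\Hilm)}.
\]
By Proposition~\ref{pro:Hilbert_bimodule_map} this forces \(\s(\Hilm)=\rg(\Hilm^*)\subseteq \rg(\Hilm[K])\) and \(\rg(\Hilm)=\s(\Hilm^*)\subseteq \s(\Hilm[K])\); exchanging the roles of \(\Hilm\) and \(\Hilm[K]\) and using the second hypothesis \(\Hilm[K]\otimes_A \Hilm\otimes_B \Hilm[K]\cong \Hilm[K]\) gives the reverse inclusions, hence \(\rg(\Hilm[K])=\s(\Hilm)\) and \(\s(\Hilm[K])=\rg(\Hilm)\), so the restriction is all of \(\Hilm[K]\) and we get an isomorphism \(\alpha\colon \Hilm^*\congto \Hilm[K]\).

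Next I would check that \(\alpha\) can be chosen so that the displayed composite
\[
\Hilm \congto \Hilm\otimes_B \Hilm[K] \otimes_A \Hilm
\congto \Hilm\otimes_B \Hilm^* \otimes_A \Hilm
\congto \rg(\Hilm) \otimes_A \Hilm
\congto \Hilm
\]
is the identity. This is a diagram chase using the two commuting diagrams in~\eqref{eq:HHH_to_H}: the composite of \(\Id_\Hilm\otimes_B\alpha^{-1}\otimes_A\Id_\Hilm\) with the given isomorphism \(\Hilm\otimes_B\Hilm[K]\otimes_A\Hilm\to\Hilm\) equals the canonical map \(\Hilm\otimes_B\Hilm^*\otimes_A\Hilm\to\Hilm\), so the composite above is the identity for this particular \(\alpha\). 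For uniqueness I would assemble the natural bijections of Proposition~\ref{pro:Hilbert_bimodule_dual} into a chain
\[
\Map(\Hilm^*,\Hilm[K]) \cong \Map(\Hilm^*\otimes_A \Hilm \otimes_B \Hilm^*,\Hilm[K])
\cong \Map(\Hilm\otimes_B \Hilm^*,\Hilm\otimes_B \Hilm[K])
\cong \Map(\Hilm,\Hilm \otimes_B \Hilm[K] \otimes_A \Hilm)
\cong \Map(\Hilm,\Hilm),
\]
and observe, by inspecting the constructions, that this carries an isomorphism \(\Hilm^*\congto\Hilm[K]\) to exactly the displayed composite map; since the only element of \(\Map(\Hilm,\Hilm)\) arising this way from an isomorphism and equal to the identity pins down a unique preimage, the isomorphism \(\Hilm^*\congto\Hilm[K]\) making the composite the identity is unique.

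The main obstacle I anticipate is purely bookkeeping: keeping track of which internal tensor products are over \(A\) and which over \(B\), and making sure that at each step the relevant fullness and ideal-inclusion hypotheses (\(\rg(\Hilm[K])\subseteq \rg(\Hilm)\) etc.) are in force so that Proposition~\ref{pro:Hilbert_bimodule_dual}'s adjunction bijections actually apply. All the diagram-level reasoning is literally the same as in Proposition~\ref{pro:dual_pe_unique}, since, as the authors stress, the Hilbert bimodule setting has the same formal algebraic features; so once the notation is set up correctly the proof is a transcription. I would therefore write the proof by referring back to Proposition~\ref{pro:dual_pe_unique} and only indicating the substitutions ``replace \(\times_H\) by \(\otimes_B\), \(\times_G\) by \(\otimes_A\), Proposition~\ref{pro:characterise_bibundle_isom} by Proposition~\ref{pro:Hilbert_bimodule_map}, and Proposition~\ref{pro:properties_of_dual} by Proposition~\ref{pro:Hilbert_bimodule_dual}'' together with the explicit diagram chase verifying that the composite in the statement is the identity.
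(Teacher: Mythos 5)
Your proposal is correct and is exactly the paper's approach: the paper's proof consists of the single sentence ``Repeat the proof of Proposition~\ref{pro:dual_pe_unique}, replacing \(\times_G\) by \(\otimes_A\),'' and your transcription (including the substitution of Proposition~\ref{pro:Hilbert_bimodule_map} for Proposition~\ref{pro:characterise_bibundle_isom} and of the diagrams~\eqref{eq:HHH_to_H} for~\eqref{eq:XXX_to_X}) carries out that replacement faithfully, with the ideal inclusions and the chain of adjunction bijections handled as in the groupoid case.
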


\begin{proof}
  Repeat the proof of Proposition~\ref{pro:dual_pe_unique},
  replacing~\(\times_G\) by~\(\otimes_A\).
\end{proof}

\begin{proposition}
  \label{pro:idempotent_Hilbert_bimodules}
  Let \(\Hilm\) be a Hilbert \(A,A\)-bimodule and let \(\mu\colon
  \Hilm\otimes_A \Hilm\to \Hilm\) be a bimodule isomorphism.  Then there is a
  unique isomorphism from~\(\Hilm\) onto an ideal \(I\idealin A\)
  that intertwines~\(\mu\) and the multiplication map \(I\otimes_A I
  \congto I\).  We have \(I=\rg(\Hilm)=\s(\Hilm)\), and the
  multiplication~\(\mu\) is associative.
\end{proposition}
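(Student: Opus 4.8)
The plan is to carry the proof of Proposition~\ref{pro:idempotents} over to the $C^*$\nb-algebraic setting essentially word for word, replacing $\times_G$ by $\otimes_A$ and using the Hilbert bimodule duality of Propositions~\ref{pro:Hilbert_bimodule_map}--\ref{pro:Hilbert_bimodule_dual_unique} in place of the corresponding statements about partial equivalences.

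First I would note that $\mu$ induces an isomorphism $\Hilm\otimes_A\Hilm\otimes_A\Hilm\congto\Hilm$, namely $\mu\circ(\mu\otimes_A\Id_\Hilm)$; applying this in both variables shows that the Hilbert $A,A$-bimodule $\Hilm[K]=\Hilm$ satisfies the two hypotheses of Proposition~\ref{pro:Hilbert_bimodule_dual_unique}. That proposition then yields a unique Hilbert bimodule isomorphism $\Hilm^*\congto\Hilm$ compatible with $\mu$, and its proof (the verbatim analogue of that of Proposition~\ref{pro:dual_pe_unique}) also gives $\rg(\Hilm)=\s(\Hilm)\eqdef I$, since for $\Hilm[K]=\Hilm$ the inclusions it produces read $\s(\Hilm)\subseteq\rg(\Hilm)$ and $\rg(\Hilm)\subseteq\s(\Hilm)$. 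Composing $\Hilm\xrightarrow{\mu^{-1}}\Hilm\otimes_A\Hilm$ with $\Id_\Hilm$ tensored by the isomorphism $\Hilm\congto\Hilm^*$ and then with the canonical pairing $\Hilm\otimes_A\Hilm^*\congto\rg(\Hilm)$ produces an isomorphism $\varphi\colon\Hilm\congto I$ of Hilbert bimodules.

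Next I would identify this $\varphi$ with the one the statement demands. Since $\varphi$ is a bimodule map, the square expressing that $\varphi$ intertwines $\mu$ with the multiplication $I\otimes_A I\congto I$ commutes precisely when $\mu$ equals the composite $\Hilm\otimes_A\Hilm\xrightarrow{\varphi\otimes_A\Id_\Hilm}I\otimes_A\Hilm\congto\Hilm$, the last arrow being the left action. Now that $\rg(\Hilm)=\s(\Hilm)$ is available, the hypotheses of Proposition~\ref{pro:Hilbert_bimodule_dual} hold for $\Hilm[K]=\Hilm[L]=\Hilm$, and sending an isomorphism $\varphi\colon\Hilm\to\rg(\Hilm)\cong\Hilm\otimes_A\Hilm^*$ to this composite is exactly the resulting natural bijection
\[
\Map(\Hilm,\s(\Hilm))\cong\Map(\Hilm,\Hilm^*\otimes_A\Hilm)\cong\Map(\Hilm\otimes_A\Hilm,\Hilm).
\]
Hence there is precisely one isomorphism $\varphi$ whose intertwining square commutes, and it corresponds to $\mu$; this gives both existence and uniqueness. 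Associativity of $\mu$ is then automatic, since under $\varphi$ it becomes the multiplication of the ideal $I$.

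The step I expect to cost the most care is the diagram chase showing that ``$\varphi$ intertwines $\mu$'' is literally the image of $\mu$ under one of the bijections of Proposition~\ref{pro:Hilbert_bimodule_dual}; this is the same bookkeeping as in the proof of Proposition~\ref{pro:idempotents} and rests on the commuting diagrams~\eqref{eq:HHH_to_H} that replace~\eqref{eq:XXX_to_X}. Since the duality for Hilbert bimodules has already been shown to have the same formal properties as that for partial equivalences, I expect no genuinely new analytic obstacle: the only Hilbert module input beyond formal manipulations is Proposition~\ref{pro:Hilbert_bimodule_map}, used in place of Proposition~\ref{pro:characterise_bibundle_isom}.
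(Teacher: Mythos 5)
Your proposal is correct and follows exactly the route the paper intends: its proof of Proposition~\ref{pro:idempotent_Hilbert_bimodules} consists of the single sentence ``This is proved exactly like Proposition~\ref{pro:idempotents},'' and your argument is precisely that translation, with $\otimes_A$ in place of $\times_G$ and Propositions~\ref{pro:Hilbert_bimodule_map}, \ref{pro:Hilbert_bimodule_dual} and~\ref{pro:Hilbert_bimodule_dual_unique} replacing their groupoid counterparts. You have in effect supplied the details the paper leaves implicit, and I see no gap.
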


\begin{proof}
  This is proved exactly like Proposition~\ref{pro:idempotents}.
\end{proof}

\begin{definition}
  \label{def:S_act_Cstar}
  Let~\(S\) be an inverse semigroup with unit and let~\(A\)
  be a \(\Cst\)\nb-algebra.  An \emph{\(S\)\nb-action on~\(A\) by
    Hilbert bimodules} consists of
  \begin{itemize}
  \item Hilbert \(A,A\)-bimodules~\(\Hilm_t\) for \(t\in S\);
  \item bimodule isomorphisms \(\mu_{t,u}\colon
    \Hilm_t\otimes_A \Hilm_u\congto \Hilm_{tu}\) for \(t,u\in S\);
  \end{itemize}
  satisfying
  \begin{enumerate}[label=\textup{(AH\arabic*)}]
  \item \(\Hilm_1\) is the identity Hilbert \(A,A\)-bimodule~\(A\);
  \item \(\mu_{t,1}\colon \Hilm_t\otimes_A A\congto \Hilm_t\) and
    \(\mu_{1,u}\colon A\otimes_A \Hilm_u\congto \Hilm_u\) are the canonical
    isomorphisms for all \(t,u\in S\);
  \item associativity: for all \(t,u,v\in S\), the following diagram
    commutes:
    \[
    \begin{tikzpicture}[baseline=(current bounding box.west)]
      \node (1) at (0,1) {\((\Hilm_t\otimes_A \Hilm_u) \otimes_A \Hilm_v\)};
      \node (1a) at (0,0) {\(\Hilm_t\otimes_A (\Hilm_u \otimes_A \Hilm_v)\)};
      \node (2) at (5,1) {\(\Hilm_{tu} \otimes_A \Hilm_v\)};
      \node (3) at (5,0) {\(\Hilm_t\otimes_A \Hilm_{uv}\)};
      \node (4) at (7,.5) {\(\Hilm_{tuv}\)};
      \draw[<->] (1) -- node[swap] {ass} (1a);
      \draw[cdar] (1) -- node {\(\mu_{t,u}\otimes_A \Id_{\Hilm_v}\)} (2);
      \draw[cdar] (1a) -- node[swap] {\(\Id_{\Hilm_t}\otimes_A\mu_{u,v}\)} (3);
      \draw[cdar] (3.east) -- node[swap] {\(\mu_{t,uv}\)} (4);
      \draw[cdar] (2.east) -- node {\(\mu_{tu,v}\)} (4);
    \end{tikzpicture}
    \]
  \end{enumerate}
  If~\(S\) has a zero element~\(0\), we may also require
  \(\Hilm_0=\{0\}\).
\end{definition}

\begin{theorem}
  \label{the:S_act_Cstar_Fell_bundle}
  Let~\(S\) be an inverse semigroup with unit, let~\(A\) be
  a \(\Cst\)\nb-algebra.  Then actions of~\(S\) on~\(A\) by Hilbert
  bimodules are equivalent to saturated Fell bundles over~\(S\)
  \textup(as defined in~\textup{\cite{Exel:noncomm.cartan}}\textup)
  with unit fibre~\(A\).

  More precisely, let \((\Hilm_t)_{t\in S}\) and \((\mu_{t,u})_{t,u\in
    S}\) be an \(S\)\nb-action by Hilbert bimodules on~\(A\).  Then
  there are unique Hilbert bimodule maps \(j_{u,t}\colon \Hilm_t\to \Hilm_u\)
  for \(t\le u\) that make the following diagrams commute for all
  \(t_1,t_2,u_1,u_2\in S\) with \(t_1\le u_1\), \(t_2\le u_2\):
  \begin{equation}
    \label{eq:inclusions_from_bimodule_action}
    \begin{tikzpicture}[baseline=(current bounding box.west)]
      \matrix (m) [cd,column sep=3em] {
        \Hilm_{t_1}\otimes_A \Hilm_{t_2}& \Hilm_{t_1t_2}\\
        \Hilm_{u_1}\otimes_A \Hilm_{u_2}& \Hilm_{u_1u_2}\\
      };
      \draw[cdar] (m-1-1) -- node {\(\mu_{t_1,t_2}\)} (m-1-2);
      \draw[cdar] (m-2-1) -- node {\(\mu_{u_1,u_2}\)} (m-2-2);
      \draw[cdar] (m-1-1) -- node[swap] {\(j_{u_1,t_1}\otimes_A j_{u_2,t_2}\)} (m-2-1);
      \draw[cdar] (m-1-2) -- node {\(j_{u_1u_2,t_1t_2}\)} (m-2-2);
    \end{tikzpicture}
  \end{equation}
  The map~\(j_{u,t}\) is a Hilbert bimodule isomorphism onto
  \(\Hilm_u\cdot \s(\Hilm_t) = \rg(\Hilm_t)\cdot \Hilm_u\).  We have
  \(j_{t,t}=\Id_{\Hilm_t}\) for all \(t\in S\) and \(j_{v,u}\circ j_{u,t}
  = j_{v,t}\) for \(t\le u\le v\) in~\(S\).  And there are unique
  Hilbert bimodule isomorphisms \(J_t\colon \Hilm_t^*\congto \Hilm_{t^*}\),
  \(x\mapsto x^*\), such that \(\mu_{t,t^*,t}(x,x^*,x)=x\cdot \braket{x}{x}_A=\BRAKET{x}{x}_A\cdot x\) for all
  \(x\in \Hilm_t\).  These also satisfy \(\mu_{t,t^*}(x\otimes
  x^*)=\BRAKET{x}{x}_A\), \(\mu_{t^*,t}(x^*,x) = \braket{x}{x}_A\) and
  \((x^*)^* = x\) for all \(x\in \Hilm_t\); \(\mu_{t,u}(x,y)^* =
  \mu_{u^*,t^*}(y^*,x^*)\) for all \(x\in \Hilm_t\), \(y\in \Hilm_u\),
  \(t,u\in S\); and \(j_{u,t}(x)^*=j_{u^*,t^*}(x^*)\) for all \(t\le
  u\) in~\(S\), \(x\in \Hilm_t\).

  Conversely, a saturated Fell bundle \((\A_t)_{t\in S}\) over~\(S\)
  with \(A=\A_1\) becomes an \(S\)\nb-action by Hilbert bimodules by
  taking~\(\Hilm_t=\A_t\) with the multiplication maps~\(\mu_{t,u}\) and
  the \(A\)\nb-bimodule structure induced by the Fell bundle
  multiplication, and the left and right inner products
  \(\BRAKET{x}{y}_A\defeq x\cdot y^*\), \(\braket{x}{y}_A\defeq
  x^*\cdot y\) for \(x,y\in \Hilm_t\).
\end{theorem}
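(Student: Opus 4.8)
The plan is to transport the analysis of Section~\ref{sec:order_involution} from actions on groupoids to actions on \(\Cst\)\nb-algebras. The point is that Propositions~\ref{pro:Hilbert_bimodule_map}, \ref{pro:Hilbert_bimodule_dual}, \ref{pro:Hilbert_bimodule_dual_unique} and~\ref{pro:idempotent_Hilbert_bimodules} are the exact counterparts of Propositions~\ref{pro:characterise_bibundle_isom}, \ref{pro:properties_of_dual}, \ref{pro:dual_pe_unique} and~\ref{pro:idempotents}, and the proofs of Propositions~\ref{pro:inclusions_from_action} and~\ref{pro:S_action_involution} were written so as to use only these four structural facts together with composition, duality, and restriction to invariant open subsets. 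Under the dictionary ``partial equivalence'' \(\leftrightarrow\) ``Hilbert bimodule'', \(\times_G\leftrightarrow\otimes_A\), ``\(G^1_U\)'' \(\leftrightarrow\) ``ideal of \(A\)'', ``\({}_U|X|_V\)'' \(\leftrightarrow\) ``restriction of a bimodule to an ideal on each side'', those two proofs carry over verbatim.

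First I would run the proof of Proposition~\ref{pro:inclusions_from_action} in this translation. For idempotent \(e\in S\), Proposition~\ref{pro:idempotent_Hilbert_bimodules} identifies \(\Hilm_e\) with an ideal \(I_e\triangleleft A\); after this identification the construction in~\eqref{eq:construct_j} produces the unique maps \(j_{u,t}\) making~\eqref{eq:inclusions_from_bimodule_action} commute, and the diagram chases behind~\eqref{eq:associativity2} and~\eqref{eq:associativity3} give \(j_{t,t}=\Id_{\Hilm_t}\) and \(j_{v,u}\circ j_{u,t}=j_{v,t}\); that \(j_{u,t}\) is an isomorphism onto \(\Hilm_u\cdot\s(\Hilm_t)=\rg(\Hilm_t)\cdot\Hilm_u\) is Proposition~\ref{pro:Hilbert_bimodule_map}. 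Then I would run the proof of Proposition~\ref{pro:S_action_involution}, now invoking Proposition~\ref{pro:Hilbert_bimodule_dual_unique} to obtain the unique \(J_t\colon\Hilm_t^*\congto\Hilm_{t^*}\) characterised by \(\mu_{t,t^*,t}(x,x^*,x)=x\), together with the analogues of~\eqref{eq:characterise_Jt_2} and~\eqref{eq:Jt_3}. The one extra ingredient, absent in the groupoid case, is the fact recorded in Proposition~\ref{pro:Hilbert_bimodule_dual} that the canonical pairing \(\Hilm_t\otimes_A\Hilm_t^*\cong\rg(\Hilm_t)\) is realised by the left inner product; unwinding the diagrams then yields the concrete identities \(\mu_{t,t^*}(x\otimes x^*)=\BRAKET{x}{x}_A\), \(\mu_{t^*,t}(x^*,x)=\braket{x}{x}_A\), \((x^*)^*=x\), \(\mu_{t,u}(x,y)^*=\mu_{u^*,t^*}(y^*,x^*)\) and \(j_{u,t}(x)^*=j_{u^*,t^*}(x^*)\), exactly as the corresponding groupoid diagrams unwind to \(x\cdot x^*\cdot x=x\), \(x\cdot x^*=1_{\rg(x)}\), and so on.

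With the data \((\Hilm_t,\mu_{t,u})\), the structure maps \(j_{u,t}\) and the involutions \(x\mapsto x^*\) in hand, I would check that this package is precisely a saturated Fell bundle over~\(S\) with unit fibre \(A\) in the sense of~\cite{Exel:noncomm.cartan}: with \(\A_t\defeq\Hilm_t\), multiplication given by \(\mu_{t,u}\) (precomposed with the bilinear map into \(\Hilm_t\otimes_A\Hilm_u\)), involution \(x\mapsto x^*\), and inclusion maps \(j_{u,t}\), Exel's axioms follow term by term from the identities above --- associativity of multiplication is~(AH3); the \(\Cst\)\nb-identity and the compatibility of the bundle norm with products follow from the Hilbert-module axioms and \(\mu_{t,t^*}(x\otimes x^*)=\BRAKET{x}{x}_A\), \(\mu_{t^*,t}(x^*,x)=\braket{x}{x}_A\); the identity \((xy)^*=y^*x^*\) is \(\mu_{t,u}(x,y)^*=\mu_{u^*,t^*}(y^*,x^*)\); the compatibility of the inclusion maps with products and involution is~\eqref{eq:inclusions_from_bimodule_action} and \(j_{u,t}(x)^*=j_{u^*,t^*}(x^*)\); and the normalisation and cocycle conditions are \(j_{t,t}=\Id\) and \(j_{v,u}\circ j_{u,t}=j_{v,t}\). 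Conversely, given a saturated Fell bundle \((\A_t)_{t\in S}\) with \(\A_1=A\), I would put \(\Hilm_t\defeq\A_t\) with the \(A\)\nb-bimodule structure from multiplying with \(\A_1\) and inner products \(\BRAKET{x}{y}_A\defeq x\cdot y^*\), \(\braket{x}{y}_A\defeq x^*\cdot y\); the inner-product axioms and the compatibility \(\BRAKET{x}{y}_A z=x\braket{y}{z}_A\) are Fell bundle axioms, \(\Hilm_t\) is full over \(\rg(\Hilm_t)\) and \(\s(\Hilm_t)\) and the left action is nondegenerate because the bundle is saturated, the Fell bundle product is \(A\)\nb-balanced and hence descends to \(\mu_{t,u}\colon\Hilm_t\otimes_A\Hilm_u\to\Hilm_{tu}\), which is an isomorphism by saturation (surjectivity) and Proposition~\ref{pro:Hilbert_bimodule_map}; axioms (AH1)--(AH3) are immediate. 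The two constructions are mutually inverse by inspection: from an action, the Fell bundle recovers \(\mu_{t,u}\) on the nose and its involution is \(J_t\), so feeding it back reproduces \(\BRAKET{x}{y}_A=\mu_{t,t^*}(x\otimes y^*)\) and the original bimodule structure; and from a Fell bundle, the structure maps and involution are recovered because Exel's axioms force them to satisfy the diagrams characterising \(j_{u,t}\) and \(J_t\) uniquely.

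The hard part is organisational rather than conceptual: one must carry the two lengthy diagram chases of Propositions~\ref{pro:inclusions_from_action} and~\ref{pro:S_action_involution} through the translation without error --- which is safe precisely because those proofs never touch the underlying topological spaces and use only composition, duality, restriction to ideals, and the four structural propositions cited above --- and one must then match the resulting package of data and identities against Exel's original list of Fell bundle axioms in~\cite{Exel:noncomm.cartan}, verifying both that nothing in his definition is left unaccounted for and that our ``simplified'' data, a Hilbert bimodule action in which \(j\) and \(J\) are derived rather than postulated, is genuinely equivalent to it. The single genuinely new input over Section~\ref{sec:ActionsGroupoidsByIso} is Proposition~\ref{pro:Hilbert_bimodule_dual}'s identification of the duality pairing with the inner products, which is what turns the abstract statements ``\(\mu_{t,t^*}\) is the left action of \(\rg(\Hilm_t)\)'' into the Fell bundle identities \(x\cdot x^*=\BRAKET{x}{x}_A\) and \(x^*\cdot x=\braket{x}{x}_A\).
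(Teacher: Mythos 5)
Your proposal is correct and follows exactly the route the paper takes: its entire proof of this theorem is the single remark that the inclusion maps $j_{u,t}$ and involutions $J_t$ are constructed, and their properties established, exactly as in Propositions~\ref{pro:inclusions_from_action} and~\ref{pro:S_action_involution}, which were deliberately written abstractly so as to transfer verbatim under the dictionary you describe (with Propositions~\ref{pro:Hilbert_bimodule_map}, \ref{pro:Hilbert_bimodule_dual}, \ref{pro:Hilbert_bimodule_dual_unique} and~\ref{pro:idempotent_Hilbert_bimodules} replacing their groupoid counterparts). Your additional explicit matching against Exel's axioms and the check that the two constructions are mutually inverse merely spell out what the paper leaves implicit.
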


\begin{proof}
  We construct the inclusion maps~\(j_{t,u}\) and the
  involutions~\(J_t\) and show their properties exactly as in the
  proofs of Propositions \ref{pro:inclusions_from_action}
  and~\ref{pro:S_action_involution}.
\end{proof}

With Theorem~\ref{the:S_act_Cstar_Fell_bundle}, it becomes easier to
construct saturated Fell bundles over inverse semigroups because
Definition~\ref{def:S_act_Cstar} needs far less data and has
correspondingly fewer conditions to check.

\begin{remark}
  \label{rem:idempotent_correspondence}
  The correspondence bicategory introduced
  in~\cite{Buss-Meyer-Zhu:Higher_twisted} is not suitable for our
  purposes by the following observation:
  Let \(I\into A \onto A/I\) be a split extension of
  \(\Cst\)\nb-algebras.  Then \(p\colon A\to A/I \to A\) is an
  idempotent endomorphism.  It remains an idempotent arrow in the
  correspondence bicategory.  More generally, if~\(A\) is Morita
  equivalent to an ideal in a \(\Cst\)\nb-algebra~\(B\), then we can
  translate~\(p\) to a correspondence~\(\Hilm\) from~\(B\) to itself
  that is idempotent in the sense that \(\Hilm\otimes_B \Hilm\cong
  \Hilm\) with an associative isomorphism.  Thus there are more
  idempotent endomorphisms in the correspondence bicategory than usual
  for inverse semigroup actions.  Furthermore, the idempotent arrows
  no longer commute up to isomorphism; thus a very basic assumption
  for inverse semigroups fails in this case.  This is why we only
  allowed Hilbert bimodules above.
\end{remark}

\begin{proposition}
  \label{pro:Hilbert_bimdule_bicategory}
  There is a bicategory with \(\Cst\)\nb-algebras as objects, Hilbert
  bimodules as arrows, Hilbert bimodule isomorphisms as
  \(2\)\nb-arrows, and~\(\otimes_B\) as composition of arrows.
\end{proposition}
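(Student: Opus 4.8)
The proof of Proposition~\ref{pro:Hilbert_bimdule_bicategory} runs entirely parallel to that of Theorem~\ref{the:bicategory_peq}, with partial equivalences of groupoids replaced by Hilbert bimodules, \(\times_H\) by \(\otimes_B\), and Propositions \ref{pro:characterise_bibundle_isom}--\ref{pro:properties_of_dual} replaced by their \(\Cst\)-algebraic counterparts, Propositions \ref{pro:Hilbert_bimodule_map}--\ref{pro:Hilbert_bimodule_dual}. So the strategy is: (1)~check that, for fixed \(\Cst\)-algebras \(A,B\), Hilbert \(A,B\)-bimodules with Hilbert bimodule isomorphisms as \(2\)-arrows form a groupoid \(\Cat(A,B)\) (composition is composition of maps, which is well defined and associative, and every Hilbert bimodule isomorphism is invertible); (2)~check that the interior tensor product \((\Hilm,\Hilm[L])\mapsto \Hilm\otimes_B\Hilm[L]\) together with the horizontal product of bimodule maps \(f\otimes_B g\) is a functor \(\Cat(A,B)\times\Cat(B,C)\to\Cat(A,C)\); (3)~exhibit the associator and the left/right unit constraints and check the pentagon and triangle coherence axioms.

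First I would record that \(\Cat(A,B)\) is a groupoid. Composition of Hilbert bimodule maps is again a Hilbert bimodule map and is strictly associative and unital; and every Hilbert bimodule map between Hilbert \(A,B\)-bimodules is an isometry by Proposition~\ref{pro:Hilbert_bimodule_map}, so all \(2\)-arrows are invertible (a bijective isometric bimodule map has an isometric bimodule map as inverse). Next I would note that the interior tensor product \(\Hilm\otimes_B\Hilm[L]\) of a Hilbert \(A,B\)-bimodule and a Hilbert \(B,C\)-bimodule is a Hilbert \(A,C\)-bimodule: it is standard that the interior tensor product of correspondences is a correspondence, and the left inner product \(\BRAKET{\xi_1\otimes\eta_1}{\xi_2\otimes\eta_2}_A\defeq \BRAKET{\xi_1\cdot\BRAKET{\eta_1}{\eta_2}_B}{\xi_2}_A\) turns it into a Hilbert bimodule (one may also invoke Lemma~\ref{lem:corr_to_Hil_bimod}, since the left action of \(\rg(\Hilm)\) is by compacts onto \(\Comp(\Hilm\otimes_B\Hilm[L])\)). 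The assignment \((f,g)\mapsto f\otimes_B g\) is easily seen to be functorial: \((f_1\circ f_2)\otimes_B (g_1\circ g_2)=(f_1\otimes_B g_1)\circ(f_2\otimes_B g_2)\) and \(\Id_{\Hilm}\otimes_B\Id_{\Hilm[L]}=\Id_{\Hilm\otimes_B\Hilm[L]}\) on elementary tensors, hence everywhere by density and continuity.

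The associator \(\alpha\colon (\Hilm\otimes_B\Hilm[L])\otimes_C\Hilm[K]\congto \Hilm\otimes_B(\Hilm[L]\otimes_C\Hilm[K])\), \((\xi\otimes\eta)\otimes\zeta\mapsto \xi\otimes(\eta\otimes\zeta)\), and the unit constraints \(\rg(\Hilm)\otimes_A\Hilm\congto\Hilm\) and \(\Hilm\otimes_B\s(\Hilm)\congto\Hilm\) are the canonical isomorphisms; here the unit object on \(A\) is \(A\) itself, with the caveat that \(A\otimes_A\Hilm\cong\rg(\Hilm)\otimes_A\Hilm\) and similarly on the right, exactly as in Proposition~\ref{pro:Hilbert_bimodule_dual}. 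These are Hilbert bimodule isomorphisms because each has dense range and preserves both inner products on elementary tensors, and the inner products generate the norm; naturality in each variable is immediate on elementary tensors. Finally, the pentagon and the triangle identities hold because all maps involved are the canonical ``rebracketing'' maps, so both composites send \(((\xi\otimes\eta)\otimes\zeta)\otimes\omega\) (resp.\ \((\xi\otimes b)\otimes\eta\)) to the same elementary tensor; since such tensors span a dense subspace and the maps are bounded, the two composites agree. The only point that needs a word of care -- and the mild obstacle in the argument -- is the unit constraint: because \(A\) need not act by an isomorphism onto \(\Comp(\Hilm)\) but only onto the ideal \(\rg(\Hilm)\), the left unit transformation is the composite \(A\otimes_A\Hilm=\rg(\Hilm)\otimes_A\Hilm\congto\Hilm\) rather than a literal identity, just as in the bicategory of correspondences; once this is understood, the triangle axiom is the commuting square relating the two ways of cancelling a \(\s(\Hilm)=\rg(\Hilm[L])\) factor, which again holds on elementary tensors. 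We omit the remaining routine verifications.
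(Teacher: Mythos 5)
Your proof is correct, but it takes a genuinely different route from the paper. The paper's argument is a two\nb-line reduction: the correspondence bicategory has already been constructed in \cite{Buss-Meyer-Zhu:Higher_twisted}, Lemma~\ref{lem:corr_to_Hil_bimod} identifies Hilbert bimodules with a class of correspondences, and since composites of Hilbert bimodules are again Hilbert bimodules, they form a sub-bicategory (of the opposite of the correspondence bicategory, to fix the direction of arrows). You instead verify the bicategory axioms from scratch: the hom-groupoids, functoriality of \(\otimes_B\), the associator and unit constraints, and the pentagon and triangle identities by density of elementary tensors. What the paper's route buys is brevity and the delegation of all coherence checks to an existing construction; what yours buys is a self-contained argument that makes explicit exactly where the Hilbert-bimodule (rather than mere correspondence) structure enters, namely in the closure of the class under \(\otimes_B\) -- which is precisely the one nontrivial point the paper's sub-bicategory argument also relies on, there dismissed as ``well-known.'' One small imprecision in your closure argument: the ideal of \(A\) acting as \(\Comp(\Hilm\otimes_B\Hilm[L])\) is not \(\rg(\Hilm)\) in general but the smaller ideal \(\rg(\Hilm\cdot\rg(\Hilm[L]))\); the map from \(\rg(\Hilm)\) is onto the compacts but need not be injective, whereas Lemma~\ref{lem:corr_to_Hil_bimod} asks for an ideal mapping \emph{isomorphically} onto them. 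This does not affect your proof, since the explicit left inner product \(\BRAKET{\xi_1\otimes\eta_1}{\xi_2\otimes\eta_2}_A = \BRAKET{\xi_1\cdot\BRAKET{\eta_1}{\eta_2}_B}{\xi_2}_A\) that you also give settles the point directly.
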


\begin{proof}
  The correspondence bicategory is constructed already
  in~\cite{Buss-Meyer-Zhu:Higher_twisted}.
  Lemma~\ref{lem:corr_to_Hil_bimod} allows to identify Hilbert
  bimodules with a subset of correspondences.  It is well-known that
  composites of Hilbert bimodules are again Hilbert bimodules.  Hence
  the Hilbert bimodules form a sub-bicategory in the opposite of the
  correspondence bicategory.
\end{proof}

\section{Fell bundles from actions of inverse semigroups}
\label{sec:Fell_from_action}

All groupoids in this section are assumed to be locally quasi-compact, locally
Hausdorff and with (locally compact) Hausdorff object space and a Haar system, so
that they have groupoid \cstar{}algebras.  Let~\(G\) be such a
groupoid and let~\(S\) be a unital inverse semigroup acting on~\(G\)
by partial equivalences.  We want to turn this into an action of~\(S\)
on~\(\Cst(G)\) by Hilbert bimodules; equivalently, we want a Fell
bundle over~\(S\) with unit fibre~\(\Cst(G)\).  There are two closely
related ways to construct this.  We are going to explain one approach
in detail and only sketch the other one briefly in
Section~\ref{sec:other_Fell_bundle_construction}.

We give details for the construction of the Fell bundle using the
transformation groupoid \(L=G\rtimes S\) because this also suggests
how to describe the section \(\Cst\)\nb-algebra of the resulting Fell
bundle.  The transformation groupoid~\(L\) comes with an
\(S\)\nb-grading~\((L_t)_{t\in S}\).  Roughly speaking, our Fell
bundle over~\(S\) will involve the subspaces of \(\Cst(L)\) of
elements supported on the open subsets~\(L_t\).  Since \(G^1=L_1\),
the unit fibre of the Fell bundle will be~\(\Cst(G)\).  This also
suggests that the section \(\Cst\)\nb-algebra of the Fell bundle
over~\(S\) is~\(\Cst(L)\).  This is indeed the case, but the technical
details need some care.

First, we need a Haar system on~\(L\).  We show in
Proposition~\ref{pro:Haar_on_transformation} that the Haar system
on~\(G\) extends uniquely to a Haar system on~\(L\).  Secondly, it is
non-trivial that~\(\Cst(G)\) is contained in~\(\Cst(L)\): this means
that the maximal \(\Cst\)\nb-norm that defines~\(\Cst(G)\) extends to a
\(\Cst\)\nb-norm on~\(\Cst(L)\).  A related issue is to show that an
element of~\(\Cst(L)\) supported in~\(G\) actually belongs
to~\(\Cst(G)\).  These problems become clearer if we construct
a pre-Fell bundle using the dense \Star{}algebra that
defines~\(\Cst(L)\) and then complete it.

In the non-Hausdorff case, continuous functions with compact support
are replaced by finite linear combinations of certain functions that
are not continuous.  The identification of~\(\Cst(L)\) with the
section \cstar{}algebra of the Fell bundle requires a technical result about these functions.
We prove it in Appendix~\ref{sec:Banach_fields} in the more general
setting of sections of upper semicontinuous Banach bundles because
this is not more difficult and allows us to generalise our main
results to Fell bundles over groupoids.

We write~\(\Sect(X)\) for the space of linear combinations of
compactly supported functions on Hausdorff open subsets of a locally
Hausdorff, locally quasi-compact space~\(X\).  This is the space of
compactly supported continuous functions on~\(X\) if and only if~\(X\)
is Hausdorff, and it is often denoted by~\(\Contc(X)\).  We find this
notation misleading, however, because its elements are not continuous
functions.

\subsection{A Haar system on the transformation groupoid}
\label{sec:Haar_system}

Before we enter the construction of Haar systems, we mention an
important trivial case: if~\(G\) is étale, then so is~\(L\).
Therefore, \(L\) certainly has a canonical Haar system if~\(G\) is
étale.  This already covers many examples, and the reader only
interested in étale groupoids may skip the construction of the Haar
system on~\(L\).

We define Haar systems as
in~\cite{Renault:Representations}*{Section~1}.  Thus our Haar system
\((\lambda_G^x)_{x\in G^0}\) on~\(G\) is left invariant, so \(\supp
\lambda_G^x = G^x = \{g\in G^1\mid \rg(g)=x\}\) and
\(g_*\lambda_G^{\s(g)} = \lambda_G^{\rg(g)}\) for all \(g\in G\).  The
continuity requirement for~\((\lambda_G^x)_{x\in G^0}\) is that the
function~\(\lambda_G(f)\) on~\(G^0\) defined by
\(\lambda_G(f)(x)\defeq \int_G f(g) \,\dd\lambda_G^x(g)\) is
continuous on~\(G^0\) for all \(f\in\Sect(G)\).  By the definition
of~\(\Sect(G)\) (see Definition~\ref{def:continuous_Banach_field}), it suffices to
check continuity if~\(f\) is a continuous function with compact
support on a Hausdorff open subset~\(U\) of~\(G\).

\begin{proposition}
  \label{pro:Haar_on_transformation}
  The Haar system on~\(G\) extends uniquely to a Haar system on the
  transformation groupoid~\(L\).
\end{proposition}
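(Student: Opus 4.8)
The plan is to read off from left invariance what the extension must be on each graded piece $L_t$, and then check that this forced formula patches together, has full support, is invariant, and is continuous. Throughout I identify $L_t$ with the partial equivalence $X_t$ via the homeomorphism of Lemma~\ref{lem:transformation_gp_open_equivalence}, so that the left $G$-action on $X_t$ is multiplication in $L$ by elements of $G=L_1$, and I write $\rg,\s\colon L_t\to G^0$ for the anchor maps.

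\emph{Uniqueness.} Suppose $(\lambda_L^y)$ is a Haar system on $L$ with $\lambda_L^y|_{G^y}=\lambda_G^y$. Fix $t\in S$ and $\xi\in L_t$ with $\rg(\xi)=y$. By the homeomorphism in~\ref{enum:Peq3} for $X_t$, the map $h\mapsto\xi\cdot h$ identifies $G^{\s(\xi)}=\{h\in G^1\mid\rg(h)=\s(\xi)\}$ with $\rg^{-1}(y)\cap L_t$, and left invariance forces $\lambda_L^y|_{\rg^{-1}(y)\cap L_t}=\xi_*\lambda_G^{\s(\xi)}$. Since $L^y=\bigcup_{t\in S}(\rg^{-1}(y)\cap L_t)$, this determines $\lambda_L^y$ uniquely, so there is at most one extension.

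\emph{Existence.} For $t\in S$ and $y\in\rg(L_t)$, define $\alpha_t^y\defeq\xi_*\lambda_G^{\s(\xi)}$ on $\rg^{-1}(y)\cap L_t$ for any $\xi$ in that fibre. This is independent of $\xi$: a second point is $\xi'=\xi\cdot h_0$ with $h_0\in G^1$, $\rg(h_0)=\s(\xi)$, and the substitution $h'=h_0h$ together with left invariance $(h_0)_*\lambda_G^{\s(h_0)}=\lambda_G^{\rg(h_0)}$ of $\lambda_G$ gives $\xi'_*\lambda_G^{\s(\xi')}=\xi_*\lambda_G^{\s(\xi)}$. As $\lambda_G^{\s(\xi)}$ has full support $G^{\s(\xi)}$ and $h\mapsto\xi h$ is a homeomorphism onto the fibre, $\supp\alpha_t^y=\rg^{-1}(y)\cap L_t$. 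For $v\le t$ the inclusion $j_{t,v}$ identifies $L_v$ with $X_t$ restricted to the invariant open set $\s(L_v)$, and the same formula gives $\alpha_v^y=\alpha_t^y|_{\rg^{-1}(y)\cap L_v}$; hence, using $L_t\cap L_u=\bigcup_{v\le t,u}L_v$ from~\ref{enum:Gra4}, the measures $\alpha_t^y$ and $\alpha_u^y$ agree on $L_t\cap L_u\cap L^y$. The $\alpha_t^y$ therefore glue to a Radon measure $\lambda_L^y$ on $L^y$ with $\supp\lambda_L^y=L^y$, and $\lambda_L^y|_{G^y}=\alpha_1^y=\lambda_G^y$ (take $\xi=1_y$), so it extends $\lambda_G$. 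For invariance, let $\ell\in L_u$ and $\xi\in L_t$ with $\s(\ell)=\rg(\xi)$, so $\ell\xi=\mu_{u,t}(\ell,\xi)\in L_{ut}$ with $\rg(\ell\xi)=\rg(\ell)$, $\s(\ell\xi)=\s(\xi)$; associativity~\ref{enum:APE4} gives $\ell\cdot(\xi\cdot h)=(\ell\xi)\cdot h$ for $h\in G^{\s(\xi)}$, and pushing forward $\lambda_G^{\s(\xi)}$ yields $\ell_*\bigl(\alpha_t^{\s(\ell)}|_{\rg^{-1}(\s(\ell))\cap L_t}\bigr)=\alpha_{ut}^{\rg(\ell)}|_{\rg^{-1}(\rg(\ell))\cap L_{ut}}$, the restriction being onto since $\xi'\mapsto\ell\xi'$ is a homeomorphism between these fibres. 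As $\ell\cdot(-)$ is a bijection $L^{\s(\ell)}\to L^{\rg(\ell)}$ and both sides decompose over $t$ in this way, summing gives $\ell_*\lambda_L^{\s(\ell)}=\lambda_L^{\rg(\ell)}$.

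\emph{Continuity, and the main obstacle.} It remains to show $y\mapsto\int_{L^y}f\,\dd\lambda_L^y$ is continuous on $G^0$ for $f\in\Sect(L)$. By definition of $\Sect(L)$ and a partition of unity inside a Hausdorff open subset of $L^1$ meeting only finitely many $L_t$ on $\supp f$, we may assume $f$ is continuous, compactly supported, and supported on a single Hausdorff open $W\subseteq L_t$; then $\int_{L^y}f\,\dd\lambda_L^y=\int_{\rg^{-1}(y)\cap L_t}f\,\dd\alpha_t^y$, which vanishes unless $y$ lies in the compact, hence closed (as $G^0$ is Hausdorff), set $\rg(\supp f)\subseteq\rg(W)$. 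On the open set $\rg(W)$, taking $\xi'\in W$ itself as base point gives $\int f\,\dd\alpha_t^{\rg(\xi')}=\int_{G^{\s(\xi')}}f(\xi'\cdot h)\,\dd\lambda_G^{\s(\xi')}(h)$, which is the $\lambda_G$-integral of a continuous, fibrewise compactly supported family of functions $F(\xi',h)=f(\xi' h)$ obtained by transporting $f$ through the homeomorphism in~\ref{enum:Peq3}; this is continuous in $\xi'$ by the continuity of the Haar system $\lambda_G$ in its form for such families. Since $\rg|_W$ is an open continuous surjection, hence a quotient map, $y\mapsto\int f\,\dd\alpha_t^y$ is then continuous on $\rg(W)$, and it vanishes on the open set $G^0\setminus\rg(\supp f)$; these two open sets cover $G^0$, so the function is continuous. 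The single non-formal ingredient here — and the only real obstacle — is the fact just used, that the continuity requirement on a Haar system automatically upgrades to continuity of $y\mapsto\int\psi(y,h)\,\dd\lambda_G^{\kappa(y)}(h)$ for jointly continuous, fibrewise compactly supported $\psi$, equivalently that $\lambda_G$ induces a continuous $G$-invariant fibrewise family of measures on the equivalence $X_t=L_t$; for Hausdorff $G$ this is the standard fact about $C_c$ of fibre products, and in the locally Hausdorff generality it follows from the same reduction together with the machinery of Appendix~\ref{sec:Banach_fields}, cf.\ \cite{Renault:Representations} and \cite{Meyer-Zhu:Groupoids}. Everything else above is forced by uniqueness and purely formal.
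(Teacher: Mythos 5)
Your proof is correct and follows essentially the same route as the paper: left invariance forces $\lambda_L^y$ on each $L_t^y$ to be the push-forward of $\lambda_G^{\s(\xi)}$ along a basepoint $\xi$, independence of $\xi$ and agreement on overlaps come from left invariance of $\lambda_G$ and the grading, and continuity is delegated to the known continuity of the canonical family of measures on the equivalence bibundles $L_t$ together with the reduction of $\Sect(L)$ to sections supported in single $L_t$'s (which is exactly Proposition~\ref{pro:Banb_and_cover}). You correctly isolate the one non-formal ingredient (continuity of the induced family on $L_t$, cf.\ \cite{Renault:Representations}), which is also where the paper defers to the literature.
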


\begin{proof}
  Fix \(x\in G^0=L^0\).  We are going to describe the
  measure~\(\lambda_L^x\) on~\(L^x\) in the Haar system.  Since
  \(L=\bigcup_{t\in S}
  L_t\) is an open cover, the measure~\(\lambda_L^x\) is determined by
  its restrictions to~\(L_t\) for all \(t\in S\).  If
  \(x\notin\rg(L_t)\), then there is nothing to do, so consider \(t\in
  S\) with \(x\in\rg(L_t)\), and fix \(g\in L_t\) with \(\rg(g)=x\).
  If \(A\subseteq L_t^x \defeq L_t\cap L^x\) is measurable, then \(A=
  g\cdot (g^{-1}\cdot A)\) with \(g^{-1}\cdot A\subseteq L_t^{-1}\cdot
  L_t = L_1 = G\).  Since we want~\((\lambda_L^x)\) to be
  left invariant and to extend~\((\lambda_G^x)\), we must have
  \(\lambda_L^x(A) = \lambda_G^{\s(g)}(g^{-1}\cdot A)\) if \(g\in
  L_t\) satisfies \(\rg(g)=x\) and \(A \subseteq L_t^x\) is
  measurable.  Hence there is at most one Haar measure on~\(L\)
  extending the given Haar measure on~\(G\).

  If \(g_1,g_2\in L_t\) satisfy \(\rg(g_1)=\rg(g_2)=x\), then \(g_1^{-1}\cdot
  g_2\in L_t^{-1} L_t = L_1=G\); the left invariance
  of~\((\lambda_G^x)\) with respect to~\(G\) implies that
  \(\lambda_G^{\s(g)}(g^{-1}\cdot A)\) does not depend on the choice
  of~\(g\).  If \(\emptyset\neq A\subseteq L_t^x\cap L_u^x\), then we
  may pick the same element \(g\in A\) to define the measure of~\(A\)
  as a subset of \(L_t^x\) and of~\(L_u^x\).  Thus the definitions
  of~\(\lambda_L^x\) on the sets \(L_t^x\) for \(t\in S\) are
  compatible.  Thus there is a unique measure~\(\lambda_L^x\)
  on~\(L^x\) with \(\lambda_L^x(A) = \lambda_G^{\s(g)}(g^{-1}\cdot
  A)\) whenever \(A \subseteq L_t^x\) is measurable and \(g\in L_t\)
  satisfies \(\rg(g)=x\).  If \(l\in L\) has \(\s(l)=x\), then
  \(l_*(\lambda^x)\) is a measure on~\(L^{\rg(l)}\) with the same
  properties that characterise~\(\lambda^{\rg(l)}\) uniquely; so we
  get the left invariance of our family of measures:
  \(l_*(\lambda^{\s(l)}) = \lambda^{\rg(l)}\) for all \(l\in L\).

  Checking continuity by hand is unpleasant, so we use a different
  description of the same Haar system for this purpose.  Recall
  that~\(L_t\) is an equivalence between restrictions of~\(G\) to open
  invariant subsets of~\(G^0\).  The proof that equivalent groupoids
  have Morita--Rieffel equivalent groupoid \(\Cst\)\nb-algebras uses a
  family of measures on the equivalence bibundle in order to define
  the right inner product; this measure on~\(L_t\) is exactly the one
  described above (see the proof of
  \cite{Renault:Representations}*{Corollaire 5.4}), and its continuity
  is known, even in the non-Hausdorff case.  Thus our family of
  measures restricts to a continuous family on each~\(L_t\).  Since
  the map \(\bigoplus \Sect(L_t) \to \Sect(L)\) in
  Proposition~\ref{pro:Banb_and_cover} is surjective, the family of
  measures~\((\lambda_L^x)\) is continuous.
\end{proof}

\subsection{Construction of the Fell bundle}
\label{sec:construct_Fell}

We know now that~\(L\) has a Haar system.  So we get a \Star{}algebra
structure on~\(\Sect(L)\) as in~\cites{Renault:Representations,
  Muhly-Williams:Renaults_equivalence}.  Since the Haar measure
on~\(L\) extends the one on~\(G\), the map \(\Sect(G)\to\Sect(L)\)
induced by the open embedding \(G\to L\) is a \Star{}algebra
isomorphism onto its image.  The groupoid \(\Cst\)\nb-algebras of
\(L\) and~\(G\) are the completions of \(\Sect(L)\) and~\(\Sect(G)\)
for suitable \(\Cst\)\nb-norms.

\begin{lemma}
  \label{lem:multiply_in_Sect_L}
  The involution on \(\Sect(L)\) maps \(\Sect(L_t)\)
  onto~\(\Sect(L_t^{-1}) = \Sect(L_{t^*})\).  The convolution product
  maps \(\Sect(L_t)\times\Sect(L_u)\) to~\(\Sect(L_{tu})\).
\end{lemma}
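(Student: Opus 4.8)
The plan is to reduce everything to the definition of $\Sect(L)$ as the span of compactly supported continuous functions on Hausdorff open subsets, together with the groupoid structure of $L$ and the grading properties of the family $(L_t)_{t\in S}$.

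First I would treat the involution. Recall from Lemma~\ref{lem:transformation_gp_open_equivalence} and Lemma~\ref{lem:transformation_gp_groupoid} that each $L_t=X_t$ sits in $L$ as an open subset, homeomorphic to the partial equivalence $X_t$, and that inversion in $L$ restricts to a homeomorphism $L_t \to L_t^{-1}$. By Proposition~\ref{pro:S_action_involution} (applied via Theorem~\ref{the:graded_groupoid_versus_action}), the involution $J_t\colon X_t^* \to X_{t^*}$ identifies $L_t^{-1}$ with $L_{t^*}$, so $L_t^{-1}=L_{t^*}$ as an open subset of $L$. The \Star{}algebra involution on $\Sect(L)$ is $f^*(l)\defeq \overline{f(l^{-1})}$, up to a modular factor coming from the Haar system (which is unimodular along the fibres of the étale part $L_1=G$ and, on each $L_t$, is again governed by the equivalence measures). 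Since $l\mapsto l^{-1}$ is a homeomorphism $L_t\to L_{t^*}$ carrying Hausdorff open subsets with compact closure to Hausdorff open subsets with compact closure, and conjugation preserves compact support, $f\in\Sect(L_t)$ implies $f^*\in\Sect(L_{t^*})$; applying this to $t^*$ gives the reverse inclusion, hence equality.

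Next I would treat the convolution product. For $f\in\Sect(L_t)$, $g\in\Sect(L_u)$, the convolution is $(f*g)(l)=\int_{L} f(l_1)\,g(l_1^{-1}l)\,\dd\lambda_L^{\rg(l)}(l_1)$. The integrand vanishes unless $l_1\in L_t$ and $l_1^{-1}l\in L_u$, so by property~\ref{enum:Gra1} (that is, $L_t\cdot L_u=L_{tu}$), the support of $f*g$ is contained in $L_{tu}$. What remains is to check that $f*g$ actually lies in $\Sect(L_{tu})$, not merely that it is supported there: one must exhibit it as a finite sum of compactly supported continuous functions on Hausdorff open subsets of $L_{tu}$. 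Here I would use that $L_{tu}$, being an open subset of $L$ homeomorphic to the partial equivalence $X_{tu}$, is itself locally Hausdorff and locally quasi-compact, so $\Sect(L_{tu})$ makes sense; then the point is that the restriction of the convolution from $\Sect(L)\times\Sect(L)$ already lands in $\Sect(L)$, and a function in $\Sect(L)$ supported in an open subset $L_{tu}$ lies in $\Sect(L_{tu})$ by the description of $\Sect$ as linear combinations of functions supported on Hausdorff open pieces (using Proposition~\ref{pro:Banb_and_cover} to split along the cover).

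The main obstacle I expect is precisely this last point: showing that a function of the form $f*g$, which is a priori only in $\Sect(L)$ and supported in $L_{tu}$, genuinely decomposes as a finite linear combination of compactly supported continuous functions on Hausdorff open subsets \emph{of $L_{tu}$}. In the Hausdorff case this is automatic, but in the non-Hausdorff case one needs the technical machinery of Appendix~\ref{sec:Banach_fields} — in particular the surjectivity of $\bigoplus \Sect(L_t)\to\Sect(L)$ from Proposition~\ref{pro:Banb_and_cover} — to control how $\Sect$ interacts with open covers. I would therefore phrase the argument so that this bookkeeping is invoked as a black box: reduce $f$ and $g$ to continuous compactly supported functions on single Hausdorff open charts $U\subseteq L_t$, $V\subseteq L_u$, observe that $f*g$ is then continuous on the Hausdorff open set $U\cdot V\subseteq L_{tu}$ with support contained in the compact set $\cl{U}\cdot\cl{V}$ (using openness and continuity of multiplication and the Haar system's continuity from Proposition~\ref{pro:Haar_on_transformation}), hence $f*g\in\Sect(L_{tu})$, and conclude by bilinearity.
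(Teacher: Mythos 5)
Your treatment of the involution is fine (the paper dismisses that part as trivial), and your overall strategy for the convolution -- reduce by bilinearity to $f\in\Sect(U)$, $g\in\Sect(V)$ for Hausdorff open $U\subseteq L_t$, $V\subseteq L_u$ and observe that $f*g$ then lives on $U\cdot V\subseteq L_{tu}$ -- is exactly the paper's. But there is a genuine gap at the crucial step: you assert that $U\cdot V$ is a Hausdorff open subset, and this is false in general. The product $U\cdot V$ is the image of $U\times_{\s,L^0,\rg}V$ under the (open, but not injective) multiplication map, and the image of a Hausdorff space under an open surjection need not be Hausdorff; this is precisely the ``care in the non-Hausdorff case'' that the lemma is about. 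The paper's proof handles it by a partition-of-unity argument: one writes $f$ and $g$ as finite sums of continuous compactly supported functions on \emph{smaller} Hausdorff open subsets $U'\subseteq U$, $V'\subseteq V$ chosen so that $U'\cdot V'$ \emph{is} Hausdorff, and only then concludes $f*g\in\sum\Sect(U'\cdot V')\subseteq\Sect(L_{tu})$ (this shrinking step is also where the reference to Muhly--Williams enters). Without it, ``$f*g$ is continuous on the Hausdorff open set $U\cdot V$'' is not a legitimate statement.

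Your fallback route is also not justified as stated: you claim that an element of $\Sect(L)$ that vanishes outside the open set $L_{tu}$ automatically lies in $\Sect(L_{tu})$. Proposition~\ref{pro:Banb_and_cover} does not give this; it describes $\Sect(L)$ as a quotient of $\bigoplus_t\Sect(L_t)$ for a cover satisfying specific compatibility conditions, and says nothing about recognising membership in $\Sect(W)$ from the vanishing locus. Indeed the paper explicitly flags the analogous statement (that an element of $\Cst(L)$ supported in $G$ belongs to $\Cst(G)$) as a nontrivial issue. So neither of your two routes closes the argument; the missing ingredient in both is the shrinking of the Hausdorff charts so that their product is again Hausdorff.
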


\begin{proof}
  The claim for the involution is trivial.  The claim for the
  convolution product follows, of course, from \(L_t\cdot L_u\subseteq
  L_{tu}\), but requires some care in the non-Hausdorff case because
  the convolution product is not defined directly, see the proof of
  \cite{Muhly-Williams:Renaults_equivalence}*{Proposition~4.4}.  If
  \(f_1\in\Sect(U)\), \(f_2\in\Sect(V)\) for Hausdorff open subsets
  \(U\subseteq L_t\) and \(V\subseteq L_u\), and if \(U\cdot V\) is
  also Hausdorff, then we directly get \(f_1*f_2\in \Sect(U\cdot V)\)
  with \(U\cdot V\subseteq L_{tu}\).  If \(U\cdot V\) is
  non-Hausdorff, a partition of unity is used to write \(f_1\)
  and~\(f_2\) as finite sums of functions on \emph{smaller} Hausdorff
  open subsets \(U'\subseteq U\), \(V'\subseteq V\) for which
  \(U'\cdot V'\) is Hausdorff.  Since \(U'\cdot V'\subseteq U\cdot
  V\subseteq L_{tu}\), we get \(\Sect(L_t)*\Sect(L_u)\subseteq
  \Sect(L_{tu})\) as desired.
\end{proof}

Lemma~\ref{lem:multiply_in_Sect_L} gives
\(\Sect(G)*\Sect(L_t)\subseteq\Sect(L_t)\) and
\(\Sect(L_t)*\Sect(G)\subseteq\Sect(L_t)\), so \(\Sect(L_t)\) is a
\(\Sect(G)\)-bimodule; it also implies \(f_1^**f_2\in\Sect(G)\) and
\(f_1*f_2^*\in\Sect(G)\) for all \(f_1,f_2\in\Sect(L_t)\), which gives
\(\Sect(G)\)-valued left and right inner products on \(\Sect(L_t)\).
We also have \(f_1*f_2\in\Sect(L_{tu})\) for \(f_1\in\Sect(L_t)\) and
\(f_2\in\Sect(L_u)\), and these multiplication maps are associative
and ``isometric'' with respect to the \(\Sect(G)\)-valued inner
products.  We put ``isometric'' in quotation marks because we have not
yet talked about norms.

\begin{lemma}
  \label{lem:positivity_pre-Fell}
  \(f^* * f\in\Sect(G)\) is positive in~\(\Cst(G)\) for each \(t\in
  S\), \(f\in\Sect(L_t)\), and the closed linear span of \(f_1^**
  f_2\) for \(f_1,f_2\in\Sect(L_t)\) is dense in
  \(\Cst(G_{\rg(L_t)})\).
\end{lemma}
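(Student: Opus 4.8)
The plan is to reduce the statement about $\Sect(L_t)$ to the standard theory of groupoid equivalences applied to the partial equivalence $L_t$. Recall from Lemma~\ref{lem:partial_equivalence} (and the grading structure on $L = G\rtimes S$) that $L_t$ is a global equivalence between the restricted groupoids $G_{\rg(L_t)}$ and $G_{\s(L_t)}$; the restricted Haar measure on $L_t$ used in Proposition~\ref{pro:Haar_on_transformation} is precisely the measure that enters the construction of the imprimitivity bimodule in the proof of \cite{Renault:Representations}*{Corollaire~5.4} (or \cite{Muhly-Williams:Renaults_equivalence}). So the $\Sect(G)$-valued left inner product $\langle f_1,f_2\rangle = f_1 * f_2^*$ on $\Sect(L_t)$, which lands in $\Sect(G_{\rg(L_t)})$ by Lemma~\ref{lem:multiply_in_Sect_L}, is exactly the left inner product of the pre-imprimitivity bimodule completing to the Morita--Rieffel equivalence $\Cst(G_{\s(L_t)})$--$\Cst(G_{\rg(L_t)})$. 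The same holds for the right inner product $f_1^* * f_2 \in \Sect(G_{\s(L_t)})$ on the other side.

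First I would make precise which inner product appears in the statement: here $f^* * f \in \Sect(G)$ is the \emph{right} $\Sect(G)$-valued inner product, living in $\Sect(G_{\s(L_t)})$. Positivity in $\Cst(G)$ then follows because $\Sect(L_t)$ with this inner product is a pre-Hilbert $\Cst(G_{\s(L_t)})$-module by the equivalence theory, and positivity of $\langle f,f\rangle$ in the module completion is equivalent to positivity in $\Cst(G_{\s(L_t)})$, which embeds as an ideal in $\Cst(G)$. (Symmetrically, the left version $f * f^*$ is positive in $\Cst(G_{\rg(L_t)}) \subseteq \Cst(G)$.) For the density assertion, the closed linear span of the elements $f_1^* * f_2$ is, by definition of a full Hilbert module, dense in $\Cst(G_{\s(L_t)})$; and since $\s(L_t) = \rg(L_{t^*})$ and one passes between the two sides by the involution $f \mapsto f^*$ which swaps $\Sect(L_t)$ and $\Sect(L_{t^*})$, the span of $f_1 * f_2^*$ for $f_i \in \Sect(L_t)$ is dense in $\Cst(G_{\rg(L_t)})$. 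One of these two formulations is what the lemma asserts; both are equally easy once the equivalence-bimodule identification is in place.

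The one point requiring genuine care is the \textbf{non-Hausdorff bookkeeping}: elements of $\Sect(L_t)$ are not continuous functions but finite sums of compactly supported continuous functions on Hausdorff open subsets of $L_t$, so one cannot literally quote the Hausdorff equivalence theorem. I expect the main obstacle to be checking that the pre-inner-product module structure on $\Sect(L_t)$ agrees with that used in the non-Hausdorff version of the equivalence theorem; this has already been dealt with in the proof of Proposition~\ref{pro:Haar_on_transformation}, where the continuity of the family of measures on $L_t$ was imported from the known non-Hausdorff equivalence construction, and in Lemma~\ref{lem:multiply_in_Sect_L}, where the convolution and involution were shown to respect the grading via partitions of unity. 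So the non-Hausdorff case adds no new difficulty beyond invoking the non-Hausdorff equivalence theorem (e.g.\ \cite{Muhly-Williams:Renaults_equivalence} or \cite{Tu:Non-Hausdorff}) in place of its Hausdorff counterpart. Positivity and fullness then transfer verbatim, since both are statements about the completed Hilbert module and are insensitive to whether the dense subalgebra consists of continuous functions.
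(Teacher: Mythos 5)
Your proposal is correct and follows essentially the same route as the paper: identify $L_t$ as a global equivalence between $G_{\s(L_t)}$ and $G_{\rg(L_t)}$, observe that the restricted Haar measures on $L_t$ are exactly the measure families used in the (non-Hausdorff) equivalence theorem of Renault/Muhly--Williams, and import positivity and fullness of the inner products from that construction. Your remark about which side the inner product $f_1^* * f_2$ lives on is well taken --- it lands in $\Sect(G_{\s(L_t)})$, consistent with the formulation in Theorem~\ref{the:iterated_crossed_2} --- but this does not affect the argument.
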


\begin{proof}
  We have already used in the proof of
  Proposition~\ref{pro:Haar_on_transformation} that the Haar measure
  on~\(L\) restricts to the usual family of measures on the partial
  equivalence space~\(L_t\).  In that context, the positivity of such
  inner products is already proved in
  \cites{Renault:Representations,Muhly-Williams:Renaults_equivalence}
  in order to show that~\(\Sect(L_t)\) may be completed to a Hilbert
  \(\Cst(G)\)-bimodule.  The proof that an equivalence induces a
  Morita--Rieffel equivalence also shows that the inner product
  defined above is full, that is, the closed linear span of \(f_1^**
  f_2\) for \(f_1,f_2\in\Sect(L_t)\) is dense in
  \(\Cst(G_{\rg(L_t)})\).
\end{proof}

Hence we may complete~\(\Sect(L_t)\) to a Hilbert
bimodule~\(\Cst(L_t)\) over~\(\Cst(G)\).  The densely defined
convolution map \(\Sect(L_t)\times\Sect(L_u) \to \Sect(L_{tu})\)
extends to a Hilbert bimodule map
\[
\mu_{t,u}\colon \Cst(L_t)\otimes_{\Cst(G)}\Cst(L_u) \to \Cst(L_{tu})
\]
because it is isometric for the \(\Sect(G)\)\nb-valued inner products.
Since \(\Cst(L_t)\) is full as a Hilbert bimodule over
\(\Cst(G_{\rg(L_t)})\) and \(\Cst(G_{\s(L_t)})\), it follows that the
maps~\(\mu_{t,u}\) above are surjective.

The associativity of the multiplication on the dense
subspaces~\(\Sect(L_t)\) extends to~\(\Cst(L_t)\).  Thus we have
constructed an action of~\(S\) by Hilbert bimodules on~\(\Cst(G)\).
By Theorem~\ref{the:S_act_Cstar_Fell_bundle}, this is equivalent to a
saturated Fell bundle \(\Cst(L_t)_{t\in S}\) over~\(S\).

\begin{theorem}
  \label{the:iterated_crossed_1}
  The section \(\Cst\)\nb-algebra~\(\Cst(S,\Cst(L_t)_{t\in S})\) is
  naturally isomorphic to the groupoid
  \(\Cst\)\nb-algebra~\(\Cst(L)\).
\end{theorem}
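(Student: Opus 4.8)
The plan is to realise both sides as maximal \(\Cst\)\nb-completions of one and the same \Star{}algebra. Recall that the section \(\Cst\)\nb-algebra \(\Cst(S,\Cst(L_t)_{t\in S})\) is (as in~\cite{Exel:noncomm.cartan}) the maximal \(\Cst\)\nb-completion of the quotient of the algebraic direct sum \(\bigoplus_{t\in S}\Cst(L_t)\) by the linear span of the elements \(a-j_{u,t}(a)\) for \(t\le u\) in~\(S\) and \(a\in\Cst(L_t)\), with multiplication and involution induced by the maps~\(\mu_{t,u}\) and~\(J_t\) of our Fell bundle. By Theorem~\ref{the:S_act_Cstar_Fell_bundle} and the construction of the \(S\)\nb-grading on~\(L\), the Hilbert bimodule inclusion \(j_{u,t}\colon \Cst(L_t)\to\Cst(L_u)\) is just extension by zero along the open inclusion \(L_t\subseteq L_u\); since \(\Sect(L_t)\) is dense in \(\Cst(L_t)\), the dense \Star{}subalgebra \(\mathcal A_0\) obtained from \(\bigoplus_{t\in S}\Sect(L_t)\) by the same quotient already determines \(\Cst(S,\Cst(L_t)_{t\in S})\).

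First I would construct the \Star{}homomorphism \(\Phi\colon \bigoplus_{t\in S}\Sect(L_t)\to\Sect(L)\) that extends \(f\in\Sect(L_t)\) by zero along the open embedding \(L_t\hookrightarrow L\). It is multiplicative and involution-preserving by Lemma~\ref{lem:multiply_in_Sect_L} read on the level of~\(\Sect\): the convolution \(\Sect(L_t)\ast\Sect(L_u)\to\Sect(L_{tu})\) is the restriction of convolution in \(\Sect(L)\). It is surjective by Proposition~\ref{pro:Banb_and_cover} for the open cover \((L_t)_{t\in S}\) of~\(L\) (a partition of unity lets one split a function supported on a Hausdorff open set into pieces each supported inside some~\(L_t\)). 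Because extending \(f\in\Sect(L_t)\) first to~\(L_u\) and then to~\(L\) is the same as extending it directly to~\(L\), the map~\(\Phi\) kills the relation subspace and hence descends to a surjective \Star{}homomorphism \(\bar\Phi\colon \mathcal A_0\to\Sect(L)\).

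The heart of the proof is the injectivity of~\(\bar\Phi\), i.e.\ the claim that the kernel of~\(\Phi\) is exactly the span of the relations \(f\mapsto j_{u,t}(f)\). Here the non-Hausdorff nature of~\(L\) is essential: elements of \(\Sect(L)\) are not functions, so a relation \(\sum_t f_t=0\) in \(\Sect(L)\) cannot be read off pointwise, and a compactly supported ``function'' on~\(L\) that is continuous on each Hausdorff piece admits many finite expressions as sums of pieces supported in the~\(L_t\); one must show that any two such expressions differ by the obvious relations. This is precisely the technical statement that we isolate and prove in Appendix~\ref{sec:Banach_fields}, in the more general language of sections of upper semicontinuous fields of Banach spaces over a locally Hausdorff, locally quasi-compact space; the case at hand is the trivial field with fibre~\(\C\) over the cover \((L_t)_{t\in S}\) of~\(L\). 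Granting it, \(\bar\Phi\) is a \Star{}isomorphism \(\mathcal A_0\congto\Sect(L)\).

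Finally, \(\mathcal A_0\) has dense image in \(\Cst(S,\Cst(L_t)_{t\in S})\) and \(\Sect(L)\) is dense in \(\Cst(L)\), and in each case the relevant \(\Cst\)\nb-completion is the maximal one. Transporting bounded \Star{}representations through the \Star{}isomorphism~\(\bar\Phi\) identifies the maximal \(\Cst\)\nb-seminorms on the two \Star{}algebras, so \(\bar\Phi\) extends to the desired isomorphism \(\Cst(S,\Cst(L_t)_{t\in S})\cong\Cst(L)\); naturality with respect to grading-preserving isomorphisms is immediate from the construction. The main obstacle is the injectivity step, that is, the appeal to Appendix~\ref{sec:Banach_fields}: everything else is careful but routine bookkeeping, whereas pinning down the kernel of~\(\Phi\) genuinely requires partitions of unity and the geometry of the non-Hausdorff space~\(L\).
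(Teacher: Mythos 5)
Your overall strategy is the paper's: realise both \(\Cst\)\nb-algebras as completions built from \(\bigoplus_{t\in S}\Sect(L_t)\), using the covering \((L_t)_{t\in S}\) of~\(L\) and the technical result of Appendix~\ref{sec:Banach_fields}. But the pivotal intermediate claim --- that the kernel of \(\Phi\colon\bigoplus_{t\in S}\Sect(L_t)\to\Sect(L)\) is \emph{exactly the linear span} of the elements \(\iota_t(f)-\iota_u(f)\), so that \(\bar\Phi\) is a \Star{}isomorphism from your \(\mathcal{A}_0\) onto \(\Sect(L)\) --- is false, and it is not what Appendix~\ref{sec:Banach_fields} proves. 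Proposition~\ref{pro:Banb_and_cover} identifies \(\ker E\) only with the \emph{closed} linear span of those elements (closure in the bornological, Mackey sense), and the remark following it points out that the closure is genuinely needed: already for \(X=[0,1]\sqcup_{(0,1]}[0,1]\) with the two-element Hausdorff cover --- precisely the kind of decomposition occurring for the transformation groupoid of Section~\ref{sec:explicit_example} --- an element of \(\ker E\) need not be a finite linear combination of relations. So your \(\mathcal{A}_0\) surjects onto \(\Sect(L)\) but is not isomorphic to it, and the final sentence ``transporting bounded representations through the isomorphism \(\bar\Phi\)'' has no isomorphism to transport through.

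The gap is repairable, and the repair is exactly how the paper argues: do not pass through an algebraic isomorphism, but compare \emph{bounded} \(\Cst\)\nb-seminorms directly. A bounded seminorm on \(\bigoplus_{t\in S}\Sect(L_t)\) vanishes on the closed span of the relations if and only if it vanishes on the relations themselves; hence, by Proposition~\ref{pro:Banb_and_cover}, the bounded \(\Cst\)\nb-seminorms on \(\Sect(L)\) correspond bijectively to the bounded \(\Cst\)\nb-seminorms on \(\bigoplus_t\Sect(L_t)\) killing \(\iota_t(f)-\iota_u(f)\), and these extend to \(\bigoplus_t\Cst(L_t)\) via \(q(f)^2=q(f^**f)\le\norm{f^**f}\). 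You also elide a second point: for a non-Hausdorff, not necessarily \'etale groupoid, \(\Cst(L)\) is by definition the completion in the maximal \emph{\(I\)\nb-norm-bounded} \(\Cst\)\nb-seminorm, and identifying this with the maximal bounded (inductive-limit-continuous) \(\Cst\)\nb-seminorm is Lemma~\ref{lem:Cstar-completion}, which rests on Renault's disintegration theorem (or the \(I\)\nb-norm estimate of Remark~\ref{rem:I-norm}). Your phrase ``in each case the relevant \(\Cst\)\nb-completion is the maximal one'' silently assumes this identification.
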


This theorem looks almost trivial from our construction; but the proof
requires a technical result about~\(\Sect(L)\) to be proved in
Appendix~\ref{sec:Banach_fields}.  Before we turn to that, we first add
coefficients in a Fell bundle over~\(L\).

The above construction still works in almost literally the same way if
we replace \(\Sect(L_t)\) by \(\Sect(L_t,\Banb)\) everywhere,
where~\(\Banb\) is a Fell bundle over the groupoid~\(L\).
Unfortunately, we could not find a reference for the generalisation of
Lemma~\ref{lem:positivity_pre-Fell} to this context.  The references
on groupoid crossed products we could find consider \emph{either} Fell
bundles over Hausdorff groupoids (such
as~\cite{Muhly-Williams:Equivalence.FellBundles}) \emph{or} a more
restrictive class of actions for non-Hausdorff groupoids (such as
\cites{Renault:Representations, Muhly-Williams:Renaults_equivalence}),
but not both.  In particular, the positivity of the inner product
on~\(\Sect(L_t,\Banb)\) for a partial equivalence~\(L_t\) is only
proved in some cases: for arbitrary upper semicontinuous Fell bundles
over Hausdorff groupoids
in~\cite{Muhly-Williams:Equivalence.FellBundles}; for Green twisted
actions of non-Hausdorff groupoids on continuous fields of
\(\Cst\)\nb-algebras over~\(G^0\) in~\cite{Renault:Representations};
and for untwisted actions by automorphisms of non-Hausdorff groupoids
on \(\Cont_0(G^0)\)-algebras
in~\cite{Muhly-Williams:Renaults_equivalence}.  This is probably only
a technical issue that will be resolved eventually, but not in this
paper.  So we add an assumption about it in our next theorem.

\begin{theorem}
  \label{the:iterated_crossed_2}
  Let~\(\Banb\) be a Fell bundle over~\(L\).  Assume that
  \(f^**f\in\Sect(G,\Banb)\) is positive in~\(\Cst(G,\Banb)\) for all
  \(f\in \Sect(L_t,\Banb)\), \(t\in S\), and that the linear span of
  these inner products is dense in~\(\Cst(G_{\s(L_t)},\Banb)\).  Then
  there is a Fell bundle
  \(\Cst(L_t,\Banb)_{t\in S}\) over~\(S\) that has the section
  \(\Cst\)\nb-algebra of the restriction \(\Cst(G,\Banb|_G)\) as unit
  fibre.  The section \(\Cst\)\nb-algebra
  \(\Cst(S,\Cst(L_t,\Banb)_{t\in S})\) is naturally isomorphic to the
  section \(\Cst\)\nb-algebra of the groupoid Fell
  bundle~\(\Cst(L,\Banb)\).
\end{theorem}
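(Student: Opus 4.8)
The plan is to prove Theorem~\ref{the:iterated_crossed_2} by reducing it to the technical result of Appendix~\ref{sec:Banach_fields} together with the abstract theory of Fell bundles over inverse semigroups developed in Section~\ref{sec:S_acts_on_Cstar}. The construction of the Fell bundle itself is already laid out: under the positivity assumption, each \(\Sect(L_t,\Banb)\) completes to a Hilbert bimodule \(\Cst(L_t,\Banb)\) over \(\Cst(G,\Banb|_G)\), the densely defined convolution maps extend to isometric surjective bimodule maps \(\mu_{t,u}\colon \Cst(L_t,\Banb)\otimes_{\Cst(G,\Banb|_G)} \Cst(L_u,\Banb) \to \Cst(L_{tu},\Banb)\), associativity passes to the completions, and Theorem~\ref{the:S_act_Cstar_Fell_bundle} then packages this \(S\)\nb-action by Hilbert bimodules into a saturated Fell bundle. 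So the only genuine content is the final sentence: identifying \(\Cst(S,\Cst(L_t,\Banb)_{t\in S})\) with \(\Cst(L,\Banb)\).

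First I would recall the description of the section \(\Cst\)\nb-algebra of a Fell bundle over \(S\): it is the completion of the algebraic direct sum \(\bigoplus_{t\in S}\Cst(L_t,\Banb)\) modulo the relations identifying \(j_{u,t}(\xi)\in\Cst(L_u,\Banb)\) with \(\xi\in\Cst(L_t,\Banb)\) for \(t\le u\), with the obvious convolution-type multiplication and involution, completed in the maximal \(\Cst\)\nb-norm. On the other side, \(\Cst(L,\Banb)\) is the completion of \(\Sect(L,\Banb)\). The natural map goes from the algebraic part of the Fell bundle section algebra to \(\Sect(L,\Banb)\): send a tuple \((f_t)_{t\in S}\) with \(f_t\in\Sect(L_t,\Banb)\) to \(\sum_t f_t\in\Sect(L,\Banb)\), where we use the open embeddings \(L_t\hookrightarrow L\). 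The key algebraic point is that this map is a \(\ast\)\nb-homomorphism (clear from Lemma~\ref{lem:multiply_in_Sect_L} and the analogous statement with coefficients), that it is surjective onto \(\Sect(L,\Banb)\) — this is exactly the surjectivity of \(\bigoplus\Sect(L_t,\Banb)\to\Sect(L,\Banb)\), which is the statement of Proposition~\ref{pro:Banb_and_cover} from Appendix~\ref{sec:Banach_fields} — and that its kernel on the algebraic level is precisely the span of the relations \(j_{u,t}(\xi)-\xi\). This last identification of the kernel is the heart of the matter and is again what the appendix delivers: an element of \(\bigoplus_t\Sect(L_t,\Banb)\) maps to zero in \(\Sect(L,\Banb)\) exactly when it is a combination of ``inclusion'' relations coming from \(L_v\subseteq L_t\cap L_u\), which matches \ref{enum:Gra4}.

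Given the algebraic isomorphism \(C_c\text{-level Fell bundle algebra}\cong \Sect(L,\Banb)\), I would then argue that the two maximal \(\Cst\)\nb-norms agree. One inclusion is automatic: every representation of \(\Sect(L,\Banb)\) restricts to a representation of each \(\Cst(L_t,\Banb)\) compatible with the structure maps, hence factors through the Fell bundle section algebra, so the \(\Cst(L,\Banb)\)-norm is dominated by the Fell bundle norm. For the reverse, a representation of the Fell bundle over \(S\) is by definition a compatible family of representations of the fibres, and because the \(L_t\) cover \(L\) and the unit fibre is \(\Cst(G,\Banb|_G)\), such a family assembles into a representation of \(\Sect(L,\Banb)\) — here one uses that \(\Sect(L,\Banb)\) is generated as a \(\ast\)\nb-algebra by the images of the \(\Sect(L_t,\Banb)\) and that the relations among these are exactly the ones respected by a Fell bundle representation. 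Hence the norms coincide and the completions are canonically isomorphic; naturality in \(\Banb\) is then evident from the construction. Theorem~\ref{the:iterated_crossed_1} is the special case \(\Banb=G^0\times\C\) (the trivial line bundle), where the positivity hypothesis is automatic by Lemma~\ref{lem:positivity_pre-Fell}.

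The main obstacle I anticipate is the precise bookkeeping at the algebraic level in the non-Hausdorff setting: because elements of \(\Sect(L,\Banb)\) are only finite sums of sections supported on Hausdorff open pieces and are genuinely not continuous on \(L\), one cannot simply ``read off'' a decomposition along the cover \((L_t)\), nor is the kernel of \(\bigoplus\Sect(L_t,\Banb)\to\Sect(L,\Banb)\) visibly spanned by two-term inclusion relations. Controlling this is exactly why the paper isolates Proposition~\ref{pro:Banb_and_cover} (and the surrounding results on upper semicontinuous Banach bundles over locally Hausdorff spaces) in Appendix~\ref{sec:Banach_fields}; once that proposition is available, the identification of kernels and the surjectivity both become formal, and the norm comparison is then a routine application of the universal properties. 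A secondary, more bureaucratic point is checking that the Hilbert-bimodule inner products on \(\Cst(L_t,\Banb)\) really do coincide with the Fell bundle products \(\BRAKET{x}{y}=x\cdot y^*\) and \(\braket{x}{y}=x^*\cdot y\) — but this is immediate from how \(\mu_{t,t^*}\) and \(\mu_{t^*,t}\) were defined and from Theorem~\ref{the:S_act_Cstar_Fell_bundle}.
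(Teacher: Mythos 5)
Your proposal is correct and follows essentially the same route as the paper: the map \(E\colon\bigoplus_{t\in S}\Sect(L_t,\Banb)\to\Sect(L,\Banb)\) from Proposition~\ref{pro:Banb_and_cover}, its surjectivity and the identification of its kernel with the inclusion relations, and then a comparison of the maximal \(\Cst\)\nb-seminorms on both sides. The only points you gloss over are that the kernel is only the \emph{closed} span of the relations \(\iota_t(f)-\iota_u(f)\) (harmless, since bounded \(\Cst\)\nb-seminorms vanishing on the generators vanish on the closure) and that one must reconcile the maximal bounded \(\Cst\)\nb-seminorm with the usual \(I\)\nb-norm-based definition of \(\Cst(L,\Banb)\), which the paper handles in Lemma~\ref{lem:Cstar-completion} and Remark~\ref{rem:I-norm}.
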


Theorem~\ref{the:iterated_crossed_1} is a special case of
Theorem~\ref{the:iterated_crossed_2} for the constant Fell
bundle~\(\C\).  It remains to prove
Theorem~\ref{the:iterated_crossed_2}.  This will be done in
Appendix~\ref{sec:proof_iterated_crossed}, after some preliminary
results about Banach bundles in Appendix~\ref{sec:Banach_fields}.

\begin{corollary}
  \label{cor:compare_groupoid_isg}
  Let~\(L\) be an étale topological groupoid with Hausdorff locally
  compact object space and with a Haar system.  Let~\(S\) be a wide
  inverse subsemigroup of~\(\Bis(L)\), that is, \(\bigcup_{t\in S} t =
  L\) and \(\bigcup_{t\in S, t\subseteq t_1\cap t_2} = t_1\cap t_2\)
  for all \(t_1,t_2\in S\).  Then the groupoid \(\Cst\)\nb-algebra
  of~\(L\) is isomorphic to the crossed product \(\Cont_0(L^0)\rtimes
  S\).

  More generally, if~\(\Banb\) is a Fell bundle over~\(L\), then the
  section \(\Cst\)\nb-algebra \(\Cst(L,\Banb)\) is isomorphic to the
  section \(\Cst\)\nb-algebra of the associated Fell bundle
  over~\(S\).
\end{corollary}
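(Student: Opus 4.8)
This follows by applying Theorems~\ref{the:iterated_crossed_1} and~\ref{the:iterated_crossed_2} with the groupoid ``\(G\)'' there taken to be the unit space~\(L^0\). The plan is to set things up so that those theorems apply. We may assume that~\(S\) has a unit, which we map to the unit bisection \(L^0\subseteq L\): since \(L^0=\bigcup_{t\in S}(t\cap L^0)\) by wideness, adjoining such a unit keeps~\(S\) wide and enlarges the associated Fell bundle over~\(S\) only by a unit fibre \(\Cst(L^0,\Banb|_{L^0})\) that already lies in the closed linear span of the remaining fibres, so it changes neither side of the asserted isomorphism. Now apply Corollary~\ref{cor:wide_action} to the inclusion \(S\into\Bis(L)\): the wideness of~\(S\) is exactly the condition ensuring that the family of bisections \((t)_{t\in S}\) is an \(S\)\nb-grading on~\(L\), and Theorem~\ref{the:graded_groupoid_versus_action} then realises~\(L\) as the transformation groupoid \(L^0\rtimes S\) of an \(S\)\nb-action by partial equivalences on the space~\(L^0\) (viewed as a groupoid with only unit arrows), with grading subsets \(L_t=t\).

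With this identification in hand, I would feed the data into Theorem~\ref{the:iterated_crossed_2}, taking its ``\(G\)'' to be~\(L^0\) and its transformation groupoid to be our~\(L\). Because \(L^0\) is a Hausdorff locally compact space, \(\Cst(L^0)=\Cont_0(L^0)\); and for a Fell bundle \(\Banb\) over~\(L\) the restriction \(\Banb|_{L^0}\) is an upper semicontinuous bundle of \(\Cst\)\nb-algebras over~\(L^0\), with section \(\Cst\)\nb-algebra \(\Cst(L^0,\Banb|_{L^0})\). The Fell bundle \((\Cst(L_t,\Banb))_{t\in S}\) provided by Theorem~\ref{the:iterated_crossed_2} is, by construction, the completion of the pre-Fell bundle \(t\mapsto\Sect(L_t,\Banb)\) of compactly supported sections of~\(\Banb\) over the open bisections; this is exactly the Fell bundle over~\(S\) associated to~\(\Banb\) in the statement. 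Theorem~\ref{the:iterated_crossed_2} identifies its section \(\Cst\)\nb-algebra with \(\Cst(L,\Banb)\), which is the second assertion; specialising to \(\Banb=\C\) gives \(\Cst(L)\cong\Cont_0(L^0)\rtimes S\), where the right-hand side is by definition the section \(\Cst\)\nb-algebra of the resulting Fell bundle over~\(S\).

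The only non-formal point is the positivity hypothesis of Theorem~\ref{the:iterated_crossed_2}, and verifying it is where I expect the (modest) work to lie — though it is made easy by the fact that each \(L_t=t\) is a \emph{bisection}. Indeed, \(\s\) restricts to a homeomorphism from~\(L_t\) onto the open subset \(\s(L_t)\subseteq L^0\), so \(L_t\) is Hausdorff; hence \(\Sect(L_t,\Banb)=\Contc(L_t,\Banb)\) and the inner products are given by honest formulas with no partition-of-unity subtleties. For \(f\in\Sect(L_t,\Banb)\) the section \(f^**f\in\Sect(L^0,\Banb|_{L^0})\) is the pointwise assignment \(x\mapsto f(g_x)^*f(g_x)\), where \(g_x\) is the unique element of~\(L_t\) with \(\s(g_x)=x\) (and the value is~\(0\) when no such element exists); this is an element of the form \(b^*b\) in the fibre \(\Cst\)\nb-algebra over~\(x\), hence positive, so \(f^**f\) is positive in \(\Cst(L^0,\Banb|_{L^0})\). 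Fullness follows from a routine approximation by sections with small support, using that \(\Banb_g^*\Banb_g=\Banb_{\s(g)}\), so the closed span of the \(f_1^**f_2\) with \(f_1,f_2\in\Sect(L_t,\Banb)\) exhausts \(\Cst(\s(L_t),\Banb|_{\s(L_t)})\). With this checked, Theorems~\ref{the:iterated_crossed_1} and~\ref{the:iterated_crossed_2} apply as stated, and the remaining ``obstacle'' is purely the bookkeeping of matching the abstract Fell bundle \((\Cst(L_t,\Banb))_{t\in S}\) with the concretely described associated Fell bundle over~\(S\).
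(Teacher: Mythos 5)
Your proposal is correct and follows essentially the same route as the paper: realise \(L\) as an \(S\)\nb-graded groupoid with unit fibre \(L^0\) via wideness, apply Theorems~\ref{the:iterated_crossed_1} and~\ref{the:iterated_crossed_2}, and dispose of the positivity hypothesis by observing that over a space positivity is pointwise and \((f^**f)(x)\) is either \(0\) or \(f(l)^*f(l)\) for the unique \(l\in L_t\) with \(\s(l)=x\), which is positive by the Fell bundle axioms. Your extra remarks on adjoining a unit and on fullness of the inner products are sound details that the paper leaves implicit.
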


\begin{proof}
  The assumptions on~\(S\) ensure that~\(L\) is an \(S\)\nb-graded
  groupoid by \(L_t \defeq t\) with unit fibre \(G=L^0\).  So
  Theorem~\ref{the:iterated_crossed_1} gives the first assertion, and
  Theorem~\ref{the:iterated_crossed_2} gives the second one.  In this
  case, positivity is not an issue because we are dealing with a
  space~\(G\), so positivity in \(\Cst(G,\Banb)\) is equivalent to
  pointwise positivity in all \(x\in L^0=G^0\).  The value
  \((f^**f)(x)\) for \(f\in\Sect(L_t,\Banb)\) is either zero or
  \(f(l)^*f(l)\) for the unique \(l\in L_t\) with \(\s(l)=x\).  This
  is assumed to be positive in the definition of a Fell bundle over a
  groupoid.
\end{proof}

The isomorphism \(\Cst(L) \cong \Cont_0(L^0)\rtimes S\) is already
proved in \cite{Exel:Inverse_combinatorial}*{Theorem~9.8} (if~\(L^0\) is
second countable and~\(S\) is countable).  The more general result for
(separable) Fell bundles over (second countable) étale groupoids is
proved in
\cite{BussExel:Fell.Bundle.and.Twisted.Groupoids}*{Theorem~2.13}.

Another special case worth mentioning are group extensions.  Let
\(G\rightarrowtail H\onto S\) be an extension of locally compact
groups with discrete~\(S\).  This gives an action of~\(S\), viewed as
an inverse semigroup, on~\(G\) by
Theorem~\ref{the:group_action_extension}.  We get a Fell bundle
over~\(S\) with unit fibre~\(\Cst(G)\) and section
\(\Cst\)\nb-algebra~\(\Cst(H)\).  More generally, we get a similar
result for a Fell bundle over~\(H\) (compare with
\cite{Buss-Meyer:Crossed_products}*{Example~3.9}).  Our Fell bundle
also comes from a Green twisted action of~\((H,G)\), and in this
formulation, our theorem is well-known in this case (see
\cites{Green:Local_twisted, Chabert-Echterhoff:Twisted}).

\begin{corollary}
  \label{cor:CstarG_embeds}
  In the situation of Theorem~\textup{\ref{the:iterated_crossed_2}},
  the canonical map from~\(\Cst(G,\Banb)\) to~\(\Cst(L,\Banb)\) is
  injective.
\end{corollary}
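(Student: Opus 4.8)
The statement to prove is Corollary~\ref{cor:CstarG_embeds}: that the canonical map \(\Cst(G,\Banb)\to \Cst(L,\Banb)\) is injective, in the situation of Theorem~\ref{the:iterated_crossed_2}. Let me think about what tools are available. We have an \(S\)-action on \(\Cst(G,\Banb|_G)\) by Hilbert bimodules \(\Cst(L_t,\Banb)\), equivalently a saturated Fell bundle \((\Cst(L_t,\Banb))_{t\in S}\) over \(S\) with unit fibre \(A \defeq \Cst(G,\Banb|_G)\), and Theorem~\ref{the:iterated_crossed_2} identifies \(\Cst(S,(\Cst(L_t,\Banb))_{t\in S})\) with \(\Cst(L,\Banb)\). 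So the canonical map in question is really the canonical map from the unit fibre \(A = \A_1\) of a saturated Fell bundle into its section \(\Cst\)-algebra.

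The plan is to deduce this from Theorem~\ref{the:iterated_crossed_2}. That theorem identifies \(\Cst(L,\Banb)\) with the section \(\Cst\)\nb-algebra \(\Cst(S,\Cst(L_t,\Banb)_{t\in S})\) of the saturated Fell bundle over~\(S\) constructed in this section, whose unit fibre is \(\Cst(L_1,\Banb)=\Cst(G,\Banb|_G)\), because \(L_1=G\); under this identification the canonical map in question becomes the canonical embedding of the unit fibre into the section \(\Cst\)\nb-algebra. So it suffices to show that for a saturated Fell bundle over~\(S\) the unit fibre embeds isometrically into the section \(\Cst\)\nb-algebra. I would use that, by construction (Lemma~\ref{lem:positivity_pre-Fell} and the discussion following it), each \(\Cst(L_t,\Banb)\) is a Hilbert bimodule over \(A\defeq\Cst(G,\Banb|_G)\), and that by Theorem~\ref{the:S_act_Cstar_Fell_bundle} these bimodules, together with the maps \(\mu_{t,u}\), form an action of~\(S\) on~\(A\) by Hilbert bimodules with associated inclusion maps \(j_{u,t}\colon \Cst(L_t,\Banb)\to\Cst(L_u,\Banb)\) for \(t\le u\).

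The key step is to produce a representation of \(\Cst(S,\Cst(L_t,\Banb)_{t\in S})\) that is faithful on~\(A\), by inducing a faithful representation of~\(A\). Fix a faithful nondegenerate representation \(\rho\colon A\to\Bound(H_0)\) on a Hilbert space~\(H_0\). For each \(t\in S\) form the internal tensor product \(H_t\defeq \Cst(L_t,\Banb)\otimes_A H_0\); since \(j_{u,t}\) is a Hilbert bimodule map, hence a norm isometry preserving inner products, it induces an isometric embedding \(H_t\hookrightarrow H_u\) for \(t\le u\). Assembling the spaces \(H_t\) along these embeddings — this is a version of Exel's left regular representation of a Fell bundle over an inverse semigroup — yields a single Hilbert space~\(H\) on which the Fell bundle multiplication defines a representation \((\pi_t)_{t\in S}\) whose unit-fibre part~\(\pi_1\) restricts to a copy of~\(\rho\) and is therefore faithful. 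The resulting representation of \(\Cst(S,\Cst(L_t,\Banb)_{t\in S})\) is then faithful on \(A=\Cst(L_1,\Banb)\), and combining with Theorem~\ref{the:iterated_crossed_2} we conclude that the canonical map \(\Cst(G,\Banb|_G)\to\Cst(L,\Banb)\) is injective. (Alternatively, one may invoke the standard faithful conditional expectation from the reduced section \(\Cst\)\nb-algebra of a Fell bundle over~\(S\) onto its unit fibre, together with the fact that the full section \(\Cst\)\nb-algebra surjects onto the reduced one; this forces the unit fibre to embed already into the full algebra, cf.~\cite{Exel:noncomm.cartan}.)

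I expect the main obstacle to be purely organisational: assembling the fibrewise data \((H_t,\pi_t)\) into one representation on~\(H\) that is simultaneously compatible with all the inclusion maps \(j_{u,t}\) and all the multiplication maps \(\mu_{t,u}\), and handling nondegeneracy via an approximate unit for~\(A\) when~\(A\) is non-unital. This is the technical heart of the regular-representation construction for Fell bundles over inverse semigroups, but the bookkeeping here is no worse than in the standard case, and no positivity issues enter because the hypotheses of Theorem~\ref{the:iterated_crossed_2} already guarantee that the inner products on the \(\Sect(L_t,\Banb)\) are positive in \(\Cst(G,\Banb|_G)\).
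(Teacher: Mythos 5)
Your proposal is correct and follows the same route as the paper: identify the map, via Theorem~\ref{the:iterated_crossed_2}, with the canonical embedding of the unit fibre \(\Cst(G,\Banb)=\Cst(L_1,\Banb)\) into the section \(\Cst\)\nb-algebra of the Fell bundle \(\Cst(L_t,\Banb)_{t\in S}\) over~\(S\), and then use that the unit fibre of a Fell bundle over an inverse semigroup always embeds into its section \(\Cst\)\nb-algebra. The paper simply cites \cite{Exel:noncomm.cartan}*{Corollary~8.10} for this last fact, whereas you sketch its regular-representation proof; that is an expansion of the same step, not a different argument.
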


\begin{proof}
  The unit fibre of the Fell bundle in
  Theorem~\ref{the:iterated_crossed_2} is \(\Cst(G,\Banb)\) and the
  section \(\Cst\)\nb-algebra is \(\Cst(L,\Banb)\).  The unit fibre
  always embeds into the section \(\Cst\)\nb-algebras of a Fell bundle
  over an inverse semigroup, see
  \cite{Exel:noncomm.cartan}*{Corollary~8.10}.
\end{proof}

Next, we note a useful variant of Theorem~\ref{the:iterated_crossed_1} for
group-valued cocycles.

Let~\(L\) be a locally quasi-compact, locally Hausdorff groupoid,
let~\(S\) be a group, and let \(c\colon L\to S\) be a
\(1\)\nb-cocycle.  Let \(L_t\defeq c^{-1}(t)\subseteq L\) for \(t\in
S\), and let \(G=L_1= c^{-1}(1)\).  Since we do not assume anything
about~\(c\), this need \emph{not} be an \(S\)\nb-grading (compare
Theorem~\ref{the:group_action_extension}).  Nevertheless, we may
complete~\(\Sect(L_t)\) to a Hilbert bimodule over~\(\Cst(G)\) and
thus get a Fell bundle over~\(S\).  The difference to the situation
above is that this Fell bundle need not be saturated any more.

\begin{theorem}
  \label{the:group_cocycle_Fell}
  The section \cstar{}algebra of the Fell bundle over~\(S\) with unit
  fibre~\(\Cst(G)\) just described is isomorphic to~\(\Cst(L)\).
  Hence the canonical map \(\Cst(G)\to\Cst(L)\) is faithful.
\end{theorem}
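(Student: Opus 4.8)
The plan is to run the proof of Theorem~\ref{the:iterated_crossed_1} with the single change that saturation is dropped, using that a group is too rigid for this to matter at the level of section \(\Cst\)\nb-algebras.

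\emph{Geometry and the Fell bundle.} As in the previous discussion, \(c\) is a continuous homomorphism into the discrete group~\(S\), so the subsets \(L_t=c^{-1}(t)\) are clopen, pairwise disjoint and cover~\(L^1\), with \(L_t\cdot L_u\subseteq L_{tu}\), \(L_t^{-1}=L_{t^{-1}}\) and \(L_1=G\). Each~\(L_t\) is moreover a partial equivalence from~\(G\) to itself for the left and right multiplication actions of \(G=L_1\): conditions \ref{enum:Peq1}, \ref{enum:Peq2} and~\ref{enum:Peq4} are inherited from~\(L\), and the map in~\ref{enum:Peq3}, \((g,x)\mapsto(x,g\cdot x)\), has the continuous inverse \((x,y)\mapsto(yx^{-1},x)\), where \(yx^{-1}\in L_1=G\) because \(c(yx^{-1})=tt^{-1}=1\). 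The Haar system on~\(L\) restricts to one on the clopen subgroupoid~\(G\), so \(\Cst(G)\) is defined and the construction of Section~\ref{sec:construct_Fell} applies verbatim: Lemma~\ref{lem:positivity_pre-Fell}, whose proof only uses that \(L_t\) is a partial equivalence, gives positivity and fullness of the \(\Sect(G)\)-valued inner products on~\(\Sect(L_t)\), so \(\Sect(L_t)\) completes to a Hilbert \(\Cst(G)\)-bimodule~\(\Cst(L_t)\) and convolution extends to bimodule maps \(\mu_{t,u}\colon\Cst(L_t)\otimes_{\Cst(G)}\Cst(L_u)\to\Cst(L_{tu})\). These are injective but, lacking \(L_tL_u=L_{tu}\), not surjective in general, so the Fell bundle \((\Cst(L_t))_{t\in S}\) over~\(S\) need not be saturated.

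\emph{Identification with~\(\Cst(L)\).} Since the~\(L_t\) are clopen and a quasi-compact subset of~\(L\) meets only finitely many of them, restriction gives an algebraic decomposition \(\Sect(L)=\bigoplus_{t\in S}\Sect(L_t)\) under which the convolution and involution of~\(\Sect(L)\) become the cross-sectional \(*\)\nb-algebra operations \((f*g)_r=\sum_{tu=r}\mu_{t,u}(f_t,g_u)\), \((f^{*})_r=(f_{r^{-1}})^{*}\) of the Fell bundle \((\Cst(L_t))_{t\in S}\). Because~\(S\) is a group, its order is trivial, so the inclusion maps \(j_{u,t}\) of Theorem~\ref{the:S_act_Cstar_Fell_bundle} are all identities, and the section \(\Cst\)\nb-algebra of this Fell bundle in the sense of~\cite{Exel:noncomm.cartan} is simply the enveloping \(\Cst\)\nb-algebra of the \(*\)\nb-algebra~\(\Sect(L)\). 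That this enveloping algebra is \(\Cst(L)\) amounts to the coincidence of two \(\Cst\)\nb-norms on~\(\Sect(L)\). One inequality is elementary: an \(I\)\nb-norm-bounded \(*\)\nb-representation \(\pi\) of~\(\Sect(L)\) restricts on \(\Sect(G)=\Sect(L_1)\) to a representation of~\(\Cst(G)\), and then \(\norm{\pi(f)\xi}^2=\langle\pi(f^{*}*f)\xi,\xi\rangle\le\norm{f^{*}*f}_{\Cst(G)}\norm{\xi}^2\) for \(f\in\Sect(L_t)\) forces the restriction \(\pi|_{\Sect(L_t)}\) to be bounded for the Hilbert-bimodule norm, hence to extend to~\(\Cst(L_t)\); so~\(\pi\) assembles to a Fell bundle representation. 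The reverse inequality -- that a Fell bundle representation integrates to a representation of~\(L\) -- is exactly the technical point about~\(\Sect(L)\) proved in Appendix~\ref{sec:Banach_fields} for Theorem~\ref{the:iterated_crossed_1}, and it goes through unchanged since saturation was never used there. Granting this, the identity of~\(\Sect(L)\) extends to \(\Cst(S,\Cst(L_t)_{t\in S})\cong\Cst(L)\), and the canonical map \(\Cst(G)=\Cst(L_1)\to\Cst(L)\) becomes the embedding of the unit fibre of a Fell bundle over~\(S\) into its section \(\Cst\)\nb-algebra, which is faithful by \cite{Exel:noncomm.cartan}*{Corollary~8.10} (compare Corollary~\ref{cor:CstarG_embeds}).

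\emph{Main obstacle.} The only substantial step is the reverse norm inequality in the non-Hausdorff setting -- equivalently, that \(\Cst(L)\) is the completion of \(\Sect(L)\) for the largest \(\Cst\)\nb-norm coming from Fell bundle representations, with the convolution, involution and clopen decomposition of~\(\Sect(L)\) behaving correctly when its elements are finite sums of compactly supported functions on Hausdorff open pieces rather than genuine continuous functions. This is precisely what Appendix~\ref{sec:Banach_fields} is for; and since the clopen pieces here are disjoint rather than overlapping as in a general \(S\)\nb-grading, the situation is if anything simpler than that of Theorem~\ref{the:iterated_crossed_1}, so no new difficulty arises.
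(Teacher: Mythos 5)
Your proposal is correct and takes essentially the same route as the paper, whose proof simply asserts that the arguments for Theorem~\ref{the:iterated_crossed_1} and Corollary~\ref{cor:CstarG_embeds} still work for non-saturated Fell bundles; you verify this in detail and rightly observe that the disjointness of the clopen pieces \(L_t\) makes the decomposition \(\Sect(L)\cong\bigoplus_{t\in S}\Sect(L_t)\) exact, so the gluing step from Appendix~\ref{sec:Banach_fields} becomes trivial here. The paper also offers an alternative route -- replacing the non-saturated bundle over the group by a saturated one over an associated inverse semigroup, as for partial actions -- but your direct argument does not need it.
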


\begin{proof}
  The proofs of Theorem~\ref{the:iterated_crossed_1} and
  Corollary~\ref{cor:CstarG_embeds} still work for non-saturated Fell
  bundles (even over inverse semigroups).  Alternatively, we may
  replace our non-saturated Fell bundle over~\(G\) by a saturated Fell
  bundle over an inverse semigroup associated to~\(G\), just as for
  partial actions
  (see~\cite{Exel:Partial_actions}).  This
  does not change the section \(\Cst\)\nb-algebra, and afterwards
  Theorem~\ref{the:iterated_crossed_1} applies literally.
\end{proof}

\subsection{Another construction of the Fell bundle}
\label{sec:other_Fell_bundle_construction}

The construction of the Fell bundle over~\(S\) in
Section~\ref{sec:construct_Fell} used the transformation groupoid.
Now we construct this Fell bundle using the abstract functorial
properties of actions on groupoids and their corresponding actions on
\cstar{}algebras.  Actually, some aspects of this have been used to
prove Lemma~\ref{lem:positivity_pre-Fell} above.

It is well-known that two equivalent groupoids have Morita--Rieffel
equivalent \(\Cst\)\nb-algebras
(see~\cite{Muhly-Renault-Williams:Equivalence}), even in the
non-Hausdorff case (see~\cite{Renault:Representations}).  The proof is
constructive: given an equivalence~\(X\) from~\(H\) to~\(G\), the
space~\(\Sect(X)\) is completed to a
\(\Cst(G)\)-\(\Cst(H)\)-imprimitivity bimodule, using certain natural
formulas for a \(\Sect(G)\)-\(\Sect(H)\)-bimodule structure and
\(\Sect(G)\)- and \(\Sect(H)\)-valued inner products.  An important
ingredient here is that the Haar measures on \(G\) and~\(H\) give
canonical families of measures on the fibres of the range and source
maps of~\(X\), which may be used to integrate functions on~\(X\).

Even if~\(X\) is only a \emph{partial} equivalence, the same formulas
still work and give a Hilbert bimodule~\(\Cst(X)\) from~\(\Cst(H)\)
to~\(\Cst(G)\) by completing~\(\Sect(X)\).  If \(f\colon X\to X'\) is
an isomorphism between two partial equivalences, then \(f_*\colon
\Sect(X)\to\Sect(X')\) defined by \(f_*(h) = h\circ f^{-1}\) is an
isomorphism that preserves all structure, so it extends to an
isomorphism \(\Cst(X)\congto \Cst(X')\).

\begin{theorem}
  \label{the:functor_gpd_Cstar}
  The maps \(G\mapsto\Cst(G)\) from groupoids to \(\Cst\)\nb-algebras,
  \(X\mapsto \Cst(X)\) from partial equivalences to Hilbert bimodules,
  and \(f\mapsto f_*\) from bibundle isomorphisms to Hilbert bimodule
  isomorphisms are part of a functor from the bicategory of partial
  groupoid equivalences to the bicategory of \(\Cst\)\nb-algebras and
  Hilbert bimodules.
\end{theorem}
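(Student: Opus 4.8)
The plan is to verify the axioms of a morphism of bicategories (a pseudofunctor, or lax/weak functor in the sense of \cite{Leinster:Basic_Bicategories}) between~\(\Peq\) of Theorem~\ref{the:bicategory_peq} and the Hilbert bimodule bicategory of Proposition~\ref{pro:Hilbert_bimdule_bicategory}. On objects we send a groupoid~\(G\) to~\(\Cst(G)\); on arrows a partial equivalence~\(X\) from~\(H\) to~\(G\) to the Hilbert \(\Cst(G),\Cst(H)\)-bimodule~\(\Cst(X)\) obtained by completing~\(\Sect(X)\) as recalled in the paragraph preceding the statement; and on \(2\)-arrows a bibundle isomorphism~\(f\colon X\to X'\) to the induced map \(f_*\colon \Sect(X)\to\Sect(X')\), \(f_*(h)=h\circ f^{-1}\), which preserves the bimodule structure and both \(\Sect\)-valued inner products and hence extends to a Hilbert bimodule isomorphism \(\Cst(X)\congto\Cst(X')\). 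First I would check that, for fixed \(G,H\), the assignment \(X\mapsto\Cst(X)\), \(f\mapsto f_*\) is a functor \(\Cat(G,H)\to\Cat(\Cst(H),\Cst(G))\): this amounts to \((g\circ f)_*=g_*\circ f_*\) and \((\Id_X)_*=\Id_{\Cst(X)}\), which are immediate from the formula \(f_*(h)=h\circ f^{-1}\) at the level of~\(\Sect\), and to the observation that a bibundle isomorphism really does preserve all the structure used to build~\(\Cst(X)\) (the two anchor maps, the bimodule actions, and the canonical families of measures on the fibres of the anchor maps, which depend only on the Haar systems on \(G\) and~\(H\)).

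Next I would produce the composition comparison \(2\)-arrows. For partial equivalences \(X\) from~\(H\) to~\(G\) and \(Y\) from~\(K\) to~\(H\), the standard formulas defining the \(\Sect(H)\)-bimodule structure on \(\Sect(X)\) and \(\Sect(Y)\) give a densely defined pairing \(\Sect(X)\times\Sect(Y)\to\Sect(X\times_H Y)\) (integrate over the \(H\)-fibre using the Haar measure on~\(H\); this is exactly the internal tensor product formula) that is \(\Sect(H)\)-balanced and isometric for the \(\Sect(K)\)-valued inner products, hence extends to a Hilbert bimodule map \(\Cst(X)\otimes_{\Cst(H)}\Cst(Y)\to\Cst(X\times_H Y)\). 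Surjectivity — hence that this is an isomorphism — follows because \(\Cst(X)\) is full as a Hilbert bimodule over \(\Cst(H_{\s(X)})\), so the range already contains a dense subspace; alternatively invoke Proposition~\ref{pro:Hilbert_bimodule_map}. The unit comparison isomorphism \(\Cst(G)\congto\Cst(G^1)\) is the obvious one. One then checks naturality of these comparison isomorphisms in~\(X\) and~\(Y\) (with respect to bibundle isomorphisms), which is routine on the dense subspaces.

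Finally I would verify the two coherence axioms: compatibility of the composition comparison with the associators on both sides (the ``hexagon''/associativity pentagon for a morphism of bicategories, relating \((X\times_H Y)\times_K Z\) to \(X\times_H(Y\times_K Z)\) via the canonical bibundle isomorphism of Lemma~\ref{lem:peq_composition_associative_unital} on one side and the associativity of \(\otimes\) on the other), and compatibility with the left and right unit transformations (that \(\Cst(G^1)\otimes_{\Cst(G)}\Cst(X)\to\Cst(G^1\times_G X)\to\Cst(X)\) is the canonical isomorphism, and symmetrically on the right). Both reduce, on the dense subspaces \(\Sect(-)\), to the elementary Fubini-type identity that iterated integration over \(H\)- and \(K\)-fibres may be performed in either order, together with left invariance of the Haar systems; these are precisely the computations already carried out in \cite{Renault:Representations} and \cite{Muhly-Williams:Renaults_equivalence} in the global case, and they go through verbatim for partial equivalences since restricting to invariant open subsets (Lemma~\ref{lem:partial_equivalence}) changes nothing. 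The main obstacle is bookkeeping rather than mathematics: keeping the associators and unitors straight and confirming that every ``obvious'' map on \(\Sect\)-level is bounded for the relevant \(\Cst\)-norm and so extends to the completion — a point where one must lean on the positivity and fullness statements underlying Lemma~\ref{lem:positivity_pre-Fell} and on \cite{Meyer-Zhu:Groupoids}. I would therefore present the verification at the level of the dense \(\Star\)-algebras and inner-product modules, and remark that continuity/boundedness of each map has already been established in the cited references, leaving the diagram chases to the reader.
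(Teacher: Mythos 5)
Your proposal is correct and its skeleton coincides with the paper's: the same assignment on objects, arrows and \(2\)\nb-arrows, the same convolution pairing \(\Sect(X)\odot\Sect(Y)\to\Sect(X\times_H Y)\) (the paper writes it out as \eqref{eq:FormulaForProductGeneral}), extended to the comparison isomorphisms \(\Cst(X)\otimes_{\Cst(H)}\Cst(Y)\cong\Cst(X\times_H Y)\), with strict compatibility for units. Where you genuinely diverge is in how the analytic verifications and the associativity coherence are carried out. You propose direct Fubini-type computations on the dense subspaces, reducing the partial case to the global one via Lemma~\ref{lem:partial_equivalence} and leaning on \cite{Renault:Representations} and \cite{Muhly-Williams:Renaults_equivalence}; this works, but in the non-Hausdorff setting elements of \(\Sect(\cdot)\) are not continuous functions, so the pointwise manipulations need the same partition-of-unity care as in Lemma~\ref{lem:multiply_in_Sect_L}. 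The paper instead packages everything into a linking groupoid: for two (or three) composable partial equivalences it forms one groupoid whose arrow space contains \(G^1\), \(H^1\), \(K^1\), \(X\), \(Y\), their duals and the composites, observes that it inherits a canonical Haar system, and reads off both the convolution maps and their associativity coherence from the associativity of convolution in that single groupoid. The linking-groupoid device buys a uniform treatment of positivity, isometry and coherence without redoing integral identities by hand, while your route is more elementary and self-contained but shifts the burden onto the Fubini/invariance bookkeeping you acknowledge; your alternative surjectivity argument via fullness and Proposition~\ref{pro:Hilbert_bimodule_map} is also fine, provided you note that the range ideals of \(\Cst(X)\otimes_{\Cst(H)}\Cst(Y)\) and \(\Cst(X\times_H Y)\) agree, which again comes from the (partial) equivalence theorem underlying Lemma~\ref{lem:positivity_pre-Fell}.
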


\begin{proof}
  The above map is strictly compatible with unit arrows: the unit
  arrow~\(G^1\) on~\(G\) is sent to \(\Cst(G^1)=\Cst(G)\), and the
  unit transformations in both bicategories are also preserved.  To
  complete the above data to a functor of bicategories, it remains to
  give natural isomorphisms \(\Cst(X)\otimes_{\Cst(H)} \Cst(Y)\cong
  \Cst(X\times_H Y)\) and check that they satisfy the expected
  associativity condition for three composable partial equivalences.
  They are constructed by writing down the ``convolution map''
  \(\Sect(X)\odot \Sect(Y)\to \Sect(X\times_H Y)\) given by the formula
  \begin{equation}
    \label{eq:FormulaForProductGeneral}
    (\xi\cdot \eta)(x,y)\defeq
    \int_{H^1}\xi(x\cdot h)\eta(h^{-1}\cdot y) \,\dd{\lambda}^u(h),
  \end{equation}
  for all \(\xi\in \Sect(X)\), \(\eta\in \Sect(Y)\) and \((x,y)\in
  X\times_H Y\), where \(u=\s(x)=\rg(y)\).  It is routine to check
  that the map~\eqref{eq:FormulaForProductGeneral} has dense range and
  is a bimodule map and an isometry for both inner products; thus it
  extends to an isomorphism between the completions:
  \(\Cst(X)\otimes_{\Cst(H)} \Cst(Y)\cong \Cst(X\times_H Y)\).

  One way to construct the convolution maps and check their properties
  is like our construction above using the transformation groupoid:
  build an appropriate linking groupoid containing all the data.  For
  two composable equivalences \(Y\) and~\(X\) from~\(K\) to~\(H\)
  and from~\(H\) to~\(H\), this linking groupoid has object space
  \(G^0\sqcup H^0\sqcup K^0\); its arrow space is a disjoint union of
  \(G^1\), \(H^1\), \(K^1\), \(X\), \(Y\), \(X^*\), \(Y^*\),
  \(X\times_H Y\), and \(Y^*\times_H X^*\), the source and range maps
  are the obvious ones, and the multiplication is defined using the
  left and right actions of \(G\), \(H\) and~\(K\) and canonical maps.
  This is indeed a topological groupoid, and it inherits a canonical
  Haar system if \(G\), \(H\) and~\(K\) have Haar systems.  The
  convolution map is the restriction of the convolution in this larger
  groupoid to \(X\times_H Y\).  Given three composable partial
  equivalences, there is a similar linking groupoid combining all the
  relevant data, and the associativity of its convolution product on
  \(X\times_H Y\times_K Z\) gives the associativity coherence of the
  isomorphisms \(\Cst(X)\otimes_{\Cst(H)} \Cst(Y)\cong \Cst(X\times_H
  Y)\).
\end{proof}

\begin{remark}
  The above theorem is extended in the thesis of Rohit
  Holkar~\cite{Holkar:Thesis}, where a similar functor from a
  bicategory of groupoid correspondences to the bicategory of
  \(\Cst\)\nb-correspondences is constructed.  This construction is
  more difficult because the family of measures needed to write down
  the right inner product is no longer canonical and becomes part of
  the data.  Hence the behaviour of the measures under composition has
  to be studied as well.
\end{remark}

An inverse semigroup action by partial equivalences may be defined as
a functor (of bicategories) from the inverse semigroup to the
bicategory of groupoids and partial equivalences.  Composing it with
the functor in the theorem gives a functor from the inverse semigroup
to the bicategory of Hilbert bimodules, which is the same thing as an
action by Hilbert bimodules.  This is the same as a saturated Fell
bundle over the inverse semigroup by
Theorem~\ref{the:S_act_Cstar_Fell_bundle}.  This is the second
construction of the Fell bundle over~\(S\).  It gives an isomorphic
Fell bundle because the Haar measure on~\(L\) used above is the same
as the combination of the measure families on the partial
equivalences~\(L_t\) that are used to define the convolution maps in
Theorem~\ref{the:functor_gpd_Cstar}.

More concretely, an action \((X_t,\mu_{t,u})\) of~\(S\) on~\(G\)
yields the action on~\(\Cst(G)\) given by the Hilbert
bimodules~\(\Cst(X_t)\) with the multiplication maps
\[
\Cst(X_t)\otimes_A \Cst(X_u) \congto
\Cst(X_t\times_G X_u) \xrightarrow[\cong]{\Cst(\mu_{t,u})}
\Cst(X_{tu}),
\]
which involve the convolution isomorphisms \(\Cst(X_t)\otimes_A
\Cst(X_u) \congto \Cst(X_t\times_G X_u)\).  This is associative by the
associativity coherence of these convolution isomorphisms.

\section{Actions of inverse semigroups and groupoids}
\label{sec:isg_to_groupoid}

Let~\(H\) be an étale groupoid with locally compact Hausdorff object
space.  So far, we have constructed actions of the inverse
semigroup~\(\Bis(H)\) on certain \(\Cst\)\nb-algebras.  Instead, we
would like to construct actions of~\(H\) itself.  In this section, we
are going to see that both kinds of actions are very closely related.
Here an action of~\(\Bis(H)\) is as above: an action by Hilbert
bimodules or, equivalently, a saturated Fell bundle over~\(\Bis(H)\).
The corresponding ``actions'' of~\(H\) are saturated Fell bundles
over~\(H\).

First we explain how to turn a Fell bundle over~\(H\) into one
over~\(\Bis(H)\).  So let \(\Banb = (B_h)_{h\in H}\) be a Fell bundle
over~\(H\) (see \cites{Kumjian:Fell_bundles,
  BussExel:Fell.Bundle.and.Twisted.Groupoids}).  Let \(A\defeq
\Cont_0(H^0,\Banb)\) be the \(\Cst\)\nb-algebra of
\(\Cont_0\)\nb-sections of~\(\Banb\) over~\(H^0\); this is a
\(\Cont_0(H^0)\)-\(\Cst\)\nb-algebra by construction.  If
\(t\in\Bis(H)\), then the Fell bundle operations turn \(\Hilm_t\defeq
\Cont_0(t,\Banb)\) into a Hilbert
\(\Cont_0(\rg(t),\Banb)\)-\(\Cont_0(\s(t),\Banb)\)-bimodule.  The
multiplication in the Fell bundle induces multiplication maps
\(\mu_{t,u}\colon \Hilm_t\otimes_A \Hilm_u \to \Hilm_{tu}\).  This
gives an action of~\(\Bis(H)\) on~\(A\) by Hilbert bimodules.

Not every action of~\(\Bis(H)\) by Hilbert bimodules is of this form.
The obstruction lies in how idempotents in~\(\Bis(H)\) act.
Idempotents in~\(\Bis(H)\) are the same as open subsets of~\(H^0\).
We identify the idempotent semilattice~\(E(\Bis(H))\) with the
complete lattice~\(\Open(H^0)\) of open subsets of~\(H^0\).  So the
action of idempotents in~\(\Bis(H)\) becomes a map from~\(\Open(H^0)\)
to the complete lattice~\(\Ideal(A)\) of ideals in~\(A\).

\begin{theorem}
  \label{the:action_Bis_groupoid}
  An action \((\Hilm_t,\mu_{t,u})_{t\in\Bis(H)}\) of the inverse
  semigroup~\(\Bis(H)\) on a \(\Cst\)\nb-algebra~\(A\) by Hilbert
  \(A\)\nb-bimodules comes from a Fell bundle over~\(H\) if and only
  if the map from \(E(\Bis(H))\cong \Open(H^0)\) to~\(\Ideal(A)\)
  commutes with suprema.  This Fell bundle over~\(H\) is unique up to
  isomorphism, and the Fell bundles over \(\Bis(H)\) and~\(H\) have
  the same section \(\Cst\)\nb-algebras.
\end{theorem}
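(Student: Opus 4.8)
The plan is to analyze the action of the idempotent semilattice $E(\Bis(H)) \cong \Open(H^0)$ on $A$ and show that the ``commutes with suprema'' condition is exactly what is needed to glue the Hilbert bimodules $\Hilm_t$ into a Fell bundle over the groupoid $H$. First I would recall that an action of the semilattice $\Open(H^0)$ on $A$ by ideals, i.e.\ a map $U \mapsto A_U \in \Ideal(A)$ with $A_{H^0} = A$ and $A_{U\cap V} = A_U \cap A_V$, together with the requirement that it commute with arbitrary suprema, is the same thing as a $\Cont_0(H^0)$-$\Cst$-algebra structure on $A$: the map sends the open set $U$ to $\Cont_0(U)\cdot A$, and suprema of open sets correspond to suprema of the associated ideals precisely because a net of functions supported in $U_i$ with union $U$ generates $\Cont_0(U)$. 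So the first key step is: \emph{the supremum condition is equivalent to $A$ being a $\Cont_0(H^0)$-algebra in such a way that the action of idempotents is by the corresponding ideals.} Assuming this, $A = \Cont_0(H^0, \Banb^0)$ for an upper semicontinuous $\Cst$-bundle $\Banb^0$ over $H^0$, by the standard structure theory of $\Cont_0(X)$-algebras.

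Next I would build the Fell bundle $\Banb$ over $H$ from the bimodules $\Hilm_t$. For a point $h \in H^1$, pick a bisection $t \in \Bis(H)$ with $h \in t$; since $H$ is étale such bisections exist and form a basis for the topology. The fibre should be $B_h \defeq \Hilm_t / (\text{elements vanishing at }h)$, where ``vanishing at $h$'' means lying in $\Hilm_t \cdot \s(\mathcal{I}_h)$ for the ideal $\mathcal{I}_h \triangleleft \Cont_0(\s(t))$ of functions vanishing at $\s(h)$; equivalently $B_h$ is the fibre of the Hilbert module $\Hilm_t$ over $\s(h)$ regarded as a Hilbert $\Cont_0(\s(t))$-module. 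I would then check this is independent of the choice of $t$: if $h \in t \cap u$, then by Proposition~\ref{pro:Hilbert_bimodule_map} and the inclusion maps $j_{u,t}$ from Theorem~\ref{the:S_act_Cstar_Fell_bundle} applied to $t \cap u \le t, u$ (using $E(\Bis(H)) \cong \Open(H^0)$), we get canonical isomorphisms of the fibres over $\s(h)$, compatible with further refinement because $j_{v,u}\circ j_{u,t} = j_{v,t}$. This produces a well-defined total space $\Banb = \bigsqcup_{h} B_h$; its topology is the one making each $\Hilm_t \to \Banb|_t$ a homeomorphism onto an open set (one checks overlaps agree by the same $j_{u,t}$ argument, exactly as in Lemma~\ref{lem:transformation_gp_open_equivalence}). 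The multiplication $B_h \times B_{h'} \to B_{hh'}$ comes from $\mu_{t,u}\colon \Hilm_t \otimes_A \Hilm_u \to \Hilm_{tu}$ by passing to fibres over $\s(h')$, and the involution from $J_t\colon \Hilm_t^* \to \Hilm_{t^*}$; associativity, the $\Cst$-identity $\|b^*b\| = \|b\|^2$, and positivity of $b^*b$ in $B_{\s(h)} \subseteq A$ are inherited fibrewise from the corresponding statements for the Hilbert bimodules $\Hilm_t$ (here positivity uses that $\mu_{t,t^*}(x\otimes x^*) = \BRAKET{x}{x}_A$ from Theorem~\ref{the:S_act_Cstar_Fell_bundle}, which is positive).

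Then I would verify the two constructions are mutually inverse: starting from a Fell bundle over $H$, forming the action of $\Bis(H)$ via $\Hilm_t = \Cont_0(t, \Banb)$, and running the above recovers $\Banb$ because $\Cont_0(t,\Banb)$ has fibre $B_h$ over $\s(h) = h$; conversely, starting from an action satisfying the supremum condition, the recovered action has $\Hilm_t' = \Cont_0(t, \Banb) \cong \Hilm_t$ naturally, using that $\Hilm_t$ is a Hilbert module over the $\Cont_0$-algebras $\Cont_0(\rg(t),\Banb^0)$ and $\Cont_0(\s(t),\Banb^0)$ and hence equals its $\Cont_0$-sections. Uniqueness of the Fell bundle over $H$ up to isomorphism follows because its fibres are forced by the action. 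Finally, for the equality of section $\Cst$-algebras, I would invoke that both the inverse-semigroup Fell bundle over $\Bis(H)$ and the groupoid Fell bundle over $H$ arise from the same transformation-groupoid picture: $\Bis(H)$ is wide in $\Bis(H)$ trivially, so $H^0 \rtimes \Bis(H) \cong H$, and Theorem~\ref{the:iterated_crossed_2} (in the form used for Corollary~\ref{cor:compare_groupoid_isg}) identifies $\Cst(S, (\Cst(L_t,\Banb))_{t\in S})$ with $\Cst(L,\Banb) = \Cst(H,\Banb)$; since the $L_t$ here are exactly the bisections $t$ and the Hilbert bimodules $\Cst(L_t,\Banb) = \Cont_0(t,\Banb) = \Hilm_t$, the two section algebras coincide.

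\textbf{Main obstacle.} The hard part will be the first key step: establishing cleanly that ``commutes with suprema of arbitrary subsets'' is equivalent to $A$ being a $\Cont_0(H^0)$-algebra with the idempotents acting by the corresponding ideals, and then that this suffices to realize $A$ as $\Cont_0(H^0,\Banb^0)$ and to globalize the bimodules $\Hilm_t$ as continuous sections of a bundle $\Banb$ over all of $H^1$ — the topology on $\Banb$ and the continuity of multiplication and involution must be checked against the upper semicontinuous Banach bundle axioms, and one must be careful that the local pieces $\Hilm_t$ patch continuously, which is where the naturality of the $j_{u,t}$ and the compatibility of inner products across restrictions (Proposition~\ref{pro:Hilbert_bimodule_map}) are essential. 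Without the supremum condition, the failure is concrete: $\Open(H^0) \to \Ideal(A)$ need not preserve the join of an infinite family, so $A$ carries more ideals than $\Cont_0(H^0)$ can supply, and there is no bundle $\Banb^0$ over $H^0$ with $A = \Cont_0(H^0,\Banb^0)$ inducing the given idempotent action — so no Fell bundle over $H$ can exist.
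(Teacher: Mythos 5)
Your proposal is correct and follows essentially the same route as the paper's proof: the supremum condition is converted into an upper semicontinuous field \((A_x)_{x\in H^0}\) (the paper does this via \cite{Meyer-Nest:Bootstrap}*{Lemma~2.25} and \cite{Nilsen:Bundles}), the fibres \(\Hilm_h \cong \Hilm_t\otimes_A A_{\s(h)}\) are glued along the inclusion maps \(j_{u,t}\), multiplication and involution are obtained fibrewise from \(\mu_{t,u}\) and \(J_t\), and the claim about section \(\Cst\)\nb-algebras is reduced to Theorem~\ref{the:iterated_crossed_2}. Your ``main obstacle'' is exactly where the paper invokes those two citations, together with the relation \(\Hilm_t\otimes_A A(U)=A(t(U))\otimes_A\Hilm_t\), which is what makes each fibre \(\Hilm_h\) an \(A_{\rg(h)}\)-\(A_{\s(h)}\)-bimodule rather than merely a right Hilbert \(A_{\s(h)}\)-module.
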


\begin{proof}
  A map \(\Open(H^0)\to \Ideal(A)\) comes from a continuous map
  \(\Prim(A)\to H^0\) if and only if it commutes with finite infima
  and arbitrary suprema by \cite{Meyer-Nest:Bootstrap}*{Lemma~2.25};
  here we need~\(H^0\) to be a sober space, a very mild condition that
  certainly allows all locally Hausdorff spaces.  Compatibility with
  finite infima says that it is a morphism of semilattices, which we
  assume anyway; compatibility with suprema is an extra condition.  A
  continuous map \(\Prim(A)\to H^0\) is equivalent to an isomorphism
  between~\(A\) and the \(\Cst\)\nb-algebra of \(\Cont_0\)\nb-sections
  of an upper semicontinuous field~\((A_x)_{x\in H^0}\) of
  \(\Cst\)\nb-algebras over~\(H^0\) (see~\cite{Nilsen:Bundles}).  Thus
  the criterion in the theorem is necessary and sufficient for~\(A\)
  to come from such an upper semicontinuous field.  This gives a Fell bundle
  over \(H^0\subseteq H\).  It remains to extend this to all of~\(H\).

  Let \(t\in\Bis(H)\).  Then~\(\Hilm_t\) is a Hilbert
  \(A\)\nb-bimodule.  For \(h\in t\subseteq H^1\), we define
  \(\Hilm_{h,t} \defeq \Hilm_t \otimes_A A_{\s(h)}\); this is a
  Hilbert \(A_{\s(h)}\)-module.  If \(\xi\in\Hilm_t\), then
  \(\norm{\xi}^2 = \norm{\langle \xi,\xi\rangle}\), and for \(\langle
  \xi,\xi\rangle\in A\), the norm is the supremum of the norms of its
  images in~\(A_x\) for all \(x\in H^0\).  Therefore, the canonical
  map from~\(\Hilm_t\) to \(\prod_{h\in t} \Hilm_{h,t}\) is isometric.
  Thus we view~\(\Hilm_t\) as a space of sections of the bundle of
  Banach spaces~\(\Hilm_{h,t}\) over~\(t\).  This is an upper
  semicontinuous bundle on~\(t\) because~\((A_x)_{x\in H^0}\) is and
  the norm on~\(\Hilm_t\) is given by \(\norm{\xi}^2 = \norm{\langle
    \xi,\xi\rangle}\) with \(\langle \xi,\xi\rangle\in A\).

  If \(t,u\in \Bis(H)\) and \(h\in t\cap u\), then both
  \(\Hilm_{h,t}\) and~\(\Hilm_{h,u}\) are candidates for the
  fibre~\(\Hilm_h\) of our Fell bundle at~\(h\).  These are isomorphic
  through the canonical isomorphisms \(j_{t,t\cap u}\colon
  \Hilm_{t\cap u}\to \Hilm_t|_{\s(t\cap u)}\) and \(j_{u,t\cap u}
  \colon \Hilm_{t\cap u}\to \Hilm_u|_{\s(t\cap u)}\) from
  Theorem~\ref{the:S_act_Cstar_Fell_bundle}.

  For each \(h\in H^1\), choose some \(t_h\in\Bis(H)\) with \(h\in
  t_h\) and define \(\Hilm_h\defeq \Hilm_{h,t_h}\).  If \(t\in\Bis(H)\), then
  there are canonical isomorphisms \(\Hilm_h\cong \Hilm_{h,t}\) for all \(h\in
  t\).  We use them to transport the topology on the
  bundle~\((\Hilm_{h,t})_{h\in t}\) to the bundle~\((\Hilm_h)_{h\in t}\).
  These topologies are compatible on~\(t\cap u\) for all
  \(t,u\in\Bis(H)\).  Since the subsets \(t\in\Bis(H)\) form an open
  cover of~\(H^1\), there is a topology on the whole
  bundle~\((\Hilm_h)_{h\in H^1}\) that coincides with the topology
  on~\((\Hilm_h)_{h\in t}\) described above for each \(t\in\Bis(H)\).  In
  particular, the space of \(\Cont_0\)\nb-sections of~\((\Hilm_h)_{h\in
    H^1}\) on~\(t\) coincides naturally with~\(\Hilm_t\).

  Let \(A(U)\) for \(U\in\Open(G)\) be the ideal of
  \(\Cont_0\)\nb-sections of~\((A_x)\) vanishing outside~\(U\).  Then
  \(A(U)=\Hilm_U\) if we view \(U\in E(\Bis(G))\).  We have
  \begin{equation}
    \label{eq:compatible_HtAU}
    \Hilm_t\otimes_A A(U) = \Hilm_{t\cdot U} = \Hilm_{t(U)\cdot t}
    = A(t(U))\otimes_A \Hilm_t
  \end{equation}
  for all \(t\in\Bis(H)\), \(U\in\Open(H^0)\) with \(U\subseteq
  \s(t)\).  Here we view each \(t\in \Bis(H)\) as a partial
  homeomorphism \(\s(t)\to \rg(t)\) and write \(t(U)\) for the image
  of~\(U\) under this map.  This is exactly how \(\Bis(H)\) acts
  on~\(H^0\).  Equation~\eqref{eq:compatible_HtAU} implies that
  \(\Hilm_{h,t}\cong A_{\rg(h)}\otimes_A \Hilm_t\).  Thus~\(\Hilm_h\) is a
  Hilbert \(A_{\rg(h)}\)-\(A_{\s(h)}\)-bimodule.  The isomorphism
  \(\Hilm_t\otimes_A \Hilm_u \to \Hilm_{tu}\) is \(A\)\nb-linear and
  hence \(\Cont_0(H^0)\)-linear.  Thus it restricts to an isomorphism
  on the fibres, \(\Hilm_{g,t} \otimes_A \Hilm_{h,u} \to \Hilm_{gh,tu}\) for all
  \(g\in t\), \(h\in u\) with \(\s(g)=\rg(h)\).  The compatibility of
  the multiplication with the inclusion maps from
  Theorem~\ref{the:S_act_Cstar_Fell_bundle} shows that these maps on
  the fibres do not depend on the choice of \(t\) and~\(u\) with
  \(h\in t\) and \(h\in u\).  Thus we get well-defined isomorphisms
  \(\Hilm_g\otimes_{A_{\s(g)}} \Hilm_h\to \Hilm_{gh}\) for all \(g,h\in H^1\) with
  \(\s(g)=\rg(h)\).  Since they can be put together to maps
  \(\Hilm_t\otimes_A \Hilm_u\to \Hilm_{tu}\) for all \(t,u\in\Bis(H)\)
  and since~\(\Bis(H)\) covers~\(H^1\), they are locally continuous,
  hence continuous.  Similarly, the isomorphisms \(\Hilm_t^*\cong
  \Hilm_{t^*}\) must come from well-defined, continuous maps
  \(\Hilm_h^*\to \Hilm_{h^{-1}}\) for \(h\in H^1\) by restricting them to
  fibres.  The remaining algebraic conditions needed for a Fell bundle
  over the groupoid~\(H^1\) all follow easily because
  \((\Hilm_t,\mu_{t,u})\) gives a Fell bundle over~\(\Bis(H)\).

  If we turn the Fell bundle over~\(H\) constructed above into a Fell
  bundle over~\(\Bis(H)\) again, we clearly get back the original Fell
  bundle over~\(\Bis(H)\) because~\(\Hilm_t\) is the space of
  \(\Cont_0\)\nb-sections of~\((\Hilm_h)_{h\in t}\).  Conversely, if
  we start with a Fell bundle over~\(H\), turn it into a Fell bundle
  over~\(\Bis(H)\), and then use the above construction to go back, we
  get an isomorphic Fell bundle over~\(H\).  Hence we get a bijection
  between isomorphism classes of the two types of Fell bundles.
  Theorem~\ref{the:iterated_crossed_2} shows that the passage from
  Fell bundles over~\(H\) to Fell bundles over~\(\Bis(H)\) does not
  change the section \(\Cst\)\nb-algebras.
\end{proof}

We assumed~\(G^0\) to be Hausdorff and locally compact so far because
Fell bundles over groupoids have not yet been defined in greater
generality.  We suggest to use the necessary and sufficient criterion
in Theorem~\ref{the:action_Bis_groupoid} as a definition:

\begin{definition}
  \label{def:action_etale_sober}
  Let~\(G\) be an étale topological groupoid for which~\(G^0\) (and
  hence~\(G^1\)) is sober.  An action of~\(G\) on a
  \(\Cst\)\nb-algebra~\(A\) is an action of~\(\Bis(G)\) by Hilbert
  bimodules for which the resulting map \(\Open(G^0)\to \Ideal(A)\)
  commutes with arbitrary suprema.
\end{definition}

Sobriety of~\(G^0\) is needed to turn a map \(\Open(G^0)\to
\Ideal(A)\) that commutes with suprema into a continuous map
\(\Prim(A)\to G^0\) (see \cite{Meyer-Nest:Bootstrap}*{Lemma~2.25}).

Let~\(G\) be a sober space~\(G^0\) viewed as a groupoid.  Then an
action of~\(G\) is the same as a continuous map \(\Prim(A)\to G^0\).
In the notation of~\cite{Meyer-Nest:Bootstrap}, this turns~\(A\) into
a \(\Cst\)\nb-algebra over~\(G^0\).  It is unclear what the ``fibres''
of such a \(\Cst\)\nb-algebra over~\(G^0\) should be if~\(G^0\) is
badly non-Hausdorff.  Therefore, it is not clear how to describe
actions of étale sober groupoids in the sense of
Definition~\ref{def:action_etale_sober} as Fell bundles over~\(G\).
If~\(G^0\) is locally Hausdorff and locally quasi-compact, then
Definition~\ref{def:action_etale_sober} seems to work quite well; we
plan to discuss this in greater detail elsewhere.

The criterion in Theorem~\ref{the:action_Bis_groupoid} also suggests
how to define actions of étale groupoids on other groupoids:

\begin{definition}
  \label{def:sober_etale_groupoid_action_on_groupoid}
  Let~\(G\) be an étale topological groupoid for which~\(G^0\) (and
  hence~\(G^1\)) is sober, and let~\(H\) be an arbitrary topological
  groupoid.  An \emph{action} of~\(G\) on~\(H\) is an action
  of~\(\Bis(G)\) on~\(H\) by partial equivalences for which the map
  \(\Open(G^0)\to\Open(H^0/H)\) that describes the action of
  \(E(\Bis(G))\) commutes with arbitrary suprema.
\end{definition}

The extra assumption in
Definition~\ref{def:sober_etale_groupoid_action_on_groupoid} and
\cite{Meyer-Nest:Bootstrap}*{Lemma~2.25} ensure that the map
\(\Open(G^0)\to\Open(H^0/H)\) for an action of~\(G\) on~\(H\) comes
from a continuous map \(H^0/H\to G^0\) or, equivalently, an
\(H\)\nb-invariant continuous map \(H^0\to G^0\).

\begin{proposition}
  \label{pro:action_from_groupoid_to_Cstar}
  Let~\(H\) be a locally quasi-compact, locally Hausdorff groupoid
  with Hausdorff object space and with a Haar system.  An action
  of~\(G\) on~\(H\) induces an action of~\(G\) on~\(\Cst(H)\) as
  well.
\end{proposition}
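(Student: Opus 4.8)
The plan is to feed the \(\Bis(G)\)-action on~\(H\) through the functorial machinery of Section~\ref{sec:Fell_from_action} and then to verify the one extra condition imposed by Definition~\ref{def:action_etale_sober}. By Definition~\ref{def:sober_etale_groupoid_action_on_groupoid}, an action of~\(G\) on~\(H\) \emph{is} an action \((X_t,\mu_{t,u})_{t\in\Bis(G)}\) of the inverse semigroup~\(\Bis(G)\) on~\(H\) by partial equivalences for which the induced map \(\Open(G^0)\to\Open(H^0/H)\) commutes with arbitrary suprema. The hypotheses on~\(H\) are exactly the standing assumptions of Section~\ref{sec:Fell_from_action}, so \(\Cst(H)\) is defined; viewing the action as a functor from~\(\Bis(G)\) into the bicategory~\(\Peq\) and composing with the functor of Theorem~\ref{the:functor_gpd_Cstar} yields an action \((\Hilm_t,\mu_{t,u})_{t\in\Bis(G)}\) of~\(\Bis(G)\) on~\(\Cst(H)\) by Hilbert bimodules with \(\Hilm_t=\Cst(X_t)\). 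It remains to show that the associated map \(\Open(G^0)\to\Ideal(\Cst(H))\) commutes with arbitrary suprema; this is what turns the \(\Bis(G)\)-action into an action of~\(G\) in the sense of Definition~\ref{def:action_etale_sober}.

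First I would identify that map. For an idempotent \(e\in\Bis(G)\), Proposition~\ref{pro:idempotents} gives an open \(H\)\nb-invariant subset \(W_e\subseteq H^0\) with \(X_e\cong H^1_{W_e}\), and by construction the map \(\Open(G^0)\to\Open(H^0/H)\) sends~\(e\) to the image of~\(W_e\) in~\(H^0/H\). Applying \(\Cst(-)\), the idempotent Hilbert bimodule \(\Hilm_e=\Cst(H^1_{W_e})\) is, by Proposition~\ref{pro:idempotent_Hilbert_bimodules} and Lemma~\ref{lem:positivity_pre-Fell} (applied with \((G,S,L)\) there in the roles of \((H,\Bis(G),H\rtimes\Bis(G))\)), exactly the ideal \(\rg(\Hilm_e)=\Cst(H_{W_e})\triangleleft\Cst(H)\), namely the closure of \(\Sect(H_{W_e})\) inside~\(\Cst(H)\). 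Thus \(\Open(G^0)\to\Ideal(\Cst(H))\) factors as \(\Open(G^0)\to\Open(H^0/H)\to\Ideal(\Cst(H))\), where the second arrow, under the standard identification of \(\Open(H^0/H)\) with the lattice of open \(H\)\nb-invariant subsets of~\(H^0\), is \(W\mapsto\Cst(H_W)\). The first arrow preserves arbitrary suprema by hypothesis, so it is enough to prove that \(W\mapsto\Cst(H_W)\) does.

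To that end, let \((W_i)_{i\in I}\) be open \(H\)\nb-invariant subsets of~\(H^0\) with union~\(W\). Since each \(\Cst(H_{W_i})\) is contained in the closed ideal \(\Cst(H_W)\), we have \(\overline{\sum_i\Cst(H_{W_i})}\subseteq\Cst(H_W)\). For the reverse inclusion it suffices to show \(\Sect(H_W)\subseteq\sum_i\Sect(H_{W_i})\), as \(\Sect(H_W)\) is dense in \(\Cst(H_W)\). An element of \(\Sect(H_W)\) is a finite sum of functions \(f\in\Contc(O)\) with \(O\) a Hausdorff open subset of \(H_W^1=\bigcup_i H_{W_i}^1\); covering the compact set \(\supp f\) by finitely many of the open subsets \(O\cap H_{W_i}^1\) and using a partition of unity on the locally compact Hausdorff space~\(O\), I would write \(f\) as a finite sum of functions in \(\Contc(O\cap H_{W_i}^1)\subseteq\Sect(H_{W_i})\). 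Hence \(\Cst(H_W)=\overline{\sum_i\Cst(H_{W_i})}=\sup_i\Cst(H_{W_i})\), which finishes the argument. The heart of the proof is this supremum-compatibility step; within it, the only delicate points are the identification of \(\rg(\Hilm_e)\) with the genuine ideal \(\Cst(H_{W_e})\) in the possibly non-Hausdorff case (which is precisely what Lemma~\ref{lem:positivity_pre-Fell} provides) and the bookkeeping with \(\Sect\) of non-Hausdorff spaces in the partition-of-unity argument, neither of which presents a serious obstacle.
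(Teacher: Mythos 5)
Your proposal is correct and follows essentially the same route as the paper: both reduce the claim to the facts that the map \(\Open(G^0)\to\Ideal(\Cst(H))\) factors through \(\Open(H^0/H)\) and that \(W\mapsto\Cst(H_W)\) commutes with arbitrary suprema. The paper merely asserts this last compatibility, whereas you supply the partition-of-unity argument (essentially the first half of the proof of Proposition~\ref{pro:Banb_and_cover}); your use of the functorial construction of the \(\Bis(G)\)-action on \(\Cst(H)\) from Theorem~\ref{the:functor_gpd_Cstar} in place of the transformation-groupoid construction is immaterial, since the paper notes that the two yield isomorphic Fell bundles.
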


\begin{proof}
  In Section~\ref{sec:construct_Fell}, we turn an action
  of~\(\Bis(G)\) on~\(H\) into an action of~\(\Bis(G)\)
  on~\(\Cst(H)\).  For any open \(H\)\nb-invariant subset~\(U\)
  of~\(H^0\), the closure of~\(\Sect(H_U)\) in~\(\Cst(H)\) is an ideal
  \(\Cst(H_U)\) in~\(\Cst(H)\).  The map
  \(\Open(H^0/H)\to\Ideal(\Cst(H))\), \(U\mapsto \Cst(H_U)\), commutes
  with suprema.  Hence Theorem~\ref{the:action_Bis_groupoid} applies
  to the action of~\(\Bis(G)\) on~\(\Cst(H)\) if the action
  of~\(\Bis(G)\) satisfies the condition in
  Definition~\ref{def:sober_etale_groupoid_action_on_groupoid}.
\end{proof}

\subsection{The motivating example}
\label{sec:motivating_example}

Now we consider our motivating example: an action of a locally
Hausdorff, locally quasi-compact, étale groupoid~\(H\) on a locally
Hausdorff, locally quasi-compact space~\(Z\).  Let~\(\OCover\) be
a Hausdorff open covering of~\(Z\) and let~\(G_\OCover\) be the
associated covering groupoid, which is étale, locally compact and
Hausdorff.  Its \(\Cst\)\nb-algebra
\(\Cst(G_\OCover)\) is our noncommutative model for the non-Hausdorff
space~\(Z\).  We want to construct an ``action'' of~\(H\) on it that
models the given action of~\(H\) on~\(Z\).

To construct it, we use the inverse semigroup \(S\defeq\Bis(H)\) of
bisections of~\(H\).  First we turn the action of~\(H\) on~\(Z\) into
an action of~\(S\) on~\(Z\) by partial homeomorphisms in the usual
way: a bisection \(t\in S\) acts by the homeomorphism
\(\rg^{-1}(\s(t))\to \rg^{-1}(\rg(t))\), \(z\mapsto g_{\rg(z)}\cdot
z\), where~\(g_x\) is the unique arrow in~\(t\) with \(\s(g_x)=x\).

We have seen in Corollary~\ref{cor:action_on_covering_groupoid} that
the \(S\)\nb-action on~\(Z\) induces an \(S\)\nb-action
on~\(G_\OCover\) by partial equivalences.  The transformation
groupoid \(G_\OCover\rtimes S\) for this action is Hausdorff,
étale and locally compact.  It is equivalent to \(Z\rtimes S\) by
Corollary~\ref{cor:action_on_covering_groupoid}.

Let \(p\colon X\defeq \bigsqcup_{U\in\OCover} U \to Z\) be the
canonical map.  Then \(G_\OCover = p^*(Z)\).  An idempotent
\(U\in\Open(H^0)\) in~\(\Bis(H)\) acts on~\(G_\OCover\) by the
identity map on the open invariant subgroupoid \(G_\OCover|_{(\rg\circ
  p)^{-1}(U)}\).  That is, \(\Open(H^0)\) acts on~\(G_\OCover\)
through the map \(\Open(H^0)\to \Open(G_\OCover^0/G_\OCover)\),
\(U\mapsto (\rg\circ p)^{-1}(U)\); this commutes with suprema and
infima.  Thus our action of~\(\Bis(H)\) on~\(G_\OCover\) is also an
action of~\(H\) in the sense of
Definition~\ref{def:sober_etale_groupoid_action_on_groupoid}.

We may identify
\(Z\rtimes S\cong Z\rtimes H\) using the obvious \(S\)\nb-grading
on~\(Z\rtimes H\) and Theorem~\ref{the:graded_groupoid_versus_action},
so \(G_\OCover\rtimes S\) is equivalent to~\(Z\rtimes H\).

The \(S\)\nb-action on~\(G_\OCover\) induces a Fell bundle over~\(S\)
with unit fibre~\(\Cst(G_\OCover)\), which we view as an action
of~\(S\) on~\(\Cst(G_\OCover)\).  Theorem~\ref{the:iterated_crossed_1}
gives an isomorphism between its section \(\Cst\)\nb-algebra
\(\Cst(G_\OCover)\rtimes S\) and the groupoid \(\Cst\)\nb-algebra
\(\Cst(G_\OCover\rtimes S)\).  We may turn our Fell bundle
over~\(\Bis(H)\) into a Fell bundle over the groupoid~\(H\) by
Proposition~\ref{pro:action_from_groupoid_to_Cstar}.

Theorem~\ref{the:action_Bis_groupoid} also says that the section
\(\Cst\)\nb-algebra of the Fell bundle over~\(H\) is isomorphic to
\(\Cst(G_\OCover)\rtimes S \cong \Cst(G_\OCover\rtimes S)\).
The restriction to the unit fibre is~\(\Cst(G_\OCover)\) by
construction.  We are going to describe this Fell bundle over~\(H\).

We have \(G_\OCover\rtimes S\cong p^*(Z\rtimes H)\),
that is, the object space of~\(G_\OCover\rtimes S\) is~\(X\) and
the arrow space is homeomorphic to the space of triples
\((x_1,h,x_2)\), \(x_1,x_2\in X\), \(h\in H^1\) with
\(\rg(p(x_1))=\rg(h)\) and \(\rg(p(x_2))=\s(h)\) in~\(H^0\).  Here
\((x_1,h,x_2)\) is an arrow from~\(x_2\) to~\(x_1\), and the
multiplication is \((x_1,h_1,x_2)\cdot (x_2,h_2,x_3) =
(x_1,h_1h_2,x_3)\).  For \(h\in H^1\), let~\(K_h\) be the subspace of
triples \((x_1,h,x_2)\) for \(x_1,x_2\in X\), \(\rg(p(x_1))=\rg(h)\)
and \(\rg(p(x_2))=\s(h)\).  Since \(p\) and~\(H\) are étale, this is a
discrete
set.  The fibre at~\(h\) of our Fell bundle over~\(H\) is the
completion of the space~\(\Contc(K_h)\) of finitely supported
functions on~\(K_h\) to a Hilbert bimodule over
\(\Cst(K_{1_{\rg(h)}})\) and \(\Cst(K_{1_{\s(h)}})\).

\begin{proposition}
  \label{pro:Cstar_action_on_G1}
  Let~\(Z\) be a basic action of~\(H\) with Hausdorff quotient
  space~\(H\backslash Z\), for instance, \(Z=H^1\) with the action by
  left or right multiplication and quotient space~\(H^0\).  Then the
  groupoid \(G_\OCover\rtimes S\) is equivalent to \(H\backslash
  Z\) and \(\Cst(G_\OCover)\rtimes S\) is Morita equivalent to
  \(\Cont_0(H\backslash Z)\).
\end{proposition}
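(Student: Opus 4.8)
The plan is to deduce both statements from the chain of groupoid equivalences
\[
G_\OCover\rtimes S \sim Z\rtimes S \cong Z\rtimes H \sim H\backslash Z
\]
together with Theorem~\ref{the:iterated_crossed_1}. For the first link, I would recall that, by construction, the \(S\)\nb-action on \(G_\OCover\) is obtained by transporting the \(S\)\nb-action on \(Z\) by partial homeomorphisms (induced by the \(H\)\nb-action) along the equivalence \(G_\OCover\sim Z\); Corollary~\ref{cor:action_on_covering_groupoid} then gives \(G_\OCover\rtimes S\sim Z\rtimes S\). For the second link, Theorem~\ref{the:graded_groupoid_versus_action} applied to the obvious \(S\)\nb-grading of \(Z\rtimes H\) by the subsets \(Z\times_{H^0} t\), \(t\in S\), identifies \(Z\rtimes S\cong Z\rtimes H\). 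For the last link, since the \(H\)\nb-action on \(Z\) is basic and its orbit space \(H\backslash Z\) is Hausdorff, Proposition~\ref{pro:proper_action} shows that the action is free and proper, that \(Z\rtimes H\) is Hausdorff, and that it is equivalent to the space \(H\backslash Z\) viewed as a unit groupoid. Composing these, \(G_\OCover\rtimes S\sim H\backslash Z\). For the example \(Z=H^1\) with left (respectively right) multiplication, the action is basic with orbit space \(H^0\) via the source (respectively range) map, so that case is covered.

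Next I would observe that \(G_\OCover\rtimes S\) is étale with Hausdorff, locally compact object space \(\bigsqcup_{U\in\OCover}U\) by Lemma~\ref{lem:transformation_gp_groupoid}, hence itself locally compact; consequently its orbit space --- which is \(H\backslash Z\) by the previous paragraph, since a groupoid equivalent to a space has that space as its orbit space --- is a Hausdorff, open quotient of a locally compact Hausdorff space (the orbit projection is open by Proposition~\ref{pro:Top_open_orbit}), hence locally compact, so that \(\Cont_0(H\backslash Z)\) is a genuine \(\Cst\)\nb-algebra. To prove the Morita equivalence I would then invoke Theorem~\ref{the:iterated_crossed_1}, which identifies \(\Cst(G_\OCover)\rtimes S\) --- the section \(\Cst\)\nb-algebra of the Fell bundle over \(S\) attached to the \(S\)\nb-action on \(G_\OCover\) --- with the groupoid \(\Cst\)\nb-algebra \(\Cst(G_\OCover\rtimes S)\). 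Since \(G_\OCover\rtimes S\) is equivalent to the locally compact Hausdorff space \(H\backslash Z\), equivalent groupoids have Morita--Rieffel equivalent \(\Cst\)\nb-algebras --- even in the non-Hausdorff case (\cite{Muhly-Renault-Williams:Equivalence}, \cite{Renault:Representations}) --- and the groupoid \(\Cst\)\nb-algebra of the space-groupoid \(H\backslash Z\) is \(\Cont_0(H\backslash Z)\), so the desired Morita equivalence follows.

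The only non-formal ingredient is Proposition~\ref{pro:proper_action}: I expect the one place that needs care is checking that ``basic action plus Hausdorff orbit space'' yields a \emph{free and proper, Hausdorff} transformation groupoid \(Z\rtimes H\) that is equivalent to \(H\backslash Z\) (and, with it, the local compactness of \(H\backslash Z\)). Everything else --- Corollary~\ref{cor:action_on_covering_groupoid}, Theorem~\ref{the:graded_groupoid_versus_action}, Theorem~\ref{the:iterated_crossed_1}, and the invariance of groupoid \(\Cst\)\nb-algebras under equivalence --- is a direct assembly of results already proved, so no new estimates or constructions are required.
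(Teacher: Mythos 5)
Your proposal is correct and follows essentially the same route as the paper: the chain $G_\OCover\rtimes S\sim Z\rtimes S\cong Z\rtimes H\sim H\backslash Z$ via Corollary~\ref{cor:action_on_covering_groupoid}, Theorem~\ref{the:graded_groupoid_versus_action} and basicness of the action, followed by Proposition~\ref{pro:proper_action} to see the equivalence is of the classical free-and-proper type, and then Theorem~\ref{the:iterated_crossed_1} plus Morita invariance of groupoid $\Cst$\nb-algebras. The only difference is that you spell out some steps the paper leaves implicit.
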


\begin{proof}
  The groupoid \(G_\OCover\rtimes S\) is equivalent to~\(Z\rtimes
  S\).  This is the same as \(Z\rtimes H\) by
  Theorem~\ref{the:graded_groupoid_versus_action}, using the evident
  \(S\)\nb-grading on~\(Z\rtimes H\).  Since the \(H\)\nb-action
  on~\(Z\) is basic, \(Z\rtimes H\) is equivalent to~\(H\backslash
  Z\).  This space is assumed to be Hausdorff, and
  \(G_\OCover\rtimes S\) is also a groupoid with Hausdorff object
  space.  So the equivalence between them is of the usual type,
  involving free and proper actions, by
  Proposition~\ref{pro:proper_action}.  Hence it induces a
  Morita--Rieffel equivalence from \(\Cont_0(H\backslash Z)\) to
  \(\Cst(G_\OCover)\rtimes S\).
\end{proof}

In the situation of Proposition~\ref{pro:Cstar_action_on_G1},
\(G_\OCover\rtimes S\) has Hausdorff arrow space because it must
be isomorphic to the covering groupoid of the open surjection
\(G_\OCover^0\to (G_\OCover\rtimes S)\backslash
G_\OCover^0 \cong H\backslash Z\) between two Hausdorff spaces.
In this case, it is also easy to see that any Fell bundle over the
groupoid \(G_\OCover\rtimes S\) is a pull-back of a Fell bundle
over~\(H\backslash Z\), which is the same as a \(\Cont_0(H\backslash
Z)\)-\(\Cst\)\nb-algebra~\(\Banb\).  The section \(\Cst\)\nb-algebra
of the Fell bundle over \(G_\OCover\rtimes S\) is Morita--Rieffel
equivalent to this \(\Cont_0(H\backslash
Z)\)-\(\Cst\)\nb-algebra~\(\Banb\).  By
Theorem~\ref{the:iterated_crossed_2}, this is also the section
\(\Cst\)\nb-algebra of the Fell bundle over~\(S\) associated
to~\(\Banb\).

Many properties like properness, amenability, essential principality
are shared by an action of a groupoid on a space and its
transformation groupoid.  This suggests how to extend these notions to
inverse semigroup actions on groupoids.  We take this as a definition
for proper actions of inverse semigroups on locally compact groupoids:

\begin{definition}
  \label{def:proper_action_S}
  An action of an inverse semigroup~\(S\) on a topological
  groupoid~\(G\) is \emph{proper} if the groupoid \(G\rtimes S\) is
  proper, that is, the following map is proper (that is, stably closed):
  \[
  (s,r)\colon (G\rtimes S)^1 \to G^0\times G^0,\qquad
  g\mapsto (s(g),r(g)).
  \]
  The action is called \emph{free} if this map is injective.
\end{definition}

Let~\(L\) be a proper groupoid such that~\(L^0\) is a locally compact
Hausdorff space.  Then the image of~\(L^1\) in \(L^0\times L^0\) is
locally compact and Hausdorff because it is a closed subspace of a
locally compact Hausdorff space.  Since this subspace is closed and
the orbit space projection \(L^0\to L\backslash L^0\) is open, it also
follows that \(L\backslash L^0\) is locally compact Hausdorff (see
Proposition~\ref{pro:Top_open_orbit}).  The
groupoid~\(L\) itself need not be Hausdorff: the non-Hausdorff group
bundle in Section~\ref{sec:explicit_example} is proper in this sense
because it is quasi-compact and the image of~\((s,r)\) is closed.
If~\(L\) acts \emph{freely} and properly on a Hausdorff space~\(L^0\),
however, then~\(L^1\) must be Hausdorff.  In this case, we also get
information about any open subgroupoid, which leads to the following
proposition:

\begin{proposition}
  \label{pro:proper_action_orbits}
  Let~\(S\) act properly and freely on a locally Hausdorff, locally
  quasi-compact groupoid~\(G\).  Then~\(G\) is a basic groupoid, so
  that~\(G\) is equivalent to the locally Hausdorff, locally
  quasi-compact space \(G\backslash G^0\).
\end{proposition}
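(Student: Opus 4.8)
The plan is to reduce the statement about the inverse semigroup action to a statement about the transformation groupoid $L \defeq G\rtimes S$ and then transport properness and freeness from $L$ down to the open subgroupoid $G = L_1 \subseteq L$. By Definition~\ref{def:proper_action_S}, the hypothesis says exactly that $L$ is proper and that the map $(\s,\rg)\colon L^1\to G^0\times G^0$ is injective, i.e.\ $L$ acts freely and properly on its object space $G^0$. Since $G^0$ is locally quasi-compact and locally Hausdorff, the preliminary discussion before the proposition (and Proposition~\ref{pro:proper_action} in the appendix) gives that $L^1$ must be locally Hausdorff and locally quasi-compact and, crucially, that the image of $(\s,\rg)$ in $G^0\times G^0$ carries the subspace topology; that is, $L$ is a \emph{basic} groupoid. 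The first step, then, is to record this: a free proper action on a nice object space produces a basic groupoid.

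Next I would exploit that $G = L_1$ is an open subgroupoid of $L$, with $G^0 = L^0$. The map $(\s,\rg)\colon G^1\to G^0\times G^0$ is the restriction of $(\s,\rg)\colon L^1\to G^0\times G^0$ to the open subset $G^1\subseteq L^1$. Being a homeomorphism onto its image (with the subspace topology) is inherited by restriction to an open subset: if $L^1\to (\s,\rg)(L^1)$ is a homeomorphism, then so is $G^1\to (\s,\rg)(G^1)$, because open subsets and the subspace topology behave well under restriction. Hence the map
\[
G^1 = G\times_{\s,G^0,\rg} G^0 \to G^0\times G^0, \qquad g\mapsto (\rg(g),\s(g)),
\]
is a homeomorphism onto its image with the subspace topology from $G^0\times G^0$, which is precisely the definition of $G$ being a basic groupoid given in Section~\ref{sec:partial_equivalence_sub}. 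This is the core of the argument and I expect it to be essentially formal once the basicness of $L$ is in hand.

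Finally, to conclude that $G$ is equivalent to the space $G\backslash G^0$, I would invoke the general theory of basic groupoids from Appendix~\ref{sec:preliminaries}: a basic groupoid is equivalent, via its own object space with the canonical bibundle $G^0$, to its orbit space $G\backslash G^0$ viewed as a groupoid with only unit arrows (this replaces the classical ``free and proper $\Rightarrow$ equivalent to the quotient'' statement in the non-Hausdorff setting). It remains to check that $G\backslash G^0$ is locally Hausdorff and locally quasi-compact. Since $\pi\colon G^0\to G\backslash G^0$ is an open continuous surjection and $G^0$ is locally Hausdorff and locally quasi-compact, one passes these local properties to the quotient: local quasi-compactness transfers along open surjections, and local Hausdorffness of the quotient follows from basicness because the diagonal-type condition on $(\s,\rg)(G^1)$ means orbit equivalence classes can be separated locally. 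The main obstacle I anticipate is precisely this last point—verifying that the quotient $G\backslash G^0$ is locally Hausdorff rather than merely a quotient space—but the basicness of $G$, i.e.\ the fact that $(\s,\rg)(G^1)$ is a locally closed-type subspace of the locally Hausdorff space $G^0\times G^0$ with the subspace topology, is exactly the tool designed in the appendix to handle this, so I expect it to go through with a short argument rather than a long one.
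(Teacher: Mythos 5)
Your proposal is correct and follows essentially the same route as the paper: freeness and properness of the action say exactly that \((\s,\rg)\colon (G\rtimes S)^1\to G^0\times G^0\) is continuous, injective and closed, hence a homeomorphism onto its image, and restricting to the open subset \(G^1\subseteq (G\rtimes S)^1\) shows that \(G\) is basic and therefore equivalent to \(G\backslash G^0\). The only imprecision is in your last step: local Hausdorffness of \(G\backslash G^0\) is not a formal consequence of basicness alone (basicness gives the subspace topology on the image of \((\s,\rg)\), not its local closedness); the needed statement is exactly Proposition~\ref{pro:orbit_principal_lc_groupoid}, whose proof combines Propositions \ref{pro:locally_closed_compact} and~\ref{pro:locally_Hausdorff_quotient} using that \(G\) is locally Hausdorff and locally quasi-compact, and which you may simply cite.
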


\begin{proof}
  The map in Definition~\ref{def:proper_action_S} is a homeomorphism
  onto its image because it is continuous, injective, and closed.
  Hence its restriction to the open subspace \(G^1\subseteq (G\rtimes
  S)^1\) is still a homeomorphism onto its image.  This means
  that~\(G\) is a basic groupoid, so~\(G\) is equivalent to
  \(G^0/G\).  This is locally Hausdorff and locally quasi-compact by
  Proposition~\ref{pro:locally_Hausdorff_quotient}.
\end{proof}

Thus the free and proper actions of~\(S\) all come from actions on
locally Hausdorff spaces that are desingularised by replacing the
space by a Hausdorff groupoid~\(G\).

\subsection{Inverse semigroup models for étale groupoids}
\label{sec:ISGModels}

Let~\(G\) be an étale groupoid.  So far, we have described actions
of~\(G\) through actions of the inverse semigroup~\(\Bis(G)\).
Since~\(\Bis(G)\) is usually quite big, even uncountable, we now
replace it by smaller inverse semigroups.  The following definition
describes which inverse semigroups we allow as ``models'' for~\(G\):

\begin{definition}
  \label{def:isg_model}
  An \emph{inverse semigroup model} for an étale groupoid~\(G\)
  consists of an inverse semigroup~\(S\), an \(S\)\nb-action on the
  space~\(G^0\) by partial homeomorphisms, and an isomorphism
  \(G^0\rtimes S\cong G\) of étale groupoids that is the identity on
  objects.
\end{definition}

In particular, if \(S\subseteq \Bis(G)\) is a wide inverse
subsemigroup, then~\(S\) with its usual action on~\(G^0\) and the
canonical isomorphism \(G^0\rtimes S\cong G\) from
Corollary~\ref{cor:wide_action} is a model for~\(G\).

\begin{lemma}
  \label{lem:isg_model}
  An inverse semigroup model for~\(G\) is equivalent to an inverse
  semigroup~\(S\) with a homomorphism \(\varphi\colon S\to \Bis(G)\)
  that induces an isomorphism \(G^0\rtimes S\to G^0\rtimes\Bis(G)\cong
  G\), where we use the canonical action of~\(\Bis(G)\) on~\(G^0\)
  and~\(\varphi\) to let~\(S\) act on~\(G^0\).
\end{lemma}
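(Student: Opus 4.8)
The plan is to show that the two data — an inverse semigroup model in the sense of Definition~\ref{def:isg_model}, and an inverse semigroup~\(S\) with a homomorphism \(\varphi\colon S\to\Bis(G)\) inducing an isomorphism \(G^0\rtimes S\congto G^0\rtimes\Bis(G)\cong G\) — determine each other, naturally and in a mutually inverse way. The key observation is that an \(S\)\nb-action on the \emph{space}~\(G^0\) by partial homeomorphisms, together with an isomorphism \(G^0\rtimes S\cong G\) of étale groupoids that is the identity on objects, is exactly the extra structure one can read off from a homomorphism \(S\to\Bis(G)\) once one knows it induces such an isomorphism. So the proof is a careful unpacking of definitions, using the equivalence between inverse semigroup actions on spaces by partial homeomorphisms and by partial equivalences (Theorem~\ref{the:action_on_space}).

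First I would go from a homomorphism \(\varphi\colon S\to\Bis(G)\) to a model. Composing~\(\varphi\) with the canonical action of \(\Bis(G)\) on~\(G^0\) by partial homeomorphisms (a bisection \(t\) acts by the partial homeomorphism \(\s(t)\to\rg(t)\)) gives an action of~\(S\) on~\(G^0\) by partial homeomorphisms. The transformation groupoid \(G^0\rtimes S\) for this action — equivalently, Exel's groupoid of germs by Theorem~\ref{the:action_on_space} — receives, by the universal property of the germ construction, a canonical homomorphism to \(G^0\rtimes\Bis(G)\cong G\) induced by~\(\varphi\); this is the identity on objects. The hypothesis is precisely that this is an isomorphism, so we obtain a model. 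Conversely, given a model \((S,\text{action on }G^0,\ G^0\rtimes S\cong G)\), I would define \(\varphi\colon S\to\Bis(G)\) as follows: identify \(G\cong G^0\rtimes S\); then \(G^0\rtimes S\) is \(S\)\nb-graded by the open subsets \(X_t = \{[t,z] \mid z\in D_{t^*t}\}\) from the proof of Theorem~\ref{the:action_on_space}, each of which is a bisection of \(G^0\rtimes S\) because the germ groupoid is étale and these subsets map homeomorphically to \(G^0\) under both range and source (they are the basic open bisections of the germ construction). Setting \(\varphi(t)\defeq X_t\subseteq G^1\) gives a homomorphism \(S\to\Bis(G)\) because \(X_t\cdot X_u = X_{tu}\) and \(X_t^{-1}=X_{t^*}\). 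That the \(S\)\nb-action on~\(G^0\) induced by this~\(\varphi\) agrees with the given one, and that \(G^0\rtimes S\to G^0\rtimes\Bis(G)\) is an isomorphism, both follow from Corollary~\ref{cor:wide_action}: the subsets \(\varphi(S)\subseteq\Bis(G)\) satisfy \(\bigcup_{t\in S}\varphi(t)=G\) and \(\varphi(t)\cap\varphi(u)=\bigcup_{v\le t,u}\varphi(v)\) by \ref{enum:Gra6} and~\ref{enum:Gra4}, so they form a wide inverse subsemigroup, hence \(G^0\rtimes\varphi(S)\cong G\) canonically.

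Finally I would check that these two constructions are mutually inverse. Starting from~\(\varphi\), passing to the model, and back: the recovered homomorphism sends \(t\) to the bisection \(X_t\subseteq(G^0\rtimes S)^1\), which under the isomorphism \(G^0\rtimes S\cong G\) induced by~\(\varphi\) maps to the image of \(X_t\), namely \(\{[\varphi(t),z]\}\subseteq\Bis(G)\); this is exactly \(\varphi(t)\) because \(\varphi(t)\) is a bisection and Exel's germ relation for the \(\Bis(G)\)-action collapses \([\varphi(t),z]\) to the single arrow of \(\varphi(t)\) over~\(z\) (see \cite{Exel:Inverse_combinatorial}*{Proposition~5.4}). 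Starting from a model, passing to~\(\varphi\), and back: the induced action of~\(S\) on~\(G^0\) is the original one by construction, and the isomorphism \(G^0\rtimes S\cong G\) is the original one because it is the identity on objects and on each bisection \(X_t\). The main obstacle — and it is a mild one — is getting the identifications of germ groupoids and their basic bisections exactly right and invoking the uniqueness statement in Theorem~\ref{the:action_on_space} (``there is always only one isomorphism between partial equivalences coming from the same partial homeomorphism'') to see that nothing is lost; the rest is bookkeeping with Corollary~\ref{cor:wide_action} and the germ construction. $\qed$
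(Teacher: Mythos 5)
Your proposal is correct and takes essentially the same route as the paper: your assignment \(t\mapsto X_t\) is exactly the canonical homomorphism \(S\to\Bis(G^0\rtimes S)\) that the paper invokes, and your converse construction is the induced groupoid homomorphism \(G^0\rtimes S\to G^0\rtimes\Bis(G)\cong G\). The additional detail you supply via Corollary~\ref{cor:wide_action} and the germ-groupoid bookkeeping merely fills in what the paper leaves as ``routine computations''.
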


\begin{proof}
  Let~\(S\) act on~\(G^0\).  There is a canonical homomorphism
  \(S\to\Bis(G^0\rtimes S)\), see~\cite{Exel:Inverse_combinatorial}.
  Combined with an isomorphism \(G^0\rtimes S\cong G\), we get a
  homomorphism \(\varphi\colon S\to\Bis(G)\).  Conversely, such a
  homomorphism induces an action of~\(S\) on~\(G^0\) and then a
  continuous groupoid homomorphism \(G^0\rtimes S\to G^0\rtimes
  \Bis(G) \cong G\).  Routine computations show that these two
  constructions are inverse to each other.
\end{proof}

The following lemma characterises inverse semigroup models more
concretely when we take \(\hat S= \Bis(G)\).

\begin{lemma}
  \label{lem:Z-isomorphism}
  Let \(S\) and~\(\hat{S}\) be inverse semigroups, let \(\varphi\colon
  S\to \hat S\) be a homomorphism, and let~\(\hat S\) act on~\(Z\) by
  partial homeomorphisms.  The induced groupoid homomorphism
  \(\tilde\varphi\colon Z\rtimes S\to Z\rtimes \hat S\) is an
  isomorphism if and only if
  \begin{enumerate}
  \item \label{enum:ZIso1} for all \(t_1,t_2\in S\) and every \(z\in
    Z\) with \(z\in D_{t_1^*t_1}\cap D_{t_2^*t_2}\) and every \(f\in
    E(\hat{S})\) with \(z\in D_f\) and \(\varphi(t_1)f=\varphi(t_2)f\), there is
    \(e\in E(S)\) with \(z\in D_e\) and \(t_1e=t_2e\);
  \item \label{enum:ZIso2} for every \(u\in \hat{S}\) and every \(z\in Z\)
    with \(z\in D_{u^*u}\), there is \(t\in S\) with \(z\in
    D_{t^*t}\) and there is \(f\in E(\hat{S})\) with \(z\in D_f\) and
    \(uf=\varphi(t)f\).
  \end{enumerate}
\end{lemma}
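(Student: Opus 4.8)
The plan is to analyze the induced map $\tilde\varphi\colon Z\rtimes S\to Z\rtimes\hat S$ on the level of germs and show that conditions~\ref{enum:ZIso1} and~\ref{enum:ZIso2} are exactly what is needed for $\tilde\varphi$ to be injective and surjective, respectively; since $\tilde\varphi$ is a continuous étale groupoid homomorphism that is the identity on objects, being a bijection will automatically make it an isomorphism. Recall that an element of $Z\rtimes S$ is a germ $[t,z]$ with $t\in S$, $z\in D_{t^*t}$, and $[t,z]=[t',z']$ iff $z=z'$ and $te=t'e$ for some $e\in E(S)$ with $z\in D_e$; the map $\tilde\varphi$ sends $[t,z]\mapsto[\varphi(t),z]$ (after identifying the object spaces, which are both $Z$ and on which $\varphi$ acts compatibly). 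I would first record that $\tilde\varphi$ is automatically a local homeomorphism preserving the unit space, so it suffices to check it is bijective.

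First I would prove that \ref{enum:ZIso1} is equivalent to injectivity of $\tilde\varphi$. Suppose $\tilde\varphi[t_1,z]=\tilde\varphi[t_2,z']$. Then necessarily $z=z'$ (the source maps agree), $z\in D_{t_1^*t_1}\cap D_{t_2^*t_2}$, and $[\varphi(t_1),z]=[\varphi(t_2),z]$ in $Z\rtimes\hat S$, which by definition of the germ relation in $\hat S$ means there is $f\in E(\hat S)$ with $z\in D_f$ and $\varphi(t_1)f=\varphi(t_2)f$. Condition~\ref{enum:ZIso1} then produces $e\in E(S)$ with $z\in D_e$ and $t_1e=t_2e$, i.e. $[t_1,z]=[t_2,z]$, giving injectivity. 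Conversely, if $\tilde\varphi$ is injective, the same computation run backwards shows~\ref{enum:ZIso1} must hold: the hypotheses of~\ref{enum:ZIso1} say precisely that $\tilde\varphi[t_1,z]=\tilde\varphi[t_2,z]$, and injectivity forces $[t_1,z]=[t_2,z]$, which is the conclusion.

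Next I would prove that \ref{enum:ZIso2} is equivalent to surjectivity. An arbitrary arrow of $Z\rtimes\hat S$ is a germ $[u,z]$ with $u\in\hat S$, $z\in D_{u^*u}$. It lies in the image of $\tilde\varphi$ iff there is $t\in S$ with $z\in D_{t^*t}$ and $[\varphi(t),z]=[u,z]$, i.e. $\varphi(t)f=uf$ for some $f\in E(\hat S)$ with $z\in D_f$ — which is exactly~\ref{enum:ZIso2}. So~\ref{enum:ZIso2} holds iff every germ is hit, i.e. iff $\tilde\varphi$ is surjective on arrows (it is already surjective on objects). Combining the two equivalences, $\tilde\varphi$ is bijective iff both conditions hold, and then it is an isomorphism of étale groupoids by Lemma~\ref{lem:transformation_gp_open_equivalence} (or directly: a continuous bijective local homeomorphism is a homeomorphism, and it is a groupoid homomorphism by construction).

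I expect the only genuinely delicate point to be the bookkeeping at the level of germs versus the level of the whole bisection $\varphi(t)$: one must be careful that the germ relation used in Exel's construction is the "idempotent" one ($te=t'e$ with $z\in D_e$), not the weaker "agree on a neighbourhood" relation, since the statement of the lemma is phrased entirely in terms of idempotents $e\in E(S)$, $f\in E(\hat S)$. Once that is pinned down, the argument is a direct unwinding of definitions; there is no hard analytic or topological obstacle, because $\tilde\varphi$ is already known to be a well-behaved (étale, identity-on-objects) homomorphism, so bijectivity is the whole content. The surjectivity-on-objects assumption is automatic since both $S$ and $\hat S$ act on the same $Z$ and $\tilde\varphi$ restricts to the identity on $Z=(Z\rtimes S)^0=(Z\rtimes\hat S)^0$.
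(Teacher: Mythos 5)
Your proposal is correct and takes the same route as the paper: the paper's proof simply notes that $\tilde\varphi$ is the identity on objects, continuous and open on arrows, and that \ref{enum:ZIso1} is equivalent to injectivity and \ref{enum:ZIso2} to surjectivity, which is exactly the germ-level unwinding you carry out. Your remark about using Exel's idempotent germ relation rather than the ``agree on a neighbourhood'' relation is the right point to be careful about, and your bookkeeping there is accurate.
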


In this case, we call~\(\varphi\) a \emph{\(Z\)\nb-isomorphism}.

\begin{proof}
  The groupoid homomorphism~\(\tilde\varphi\) is the identity on objects
  and always continuous and open on arrows, so the only issue
  is whether~\(\tilde\varphi\) is bijective on arrows.  It is routine to
  check that~\ref{enum:ZIso1} is equivalent to injectivity
  and~\ref{enum:ZIso2} to surjectivity of~\(\tilde\varphi\).
\end{proof}

Let \(S\) and \(\varphi\colon S\to\Bis(G)\) be an inverse semigroup
model for an étale topological groupoid~\(G\).  Which actions of~\(S\)
on groupoids by partial equivalences or on \(\Cst\)\nb-algebras by
Hilbert bimodules come from actions of~\(G\)?

First we consider a trivial special case to see why we need more data.
Let~\(G\) be just a topological space, viewed as a groupoid.  In this
case, the trivial inverse semigroup~\(\{1\}\) is an inverse semigroup
model.  An action of~\(S\) contains no information.  An action
of~\(G\) on a topological groupoid~\(H\) or a \(\Cst\)\nb-algebra is
simply a continuous map \(\psi\colon H^0/H\to G^0\) or \(\psi\colon
\Prim(A)\to G^0\), respectively.

\begin{theorem}
  \label{the:model_action_groupoid}
  Let~\(G\) be a sober étale topological groupoid and let~\(S\) and
  \(\varphi\colon S\to\Bis(G)\) be an inverse semigroup model
  for~\(G\).  Let~\(H\) be a topological groupoid.  An action of~\(G\)
  on~\(H\) by partial equivalences is equivalent to a pair consisting
  of an action of~\(S\) on~\(H\) by partial equivalences and an
  \(S\)\nb-equivariant map \(\psi\colon H^0/H\to G^0\).  The
  transformation groupoid for an action of~\(G\) \textup{(}that is,
  \(\Bis(G)\)\textup{)} and its restriction to~\(S\) are the same.
\end{theorem}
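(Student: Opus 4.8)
The plan is to work throughout with transformation groupoids and the equivalence between \(S\)\nb-actions and \(S\)\nb-gradings (Theorem~\ref{the:graded_groupoid_versus_action}), after two preliminary reformulations. First, by Definition~\ref{def:sober_etale_groupoid_action_on_groupoid} together with \cite{Meyer-Nest:Bootstrap}*{Lemma~2.25} and sobriety of~\(G^0\), an action of~\(G\) on~\(H\) is an action \((X_b)_{b\in\Bis(G)}\) of~\(\Bis(G)\) on~\(H\) for which the map \(\Open(G^0)=E(\Bis(G))\to\Open(H^0/H)\) describing the idempotent part equals \(U\mapsto\psi^{-1}(U)\) for a unique continuous \(\psi\colon H^0/H\to G^0\); writing \(\bar\psi\colon H^0\to G^0\) for the composite with the orbit projection, this means \(X_U\) is the identity equivalence on~\(H_{\bar\psi^{-1}(U)}\) for every open \(U\subseteq G^0\), and then by Lemma~\ref{lem:domains_invsg-action} one has \(\s(X_b)=\bar\psi^{-1}(\s(b))\) and \(\rg(X_b)=\bar\psi^{-1}(\rg(b))\) for all \(b\in\Bis(G)\). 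Second, by Corollary~\ref{cor:wide_action} the model hypothesis on \(\varphi\colon S\to\Bis(G)\) (after adjoining a unit to~\(S\) sent to the unit bisection, if necessary) says exactly that \(\bigcup_{t\in S}\varphi(t)=G^1\) and \(\varphi(t)\cap\varphi(u)=\bigcup_{v\le t,u}\varphi(v)\) in~\(\Bis(G)\) for all \(t,u\in S\), where we identify \(G=G^0\rtimes S\) and \(\varphi(t)=\{[t,x]\mid x\in D^{G^0}_{t^*t}\}\) is the canonical bisection. I would also record the elementary fact that in a \(\Bis(G)\)\nb-graded groupoid~\(M\) one has \(M_b=\bigcup_i M_{b_i}\) whenever \(b=\bigcup_i b_i\) with all \(b_i\) bisections contained in~\(b\); this follows from \(X_b=\bigcup_i X_b|_{\s(X_{b_i})}\) and the isomorphisms~\(j_{b,b_i}\) of Proposition~\ref{pro:inclusions_from_action}.

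For the forward direction I would start from the \(\Bis(G)\)\nb-graded groupoid \((M,(M_b))\) with \(M=H\rtimes\Bis(G)\) and \(M_{G^0}=H\) produced by Theorem~\ref{the:graded_groupoid_versus_action}, and set \(L_t\defeq M_{\varphi(t)}\). Conditions \ref{enum:Gra1} and~\ref{enum:Gra2} for \((L_t)_{t\in S}\) are immediate since \(\varphi\) is a homomorphism, \(L_1=M_{\varphi(1)}=M_{G^0}=H\), and \ref{enum:Gra4}, \ref{enum:Gra6} follow by combining the two defining properties of a model with the elementary fact above. Hence \((L_t)_{t\in S}\) is an \(S\)\nb-grading on the \emph{same} groupoid~\(M\), so by Theorem~\ref{the:graded_groupoid_versus_action} it is the transformation groupoid of an \(S\)\nb-action on~\(H\), the ``restriction to~\(S\)'', and \(H\rtimes S=M=H\rtimes\Bis(G)\) as topological groupoids; this already proves the last sentence of the theorem.

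The remaining and, I expect, hardest point is that the map~\(\psi\) attached to the \(G\)\nb-action is \(S\)\nb-equivariant for the restricted action: a priori \(\psi\) is only continuous and intertwines the idempotent parts of the two actions, and since it need not be open or a local homeomorphism there is no naive ``pull-back of partial homeomorphisms'' to appeal to. Let \(X_t\defeq X_{\varphi(t)}\) and let \(\theta_t\) denote the action of~\(t\) on~\(G^0\). For an open set \(U'\subseteq G^0\), viewed as an idempotent of~\(\Bis(G)\), one computes in~\(\Bis(G)\) that \(U'\varphi(t)=\varphi(t)\cdot\bigl(\varphi(t)^*U'\varphi(t)\bigr)\) with \(\varphi(t)^*U'\varphi(t)=\theta_t^{-1}(U')\cap D^{G^0}_{t^*t}\). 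Applying the action and using the restriction isomorphisms~\eqref{eq:restrict_as_product} in both factorisations, \(X_{U'\varphi(t)}\) is canonically isomorphic both to \({}_{\bar\psi^{-1}(U')}|X_t\) and to \(X_t|_{\bar\psi^{-1}(\theta_t^{-1}(U'))}\); since bibundle isomorphisms preserve source-anchor images (Proposition~\ref{pro:characterise_bibundle_isom}), comparing these gives \(((X_t)_*)^{-1}\bigl(\psi^{-1}(U')\bigr)=\psi^{-1}\bigl(\theta_t^{-1}(U')\bigr)\), that is, \((\psi\circ(X_t)_*)^{-1}(U')=(\theta_t\circ\psi)^{-1}(U')\) for every open~\(U'\). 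Two continuous maps into the sober, hence \(T_0\), space~\(G^0\) with equal preimages of all open sets coincide, so \(\psi\circ(X_t)_*=\theta_t\circ\psi\), which is the required equivariance; read for idempotents, the same identity also records the domain matching \(\s(X_e)=\bar\psi^{-1}(D^{G^0}_e)\).

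For the converse, given an \(S\)\nb-action \((X_t)\) on~\(H\) and an \(S\)\nb-equivariant~\(\psi\) (so, in particular, \(\s(X_e)=\bar\psi^{-1}(D^{G^0}_e)\) for idempotents~\(e\)), I would take \(L\defeq H\rtimes S\) with its grading \((L_t)\) and define \(q\colon L\to G=G^0\rtimes S\) by \(q(\pi(x,t))\defeq[t,\bar\psi(\s x)]\). Equivariance makes \(q\) well defined and a continuous groupoid homomorphism with \(q^{-1}(G^0)=L_1=H\) and \(q(L^{\s m})=G^{\bar\psi(\s m)}\) for every arrow~\(m\). Then \(M_b\defeq q^{-1}(b)\) for \(b\in\Bis(G)\) defines a \(\Bis(G)\)\nb-grading on the same groupoid \(M=L\): \ref{enum:Gra2} and \(M_{G^0}=H\) are clear, \ref{enum:Gra4} follows from \(b\cap b'=\bigcup_{c\le b,b'}c\) in~\(\Bis(G)\), \ref{enum:Gra6} from \(G^1=\bigcup\Bis(G)\) (as~\(G\) is étale), and \ref{enum:Gra1} from the surjectivity \(q(L^{\s m})=G^{\bar\psi(\s m)}\). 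Its idempotent part is \(U\mapsto\psi^{-1}(U)\), so by Theorem~\ref{the:graded_groupoid_versus_action} and Definition~\ref{def:sober_etale_groupoid_action_on_groupoid} this is an action of~\(G\) on~\(H\). A direct verification that \(M_{\varphi(t)}=L_t\) and that the map recovered from its idempotent part is again~\(\psi\), together with the analogous verification performed in the opposite order, shows the two constructions are mutually inverse; functoriality in isomorphisms of actions is straightforward, giving the claimed equivalence of categories.
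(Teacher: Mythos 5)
Your proposal is correct, and while it follows the same overall strategy as the paper --- translate both kinds of actions into gradings on a single transformation groupoid via Theorem~\ref{the:graded_groupoid_versus_action}, restrict along \(\varphi\) in one direction and extend using the model property and \(\psi\) in the other --- it differs genuinely in how the extended \(\Bis(G)\)\nb-grading is produced and verified. The paper defines \(\bar{L}_b \defeq \bigcup_{u\in S} L_u|_{\psi^{-1}(\s(b\cap\varphi(u)))}\) directly and checks the grading axioms by germ-by-germ arguments; you instead package the same data as a continuous groupoid homomorphism \(q\colon H\rtimes S\to G^0\rtimes S\cong G\), \(\pi(x,t)\mapsto [t,\bar\psi(\s(x))]\), and set \(\bar{L}_b\defeq q^{-1}(b)\). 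The two prescriptions give the same subsets, but your packaging makes \ref{enum:Gra2}, \ref{enum:Gra4} and~\ref{enum:Gra6} immediate from properties of preimages together with the model hypothesis, and reduces \ref{enum:Gra1} to the surjectivity of \(q\) on source fibres, exactly as in the group case of Theorem~\ref{the:group_action_extension}. You also add value in two places: you actually prove the \(\Bis(G)\)\nb-equivariance of \(\psi\) in the forward direction (via the two factorisations of \(U'\varphi(t)\), the restriction isomorphisms~\eqref{eq:restrict_as_product}, and \(T_0\)\nb-separation of the sober space \(G^0\)), a claim the paper disposes of in one sentence; and your ``elementary fact'' \(M_b=\bigcup_i M_{b_i}\) --- correctly invoked only for actions satisfying the suprema condition, where it would fail for a general \(\Bis(G)\)\nb-action --- is exactly what makes the forward direction and the second half of the consistency check work. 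The one point where you are terser than the paper is mutual inverseness: the paper proves \emph{uniqueness} of the extended grading, which settles this in one stroke, whereas you defer the direction ``\(G\)\nb-action \(\to\) pair \(\to\) \(G\)\nb-action'' to ``the analogous verification''; that verification is the computation \(M_b=\bigcup_{u\in S}M_{b\cap\varphi(u)}=\bigcup_{u\in S}L_u|_{\psi^{-1}(\s(b\cap\varphi(u)))}=q^{-1}(b)\), which your elementary fact supplies, so nothing essential is missing. Like the paper, you read \(S\)\nb-equivariance of \(\psi\) as including the domain identity \(\s(X_e)=\bar\psi^{-1}(D_e)\) for idempotents; it is worth stating this convention explicitly, since the converse direction genuinely needs it.
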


The \(S\)\nb-equivariance of~\(\psi\) refers to the actions of~\(S\)
on \(H^0/H\) and~\(G^0\) by partial homeomorphisms induced by
the action on~\(H\) and by~\(\varphi\).

\begin{proof}
  First let~\(G\) act on~\(H\); more precisely, \(\Bis(G)\) acts
  on~\(H\) and the resulting map \(\Open(G^0)=
  E(\Bis(G))\to\Open(H^0/H)\) commutes with suprema
  (Definition~\ref{def:sober_etale_groupoid_action_on_groupoid}).
  \cite{Meyer-Nest:Bootstrap}*{Lemma~2.25} shows that it comes from a
  continuous map \(\psi\colon H^0/H\to G^0\).  This map is
  \(\Bis(G)\)-equivariant and hence \(S\)\nb-equivariant.

  Now let~\(S\) act on~\(H\) and let \(\psi\colon H^0/H\to G^0\) be an
  \(S\)\nb-equivariant map.  Let \(L\defeq H\rtimes S\) with its
  canonical \(S\)\nb-grading \((L_t)_{t\in S}\).  We claim that there
  is a unique \(\Bis(G)\)-grading \((\bar{L}_t)_{t\in\Bis(G)}\)
  on~\(L\) with \(\bar{L}_{\varphi(t)} = L_t\) for all \(t\in S\), and
  \(\bar{L}_U = H^1_{\psi^{-1}(U)}\) for \(U\in\Open(G^0)\).  These
  two conditions on the \(\Bis(G)\)-grading say exactly that it corresponds
  to the given \(S\)\nb-action and map~\(\psi\).  So the proof of the
  claim will finish the proof of the theorem.

  For \(t\in\Bis(G)\) and \(u\in S\), we may form \(t\cap
  \varphi(u)\in\Bis(G)\).  We have
  \[
  t\cap \varphi(u) = t\cdot V_{t,u} = \varphi(u)\cdot V_{t,u}\qquad
  \text{for }V_{t,u}= \s(t\cap \varphi(u))\in\Open(G^0);
  \]
  here we also view~\(V_{t,u}\) as an idempotent element
  of~\(\Bis(G)\).  Since~\(S\) models~\(G\), we have \(t =
  \bigcup_{u\in S} t\cap \varphi(u)\) and hence \(\s(t) =
  \bigcup_{u\in S} V_{t,u}\).  Any \(\Bis(G)\)-grading with
  \(\bar{L}_V = H^1_{\psi^{-1}(V)}\) for all \(V\in\Open(G^0)\)
  satisfies
  \[
  \bar{L}_t|_{\psi^{-1}(V_{t,u})}
  = \bar{L}_t \cdot \bar{L}_{V_{t,u}}
  = \bar{L}_{t \cap \varphi(u)}
  = \bar{L}_{\varphi(u)}|_{\psi^{-1}(V_{t,u})}
  \]
  for all \(t\in\Bis(G)\), \(u\in S\).  Since \(\s(t) = \bigcup_{u\in
    S} V_{t,u}\) and~\(\psi\) is \(S\)\nb-equivariant, this shows that
  there is at most one \(\Bis(G)\)-grading with the required
  properties, namely,
  \[
  \bar{L}_t = \bigcup_{u\in S} L_u|_{\psi^{-1}(V_{t,u})}.
  \]
  More explicitly, \(l\in\bar{L}_t\) if and only if \(l\in L_u\) for
  some \(u\in S\) for which \(t\) and~\(\varphi(u)\) have the same
  germ at~\(\psi(\s(l))\).  We must prove that
  \((\bar{L}_t)_{t\in\Bis(G)}\) is a grading with all desired
  properties.

  First we check \(\bar{L}_{\varphi(u)} = L_u\) for \(u\in
  S\).  The inclusion~\(\supseteq\) is trivial.  If \(l\in
  \bar{L}_{\varphi(u)}\), then \(l\in L_{u'}\) for some \(u'\in S\)
  for which \(\varphi(u)\) and~\(\varphi(u')\) have the same germ at
  \(\psi(\s(l))\in G^0\).  Hence there is an idempotent element \(e\in
  S\) with \(\psi(\s(l))\in \varphi(e)\) and \(ue=u'e\).  Since \(L_e
  = H^1_{\psi^{-1}(e)}\), we get \(l\in L_{u'}L_e = L_{u'e} = L_{ue} =
  L_uL_e\subseteq L_u\).  This finishes the proof that
  \(\bar{L}_{\varphi(u)} = L_u\) for all \(u\in S\).

  Next we check \(\bar{L}_W = H^1_{\psi^{-1}(W)}\) for
  \(W\in\Open(G^0)\).  The inclusion~\(\supseteq\) holds because
  \(V_{W,1} = W\).  Conversely, let \(l\in\bar{L}_W\).  Then \(l\in
  L_u\) for some \(u\in S\) for which \(\varphi(u)\) and~\(\Id_W\)
  have the same germ at~\(\psi(\s(l))\).  Since \(G^0\rtimes S\cong
  G\), there is an idempotent \(e\in S\) with \(\psi(\s(l))\in
  \varphi(e)\) and \(ue=e\).  An argument as in the previous paragraph
  shows that \(l\in L_uL_e = L_e \subseteq H^1\).  Thus \(\bar{L}_W =
  H^1_{\psi^{-1}(W)}\) for all \(W\in\Open(G^0)\).

  If \(t\in\Bis(G)\), \(u\in S\), then \((\varphi(u)\cap t)^* =
  \varphi(u^*)\cap t^*\).  Hence \(V_{t^*,u^*} = t(V_{t,u}) =
  \varphi(u)(V_{t,u})\).  This implies \(L_{t^*} = L_t^{-1}\) for
  all \(t\in\Bis(G)\).

  Let \(t_1,t_2\in\Bis(G)\).  We claim that \(\bar{L}_{t_1}\cdot
  \bar{L}_{t_1} = \bar{L}_{t_1t_2}\).  The inclusion~\(\subseteq\)
  follows because \((\varphi(u_1)\cap t_1)\cdot (\varphi(u_2)\cap t_2)
  \subseteq \varphi(u_1u_2)\cap t_1t_2\).  For the converse inclusion,
  take \(l\in \bar{L}_{t_1t_2}\).  Then \(t\in L_u\) for some \(u\in
  S\) for which \(t_1t_2\) and~\(\varphi(u)\) have the same germ
  at~\(\psi(\s(l))\).  Factor this germ as~\(g_1g_2\) with \(g_j\in
  t_j\) for \(j=1,2\).  There are \(u_j\in S\) with
  \(g_j\in\varphi(u_j)\) for \(j=1,2\) because \(G\cong G^0\rtimes
  S\).  Thus \(\varphi(u_1)\varphi(u_2)=\varphi(u_1u_2)\)
  and~\(t_1t_2\) have the same germ~\(g_1g_2\) at~\(\psi(\s(l))\).
  Then \(u_1u_2\) and~\(u\) also have the same germ there, and an
  argument as above shows that \(l\in L_{u_1u_2}\) as well.
  Using~\ref{enum:Gra1} for the \(S\)\nb-grading, we get \(l_j\in
  L_{u_j}\) for \(j=1,2\) with \(l=l_1l_2\).  Then \(\s(l_2)=\s(l)\)
  and \(\rg(l_1) = \rg(l)\).  This allows to prove \(l_2\in
  \bar{L}_{t_2}\) and \(l_1^{-1}\in \bar{L}_{t_1^*}\), so that
  \(l_1\in \bar{L}_{t_1}\).  Hence the \(\Bis(G)\)-grading
  satisfies~\ref{enum:Gra1}.

  It is clear that \(\bar{L}_{t_1}\subseteq \bar{L}_{t_2}\) if
  \(t_1\le t_2\) in \(\Bis(G)\), so \(\bar{L}_{t_1}\cap \bar{L}_{t_2}
  \supseteq \bigcup_{v\le t_1,t_2} \bar{L}_v = \bar{L}_{t_1\cap t_2}\)
  for all \(t_1,t_2\in\Bis(G)\).  For the converse inclusion, take
  \(l\in \bar{L}_{t_1}\cap \bar{L}_{t_2}\).  Then there are
  \(u_1,u_2\in S\) with \(l\in L_{u_1}\cap L_{u_2}\), such that
  \(t_j\) and~\(\varphi(u_j)\) have the same germ at~\(\psi(\s(l))\)
  for \(j=1,2\).  \ref{enum:Gra4} for the \(S\)\nb-grading gives
  \(v\in S\) with \(v\le u_1,u_2\) and \(l\in L_v\).  Since~\(\psi\)
  is \(S\)\nb-equivariant, \(\psi(\s(l))\) belongs to the domain
  of~\(\varphi(v)\), so the germs of \(\varphi(v)\)
  and~\(\varphi(u_i)\) at~\(\psi(\s(l))\) are equal.  Then the germs
  of \(t_1\) and~\(t_2\) at~\(\psi(\s(l))\) are equal as well, that is,
  \(t_1\cap t_2\) is defined at~\(\psi(\s(l))\) and has the same germ
  there as~\(\varphi(v)\).  This means that \(l\in \bar{L}_{t_1\cap t_2}\).
  This verifies~\ref{enum:Gra4} for the \(\Bis(G)\)-grading.

  Since \(\bar{L}_{\varphi(u)} = L_u\) for all \(u\in S\) and
  \(\bigcup_{u\in S} L_u = L^1\), we also get \(\bigcup_{t\in\Bis(G)}
  \bar{L}_t = L^1\), which is~\ref{enum:Gra6}.
\end{proof}

The following lemma is needed to formulate a similar result for
actions on \(\Cst\)\nb-algebras:

\begin{lemma}
  \label{lem:action_on_Prim}
  An action of~\(S\) on a \(\Cst\)\nb-algebra~\(A\) by Hilbert
  bimodules induces an action of~\(S\) on~\(\Prim(A)\) by partial
  homeomorphisms.
\end{lemma}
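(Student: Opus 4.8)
The statement asserts that an $S$-action on a $\Cst$-algebra $A$ by Hilbert bimodules $(\Hilm_t,\mu_{t,u})$ induces an action of $S$ on $\Prim(A)$ by partial homeomorphisms. The natural approach is to extract, from each Hilbert $A,A$-bimodule $\Hilm_t$, a partial homeomorphism $\hat\theta_t$ of $\Prim(A)$, and then check that $t\mapsto\hat\theta_t$ is a semigroup homomorphism $S\to\PHomeo(\Prim(A))$.

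\textbf{First step: the partial homeomorphism attached to a single bimodule.} Given a Hilbert $A,A$-bimodule $\Hilm=\Hilm_t$, we have the ideals $\rg(\Hilm),\s(\Hilm)\triangleleft A$ from the discussion after Definition~\ref{def:Hilbert_bimodule}, and $\Hilm$ is an $\rg(\Hilm),\s(\Hilm)$-imprimitivity bimodule. The Rieffel correspondence for an imprimitivity bimodule is an isomorphism between the ideal lattices of $\rg(\Hilm)$ and $\s(\Hilm)$, equivalently a homeomorphism between $\Prim(\rg(\Hilm))$ and $\Prim(\s(\Hilm))$; since $\Prim(I)$ for an ideal $I\triangleleft A$ is canonically an open subset $\hat I\subseteq\Prim(A)$, this gives a homeomorphism $\hat\theta_t\colon D_{t^*t}\to D_{tt^*}$ between the open subsets $D_{t^*t}\defeq\widehat{\s(\Hilm_t)}$ and $D_{tt^*}\defeq\widehat{\rg(\Hilm_t)}$ of $\Prim(A)$, i.e.\ an element of $\PHomeo(\Prim(A))$. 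I would cite the standard Rieffel correspondence (e.g.\ Raeburn--Williams, \emph{Morita Equivalence and Continuous-Trace C*-Algebras}) for these facts.

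\textbf{Second step: homomorphism property.} I must show $\hat\theta_{tu}=\hat\theta_t\circ\hat\theta_u$ and $\hat\theta_1=\Id$. The latter is immediate since $\Hilm_1=A$. For the former, the key input is the bimodule isomorphism $\mu_{t,u}\colon\Hilm_t\otimes_A\Hilm_u\congto\Hilm_{tu}$: the Rieffel correspondence is compatible with interior tensor products of imprimitivity bimodules, so the homeomorphism attached to $\Hilm_t\otimes_A\Hilm_u$ is the composite of those attached to $\Hilm_t$ and $\Hilm_u$ (on the appropriate restricted domains). One must be slightly careful with domains: $\Hilm_t\otimes_A\Hilm_u$ is an imprimitivity bimodule between $\rg(\Hilm_t\otimes_A\Hilm_u)$ and $\s(\Hilm_t\otimes_A\Hilm_u)$, which are the restrictions of $\rg(\Hilm_t)$ and $\s(\Hilm_u)$ appearing in Proposition~\ref{pro:Hilbert_bimodule_map} and the surrounding discussion; the composite of partial homeomorphisms $\hat\theta_t\hat\theta_u$ has exactly the matching domain $\hat\theta_u^{-1}(D_{tt^*}\cap D_{u^*u})$. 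Since $\mu_{t,u}$ is an isomorphism of bimodules, the attached homeomorphisms agree, giving $\hat\theta_{tu}=\hat\theta_t\circ\hat\theta_u$ as partial homeomorphisms. It also follows formally that $\hat\theta_{t^*}=\hat\theta_t^{-1}$, using the isomorphism $J_t\colon\Hilm_t^*\congto\Hilm_{t^*}$ from Theorem~\ref{the:S_act_Cstar_Fell_bundle} together with the fact that the homeomorphism attached to the dual bimodule $\Hilm_t^*$ is $\hat\theta_t^{-1}$.

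\textbf{Main obstacle.} The genuinely delicate point is the bookkeeping of domains and the compatibility of the Rieffel correspondence with tensor products — making sure that ``$\hat\theta_{tu}=\hat\theta_t\circ\hat\theta_u$'' is an equation of partial homeomorphisms (equal domains, not just agreement where both are defined), which is what's needed for a genuine homomorphism into $\PHomeo(\Prim(A))$ rather than into some weaker structure. This is handled by invoking Proposition~\ref{pro:Hilbert_bimodule_map} to identify $\s(\Hilm_t\otimes_A\Hilm_u)$ and $\rg(\Hilm_t\otimes_A\Hilm_u)$ precisely, and then checking that the Rieffel-correspondence domains match on the nose; everything else is routine. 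One could alternatively observe that this is exactly the $\Cst$-algebraic mirror of Theorem~\ref{the:action_on_space}: an $S$-action on $A$ by Hilbert bimodules restricts, via $A\mapsto\Prim(A)$, $\Hilm\mapsto\hat\theta_\Hilm$, to an $S$-action on the space $\Prim(A)$ by partial homeomorphisms, in parallel with how an action by partial equivalences on a groupoid $G$ induces one on $G^0/G$ (the homomorphism $\widetilde{\Peq}(G)\to\PHomeo(G^0/G)$ before Remark~\ref{rem:PHomeo_versus_Peq}), and the proof is formally the same.
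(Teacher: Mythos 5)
Your proposal is correct and follows essentially the same route as the paper: attach a partial homeomorphism of \(\Prim(A)\) to each Hilbert bimodule via the Rieffel correspondence (citing Raeburn--Williams), and use the compatibility of this correspondence with interior tensor products, together with the isomorphisms \(\mu_{t,u}\), to get a homomorphism \(S\to\PHomeo(\Prim(A))\). Your extra care about matching domains and the remark on duals just spell out details the paper's proof leaves implicit.
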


\begin{proof}
  The Rieffel Correspondence (see
  \cite{Raeburn-Williams:Morita_equivalence}*{Corollary~3.33}) says
  that an imprimitivity bimodule~\(\Hilm\) from~\(B\) to~\(A\) induces
  a homeomorphism \(\Prim(B)\congto \Prim(A)\).  The corresponding
  lattice isomorphism
  \[
  \Ideal(B) = \Open(\Prim(B)) \congto \Open(\Prim(A)) = \Ideal(A)
  \]
  sends an ideal \(J\subseteq B\) to the unique ideal \(I\subseteq A\)
  with \(I\cdot \Hilm=\Hilm\cdot J\).  A Hilbert \(A,B\)-bimodule
  induces a \emph{partial} homeomorphism \(\Prim(B)\to \Prim(A)\)
  because it is an imprimitivity bimodule between certain ideals in
  \(A\) and~\(B\), which correspond to open subsets of the primitive
  ideal spaces.  Isomorphic Hilbert bimodules induce the same partial
  homeomorphism, of course.  The partial homeomorphism associated to a
  tensor product bimodule \(\Hilm_1\otimes_B\Hilm_2\) is the composite
  of the partial homeomorphisms associated to \(\Hilm_1\)
  and~\(\Hilm_2\).  Thus the map from~\(S\) to \(\PHomeo(\Prim(A))\)
  induced by an action on~\(A\) by Hilbert bimodules is a
  homomorphism.
\end{proof}

\begin{theorem}
  \label{the:model_action_Cstar}
  Let~\(G\) be a sober étale topological groupoid and let~\(S\) and
  \(\varphi\colon S\to\Bis(G)\) be an inverse semigroup model
  for~\(G\).  Let~\(A\) be a \(\Cst\)\nb-algebra.  An action of~\(G\)
  on~\(A\) by Hilbert bimodules is equivalent to a pair consisting of
  an action of~\(S\) on~\(A\) by Hilbert bimodules and an
  \(S\)\nb-equivariant map \(\psi\colon \Prim(A)\to G^0\).  The
  section \(\Cst\)\nb-algebras of the corresponding Fell bundles over \(\Bis(G)\)
  and~\(S\) are the same.
\end{theorem}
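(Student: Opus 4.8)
The plan is to repeat the proof of Theorem~\ref{the:model_action_groupoid}, with partial equivalences replaced by Hilbert bimodules, open invariant subsets of the object space by ideals, and the transformation groupoid with its $S$\nb-grading by the saturated Fell bundle over~$S$ from Theorem~\ref{the:S_act_Cstar_Fell_bundle}. Given an action of~$G$ on~$A$ -- that is, by Definition~\ref{def:action_etale_sober}, an action $(\Hilm_t,\mu_{t,u})_{t\in\Bis(G)}$ of~$\Bis(G)$ by Hilbert bimodules whose induced semilattice map $\Open(G^0)=E(\Bis(G))\to\Ideal(A)$ commutes with suprema -- I would restrict it along~$\varphi$ to an $S$\nb-action by Hilbert bimodules. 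Since that semilattice map also commutes with finite infima and $G^0$ is sober, \cite{Meyer-Nest:Bootstrap}*{Lemma~2.25} turns it into a continuous map $\psi\colon\Prim(A)\to G^0$; by Lemma~\ref{lem:action_on_Prim} and the compatibility~\eqref{eq:compatible_HtAU} of the~$\Hilm_t$ with the ideals $\Hilm_U$, $U\in E(\Bis(G))$, the map~$\psi$ is $\Bis(G)$\nb-equivariant, hence $S$\nb-equivariant. This produces the required pair, and the construction is clearly natural for isomorphisms of actions.

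For the converse, let an $S$\nb-action $(\Hilm_u,\mu_{u,v})_{u\in S}$ by Hilbert bimodules on~$A$ and an $S$\nb-equivariant map $\psi\colon\Prim(A)\to G^0$ be given, and let $A(U)\triangleleft A$ be the ideal corresponding to the open set $\psi^{-1}(U)\subseteq\Prim(A)$ for $U\in\Open(G^0)$. As in the groupoid proof, for $t\in\Bis(G)$ and $u\in S$ set $V_{t,u}\defeq\s(t\cap\varphi(u))\in\Open(G^0)$; since $S$ is a model for~$G$, one has $\s(t)=\bigcup_{u\in S}V_{t,u}$. I would define $\bar\Hilm_t$ by gluing the Hilbert bimodules $\Hilm_u\cdot A(V_{t,u})$ along this cover: on an overlap the germs of $\varphi(u_1)$ and $\varphi(u_2)$ agree, so $G\cong G^0\rtimes S$ provides, over smaller open sets~$W$, idempotents $e\in S$ with $u_1e=u_2e$, and the inclusion maps of Theorem~\ref{the:S_act_Cstar_Fell_bundle} yield canonical isomorphisms $\Hilm_{u_1}\cdot A(W)\cong\Hilm_{u_2}\cdot A(W)$; the relation $j_{v,u}\circ j_{u,t}=j_{v,t}$ makes these into a cocycle. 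The glued bimodule satisfies $\s(\bar\Hilm_t)=A(\s(t))$, $\rg(\bar\Hilm_t)=A(\rg(t))$ and $\bar\Hilm_t\cdot A(V_{t,u})\cong\Hilm_u\cdot A(V_{t,u})$, so in particular $\bar\Hilm_{\varphi(u)}\cong\Hilm_u$ and $\bar\Hilm_U=A(U)$ for idempotent~$U$. The maps $\mu_{u,v}$, $J_u$ and $j_{u,v}$ glue correspondingly to multiplication maps $\bar\mu_{t,t'}\colon\bar\Hilm_t\otimes_A\bar\Hilm_{t'}\congto\bar\Hilm_{tt'}$, duals and inclusions, and the verification of associativity and of the remaining axioms of an action of~$\Bis(G)$ by Hilbert bimodules is carried out germ-by-germ over~$G^0$, exactly as the verification of \ref{enum:Gra1}--\ref{enum:Gra6} in the proof of Theorem~\ref{the:model_action_groupoid}. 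Since $U\mapsto\bar\Hilm_U=A(U)$ commutes with suprema, this is an action of~$G$ in the sense of Definition~\ref{def:action_etale_sober}.

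That the two passages are mutually inverse on isomorphism classes follows as in the groupoid case: $\bar\Hilm_t$ is determined up to isomorphism by $\bar\Hilm_{\varphi(u)}\cong\Hilm_u$ and $\bar\Hilm_U=A(U)$, because $\bar\Hilm_t\cdot A(V_{t,u})=\bar\Hilm_{\varphi(u)}\cdot A(V_{t,u})$ and $\s(t)=\bigcup_uV_{t,u}$. For the statement about section \(\Cst\)\nb-algebras, the canonical map $\bigoplus_{u\in S}\Hilm_u\to\bigoplus_{t\in\Bis(G)}\bar\Hilm_t$ of the underlying convolution $*$\nb-algebras is compatible with all Fell-bundle operations and inclusion maps, and its image generates because each $\bar\Hilm_t$ is assembled from the $\Hilm_u$ together with ideals of $A=\Hilm_1$; hence representations of the two Fell bundles correspond, so their universal (and reduced) section \(\Cst\)\nb-algebras coincide. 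This is the Fell-bundle analogue of the last sentence of Theorem~\ref{the:model_action_groupoid} and of Theorem~\ref{the:action_Bis_groupoid}, and rests on the same mechanism as Theorem~\ref{the:iterated_crossed_2}: the model condition $G^0\rtimes S\cong G^0\rtimes\Bis(G)$ forces the two bundles to have the same representation theory.

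The only step with genuine content -- the rest being a rerun of the proof of Theorem~\ref{the:model_action_groupoid} -- is the gluing of Hilbert bimodules along the open cover $\{V_{t,u}\}_{u\in S}$ of~$\s(t)$. In the groupoid case $\bar{L}_t$ was merely a union of subsets of the fixed arrow space~$L^1$; here one faces a genuine descent problem for Hilbert bimodules over an open cover of~$\Prim(A)$, and one must verify that the glued object exists and carries well-defined, full $A$\nb-valued inner products. The way I would handle this is to realise all the relevant $\Hilm_u$ inside a single ambient object over the ideal $A(\s(t))$ -- for instance its multiplier bimodule, or the section \(\Cst\)\nb-algebra of the $S$\nb-Fell bundle -- so that $\bar\Hilm_t$ becomes a closed span of subspaces, just as in the groupoid argument; the cocycle identity $j_{v,u}\circ j_{u,t}=j_{v,t}$ is precisely what makes this span consistent.
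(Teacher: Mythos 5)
Your proposal is correct and, in its final form, coincides with the paper's own argument for the general sober case: the paper likewise forms the section \(\Cst\)\nb-algebra \(B\) of the Fell bundle over~\(S\) and then constructs a \(\Bis(G)\)-grading \((\bar B_t)_{t\in\Bis(G)}\) on~\(B\) with \(\bar B_{\varphi(u)}=B_u\) and \(\bar B_U=A(U)\) by transcribing the proof of Theorem~\ref{the:model_action_groupoid}, so your closing observation --- that the descent problem for Hilbert bimodules is resolved by realising all the \(\Hilm_u\) inside this single ambient \(\Cst\)\nb-algebra and taking \(\bar\Hilm_t\) to be a closed linear span --- is exactly the device the paper uses (and, like you, it leaves the verification of the grading axioms to the reader). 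The only divergence is that the paper first settles the case where \(G^0\) is locally compact Hausdorff by a separate fibrewise argument through Fell bundles over~\(G\) via Theorem~\ref{the:action_Bis_groupoid}, a shortcut your uniform argument does not need.
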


The \(S\)\nb-equivariance of~\(\psi\) refers to the action of~\(S\) on
\(\Prim(A)\) from Lemma~\ref{lem:action_on_Prim}.

\begin{proof}
  Assume first that~\(G^0\) is locally compact Hausdorff.  In that
  case, an action of~\(G\) is the same as a Fell bundle over~\(G\) by
  Theorem~\ref{the:action_Bis_groupoid}.  This determines an action
  of~\(\Bis(G)\), which we may compose with~\(\varphi\) to get an
  action of~\(S\); we also get an \(S\)\nb-equivariant map~\(\psi\).
  Conversely, let an action of~\(S\) and a continuous
  \(S\)\nb-equivariant map \(\psi\colon \Prim(A)\to G^0\) be given.
  Since \(G\cong G^0\rtimes S\), we may carry over the proof of
  Theorem~\ref{the:action_Bis_groupoid}.  The \(S\)\nb-equivariance
  of~\(\psi\) gives the compatibility
  condition~\eqref{eq:compatible_HtAU}.  Hence literally the same
  argument still works.

  If~\(G^0\) is only a sober topological space, we need a different
  proof because we cannot describe \(G\)\nb-actions fibrewise.  We
  first construct the section \(\Cst\)\nb-algebra~\(B\) of the Fell
  bundle over~\(S\) corresponding to the action by
  Theorem~\ref{the:S_act_Cstar_Fell_bundle}.  This \(\Cst\)\nb-algebra
  is \(S\)\nb-graded by construction: it is the Hausdorff completion
  of the \Star{}algebra \(\bigoplus_{t\in S} \Hilm_t\) in the maximal
  \(\Cst\)\nb-seminorm that vanishes on \(j_{u,t}(\xi)\delta_u -
  \xi\delta_t\) for all \(t,u\in S\) with \(t\le u\) and all
  \(\xi\in\Hilm_t\), and we let \(B_t\subseteq B\) be the image
  of~\(\Hilm_t\) in~\(B\).  In particular, we may identify \(A=B_1\).
  Now we must construct a \(\Bis(G)\)-grading \((\bar{B}_t)_{t\in
    \Bis(G)}\) on~\(B\) with \(\bar{B}_{\varphi(t)} = B_t\) for all
  \(t\in S\) and \(\bar{B}_U = A(U)\) for all \(U\in \Open(G^0)\),
  where~\(A(U)\) denotes the ideal in~\(A\) corresponding to
  \(\psi^{-1}(U)\in\Open(\Prim(A))\).  This is done similarly to the
  proof of Theorem~\ref{the:model_action_groupoid}.  Since this is
  rather technical and we already have another proof in the locally
  compact Hausdorff case, we leave it to the determined reader to
  spell out the details of this argument.
\end{proof}

\section{Actions by automorphisms are not enough}
\label{sec:automorphisms_fail}

The following theorem shows that the multiplication action of a
non-Hausdorff groupoid on its own arrow space cannot be described by
a continuous groupoid action by automorphisms.

\begin{theorem}
  \label{the:no-go-theorem}
  Let~\(G\) be a locally quasi-compact, locally Hausdorff, étale
  groupoid with Hausdorff~\(G^0\), such that~\(G^1\) is not
  Hausdorff.  Let~\(A\) be a \(\Cst\)\nb-algebra with \(\Prim(A)
  \cong G^1\).  There is no continuous \textup{(}twisted\textup{)}
  action of~\(G\) on~\(A\) by automorphisms that induces the left
  multiplication action on~\(\Prim(A)\cong G^1\).
\end{theorem}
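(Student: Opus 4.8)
The plan is to derive a contradiction from the existence of such an action by examining what a genuine action by automorphisms forces on the primitive ideal space, and comparing that with the non-Hausdorff pathology of $G^1$. The key tension is this: a continuous (twisted) action of $G$ on $A$ by automorphisms gives, for each arrow $g \in G^1$, an automorphism (or at least a $*$-isomorphism between fibres) $\alpha_g\colon A_{\s(g)} \to A_{\rg(g)}$, and these must fit together continuously. Under the identification $\Prim(A) \cong G^1$, the induced map on $\Prim(A)$ is the left multiplication action $G^1 \times_{G^0} G^1 \to G^1$, $(g,h)\mapsto gh$. I would first pin down what structure an action by automorphisms provides that an action by Hilbert bimodules does not, namely an actual bundle of $\Cst$-algebras over $G^0$ whose fibres $A_x$ are being shuffled by the $\alpha_g$; in the étale case this is literally a Fell bundle over $G$ whose fibres are all $*$-isomorphic to one another (via the $\alpha_g$) and whose restriction to $G^0$ is a continuous field.

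First I would make precise the local model near a non-Hausdorff point. Since $G$ is étale with $G^1$ non-Hausdorff, there exist two distinct arrows $g_1, g_2 \in G^1$ with $\s(g_1)=\s(g_2)$, $\rg(g_1)=\rg(g_2)$ that cannot be separated by open sets, i.e.\ every pair of bisections $t_1 \ni g_1$, $t_2 \ni g_2$ has $t_1 \cap t_2 \ne \emptyset$ with $g_1,g_2$ in its closure; concretely there is a net in $G^0$ (identified with unit arrows or with a common bisection) converging to both $g_1$ and $g_2$. Under $\Prim(A)\cong G^1$ this means the two primitive ideals $P_1, P_2$ corresponding to $g_1,g_2$ are distinct but topologically indistinguishable-adjacent: they are limits of a common net of primitive ideals. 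Now I would use the left multiplication action: multiplying on the left by arrows near $1_{\rg(g_1)}$ should move $g_1$ and $g_2$ continuously, and because an \emph{automorphism} acts as an actual homeomorphism of $\Prim(A)$ (not merely a partial homeomorphism between open subsets), it must respect the specialisation/closure structure globally and in particular fix the fibre decomposition over $G^0$.

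The crux I would exploit is that an action by automorphisms forces $A$ to be, fibrewise over $G^0$, a \emph{continuous} field whose total space $\Prim(A)$ maps \emph{continuously and openly} onto $G^0$ via the structure map — and more importantly, that the $G$-action identifies the fibre $A_x$ for \emph{all} $x$ in a single orbit, compatibly. But $G^1 = \bigsqcup_{x\in G^0}(\text{arrows out of }x)$ as a set, and the left multiplication action of $g$ with $\s(g)=x$, $\rg(g)=y$ sends the fibre $\{h : \rg(h)=x\}$ of $G^1 \to_{\rg} G^0$ to the fibre over $y$; under $\Prim(A)\cong G^1$ using the $\rg$-map as the bundle projection to $G^0$, an action by automorphisms makes this a locally trivial (or at least fibrewise-homeomorphic) bundle. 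Then near the non-Hausdorff point: take $g_1,g_2$ as above with $\rg(g_1)=\rg(g_2)=x$, and a net $x_\lambda \to x$ in $G^0$ with arrows $g_{1,\lambda}, g_{2,\lambda}$ (in disjoint bisections through $g_1$ resp.\ $g_2$) both converging to $g_1$ \emph{and} both converging to $g_2$ is impossible on one side, giving that within a single $\rg$-fibre one cannot separate $g_1$ from $g_2$ while the action by an automorphism (a homeomorphism $A_{x}\text{-side}$) would, after translating, force the two points into distinct fibres over distinct objects — contradicting continuity of the $\Prim$-map to $G^0$. I would package this as: the existence of the automorphic action would make the anchor map $\Prim(A)\cong G^1 \to G^0$ (either $\rg$ or $\s$) into a map with Hausdorff fibres \emph{and} the action transitive enough on fibres to conclude that $G^1$ is locally homeomorphic over $G^0$ to a fixed Hausdorff fibre, whence $G^1$ is locally Hausdorff \emph{with Hausdorff slices glued by homeomorphisms of a fixed space}, forcing $G^1$ Hausdorff — contradiction.

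The hard part will be turning the intuition ``an automorphism is a total homeomorphism, not a partial one'' into a clean structural statement that is genuinely obstructed by non-Hausdorffness of $G^1$ but \emph{not} by that of a mere Fell-bundle / Hilbert-bimodule action (which the earlier sections show always exists, via Theorem~\ref{the:iterated_crossed_1} and Proposition~\ref{pro:Cstar_action_on_G1}). I expect the right formulation is: a continuous groupoid action by automorphisms on $A$ with $\Prim(A)\cong G^1$ yields a continuous \emph{field} of $\Cst$-algebras over $G^0$ whose spectrum is $G^1$ together with a homeomorphism $\s^*A \cong \rg^*A$ of the two pullbacks over $G^1$ intertwining structure maps — and this homeomorphism, combined with the continuous-field condition, upper-semicontinuity, and the computation that the two pullback spectra are $G^1\times_{\s,G^0,\rg}G^1$, forces the diagonal-type subset to be closed, which is exactly the statement that $G^1$ is Hausdorff. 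So the main obstacle, and the place I would spend the most care, is identifying precisely which closedness/Hausdorff-separation property of $G^1$ is equivalent to the existence of the automorphic (as opposed to bimodule) action, and then producing the explicit net near the non-Hausdorff point $g_1\neq g_2$ that violates it; the twisted case adds only a cocycle that does not affect the underlying homeomorphism of spectra, so it reduces to the untwisted argument.
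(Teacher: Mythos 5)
There is a genuine gap: after the (correct) opening moves --- identifying the fibrewise isomorphisms \(\alpha_g\colon A_{\s(g)}\to A_{\rg(g)}\), picking non-separated arrows \(g_1\neq g_2\) with \(\rg(g_1)=\rg(g_2)\), \(\s(g_1)=\s(g_2)\), and a net converging to both --- you never produce a working contradiction, and the two mechanisms you sketch are unsound. The ``packaging'' that an automorphic action would make \(\rg\colon G^1\to G^0\) a map with Hausdorff fibres and \(G^1\) locally homeomorphic over \(G^0\) to a fixed fibre, hence Hausdorff, proves nothing: \emph{every} étale groupoid already has \(\rg\) a local homeomorphism onto the Hausdorff space \(G^0\) with discrete fibres, and \(G^1\) is glued from Hausdorff bisections by homeomorphisms, yet it can be non-Hausdorff --- the group bundle of Section~\ref{sec:explicit_example} is exactly such an example. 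Gluing Hausdorff slices by homeomorphisms does not force a Hausdorff total space (think of the doubled interval), so no contradiction with non-Hausdorffness of \(G^1\) can come from this kind of soft bundle-theoretic statement. Likewise, the closing ``I expect the right formulation is \dots\ \(\s^*A\cong\rg^*A\) forces the diagonal to be closed'' is speculation, not an argument; no mechanism is given for why such an isomorphism of pullbacks should imply closedness, and the distinction you lean on (``total versus partial homeomorphism of \(\Prim(A)\)'') is not the real difference between automorphic and bimodule actions, since both induce the same partial homeomorphisms of \(\Prim(A)\cong G^1\).

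The missing idea is quantitative, not topological. First one must formalise continuity of the action: for bisections \(U\) it yields partial isomorphisms \(\alpha_U\colon A(V)\to A(U\cdot V)\) between ideals, compatible with restriction, where \(A(W)\) is the ideal with \(\Prim A(W)=W\). The paper then combines two facts you never use: (i) automorphisms of fibres are \emph{isometric}, and (ii) for a Hausdorff open \(U\subseteq G^1\) the norm function \(U\ni h\mapsto \norm{a(h)}\) of \(a\in A(U)\) is \emph{continuous} (Nilsen, since \(\Prim A(U)\to U\) is open and \(U\) is Hausdorff locally compact). Concretely, with bisections \(U_1\ni g_1\), \(U_2\ni g_2\), \(\s(U_1)=\s(U_2)\), \(V=U_1^*U_1\), a section \(f\in A(V)\) with \(f(1_{\s(g_1)})\neq0\), and a net \(h_n\in U_1\cap U_2\) converging to both \(g_1\) and \(g_2\), the element \(\psi=\alpha_{U_1}(f)\cdot\alpha_{U_2}(f)^*\) lies in \(A(U_1\cap U_2)\), hence vanishes at \(g_1\), so \(\norm{\psi(h_n)}\to 0\) by norm continuity on the Hausdorff set \(U_1\); but on \(U_1\cap U_2\) both \(\alpha_{U_1}\) and \(\alpha_{U_2}\) restrict to \(\alpha_{U_1\cap U_2}\), so \(\norm{\psi(h_n)}=\norm{f(1_{\s(h_n)})}^2\to\norm{f(1_{\s(g_1)})}^2\neq0\) --- a contradiction. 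Without an element of this kind, and without invoking isometry of the \(\alpha_{h_n}\) against norm continuity on Hausdorff bisections, your argument does not isolate anything that a Fell-bundle action fails to satisfy, which is precisely what the theorem requires.
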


\begin{proof}
  Since \(\Prim(A)\cong G^1\), the lattice of ideals in~\(A\) is
  order-isomorphic to the lattice of open subsets in~\(G^1\).  Let
  \(A(U)\idealin A\) for an open subset \(U\subseteq G^1\) be the
  corresponding ideal in~\(A\).  Then \(\Prim(A(U))\cong U\).

  Part of a continuous action of~\(G\) on~\(A\) is a continuous map
  \(\Prim(A)\to G^0\).  (This is equivalent to a
  \(\Cont_0(G^0)\)-algebra structure.)  Since we want to have the left
  multiplication action of~\(G^1\) on \(\Prim(A)\), we assume that
  this map becomes the range map \(G^1\to G^0\) when we identify
  \(\Prim(A)\cong G^1\).  The fibre at \(x\in G^0\) is the restriction
  of~\(A\) to the closed subset \(G^x=\{g\in G^1\mid \rg(g)=x\}\),
  which we denote by~\(A|_{G^x}\); we have \(\Prim(A|_{G^x})=G^x\).
  A \(G\)\nb-action on~\(A\) must provide isomorphisms
  \(\alpha_g\colon A|_{G^{\s(g)}} \to A|_{G^{\rg(g)}}\) for \(g\in
  G^1\).  We assume that~\(\alpha_g\) induces the map
  \(G^{\s(g)}\to G^{\rg(g)}\), \(h\mapsto gh\), on the primitive
  ideal space.

  What does continuity of \(g\mapsto\alpha_g\) mean?  Let
  \(U,V\subseteq G^1\) be bisections, then \(U\cdot V\) is also a
  bisection.  If \(g\in U\), \(h\in V\) satisfy \(\s(g)=\rg(h)\),
  then~\(\alpha_g\) restricts to an isomorphism \(\alpha_{g,h}\colon
  A|_{h}\to A|_{gh}\).  Any element of~\(U\cdot V\) is of the
  form~\(g\cdot h\) for unique \(g\in U\), \(h\in V\).  Continuity
  of~\((\alpha_g)\) means that for all bisections \(U,V\) and all
  \(a=(a_h)_{h\in V}\) in~\(A(V)\), the section \((g\cdot h)\mapsto
  \alpha_{g,h}(a_h)\) for \(g\in U\), \(h\in V\) is continuous
  on~\(U\cdot V\), that is, it belongs to~\(A(U\cdot V)\) (see also
  \cite{Popescu:Equivariant_E}*{Definition~2.3}).  Thus we get
  isomorphisms \(\alpha_U\colon A(V)\to A(U\cdot V)\).  In brief,
  \(\Bis(G)\) acts on~\(A\) by partial isomorphisms.

  Since~\(G^1\) is non-Hausdorff, there are \(g_1,g_2\in G^1\) that
  cannot be separated by open subsets.  Then \(\rg(g_1)=\rg(g_2)\)
  and \(\s(g_1)=\s(g_2)\).  Let \(U_1\) and~\(U_2\) be bisections
  of~\(G\) containing \(g_1\) and~\(g_2\), respectively.  Shrinking
  them, we may achieve that \(\s(U_1)=\s(U_2)\).  Let
  \[
  V\defeq U_1^*U_1=\{1_x\mid x\in \s(U_1)\} = U_2^*U_2;
  \]
  then \(U_1V=U_1\) and \(U_2V=U_2\).  Since \(g_1\) and~\(g_2\)
  cannot be separated, there is a net~\((h_n)\) in~\(U_1\cap U_2\)
  that converges both to \(g_1\) and to~\(g_2\).

  Let \(f\in A(V)\) with \(f(1_{\s(g_1)})\neq0\).  Then
  \(\alpha_{U_1}(f)\in A(U_1V)\) and \(\alpha_{U_2}(f)\in A(U_2V)\) by
  our continuity assumption.  Thus
  \[
  \psi\defeq \alpha_{U_1}(f)\cdot \alpha_{U_2}(f)^*\in
  A(U_1V)\cap A(U_2V) = A(U_1\cap U_2),
  \]
  so~\(\psi\) vanishes at \(g_1\) and~\(g_2\).  At \(h_n\in U_1\cap
  U_2\), we have
  \[
  \alpha_{U_1}(f)(h_n)
  = \alpha_{U_1\cap U_2}(f)(h_n)
  = \alpha_{U_2}(f)(h_n)
  = \alpha_{h_n}(f(1_{\s(h_n)})).
  \]
  Since each~\(\alpha_{h_n}\) is an isomorphism, we get
  \[
  \norm{\psi(h_n)} =
  \norm{\alpha_{h_n}(f(1_{\s(h_h)})f(1_{\s(h_n)})^*)} =
  \norm{f(1_{\s(h_h)})}^2.
  \]
  If \(U\subseteq G^1\) is Hausdorff and \(a\in A(U)\), then \(U\ni
  x\mapsto \norm{a}_x\) is continuous
  by~\cite{Nilsen:Bundles}*{Corollary 2.2} because the map \(\Prim
  A(U)\to U\) is open and~\(U\) is Hausdorff and locally compact.
  Therefore, \(\norm{\psi(h_n)}\) converges towards
  \(\norm{\psi(g_1)}=0\).  At the same time, \(\norm{\psi(h_n)}\)
  converges towards \(\norm{f(1_{\s(g_1)})}^2\neq 0\) because
  \(\s(h_n)\to \s(g_1)\) inside the Hausdorff open subset~\(V\).  This
  contradiction shows that there is no continuous action of~\(G\)
  on~\(A\) that lifts the multiplication action on \(\Prim(A)\cong
  G^1\).
\end{proof}

\begin{remark}
  \label{rem:counterexample_more_general}
  More generally, if we only assume an open continuous surjection
  \(p\colon \Prim(A)\to G^1\), then there is no continuous action
  of~\(G\) on~\(A\) such that~\(p\) is \(G\)\nb-equivariant for
  the induced action of~\(G\) on~\(\Prim(A)\) and the left
  multiplication action on~\(G^1\); the proof is exactly the same.
\end{remark}

The proof of Theorem~\ref{the:no-go-theorem} does not care about the
multiplicativity of the action, so allowing ``twisted'' actions
of~\(G\) does not help.  There are only two ways around this.
First, we may allow Fell bundles over~\(G\).  Secondly, we may allow
actions of the inverse semigroup~\(\Bis(G)\).  After stabilisation
every Fell bundle becomes a twisted action by partial automorphisms
(see~\cite{BussExel:Regular.Fell.Bundle}).  We cannot remove the
twist, however, because an untwisted action of~\(\Bis(G)\) by
automorphisms would give an action of~\(G\) by automorphisms as
well, which cannot exist by Theorem~\ref{the:no-go-theorem}.

\section{A simple explicit example}
\label{sec:explicit_example}

Let~\(G\) be the group bundle over \(G^0=[0,1]\) with
trivial isotropy groups~\(G(x)\) for \(x\neq0\) and with
\(G(0)\cong\Z/2=\{1,-1\}\).  So, as a set, \(G\)~is \((0,1]\cup
\{0^+,0^-\}\) with \(0^+\)~corresponding to \(+1\in \Z/2\) and
\(0^-\)~to \(-1\in \Z/2\).  The topology on~\(G\) is the quotient
topology from \([0,1]\times\Z/2\), where we divide by the equivalence
relation generated by \((x,1)\sim (x,-1)\) for \(x\neq0\).  With this
topology, \(G\)~is an étale, quasi-compact, second countable, locally
Hausdorff, non-Hausdorff groupoid (even a group bundle).  The points
\(0^+\) and~\(0^-\) cannot be topologically separated: any net
in~\((0,1]\) converging to~\(0^+\) also converges to~\(0^-\), and vice
versa.

Let~\(H\) be the groupoid of the equivalence relation~\(\sim\) on
\([0,1]\times \Z/2\) just defined.  Its \(\Cst\)\nb-algebra
\(C^*(H)\cong C^*_\red(H)\) is
\[
A\defeq \{f\in \Cont([0,1],\Mat_2): f(0)\text{ is diagonal}\}.
\]
(This can be proved using the same idea as in
\cite{Clark-Huef-Raeburn:Fell_algebras}*{Example~7.1}.)  This is a
\(\Cst\)\nb-algebra over~\([0,1]\) with fibres \(A_x\cong\Mat_2\) at
\(x\neq0\) and \(A_0\cong\C^2\), and it has \(\hat{A} \cong
\Prim(A)\cong G^1\) (this is a special case of
\cite{Clark-Huef-Raeburn:Fell_algebras}*{Corollary~5.4}).
Theorem~\ref{the:no-go-theorem} shows that there is no action
of~\(G\) on~\(A\) by automorphisms that would model the left
multiplication action of~\(G\) on~\(G^1\).

Since~\(A\) is the groupoid \(\Cst\)\nb-algebra of the \v{C}ech
groupoid for the covering \([0^+,1]\cup [0^-,1] = H^1\), our main
results give an action of~\(G\) on~\(A\) by Hilbert bimodules.  We
first describe it as an inverse semigroup action for a very small
inverse semigroup~\(S\) that models~\(G\).  We consider three
special bisections of~\(G\):
\[
1= [0^+,1]=G^1\setminus\{0^-\},\quad
g=[0^-,1]=G^1\setminus\{0^+\},\quad
e=(0,1] = g\cap 1.
\]
The bisection~\(1\) is the unit bisection of~\(G\), so \(1x=x=x1\)
for all \(x\in\{1,g,e\}\).  Moreover, \(g^2=1\), \(e^2=e\), and
\(eg=ge=e\).  Thus \(S\defeq \{1,e,g\}\) is an inverse semigroup
with \(x^*=x\) for all \(x\in\{1,e,g\}\).  A bisection~\(t\)
of~\(G\) cannot contain both \(0^+\) and~\(0^-\).  Hence either
\(0^+\in t\subseteq 1\), \(0^-\in t\subseteq g\), or \(t\subseteq
e=1\cap g\).

The groupoid~\(G\) is the étale groupoid associated to the trivial
action of~\(S\) on~\(G^0\); here the trivial action has \(1\)
and~\(g\) acting by the identity on~\(G^0\) and~\(e\) acting by the
identity on \((0,1]\subseteq G^0\).  An action of~\(G\) on a
groupoid or a \(\Cst\)\nb-algebra is equivalent to an action
of~\(S\) together with a compatible action of \(G^0=[0,1]\)
(Theorem~\ref{the:model_action_Cstar}).

The transformation groupoid~\(\Link\) of the \(S\)\nb-action on~\(H\) may
be identified with the groupoid of the equivalence relation on
\([0,1]\sqcup [0,1]\) that identifies the two copies of~\((0,1]\),
so that
\begin{multline*}
\Link^1= [0,1]\times\{(+,+),(+,-),(-,+),(-,-)\} \\
\subseteq([0,1]\times\{(+,+),(+,-),(-,+),(-,-)\})^2.
\end{multline*}
The \(S\)\nb-grading on~\(\Link\) has
\begin{align*}
  \Link_1 &= (0,1]\times\{(+,+),(+,-),(-,+),(-,-)\} \sqcup
  \{0\}\times\{(+,+),(-,-)\},\\
  \Link_g &= (0,1]\times\{(+,+),(+,-),(-,+),(-,-)\} \sqcup
  \{0\}\times\{(+,-), (-,+)\},\\
  \Link_e &= (0,1]\times\{(+,+),(+,-),(-,+),(-,-)\} =
  \Link_1\cap \Link_g.
\end{align*}
So \(\Link_1\cong H\) is open but not closed.  The
\(\Cst\)\nb-algebra of~\(\Link\) is \(B\defeq \Cont([0,1],\Mat_2)\).

To let~\(S\) act on the \(\Cst\)\nb-algebra~\(A\) of~\(H\), we use
the transformation groupoid \(\Cst\)\nb-algebra~\(B\) and the involution
\(u \defeq \bigl(\begin{smallmatrix}0&1\\1&0\end{smallmatrix}\bigr)
\in B\).  We have \(u=u^*\) and \(u^2=1\), \(u\cdot A(0,1] = A(0,1]
= A(0,1]\cdot u\) and \(u A = A u\) as subsets of~\(B\).  Let
\(A_1\defeq A\), \(A_e\defeq A(0,1]\subseteq A_1\), and \(A_g\defeq
uA=Au\).  These subspaces~\(A_x\) for \(x\in S\) satisfy
\(A_x^*=A_x=A_{x^*}\) for all \(x\in S\) and \(A_x\cdot A_y =
A_{xy}\) for all \(x,y\in S\); in particular, \(A_g\)~is a full
Hilbert bimodule over~\(A_1\) with inner products given by the usual
formulas \(a_1^*\cdot a_2\) and \(a_1\cdot a_2^*\).  Furthermore,
\(A_1\cap A_g=A_e\) and \(A_1 + A_g=B\) because elements of~\(A_g\)
are precisely those \(f\in B\) with off-diagonal~\(f(0)\).  Hence
the map \(g\mapsto A_g\) defines an action of~\(S\) on~\(A\) by
Hilbert bimodules.  Since \(A_1+A_g\) is already
complete in the \(\Cst\)\nb-norm of~\(B\), there is only one
\(\Cst\)\nb-norm on~\(A_1+A_g\) that extends the given
\(\Cst\)\nb-norm on~\(A_1\).  Thus the sectional \(\Cst\)\nb-algebra
for the resulting Fell bundle over~\(S\) is~\(B\), which is
Morita--Rieffel equivalent to~\(\Cont[0,1]\).

The \(S\)\nb-action on~\(A\) extends to all bisections of~\(G\)
because they are all contained in \(1\) or~\(g\): if \(t\subseteq
G^1\) is a bisection, then let \(A_t=A_1|_{\s(t)}\) if \(t\subseteq
1\) and \(A_t=A_g|_{\s(t)}\) if \(t\subseteq g\); this is consistent
for \(t\subseteq 1\cap g=e\) because \(A_e=A_1\cap A_g\).

Next we describe a twisted \(S\)\nb-action by partial automorphisms
of~\(A\) that induces the \(S\)\nb-action by Hilbert bimodules
described above.  (This is possible by
\cite{BussExel:Regular.Fell.Bundle}*{Corollary~4.16} because our
saturated Fell bundle is regular in the notation
of~\cite{BussExel:Regular.Fell.Bundle}.)

A twisted \(S\)\nb-action by partial automorphisms is given by
ideals \(A_1=A\) and~\(A_e\) with isomorphisms \(\alpha_x\colon
A_{xx^*}\to A_{x^*x}\) and unitary multipliers (the twists)
\(\omega(x,y)\) in \(\Mult(A_{xyy^*x^*})\) for \(x,y\in S\).  For the
idempotent elements \(x=e,1\), the isomorphism~\(\alpha_x\) is the
identity; for \(x=g\), it is the order-\(2\)-automorphism
\(\alpha_g\colon A\to A\), \(a\mapsto uau\), because \(a_1\cdot ua_2
= u\cdot (ua_1u\cdot a_2)\) for all \(a_1\in A_1\), \(ua_2\in A_g\).
The automorphism~\(\alpha_g\) is \emph{not} inner on~\(A_1\) because
\(u\in B\) does not belong to~\(\Mult(A)\).  The restriction
of~\(\alpha_g\) to the ideal~\(A_e\) becomes inner, however, because
\(u\in\Mult(A(0,1])\).  This unitary~\(u\) enters in the twisting
unitaries~\(\omega(x,y)\) for \(x,y\in S\); they are \(1\) if
\(x=1\) or \(y=1\), or if \((x,y)\) is \((e,e)\) or~\((g,g)\)
(\(\alpha_g^2=\Id_A = \alpha_1\)).  The remaining cases are
\(\omega(e,g)=\omega(g,e) = u|_{A_e}\), that is, \(u\)~viewed as a
multiplier of the ideal \(A_e=A(0,1]\).  It is routine to check that
this data gives a twisted action of~\(S\) on~\(A\) in the sense of
\cite{BussExel:Regular.Fell.Bundle}*{Definition~4.1} and that the
resulting saturated Fell bundle over~\(S\) is isomorphic to the one
described above.  Incidentally, this is not a twisted action in the
sense of Sieben~\cite{SiebenTwistedActions} because \(\omega(e,g)\)
and~\(\omega(g,e)\) are non-trivial although~\(e\) is idempotent.

This twisted \(S\)\nb-action cannot be turned into a groupoid action
of~\(G\) by partial automorphisms because for \(x\in 1\cap g\),
the restrictions of \(\alpha_g\) and~\(\alpha_1\) to \(A|_{\s(x)}\)
differ by a non-trivial inner automorphism.  This impossibility is
in accord with Theorem~\ref{the:no-go-theorem}.

\begin{remark}
  \label{rem:Packer-Raeburn}
  The Packer--Raeburn Stabilisation Trick replaces a twisted group
  action by an untwisted action on a suitable
  \(\Cst\)\nb-stabilisation.  We claim that this cannot be done for
  the above inverse semigroup twisted action.  Let~\(D\) be a
  \(\Cst\)\nb-algebra with an untwisted action of~\(S\) by
  automorphisms.  Then \(1\) and~\(e\) act by the identity on~\(D\)
  and by some ideal \(D_e\idealin D\), respectively, and~\(g\)
  acts by some automorphism~\(\alpha_g\) on~\(D\).  If there is no
  twist, then \(\alpha_g|_{D_e} = \alpha_1|_{D_e}\) is the identity
  on~\(D_e\) because \(eg=e=ge\).  Suppose that~\(D\) is also a
  \(\Cst\)\nb-algebra over~\([0,1]\), with \(D((0,1]) = D_e\).  Then
  this allows to define an action of the groupoid~\(G\) on~\(D\) by
  letting elements of \(g\) or~\(1\) act by the fibre restrictions
  of \(\alpha_g\) and~\(\Id_D\), respectively.  This gives a
  well-defined, untwisted action of~\(G\) on~\(D\).
  Theorem~\ref{the:no-go-theorem} implies that
  \(\Prim(D)\not\cong G^1\), so that \(A\) and~\(D\) cannot be
  Morita--Rieffel equivalent.  This example therefore shows that the
  Packer--Raeburn Stabilisation Trick cannot be extended from groups
  to inverse semigroups or non-Hausdorff groupoids.
\end{remark}

\appendix
\section{Preliminaries on topological groupoids}
\label{sec:preliminaries}

This appendix defines topological groupoids and equivalences between
them, following~\cite{Meyer-Zhu:Groupoids}.  The point is that all
this works smoothly without assuming topological spaces to be
Hausdorff or locally (quasi)compact, if we choose appropriate
definitions.  The theory of possibly non-Hausdorff topological
groupoids becomes very natural if one treats topological groupoids,
Lie groupoids, infinite-dimensional Lie groupoids (modelled on
Banach or Fréchet manifolds), and other types of groupoids
simultaneously as in~\cite{Meyer-Zhu:Groupoids}.  Here we recall the
results and definitions from~\cite{Meyer-Zhu:Groupoids} that are
relevant for us.

The theory of topological groupoids and their principal bundles and
equivalences depends on a choice of ``covers'' in the category of
topological spaces (see~\cite{Meyer-Zhu:Groupoids}).  We choose the
open surjections as covers.  This means that we require \emph{the
  range and source maps in a topological groupoid, the bundle
  projection in a principal bundle, and the anchor maps in a (bibundle)
  equivalence to be open surjections}.

Following Bourbaki, we require compact and locally compact spaces to
be Hausdorff.  Since many authors allow non-Hausdorff locally
compact spaces, we usually speak of ``Hausdorff locally compact''
spaces to avoid confusion.  A topological space is \emph{locally
  quasi-compact} if every point has a neighbourhood basis consisting
of quasi-compact neighbourhoods.  This is strictly more than having
a single quasi-compact neighbourhood, but both notions coincide in
the locally Hausdorff case, which is the case we are interested in.
Recall that a topological space is \emph{locally Hausdorff} if every
point has a Hausdorff neighbourhood (and thus a neighbourhood basis
consisting of Hausdorff neighbourhoods).  A space is locally
Hausdorff, locally quasi-compact if and only if every point has a
compact (hence Hausdorff) neighbourhood.  It would make sense to call
such spaces ``locally compact,'' if it were not for the conflict
with other established notation.

\subsection{Topological groupoids,
principal bundles, and equivalences}
\label{sec:groupoids_bundles_equivalences}

We now specialise the general definitions of groupoids, groupoid
actions, principal bundles, basic groupoid actions and bibundle
equivalences in~\cite{Meyer-Zhu:Groupoids} to the category of (all)
topological spaces with open surjections as covers.

\begin{proposition}
  \label{pro:Top_open_groupoid}
  A topological groupoid consists of topological spaces \(G^0\)
  and~\(G^1\) and continuous maps \(\rg,\s\colon
  G^1\rightrightarrows G^0\) and \(m\colon
  G^1\times_{\s,G^0,\rg} G^1\to G^1\), \((g_1,g_2)\mapsto
  g_1\cdot g_2\), such that
  \begin{enumerate}[label=\textup{(G\arabic*)}]
  \item \label{enum:Gr1} \(\s(g_1\cdot g_2)=\s(g_2)\) and
    \(\rg(g_1\cdot g_2)=\rg(g_1)\) for all \(g_1,g_2\in G^1\);
  \item \label{enum:Gr2} \(m\) is associative: \((g_1\cdot
    g_2)\cdot g_3=g_1\cdot (g_2\cdot g_3)\) for all
    \(g_1,g_2,g_3\in G^1\) with \(\s(g_1)=\rg(g_2)\),
    \(\s(g_2)=\rg(g_3)\);
  \item \label{enum:Gr3} the following two maps are homeomorphisms:
    \begin{alignat*}{2}
      G^1\times_{\s,G^0,\rg} G^1&\to
      G^1\times_{\s,G^0,\s} G^1,
      &\qquad (g_1,g_2)&\mapsto (g_1\cdot g_2,g_2),\\
      G^1\times_{\s,G^0,\rg} G^1&\to
      G^1\times_{\rg,G^0,\rg} G^1,
      &\qquad (g_1,g_2)&\mapsto (g_1,g_1\cdot g_2),
    \end{alignat*}
  \item \label{enum:Gr4} \(\rg\) and~\(\s\) are open surjections.
  \end{enumerate}
  Then~\(m\) is open and surjective and there are continuous maps
  \(G^0\to G^1\) and~\(G^1\to G^1\) with the usual properties of
  unit and inversion.  Conversely, the maps in~\ref{enum:Gr3}
  are homeomorphisms if~\(G\) has continuous unit and inversion maps.
\end{proposition}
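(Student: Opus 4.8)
The plan is to treat the forward and backward implications separately, and in both directions to exploit the homeomorphisms in~\ref{enum:Gr3} as the sole ``non-formal'' ingredient, deriving everything else by diagram manipulation. I would first assume~\ref{enum:Gr1}--\ref{enum:Gr4} and construct the unit and inversion maps. To get units, I consider the homeomorphism \(L\colon G^1\times_{\s,G^0,\rg} G^1\to G^1\times_{\rg,G^0,\rg} G^1\), \((g_1,g_2)\mapsto (g_1,g_1 g_2)\); its inverse sends \((g_1,g_1g_2)\) to \((g_1,g_2)\), and applying it along the diagonal \((g,g)\) produces, for each \(g\), a canonical element \(1_{\rg(g)}\) with \(1_{\rg(g)} g = g\). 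One checks this depends only on \(\rg(g)\) (using that \(\rg\) is a surjection and that~\(L\) is a homeomorphism over the base~\(G^0\)), giving a continuous map \(G^0\to G^1\), \(x\mapsto 1_x\); the analogous argument with the other homeomorphism in~\ref{enum:Gr3} gives right units, and associativity~\ref{enum:Gr2} forces left and right units to agree. For inversion, fix \(g\in G^1\); the fibre of the first map in~\ref{enum:Gr3} over \((1_{\rg(g)}, g)\in G^1\times_{\s,G^0,\s}G^1\) is a single point \((h,g)\) with \(hg = 1_{\rg(g)}\), and continuity of \(g\mapsto h\eqdef g^{-1}\) follows because it is obtained by composing the inverse homeomorphism with continuous maps. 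A short diagram chase with~\ref{enum:Gr2} then yields \(g g^{-1} = 1_{\rg(g)}\), \((g^{-1})^{-1} = g\), and the source/range compatibilities.

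For the openness and surjectivity of~\(m\): surjectivity is immediate since \(g = g\cdot 1_{\s(g)}\). Openness I would get by writing \(m\) as the composite
\[
G^1\times_{\s,G^0,\rg} G^1 \xrightarrow{(g_1,g_2)\mapsto (g_1 g_2, g_2)} G^1\times_{\s,G^0,\s} G^1 \xrightarrow{\mathrm{pr}_1} G^1,
\]
where the first map is a homeomorphism by~\ref{enum:Gr3} and the projection \(\mathrm{pr}_1\) is open because \(\s\colon G^1\to G^0\) is open (pullbacks of open surjections along continuous maps are open surjections). Hence \(m\) is open as a composite of an open map with a homeomorphism.

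The converse is the formal direction: given continuous unit and inversion maps with the usual groupoid identities, the inverse of \((g_1,g_2)\mapsto(g_1 g_2,g_2)\) is visibly \((h,g_2)\mapsto (h g_2^{-1}, g_2)\), which is continuous since inversion and~\(m\) are; similarly for the second map in~\ref{enum:Gr3}. So both maps are homeomorphisms. I expect the main obstacle to be the bookkeeping in the first direction: one must verify, purely from the two abstract homeomorphisms in~\ref{enum:Gr3} plus~\ref{enum:Gr2} and~\ref{enum:Gr4}, that the ``unit'' extracted from the diagonal is genuinely two-sided and that inversion is a global involution — this is exactly the kind of argument that is routine once set up correctly but easy to get tangled in. Since this proposition merely reconciles the bicategory-friendly axioms of~\cite{Meyer-Zhu:Groupoids} with the classical definition, I would keep the write-up terse and cite~\cite{Meyer-Zhu:Groupoids} for the details that are genuinely mechanical.
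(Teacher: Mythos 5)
Your proposal is correct in substance, but it takes a genuinely different route from the paper: the paper's entire proof is a citation, observing that \ref{enum:Gr1}--\ref{enum:Gr4} are exactly \cite{Meyer-Zhu:Groupoids}*{Definition 3.2} and quoting \cite{Meyer-Zhu:Groupoids}*{Proposition 3.6} for the openness and surjectivity of~\(m\) and for the equivalence with the classical definition via units and inversion, proved there in the general setting of a category with a chosen pretopology.  You instead carry out the argument directly in the category of topological spaces with open surjections as covers.  This buys a self-contained proof in which one sees exactly where each hypothesis enters -- openness of~\(\s\) gives openness of~\(m\) through your factorisation \(m=\mathrm{pr}_1\circ(\text{first map in \ref{enum:Gr3}})\) with \(\mathrm{pr}_1\) open as a pullback of~\(\s\), and it makes~\(\s\) a quotient map so that the unit section is continuous -- at the cost of redoing what the reference provides once and for all in greater generality.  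The converse direction is as formal as you say.

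Three bookkeeping repairs are needed before your sketch is a proof.  First, you have swapped the two homeomorphisms: the inverse of \((g_1,g_2)\mapsto(g_1,g_1\cdot g_2)\) applied to the diagonal \((g,g)\) yields the unique \(u(g)\) with \(g\cdot u(g)=g\), that is, a \emph{right} unit at~\(\s(g)\); left units \(1_{\rg(g)}\) with \(1_{\rg(g)}\cdot g=g\) come from the other map \((g_1,g_2)\mapsto(g_1\cdot g_2,g_2)\).  Second, in the construction of the inverse the point \((1_{\rg(g)},g)\) does not lie in \(G^1\times_{\s,G^0,\s}G^1\) unless \(\rg(g)=\s(g)\); the correct point is \((1_{\s(g)},g)\), whose unique preimage \((h,g)\) gives \(h\cdot g=1_{\s(g)}\) with \(\s(h)=\rg(g)\).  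Third -- the only place where a real argument is required -- your justification that the extracted unit depends only on the base point (``\(\rg\) is a surjection and \(L\) is a homeomorphism over the base'') does not yet prove it.  The standard argument: uniqueness in the fibre together with \ref{enum:Gr2} gives \(u(g)\cdot u(g)=u(g)\); then for \(\s(g')=\s(g)\) use surjectivity of the \emph{other} homeomorphism to write \(g'=k\cdot u(g)\), whence \(g'\cdot u(g)=k\cdot\bigl(u(g)\cdot u(g)\bigr)=g'\) and \(u(g')=u(g)\) by uniqueness.  With these repairs your argument goes through; alternatively, do as the paper does and simply invoke \cite{Meyer-Zhu:Groupoids}.
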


\begin{proof}
  Our definition of a groupoid is exactly
  \cite{Meyer-Zhu:Groupoids}*{Definition~3.4}.  It implies
  that~\(m\) is open and surjective and is equivalent to the usual
  one with unit and inverse by \cite{Meyer-Zhu:Groupoids}*{Proposition
    3.6}.
\end{proof}

Let~\(G\) be a topological groupoid as above.

\begin{proposition}
  \label{pro:Top_open_action}
  A \textup{(}right\textup{)} \(G\)\nb-action is a space~\(X\)
  with continuous maps \(\s\colon X\to G^0\) and \(m\colon
  X\times_{\s,G^0,\rg} G^1\to X\), \((x,g)\mapsto x\cdot
  g\), such that
  \begin{enumerate}[label=\textup{(A\arabic*)}]
  \item \label{enum:Act1} \(\s(x\cdot g)=\s(g)\) for all \(x\in X\),
    \(g\in G^1\) with \(\s(x)=\rg(g)\);
  \item \label{enum:Act2} \(m\) is associative: \((x\cdot g_1)\cdot
    g_2 = x\cdot (g_1\cdot g_2)\) for all \(x\in X\),
    \(g_1,g_2\in G^1\) with \(\s(x)=\rg(g_1)\) and
    \(\s(g_1)=\rg(g_2)\);
  \item \label{enum:Act3} \(m\) is surjective.
  \end{enumerate}
  Condition~\ref{enum:Act3} holds if and only if \(x\cdot
  1_{\s(x)}=x\) for all \(x\in X\), if and only if~\(m\) is an
  open surjection, if and only if the following map is a
  homeomorphism:
  \[
  X\times_{\s,G^0,\rg} G^1\to
  X\times_{\s,G^0,\s} G^1,
  \qquad (x,g)\mapsto (x\cdot g,g).
  \]
\end{proposition}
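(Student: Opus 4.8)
The plan is to prove a cycle of implications among the four conditions, using that a topological groupoid in the sense of Proposition~\ref{pro:Top_open_groupoid} comes with continuous unit and inversion maps satisfying the usual identities $\rg(1_x)=x=\s(1_x)$, $g\cdot 1_{\s(g)}=g=1_{\rg(g)}\cdot g$, $g\cdot g^{-1}=1_{\rg(g)}$ and $g^{-1}\cdot g=1_{\s(g)}$.

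First I would dispose of the trivial directions. If $x\cdot 1_{\s(x)}=x$ for all $x\in X$, then $x=m(x,1_{\s(x)})$, so $m$ is surjective, which is~\ref{enum:Act3}; and an open surjection is in particular surjective, so the third condition also implies~\ref{enum:Act3}. Conversely, assuming~\ref{enum:Act3}, write a given $x\in X$ as $x=y\cdot g$ with $\s(y)=\rg(g)$. Then $\s(x)=\s(g)$ by~\ref{enum:Act1}, and associativity together with $g\cdot 1_{\s(g)}=g$ gives $x\cdot 1_{\s(x)}=(y\cdot g)\cdot 1_{\s(g)}=y\cdot(g\cdot 1_{\s(g)})=y\cdot g=x$. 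Thus~\ref{enum:Act3} and the identity $x\cdot 1_{\s(x)}=x$ are equivalent.

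The substantial step is to pass between the identity $x\cdot 1_{\s(x)}=x$ and the homeomorphism statement. Writing $\Phi\colon X\times_{\s,G^0,\rg}G^1\to X\times_{\s,G^0,\s}G^1$, $(x,g)\mapsto(x\cdot g,g)$, I would exhibit as its inverse the map $\Psi(x,g)\defeq(x\cdot g^{-1},g)$; this is well defined because $\s(x)=\s(g)=\rg(g^{-1})$ makes $x\cdot g^{-1}$ defined with $\s(x\cdot g^{-1})=\s(g^{-1})=\rg(g)$, and it is continuous, being built from $m$ and the continuous inversion map. Using associativity, the unit identities and $x\cdot 1_{\s(x)}=x$, one checks $\Phi\Psi(x,g)=\bigl((x\cdot g^{-1})\cdot g,g\bigr)=(x\cdot 1_{\s(g)},g)=(x,g)$ and, symmetrically, $\Psi\Phi(x,g)=(x\cdot 1_{\rg(g)},g)=(x,g)$, so $\Phi$ is a homeomorphism. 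Conversely, if $\Phi$ is a homeomorphism, then for $x\in X$ the pair $(x,1_{\s(x)})$ lies in its target, hence equals $\Phi(y,g)$ for some $(y,g)$; this forces $g=1_{\s(x)}$ and $y\cdot 1_{\s(x)}=x$, and then $x\cdot 1_{\s(x)}=(y\cdot 1_{\s(x)})\cdot 1_{\s(x)}=x$.

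It remains to deduce that $m$ is an open surjection from the homeomorphism $\Phi$, which closes the cycle. Here I would factor $m=\mathrm{pr}_1\circ\Phi$, where $\mathrm{pr}_1\colon X\times_{\s,G^0,\s}G^1\to X$ is the first projection. This projection is the base change of the source map $\s\colon G^1\to G^0$ along $\s\colon X\to G^0$, and $\s\colon G^1\to G^0$ is an open surjection by~\ref{enum:Gr4}; since open surjections are stable under pullback, $\mathrm{pr}_1$ is an open surjection, hence so is $m$. I do not expect a genuine obstacle: the only care required is bookkeeping of the range and source constraints so that each product $x\cdot g$ and $x\cdot g^{-1}$ written down actually lies in the domain of $m$, together with the stability of open surjections under base change --- precisely the property for which open surjections were chosen as covers in~\cite{Meyer-Zhu:Groupoids}.
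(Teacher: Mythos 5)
Your proof is correct and complete: the cycle (A3) \(\Leftrightarrow\) unit identity \(\Rightarrow\) homeomorphism \(\Rightarrow\) open surjection \(\Rightarrow\) (A3), together with the direct check that the homeomorphism forces \(x\cdot 1_{\s(x)}=x\), covers all four equivalences, and each step is justified (the inverse \(\Psi(x,g)=(x\cdot g^{-1},g)\) is well defined and continuous because the groupoid of Proposition~\ref{pro:Top_open_groupoid} has continuous unit and inversion maps, and the factorisation \(m=\mathrm{pr}_1\circ\Phi\) with \(\mathrm{pr}_1\) the base change of the open surjection \(\s\colon G^1\to G^0\) is exactly the right mechanism for openness). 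Note that the paper does not prove this proposition at all: its ``proof'' is a citation of \cite{Meyer-Zhu:Groupoids}*{Definition and Lemma 4.1}. Your argument is essentially the standard one carried out in that reference -- invert the shear map using the groupoid inversion, and deduce openness of \(m\) from stability of open surjections (the chosen covers) under pullback -- so what you have written is a sound self-contained replacement for the citation rather than a genuinely different route. One small remark: your step ``\(\Phi\) homeomorphism \(\Rightarrow\) unit identity'' only uses surjectivity of \(\Phi\), which slightly strengthens the statement but is of course harmless.
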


\begin{proof}
  This is contained in \cite{Meyer-Zhu:Groupoids}*{Definition and
    Lemma 4.1}.
\end{proof}

Left actions are defined similarly and are equivalent to right actions
by \(g\cdot x=x\cdot g^{-1}\).  The transformation groupoid
\(X\rtimes G\) of a groupoid action is a topological groupoid by
\cite{Meyer-Zhu:Groupoids}*{Definition and Lemma 4.11}.  Any groupoid
acts on~\(G^0\) by \(\rg(g)\cdot g \defeq \s(g)\) for all
\(g\in G^1\), and on~\(G^1\) both on the left and right by left and
right multiplication.

\begin{proposition}
  \label{pro:Top_open_orbit}
  For any \(G\)\nb-action on a topological space~\(X\), the orbit
  space projection \(X\to X/G\) is an open surjection, and
  \(X/G\)~is Hausdorff if and only if
  \begin{multline*}
    X\times_{X/G} X = \{(x_1,x_2)\in X\mid
    \\\text{there is }g\in G^1\text{ with }\s(x_1)=\rg(g)
    \text{ and }x_1\cdot g=x_2\}
  \end{multline*}
  is a closed subset of~\(X\times X\).
\end{proposition}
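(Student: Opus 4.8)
The plan is to treat the two assertions in turn, deducing the Hausdorffness criterion from the general principle that, for an open surjection, Hausdorffness of the target is equivalent to closedness of the associated fibre product in the source.

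First I would verify that the orbit projection \(q\colon X\to X/G\) is an open surjection. Surjectivity is immediate from the definition of the quotient. For openness, let \(U\subseteq X\) be open. Its saturation \(q^{-1}(q(U))\) is exactly \(U\cdot G^1 = \{u\cdot g\mid u\in U,\ g\in G^1,\ \s(u)=\rg(g)\}\), which is the image of the open subset \(U\times_{\s,G^0,\rg} G^1\) of \(X\times_{\s,G^0,\rg} G^1\) under the action map~\(m\). By Proposition~\ref{pro:Top_open_action}, Condition~\ref{enum:Act3} forces~\(m\) to be an open surjection, so \(q^{-1}(q(U))\) is open, and hence \(q(U)\) is open in the quotient topology. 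Thus~\(q\) is open.

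Next I would record the set-theoretic identification. Put \(R\defeq X\times_{X/G}X = \{(x_1,x_2)\in X\times X\mid q(x_1)=q(x_2)\}\). Two points of~\(X\) have the same image in~\(X/G\) precisely when they lie in a common \(G\)\nb-orbit, that is, when there is \(g\in G^1\) with \(\s(x_1)=\rg(g)\) and \(x_1\cdot g=x_2\) (this relation is symmetric since \(x_2\cdot g^{-1}=x_1\), and it forces \(\s(x_2)=\s(g)\) by~\ref{enum:Act1}). Hence~\(R\) coincides with the subset displayed in the statement, and it remains to show that \(X/G\) is Hausdorff if and only if~\(R\) is closed in \(X\times X\).

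For this equivalence I would use that, \(q\) being an open surjection, the map \(q\times q\colon X\times X\to (X/G)\times(X/G)\) is again an open surjection (a product of open maps is open, as it sends a box \(U_1\times U_2\) to the open set \(q(U_1)\times q(U_2)\), and a product of surjections is surjective). One direction is formal: if \(X/G\) is Hausdorff, the diagonal \(\Delta\subseteq (X/G)\times(X/G)\) is closed, and \(R=(q\times q)^{-1}(\Delta)\) is closed by continuity. Conversely, if~\(R\) is closed, then \((X\times X)\setminus R\) is open, so its image under the open map \(q\times q\) is open; this image equals \(\bigl((X/G)\times(X/G)\bigr)\setminus\Delta\), using surjectivity of \(q\times q\) for one inclusion and the observation that \((q\times q)(x_1,x_2)\in\Delta\) implies \((x_1,x_2)\in R\) for the other. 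Hence \(\Delta\) is closed and \(X/G\) is Hausdorff.

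The only genuine subtlety is the converse direction of the last step: it is precisely there that openness of~\(q\) (hence of \(q\times q\)) is needed, since closedness of~\(R\) alone would not let us pass the complement to the quotient. Everything else is routine bookkeeping with Proposition~\ref{pro:Top_open_action}; I would present the short argument for completeness rather than cite it.
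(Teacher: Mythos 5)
Your proof is correct, and it does directly what the paper delegates to citations: the paper simply invokes \cite{Meyer-Zhu:Groupoids}*{Proposition 9.31} for openness of the orbit projection and \cite{Meyer-Zhu:Groupoids}*{Proposition 9.15} (via the remark that open surjections are biquotient maps) for the Hausdorffness criterion, whereas you prove both facts from scratch. Your two ingredients are exactly the ones hidden in those references: the saturation \(q^{-1}(q(U)) = U\cdot G^1\) is open because it is the image of the open set \(U\times_{\s,G^0,\rg} G^1\) under the action map, which Proposition~\ref{pro:Top_open_action} guarantees to be open (and \(U\subseteq U\cdot G^1\) since \(x\cdot 1_{\s(x)}=x\)); and for the second part, \(q\times q\) is an open surjection, so closedness of \(X\times_{X/G}X=(q\times q)^{-1}(\Delta)\) is equivalent to closedness of the diagonal, hence to Hausdorffness of \(X/G\), with openness of \(q\) being precisely what makes the converse direction work, as you note. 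The only gain of the paper's route is that the cited results hold in the more general setting of groupoids in categories with pretopology; your argument buys self-containedness at no mathematical cost in the purely topological case at hand.
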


\begin{proof}
  The orbit space projection is open by
  \cite{Meyer-Zhu:Groupoids}*{Proposition 9.31} because the range and
  source maps of~\(G\) are open.  By
  \cite{Meyer-Zhu:Groupoids}*{Proposition 9.18}, \(X/G\)~is
  Hausdorff if and only if \(X\times_{X/G} X\) is closed
  in~\(X\times X\) (open surjections are clearly biquotient maps,
  see the discussion in~\cite{Meyer-Zhu:Groupoids}*{Section 9.6}).
\end{proof}

We now specialise the general concepts of basic actions and principal
bundles from~\cite{Meyer-Zhu:Groupoids} to our context.

\begin{proposition}
  \label{pro:Top_open_basic}
  A right \(G\)\nb-action is \emph{basic} if the map
  \begin{equation}
    \label{eq:basic_map}
    X\times_{\s,G^0,\rg} G^1\to
    X\times X,\qquad (x,g)\mapsto (x,x\cdot g),
  \end{equation}
  is a homeomorphism onto its image with the subspace topology.

  A \emph{principal right \(G\)\nb-bundle} is a space~\(X\) with
  continuous maps \(\s\colon X\to G^0\), \(p\colon
  X\to Z\), and \(m\colon X\times_{\s,G^0,\rg} G^1\to
  X\), \((x,g)\mapsto x\cdot g\), such that
  \begin{enumerate}[label=\textup{(Pr\arabic*)}]
  \item \label{enum:Prin1} \(\s(x\cdot g)=\s(g)\) and \(p(x\cdot
    g)=p(x)\) for all \(x\in X\), \(g\in G^1\) with
    \(\s(x)=\rg(g)\);
  \item \label{enum:Prin2} \(m\) is associative: \((x\cdot g_1)\cdot
    g_2 = x\cdot (g_1\cdot g_2)\) for all \(x\in X\),
    \(g_1,g_2\in G^1\) with \(\s(x)=\rg(g_1)\) and
    \(\s(g_1)=\rg(g_2)\);
  \item \label{enum:Prin3} the map
    \[
    X\times_{\s,G^0,\rg} G^1\to
    X\times_{p,Z,p} X,\qquad (x,g)\mapsto (x,x\cdot g),
    \]
    is a homeomorphism;
  \item \label{enum:Prin4} the map~\(p\) is open and surjective.
  \end{enumerate}
  Then \(x\cdot 1_{\s(x)}=x\) for all \(x\in X\), and there is a unique
  homeomorphism \(Z\cong X/G\) intertwining~\(p\) and the
  canonical projection \(X\to X/G\).  Thus a principal
  \(G\)\nb-bundle is equivalent to a basic \(G\)\nb-action with a
  homeomorphism \(X/G\cong Z\).
\end{proposition}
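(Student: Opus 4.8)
The plan is to prove Proposition~\ref{pro:Top_open_basic} by citing the general results of~\cite{Meyer-Zhu:Groupoids}, exactly as was done for Propositions~\ref{pro:Top_open_groupoid}--\ref{pro:Top_open_orbit}. The definitions of ``basic action'' and ``principal bundle'' given here are simply the specialisations to the category of topological spaces with open surjections as covers of the corresponding notions in~\cite{Meyer-Zhu:Groupoids}, so the bulk of the work is just matching up definitions.

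First I would recall that~\cite{Meyer-Zhu:Groupoids}*{Definition~5.1} (or the analogous numbered statement there) defines a basic action by the requirement that the map~\eqref{eq:basic_map} be a homeomorphism onto its image for the subspace topology; this matches our statement verbatim once one notes that in our chosen category ``monomorphism of covers'' unwinds to ``homeomorphism onto a subspace.'' Next, the definition of principal \(G\)\nb-bundle in~\cite{Meyer-Zhu:Groupoids}*{Definition~5.3} requires conditions \ref{enum:Prin1}--\ref{enum:Prin4}; our conditions are the literal translations, using that a cover is an open surjection (which gives \ref{enum:Prin4}) and that a pullback square of covers is detected by the map in~\ref{enum:Prin3} being a homeomorphism. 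The auxiliary claims --- that \(x\cdot 1_{\s(x)}=x\), that \(Z\cong X/G\) canonically, and that a principal bundle is the same as a basic action together with a homeomorphism \(X/G\cong Z\) --- are all contained in~\cite{Meyer-Zhu:Groupoids}*{Proposition~5.7} (the same result already invoked several times above). The identity \(x\cdot 1_{\s(x)}=x\) also follows from Proposition~\ref{pro:Top_open_action} applied to the \(G\)\nb-action underlying the bundle, since \ref{enum:Prin3} forces surjectivity of the action map.

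The only point needing a word of care is the identification \(Z\cong X/G\): one must check that~\(p\) is a quotient map for the orbit equivalence relation. But~\(p\) is an open surjection by~\ref{enum:Prin4}, its fibres are exactly the \(G\)\nb-orbits by~\ref{enum:Prin1} and~\ref{enum:Prin3} (the latter says \(X\times_{p,Z,p}X\) coincides with the graph of the orbit relation), and an open continuous surjection whose fibres are the classes of an equivalence relation is automatically the quotient map for that relation. Combining this with Proposition~\ref{pro:Top_open_orbit}, which says \(X\to X/G\) is itself an open surjection, yields the canonical homeomorphism \(Z\cong X/G\) intertwining the two projections, and hence the stated equivalence between principal bundles and basic actions equipped with such a homeomorphism.

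I do not expect any real obstacle here: the statement is definitional, and every nontrivial assertion is already established in~\cite{Meyer-Zhu:Groupoids}. The mild ``hard part'' is purely bookkeeping --- confirming that our four numbered conditions are precisely the unwound forms of the abstract covering-theoretic conditions, and that ``basic'' in the sense of~\eqref{eq:basic_map} agrees with the abstract notion of an action whose orbit map is a cover with the subspace topology on its image. Once that dictionary is in place the proof is a one-line citation, so I would write:

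\begin{proof}
  This specialises \cite{Meyer-Zhu:Groupoids}*{Definitions~5.1 and~5.3
  and Proposition~5.7} to the category of topological spaces with open
  surjections as covers.  The map in~\eqref{eq:basic_map} being a
  homeomorphism onto its image is the unwinding of the abstract
  condition that the orbit map be a monomorphism of covers, and
  \ref{enum:Prin1}--\ref{enum:Prin4} are the unwindings of the
  defining conditions for a principal bundle in this category; here
  \ref{enum:Prin4} encodes that~\(p\) is a cover.  The identity
  \(x\cdot 1_{\s(x)}=x\) follows from
  Proposition~\ref{pro:Top_open_action} applied to the underlying
  \(G\)\nb-action, since \ref{enum:Prin3} makes the action map
  surjective.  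Finally, \(p\) is an open continuous surjection whose
  fibres are the \(G\)\nb-orbits by \ref{enum:Prin1}
  and~\ref{enum:Prin3}, hence it is the quotient map for the orbit
  equivalence relation; together with
  Proposition~\ref{pro:Top_open_orbit} this gives the unique
  homeomorphism \(Z\cong X/G\) intertwining~\(p\) with the orbit
  space projection, and the asserted equivalence between principal
  \(G\)\nb-bundles and basic \(G\)\nb-actions with a chosen
  homeomorphism \(X/G\cong Z\).
\end{proof}
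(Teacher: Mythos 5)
Your proposal is correct and follows essentially the same route as the paper: both reduce the statement to matching the conditions \ref{enum:Prin1}--\ref{enum:Prin4} with the abstract definitions in \cite{Meyer-Zhu:Groupoids}, use the openness of the orbit projection (Proposition~\ref{pro:Top_open_orbit}, i.e.\ \cite{Meyer-Zhu:Groupoids}*{Proposition 9.31}) for the basic-action direction, and observe that the image of the map in~\eqref{eq:basic_map} is \(X\times_{X/G}X\), so that \ref{enum:Prin3} is exactly the basic condition. The only differences are cosmetic: you derive \(x\cdot 1_{\s(x)}=x\) from Proposition~\ref{pro:Top_open_action} and argue \(Z\cong X/G\) directly via the open-surjection-with-orbit-fibres argument, where the paper cites \cite{Meyer-Zhu:Groupoids}*{Lemma 5.3} for both (your guessed reference numbers in \cite{Meyer-Zhu:Groupoids} are slightly off, but your self-contained arguments make this immaterial).
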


\begin{proof}
  A principal bundle in the sense above also satisfies \(x\cdot
  1_{\s(x)}=x\) for all \(x\in X\) because of~\ref{enum:Prin3} (see
  \cite{Meyer-Zhu:Groupoids}*{Lemma 5.3}).  Hence \(\s\) and~\(m\)
  give a right \(G\)\nb-action, and all conditions for a principal
  bundle in~\cite{Meyer-Zhu:Groupoids} are met.
  \cite{Meyer-Zhu:Groupoids}*{Lemma 5.3} also gives the unique
  homeomorphism \(X/G\cong Z\) intertwining~\(p\) and the
  canonical map \(X\to X/G\).

  A groupoid action is called \emph{basic}
  in~\cite{Meyer-Zhu:Groupoids} if it becomes a principal bundle with
  \(X\to X/G\) as bundle projection.  The canonical map
  \(X\to X/G\) is automatically \(G\)\nb-invariant, and it is
  an open surjection by \cite{Meyer-Zhu:Groupoids}*{Proposition 9.31}.
  Thus the second half of~\ref{enum:Prin1} and~\ref{enum:Prin4} hold
  for any \(G\)\nb-action with this choice of~\(p\).  The first
  half of~\ref{enum:Prin1} and~\ref{enum:Prin2} are part of the
  definition of a groupoid action.  The image of the map
  in~\eqref{eq:basic_map} is \(X\times_{X/G} X\) by the
  definition of~\(X/G\), so that~\ref{enum:Prin3} is equivalent
  to~\eqref{eq:basic_map} being a homeomorphism onto its image.
\end{proof}

Next, we consider the notion of equivalence between groupoids as
defined in~\cite{Meyer-Zhu:Groupoids}.  We will relate it to notions
of equivalence by other authors in
Appendix~\ref{sec:basic_vs_free_proper}.

\begin{proposition}
  \label{pro:equivalence}
  Let \(G\) and~\(H\) be topological groupoids.  A \emph{bibundle
    equivalence} from~\(H\) to~\(G\) consists of a topological
  space~\(X\), continuous maps \(\rg\colon X\to G^0\), \(\s\colon X\to
  H^0\) \emph{(anchor maps)}, \(G^1\times_{\s,G^0,\rg} X\to X\) and
  \(X\times_{\s,H^0,\rg} H^1\to X\) \emph{(multiplications)},
  satisfying the following conditions:
  \begin{enumerate}[label=\textup{(E\arabic*)}]
  \item \label{enum:Eq1} \(\s(g\cdot x)=\s(x)\), \(\rg(g\cdot
    x)=\rg(g)\) for all \(g\in G^1\), \(x\in X\) with
    \(\s(g)=\rg(x)\), and \(\s(x\cdot h)=\s(h)\), \(\rg(x\cdot
    h)=\rg(x)\) for all \(x\in X\), \(h\in H^1\) with
    \(\s(x)=\rg(h)\);
  \item \label{enum:Eq2} associativity: \(g_1\cdot (g_2\cdot x)=
    (g_1\cdot g_2)\cdot x\), \(g_2\cdot (x\cdot h_1)= (g_2\cdot
    x)\cdot h_1\), \(x\cdot (h_1\cdot h_2)= (x\cdot h_1)\cdot h_2\)
    for all \(g_1,g_2\in G^1\), \(x\in X\), \(h_1,h_2\in H^1\) with
    \(\s(g_1)=\rg(g_2)\), \(\s(g_2)=\rg(x)\), \(\s(x)=\rg(h_1)\),
    \(\s(h_1)=\rg(h_2)\);
  \item \label{enum:Eq3} the following two maps are homeomorphisms:
    \begin{align*}
      G^1\times_{\s,G^0,\rg} X &\to X\times_{\s,H^0,\s} X,&\qquad
      (g,x)&\mapsto (x,g\cdot x),\\
      X\times_{\s,H^0,\rg} H^1 &\to X\times_{\rg,G^0,\rg} X,&\qquad
      (x,h)&\mapsto (x,x\cdot h);
    \end{align*}
  \item \label{enum:Eq4} \(\s\) and~\(\rg\) are open;
  \item \label{enum:Eq5} \(\s\) and~\(\rg\) are surjective.
  \end{enumerate}
  Then \(1_{\rg(x)}\cdot x=x=x\cdot 1_{\s(x)}\) for all \(x\in X\),
  and the anchor maps descend to homeomorphisms \(G\backslash X\cong
  H^0\) and \(X/H\cong G^0\).
\end{proposition}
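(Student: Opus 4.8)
The plan is to establish the two unit identities first by the standard idempotent argument, and then to observe that conditions~\ref{enum:Eq1}--\ref{enum:Eq5} are literally the data of two commuting principal bundles, so that the orbit-space identifications fall out of Proposition~\ref{pro:Top_open_basic}. Throughout I will use that, by Proposition~\ref{pro:Top_open_groupoid}, the topological groupoids \(G\) and~\(H\) carry continuous unit and inversion maps, so that the classical fact ``an idempotent arrow is a unit'' is available.

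First I would fix \(x\in X\) and use the first homeomorphism in~\ref{enum:Eq3}. Since \(\s(x)=\s(x)\), the pair \((x,x)\) lies in \(X\times_{\s,H^0,\s} X\), hence has a unique preimage, necessarily of the form \((g,x)\) with \(\s(g)=\rg(x)\) and \(g\cdot x=x\). From \(g\cdot x=x\) and~\ref{enum:Eq1} we get \(\rg(g)=\rg(g\cdot x)=\rg(x)\), which equals \(\s(g)\) by the fibre-product constraint; so \(g\cdot g\) is defined and \((g\cdot g)\cdot x = g\cdot(g\cdot x)=g\cdot x=x\) by associativity~\ref{enum:Eq2}, and injectivity of the homeomorphism forces \(g\cdot g=g\). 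Multiplying by~\(g^{-1}\) in~\(G\) gives \(g=1_{\s(g)}=1_{\rg(x)}\), i.e.\ \(1_{\rg(x)}\cdot x=x\). The identity \(x\cdot 1_{\s(x)}=x\) is proved symmetrically, using the second homeomorphism in~\ref{enum:Eq3}, the pair \((x,x)\in X\times_{\rg,G^0,\rg} X\), and the associativity of the right \(H\)\nb-action; the resulting idempotent \(h\in H^1\) is a unit, hence \(h=1_{\s(x)}\).

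Next I would check that \(X\), together with its right \(H\)\nb-action and with \(p\defeq\rg\colon X\to G^0\), is a principal right \(H\)\nb-bundle in the sense of Proposition~\ref{pro:Top_open_basic}: condition~\ref{enum:Prin1} is~\ref{enum:Eq1}, \ref{enum:Prin2} is the third associativity identity of~\ref{enum:Eq2}, \ref{enum:Prin3} is exactly the second homeomorphism in~\ref{enum:Eq3} with \(Z=G^0\), and~\ref{enum:Prin4} is~\ref{enum:Eq4} together with~\ref{enum:Eq5}. Proposition~\ref{pro:Top_open_basic} then yields a unique homeomorphism \(X/H\cong G^0\) intertwining~\(\rg\) with the orbit projection \(X\to X/H\); since~\(\rg\) is \(H\)\nb-invariant by~\ref{enum:Eq1}, this homeomorphism is precisely the descent of~\(\rg\). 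Exchanging left with right and~\(G\) with~\(H\), and using the first homeomorphism in~\ref{enum:Eq3} with \(p\defeq\s\colon X\to H^0\), the same proposition (in its left-handed form, quoted via the dictionary \(g\cdot x=x\cdot g^{-1}\)) shows that \(X\) is a principal left \(G\)\nb-bundle over~\(H^0\) and gives the homeomorphism \(G\backslash X\cong H^0\) as the descent of~\(\s\).

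The whole argument is essentially bookkeeping; the only point that needs a genuine idea rather than a citation is the pair of unit identities, and there the substance is the classical trick that an idempotent arrow of a groupoid is a unit, fed through the injectivity built into~\ref{enum:Eq3}. The only care point worth flagging is that the left-handed version of Proposition~\ref{pro:Top_open_basic} must be invoked through the standard left/right correspondence, so that nothing in the principal-bundle step is secretly re-proved.
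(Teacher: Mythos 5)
Your proposal is correct and takes essentially the same route as the paper: conditions \ref{enum:Eq1}--\ref{enum:Eq5} are exactly the principal-bundle conditions \ref{enum:Prin1}--\ref{enum:Prin4} for the right \(H\)\nb-action with \(p=\rg\) and, in left-handed form, for the left \(G\)\nb-action with \(p=\s\), and Proposition~\ref{pro:Top_open_basic} then yields the descent homeomorphisms \(X/H\cong G^0\) and \(G\backslash X\cong H^0\). The only difference is that you re-derive the unit identities \(1_{\rg(x)}\cdot x=x=x\cdot 1_{\s(x)}\) directly via the idempotent trick, whereas the paper simply reads them off from the general principal-bundle statement in Proposition~\ref{pro:Top_open_basic}; your direct argument is valid (and self-contained), just redundant relative to the citation.
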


\begin{proof}
  Condition \ref{enum:Eq1} and~\ref{enum:Eq3} are equivalent to
  \ref{enum:Prin1} and~\ref{enum:Prin3} for both the left
  \(G\)\nb-action with \(p=\s\) and the right \(H\)\nb-action with
  \(p=\rg\), respectively.  Condition~\ref{enum:Eq2} means that
  the left \(G\)\nb-action and the right \(H\)\nb-action
  satisfy~\ref{enum:Prin2} and commute.  Conditions \ref{enum:Eq4}
  and~\ref{enum:Eq5} together are equivalent to~\ref{enum:Prin4} for
  both actions.  Thus the conditions \ref{enum:Eq1}--\ref{enum:Eq5}
  characterise bibundle equivalences in the notation
  of~\cite{Meyer-Zhu:Groupoids}.  The last sentence follows from the
  general properties of principal bundles, see
  Proposition~\ref{pro:Top_open_basic}.
\end{proof}

In the following, we abbreviate ``bibundle equivalence'' to
``equivalence'' because we do not use any other equivalences between
groupoids.

We have switched the direction of a bibundle equivalence compared
to~\cite{Meyer-Zhu:Groupoids} because this is convenient here.  Going
from right to left is also consistent with our notation \(\s\)
and~\(\rg\) for the right and left anchor maps.

\subsection{Basic actions versus free and proper actions}
\label{sec:basic_vs_free_proper}

We now compare our basic actions with free and proper actions.  A
continuous map \(f\colon X\to Y\) is \emph{closed} if it maps closed
subsets of~\(X\) to closed subsets of~\(Y\), and \emph{proper} if
\(\Id_Z\times f\colon Z\times X\to Z\times Y\) is closed for all
topological spaces~\(Z\) or, equivalently, \(f\)~is closed and
\(f^{-1}(y)\) is quasi-compact for all \(y\in Y\) (see
\cite{Bourbaki:Topologie_generale}*{Theorem~1 in I.10.2}).  A map from
a Hausdorff space~\(X\) to a Hausdorff locally compact space~\(Y\) is
proper if and only if preimages of compact subsets are compact.  In
this case, \(X\) is necessarily locally compact
(\cite{Bourbaki:Topologie_generale}*{Proposition 7 in I.10.3}).

\begin{definition}
  \label{def:proper_action}
  A right action of a topological groupoid~\(G\) on a topological
  space~\(X\) is \emph{proper} if the map in~\eqref{eq:basic_map} is
  proper.  The action is \emph{free} if the map~\eqref{eq:basic_map}
  is injective.
\end{definition}

Groupoids for which the action on its unit space is free (that is, for
which the map \(\s\times\rg\colon G^1\to G^0\times G^0\) is injective)
are often called \emph{principal} (see~\cite{Renault:Groupoid_Cstar}).
This terminology conflicts, however, with the usual notion of a
principal bundle, which requires extra topological conditions besides
freeness of the action.

We call a groupoid \emph{basic} if its canonical action on the object
space is basic, that is, the map \(\s\times\rg\colon G^1\to G^0\times
G^0\) is a homeomorphism onto its image.

\begin{proposition}
  \label{pro:proper_action}
  A groupoid action is free and proper if and only if it is basic and
  has Hausdorff orbit space.

  If \(G\) and~\(H\) are topological groupoids with Hausdorff
  object spaces, then an equivalence from \(H\) to~\(G\) in
  our sense is the same as a topological space~\(X\) with commuting
  free and proper actions of \(G\) and~\(H\), such that the
  anchor maps induce homeomorphisms \(G\backslash X\cong H^0\)
  and \(X/H\cong G^0\).
\end{proposition}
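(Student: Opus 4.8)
The plan is to prove both assertions by relating the definitions in play to the characterisations of ``basic'' and ``principal bundle'' already established. For the first statement, I would argue by a chain of equivalences. Starting from a right $G$-action on~$X$, Proposition~\ref{pro:Top_open_basic} tells us that ``basic'' means the map in~\eqref{eq:basic_map}, namely $(x,g)\mapsto (x,x\cdot g)$, is a homeomorphism onto its image with the subspace topology from $X\times X$; and its image is exactly $X\times_{X/G} X$. On the other hand, ``free'' means this map is injective and ``proper'' means it is proper as a map to $X\times X$. First I would observe that a continuous injective map that is proper is automatically a homeomorphism onto its image (a proper map is closed, a closed continuous injection onto its image is a closed embedding). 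This shows free $+$ proper $\Rightarrow$ basic. For the orbit space, I would invoke Proposition~\ref{pro:Top_open_orbit}: $X/G$ is Hausdorff iff $X\times_{X/G} X$ is closed in $X\times X$; and properness of~\eqref{eq:basic_map} forces its image, which is $X\times_{X/G}X$, to be closed (a proper map into a space is closed, so its image is closed). Hence free $+$ proper $\Rightarrow$ basic with Hausdorff orbit space.

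For the converse direction, suppose the action is basic with Hausdorff orbit space. Basicness gives that~\eqref{eq:basic_map} is a homeomorphism onto $X\times_{X/G}X$; in particular it is injective, so the action is free. It remains to see it is proper. Here I would use that $X\times_{X/G}X$ is closed in $X\times X$ (by Proposition~\ref{pro:Top_open_orbit}, since $X/G$ is Hausdorff), so~\eqref{eq:basic_map} is a homeomorphism onto a closed subspace, i.e.\ a closed embedding; a closed embedding is proper. Thus basic $+$ Hausdorff orbit space $\Rightarrow$ free $+$ proper. This step is the main obstacle: one must be careful that ``proper'' is understood in the Bourbaki sense (stably closed), and that a closed embedding really is proper in that sense — which it is, since composing with $\Id_Z$ still yields a closed embedding. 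I would also note that no local compactness is needed anywhere, which is the whole point of using ``basic'' rather than ``free and proper'' as the primitive notion.

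For the second statement, I would deduce it from the first together with Proposition~\ref{pro:equivalence}. Recall from the proof of Proposition~\ref{pro:equivalence} that a bibundle equivalence from~$H$ to~$G$ is precisely a space~$X$ with commuting left $G$- and right $H$-actions such that: the left $G$-action is a principal bundle with bundle projection $\s\colon X\to H^0$, the right $H$-action is a principal bundle with bundle projection $\rg\colon X\to G^0$, and (via Proposition~\ref{pro:Top_open_basic}) each of these is the same as a basic action together with a homeomorphism identifying the orbit space with $H^0$ (resp.\ $G^0$). Now assume $G^0$ and $H^0$ are Hausdorff. Then $G\backslash X\cong H^0$ and $X/H\cong G^0$ are Hausdorff, so by the first part of this proposition ``the left $G$-action is basic with Hausdorff orbit space $H^0$'' is equivalent to ``the left $G$-action is free and proper with $G\backslash X\cong H^0$'', and symmetrically for the right $H$-action. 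Substituting these equivalences into the characterisation from Proposition~\ref{pro:equivalence} yields exactly the assertion: an equivalence from~$H$ to~$G$ is the same as a space~$X$ with commuting free and proper actions of $G$ and $H$ inducing $G\backslash X\cong H^0$ and $X/H\cong G^0$. I expect this second part to be essentially bookkeeping once the first part is in hand; the only subtlety is to make sure the Hausdorff hypotheses on $G^0,H^0$ are used exactly where needed (namely to guarantee the orbit spaces are Hausdorff so that the first part applies).
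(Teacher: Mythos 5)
Your proof is correct and follows essentially the same route as the paper: the paper delegates the first assertion to \cite{Meyer-Zhu:Groupoids}*{Corollary 9.32} but singles out exactly the main point you use, namely the criterion of Proposition~\ref{pro:Top_open_orbit} that the orbit space is Hausdorff if and only if the orbit relation is closed in \(X\times X\), and your second assertion is the same bookkeeping via Propositions~\ref{pro:Top_open_basic} and~\ref{pro:equivalence}. The only difference is that you spell out the elementary point-set facts (a proper injection is a closed embedding, and a closed embedding is stably closed, hence proper) rather than citing them, which makes the argument self-contained but does not change the approach.
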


\begin{proof}
  The characterisation of free and proper actions is
  \cite{Meyer-Zhu:Groupoids}*{Corollary 9.32}; the main point of the
  proof is that the orbit space is Hausdorff if and only if the orbit
  equivalence relation is closed in \(X\times X\)
  (Proposition~\ref{pro:Top_open_orbit}).  The left and right actions
  on an equivalence are basic with \(X/H\cong G^0\) and
  \(G\backslash X\cong H^0\); hence they are free and proper
  if and only if \(G^0\) and~\(H^0\) are Hausdorff,
  respectively.  Conversely, if the actions of \(G\) and~\(H\)
  on~\(X\) are free and proper, then both actions are basic, and
  both anchor maps are open because they are equivalent to orbit space
  projections; thus we have an equivalence in our sense.
\end{proof}

For a general action of a groupoid~\(G\) on a space~\(X\), the
image of the map~\eqref{eq:basic_map} is the orbit equivalence
relation \(X\times_{X/G}X\subseteq X\times X\).  Thus
the map~\eqref{eq:basic_map} is a homeomorphism (the action is basic)
if and only if the action is free and the map that sends
\((x_1,x_2)\in X\times_{X/G}X\) to the unique \(g\in G^1\)
with \(\s(x_1)=\rg(g)\) and \(x_1\cdot g=x_2\) is continuous.

If \(G\), \(H\) and~\(X\) are locally compact Hausdorff, then an
equivalence in our sense is the same as a \((G,H)\)-equivalence in
the notation of~\cite{Muhly-Renault-Williams:Equivalence}; the main
result of~\cite{Muhly-Renault-Williams:Equivalence} is that such an
equivalence induces a Morita equivalence between the groupoid
\(\Cst\)\nb-algebras of \(G\) and~\(H\) (for any Haar systems).

For non-Hausdorff groupoids, Jean-Louis Tu defined a notion of
equivalence in~\cite{Tu:Non-Hausdorff}, using a technical variant of
proper actions: he calls a groupoid~\(G\) \emph{\(\rho\)\nb-proper}
with respect to a \(G\)\nb-invariant continuous map \(\rho\colon
G^0\to T\) if the map
\[
(\rg,\s)\colon G^1\to G^0\times_{\rho,T,\rho} G^0,\qquad
g\mapsto (\rg(g),\s(g)),
\]
is proper.  If~\(T\) is non-Hausdorff, then \(G^0\times_{\rho,T,\rho}
G^0\) need not be closed in~\(G^0\times G^0\), so that this is weaker
than properness.  In the definition of equivalence, he takes~\(\rho\)
to be the anchor map on the other side, so he requires the maps
in~\ref{enum:Eq3} to be proper.  These maps are continuous
bijections because the actions are free.  A
continuous, proper bijection, being closed, must be a homeomorphism.
Thus Tu's notion of equivalence is equivalent to ours.

\subsection{Covering groupoids and equivalence}
\label{sec:covering_groupoids}

\begin{definition}
  \label{def:covering_groupoid}
  Let \(f\colon X\to Z\) be a continuous, open surjection.  The
  \emph{covering groupoid}~\(G(f)\) has object space~\(X\), arrow
  space \(X\times_{f,Z,f} X\), range and source maps
  \(\rg(x_1,x_2)\defeq x_1\), \(\s(x_1,x_2)\defeq x_2\), and
  multiplication \((x_1,x_2)\cdot (x_2,x_3)\defeq (x_1,x_3)\) for all
  \(x_1,x_2,x_3\in X\) with \(f(x_1)=f(x_2)=f(x_3)\).
\end{definition}

The assumption on~\(f\) implies that it is a quotient map, that is, we
may identify~\(Z\) with the quotient space~\(X/{\sim}\) by the
following equivalence relation: \(x\sim y\) if and only if
\(f(x)=f(y)\); and \(f\) becomes the quotient map \(X\to X/{\sim}\).
The covering groupoid~\(G(f)\) is the groupoid associated to this
equivalence relation.  In particular, \(Z\) can be identified with the
orbit space~\(X/G(f)\) for the canonical action of~\(G(f)\) on its
unit space~\(X\).

Every covering groupoid is basic, that is, its action on the unit
space is basic.  Conversely, if~\(G\) is a basic groupoid, then it is
isomorphic to a covering groupoid.  The map \(\rg\times\s\colon G^1\to
G^0\times G^0\) gives a homeomorphism from \(G^1\) onto
\(G^0\times_{f,G^0/G,f} G^0\), where \(f\colon G^0\to G^0/G\) denotes
the quotient map.  This yields an isomorphism of topological groupoids
\(G\cong G(f)\).

\begin{example}[\v{C}ech groupoids]
  \label{exa:Cech}
  Let~\(Z\) be a topological space and let~\(\OCover\) be an open
  covering of~\(Z\).  Let \(X\defeq \bigsqcup_{U\in\OCover} U\)
  and let \(f\colon X\to Z\) be the canonical map: \(f\)~is the
  inclusion map on each \(U\in\OCover\).  This map is an open
  surjection.  It is even étale, that is, a local homeomorphism.  We
  denote the covering groupoid of~\(f\) by~\(G_\OCover\) and call
  it the \emph{\v{C}ech groupoid} of the covering.

  Assume that~\(Z\) is locally Hausdorff and choose the open
  covering~\(\OCover\) to consist of Hausdorff open subsets
  \(U\subset Z\).  Then the \v{C}ech groupoid \(G_\OCover\) is a
  Hausdorff, étale topological groupoid (see also
  \cite{Clark-Huef-Raeburn:Fell_algebras}*{Lemma~4.2}).  If, in
  addition, \(Z\) is locally quasi-compact, then \(G_\OCover\) is a
  (Hausdorff) locally compact, étale groupoid.  This is the situation
  we are mainly interested in.
\end{example}

\begin{proposition}
  \label{prop:MoritaInvariantCovGroupoid}
  Let \(f_i\colon X_i\to Z\) for \(i=1,2\) be two continuous, open
  surjections.  Then \(X_1\times_{f_1,Z,f_2} X_2\) with the obvious
  left and right actions of \(G(f_1)\) and~\(G(f_2)\) gives an
  equivalence from \(G(f_2)\) to~\(G(f_1)\).
\end{proposition}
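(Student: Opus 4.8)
The plan is to verify directly that the bispace \(X\defeq X_1\times_{f_1,Z,f_2} X_2\) with its obvious actions satisfies conditions \ref{enum:Eq1}--\ref{enum:Eq5} of Proposition~\ref{pro:equivalence}, using throughout the fact that every covering groupoid acts basically on its unit space and the hypothesis that \(f_1,f_2\) are open surjections. Concretely, the anchor maps are the two coordinate projections \(\rg\colon X\to X_1\) and \(\s\colon X\to X_2\), the left \(G(f_1)\)-action is \((x_1',x_1)\cdot(x_1,x_2)\defeq (x_1',x_2)\) whenever \(f_1(x_1')=f_1(x_1)\), and the right \(G(f_2)\)-action is \((x_1,x_2)\cdot(x_2,x_2')\defeq (x_1,x_2')\) whenever \(f_2(x_2)=f_2(x_2')\). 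Conditions \ref{enum:Eq1} and \ref{enum:Eq2} (compatibility with range/source maps and associativity and commutativity of the two actions) are immediate from the coordinate-wise description and the corresponding properties of the two covering groupoids.

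For \ref{enum:Eq4} and \ref{enum:Eq5}, I would observe that \(\rg\colon X\to X_1\) is the pullback of the open surjection \(f_2\colon X_2\to Z\) along \(f_1\colon X_1\to Z\), hence is itself an open surjection since open surjections are stable under pullback; symmetrically for \(\s\). For \ref{enum:Eq3}, consider the first map
\[
G(f_1)^1\times_{\s,X_1,\rg} X \to X\times_{\s,X_2,\s} X,\qquad
\bigl((x_1',x_1),(x_1,x_2)\bigr)\mapsto \bigl((x_1,x_2),(x_1',x_2)\bigr).
\]
Here the domain is canonically \(\{(x_1',x_1,x_2): f_1(x_1')=f_1(x_1)=f_2(x_2)\}\cong X_1\times_{f_1,Z,f_1}X_1\times_{f_1,Z,f_2}X_2\) and the codomain is \(\{((x_1,x_2),(x_1',x_2')): f_1(x_1)=f_2(x_2),\ f_1(x_1')=f_2(x_2'),\ x_2=x_2'\}\), which simplifies to the same fibre product of triples \((x_1,x_1',x_2)\). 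Under these identifications the map becomes a permutation of coordinates, manifestly a homeomorphism. The second map in \ref{enum:Eq3} is handled identically, exchanging the roles of \(1\) and \(2\).

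The main (mild) obstacle is bookkeeping: matching the various iterated fibre products and checking that the natural coordinate-rearrangement identifications are genuinely homeomorphisms rather than merely continuous bijections. This is where I would be careful, but it is not deep — each identification is an open continuous bijection given on both sides by projections, and the fibre-product topologies make the inverse continuous as well. An alternative, cleaner route that avoids writing out \ref{enum:Eq3} by hand: note that \(G(f_2)\) is equivalent to \(Z\) via the canonical action of \(G(f_2)\) on \(X_2\) with momentum map \(f_2\) (every covering groupoid is basic with orbit space \(Z\), by the discussion preceding Example~\ref{exa:Cech}, so \(X_2\) is a \(G(f_2),Z\)-equivalence), and likewise \(X_1\) is a \(G(f_1),Z\)-equivalence; then \(X = X_1\times_Z X_1^*\)\dots more precisely \(X\) is the composite \(X_1\times_{\,Z\,} X_2^{*}\) of the equivalence \(X_1\colon G(f_1)\to Z\) with the dual \(X_2^*\colon Z\to G(f_2)\), and a composite of equivalences is an equivalence by Lemma~\ref{lem:peq_composition_associative_unital} together with Proposition~\ref{pro:properties_of_dual}. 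I would present the direct verification as the primary argument and mention this conceptual reformulation as a remark.
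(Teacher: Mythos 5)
Your proposal is correct, but it takes a different route from the paper: the paper gives no internal argument at all and simply cites \cite{Meyer-Zhu:Groupoids}*{Example 6.4}, whereas you verify conditions \ref{enum:Eq1}--\ref{enum:Eq5} of Proposition~\ref{pro:equivalence} directly.  Your verification goes through: the two maps in \ref{enum:Eq3} become coordinate permutations of the space of triples \(\{(x_1,x_1',x_2): f_1(x_1)=f_1(x_1')=f_2(x_2)\}\) (all fibre products carrying the subspace topology from the ambient products, so the rearrangements are homeomorphisms), and \ref{enum:Eq4}--\ref{enum:Eq5} follow from stability of open surjections under pullback, which is elementary (the image of \((U_1\times U_2)\cap X\) under \(\rg\) is \(U_1\cap f_1^{-1}(f_2(U_2))\)).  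What your approach buys is a self-contained proof inside the paper's framework; what the citation buys is brevity and consistency with the general machinery of \cite{Meyer-Zhu:Groupoids}, where the statement is an instance of a categorical construction.  One caveat on your closing remark: the reformulation of \(X_1\times_{f_1,Z,f_2}X_2\) as the composite of the equivalence \(X_1\) from \(Z\) to \(G(f_1)\) with the dual of the equivalence \(X_2\) from \(Z\) to \(G(f_2)\) uses exactly the special case \(f_2=\Id_Z\) of the proposition being proved -- indeed this is how the paper deduces Proposition~\ref{pro:CharacterizationCovGroupoids} from it -- so it can only be offered as an afterthought (as you do) or after checking that special case separately; keeping the direct verification as the primary argument is the right choice.
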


\begin{proof}
  This is \cite{Meyer-Zhu:Groupoids}*{Example 6.4}.
\end{proof}

If \(G(f_1)\) and~\(G(f_2)\) are Hausdorff locally compact, then so is
the equivalence \(X_1\times_{f_1,Z,f_2} X_2\) between them.  If the
maps \(f_1\) and~\(f_2\) are both étale -- for instance, if they
come from open coverings of~\(Z\) -- then the groupoids \(G(f_1)\)
and~\(G(f_2)\) are étale, and the anchor maps \(X_1\leftarrow
X_1\times_{f_1,Z,f_2} X_2\rightarrow X_2\), \(x_1\leftarrow
(x_1,x_2)\rightarrow x_2\), are étale as well.

\begin{proposition}
  \label{pro:CharacterizationCovGroupoids}
  The covering groupoid~\(G(f)\) of a continuous open surjection
  \(f\colon X\to Z\) is always equivalent \textup{(}as a topological
  groupoid\textup{)} to the space~\(Z\) viewed as a groupoid with only
  identity arrows.  In particular, the \v{C}ech groupoid of a covering
  of~\(Z\) is equivalent to~\(Z\).

  Conversely, if~\(X\) is an equivalence from a space~\(Z\) to a
  topological groupoid~\(G\), then \(G\)~is isomorphic to the covering
  groupoid of the anchor map \(\s\colon X\to Z\).

  Hence covering groupoids are exactly the groupoids that are
  equivalent to spaces.
\end{proposition}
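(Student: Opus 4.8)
The statement has three parts, and the plan is to handle them in turn, with the first two being the substantive claims and the third a formal consequence.

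\medskip

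\textbf{First direction.} For a continuous open surjection $f\colon X\to Z$, I would exhibit an explicit equivalence from~$Z$ (viewed as a groupoid with only unit arrows) to the covering groupoid~$G(f)$. The natural candidate is~$X$ itself, with the canonical left $G(f)$\nb-action $(x_1,x_2)\cdot x_2 \defeq x_1$ on $G^0(f)=X$ and the trivial right $Z$\nb-action via $\s=f\colon X\to Z$ (every arrow of~$Z$ is a unit, so this is just the map~$f$). I would then check the five conditions \ref{enum:Eq1}--\ref{enum:Eq5} of Proposition~\ref{pro:equivalence}. Conditions \ref{enum:Eq1}--\ref{enum:Eq2} are immediate from the definitions. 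For~\ref{enum:Eq3}, the first map $G(f)^1\times_{\s,X,\rg} X\to X\times_{f,Z,f} X$ sends $((x_1,x_2),x_2)\mapsto(x_2,x_1)$ (a flip, hence a homeomorphism), and the second map $X\times_{f,Z,f}\{*\}\to X\times_{X,X} X$ is essentially the identity. Condition~\ref{enum:Eq4} is exactly the hypothesis that $\s=f$ and $\rg$ (range of $G(f)$, which is open since $G(f)$ is a groupoid, cf.\ Proposition~\ref{pro:Top_open_groupoid}) are open; \ref{enum:Eq5} follows from surjectivity of~$f$ and of the range map. The \v{C}ech groupoid case is the special instance where $f$ is the canonical map $\bigsqcup_{U\in\OCover} U\to Z$ (Example~\ref{exa:Cech}).

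\medskip

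\textbf{Converse.} Suppose $X$ is an equivalence from a space~$Z$ to a groupoid~$G$. By the last sentence of Proposition~\ref{pro:equivalence}, the anchor maps descend to homeomorphisms $G\backslash X\cong Z$ and $X/G\cong G^0$; in particular the left $G$\nb-action on~$X$ is basic with base~$Z$ via $p=\s\colon X\to Z$. Now I invoke the already-established fact (in the paragraph preceding Example~\ref{exa:Cech}) that a basic groupoid is isomorphic to a covering groupoid: here the relevant statement is rather that the \emph{acting} groupoid~$G$ is recovered from the principal bundle. Concretely, since $X$ is a principal left $G$\nb-bundle over~$Z$, condition~\ref{enum:Prin3} of Proposition~\ref{pro:Top_open_basic} gives a homeomorphism $G^1\times_{\rg,G^0,\s} X\congto X\times_{\s,Z,\s} X$. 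Using the homeomorphism $X/G\cong G^0$ to identify the orbit map with $\rg\colon X\to G^0$, and composing with the right $G$\nb-action to trivialise the $X$\nb-factor, one extracts a homeomorphism $G^1\cong X\times_{\s,Z,\s} X = G(\s)^1$ compatible with range, source, and multiplication, i.e.\ an isomorphism $G\cong G(\s)$ of topological groupoids. The main obstacle here is bookkeeping: making the identifications $X/G\cong G^0$ and the principality homeomorphism interact correctly so that the resulting bijection $G^1\to X\times_{\s,Z,\s} X$ is visibly continuous with continuous inverse and is a functor. I would phrase this by citing \cite{Meyer-Zhu:Groupoids}*{Proposition 5.7} (as is done elsewhere in the paper, e.g.\ in the proof of Proposition~\ref{pro:characterise_bibundle_isom}) to avoid hand-checking the inverse.

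\medskip

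\textbf{Last clause.} The final sentence, ``covering groupoids are exactly the groupoids equivalent to spaces,'' is now immediate: the first direction shows every covering groupoid is equivalent to a space, and the converse shows every groupoid equivalent to a space is (isomorphic to) a covering groupoid. I would state this in one line. The whole proof is short; the only step requiring genuine care is the converse, and even there the heavy lifting is done by Proposition~\ref{pro:Top_open_basic} and the cited rigidity result on principal bundles.
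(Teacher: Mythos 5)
Your first half is correct and is in substance the paper's own argument: the bibundle you write down, namely \(X\) with the tautological left \(G(f)\)\nb-action and with \(f\) as right anchor map, is exactly what Proposition~\ref{prop:MoritaInvariantCovGroupoid} produces for \(f_1=f\), \(f_2=\Id_Z\), so your direct verification of \ref{enum:Eq1}--\ref{enum:Eq5} just makes that citation explicit. (Minor slip: the left anchor map of this bibundle is the identity map on \(X=G(f)^0\), not the range map of \(G(f)\); its openness and surjectivity are trivial.)

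The converse is where your bookkeeping goes wrong, and at exactly the pivotal step. For an equivalence \(X\) from the space~\(Z\) to~\(G\), the groupoid~\(G\) acts on the left and \(Z\) acts on the right, so Proposition~\ref{pro:equivalence} gives homeomorphisms \(G\backslash X\cong Z\) and \(X/Z\cong G^0\); there is no homeomorphism ``\(X/G\cong G^0\)'' and no ``right \(G\)\nb-action'' with which to ``trivialise the \(X\)\nb-factor'', so that sentence does not parse as written (likewise the fibre product should be \(G^1\times_{\s,G^0,\rg} X\), since \(\s_X\) takes values in \(Z\), not in~\(G^0\)). The correct, one-line justification is the point the paper makes: since every arrow of~\(Z\) is a unit, the right \(Z\)\nb-action is trivial, so \(X/Z=X\) and the homeomorphism \(X/Z\cong G^0\) says that \(\rg\colon X\to G^0\) is itself a homeomorphism. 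After identifying \(X=G^0\) along~\(\rg\), the fibre product \(G^1\times_{\s,G^0,\rg}X\) is canonically~\(G^1\), and the first homeomorphism in~\ref{enum:Eq3} is then literally a homeomorphism \(G^1\congto X\times_{\s,Z,\s}X=G(\s)^1\) compatible with range, source and multiplication; no appeal to \cite{Meyer-Zhu:Groupoids}*{Proposition 5.7} is needed. With that repair your argument coincides with the paper's proof, and the final clause follows as you say.
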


\begin{proof}
  The first part is a consequence of
  Proposition~\ref{prop:MoritaInvariantCovGroupoid} applied to
  \(f_1=f\) and \(f_2=\Id_Z\) (see also
  \cite{Meyer-Zhu:Groupoids}*{Example 6.3}).  For the second part,
  observe that the action of~\(Z\) on~\(X\) is simply the anchor map
  \(\s\colon X\to Z\), which must be an open surjection.  The anchor
  map \(\rg\colon X\to G^0\) must be a homeomorphism (because it must
  be the projection map \(X\to Z\backslash X=X\)), so we may as well
  assume \(X=G^0\).  Then \(G^1\times_{\s,G^0,\rg} X \cong G^1\), and
  the first isomorphism in~\ref{enum:Eq3} identifies~\(G^1\) with
  \(X\times_{\s,Z,\s} X\).  This yields an isomorphism from~\(G\) to
  the covering groupoid~\(G(s)\) of \(\s\colon X\to Z\).
\end{proof}

Let~\(Z\) be a space, view~\(Z\) as a groupoid with only identity
arrows.  When is~\(Z\) equivalent to a locally compact, Hausdorff
groupoid?  If~\(Z\) is equivalent to a topological groupoid~\(G\),
then~\(G\) is necessarily the covering groupoid~\(G(f)\) of a cover
\(f\colon X\to Z\) by
Proposition~\ref{pro:CharacterizationCovGroupoids}.

Given a space~\(Z\), we thus seek a locally compact, Hausdorff
space~\(X\) and an open, continuous surjection \(f\colon X\to Z\) such
that \(X\times_{f,Z,f} X\) is locally compact.  The question when
\(X\times_{f,Z,f} X\) is locally compact is also asked
in~\cite{Clark-Huef-Raeburn:Fell_algebras} at the end of Section~4.
We answer this question in
Proposition~\ref{pro:locally_Hausdorff_quotient} below:
\(X\times_{f,Z,f} X\) is locally compact if and only if~\(Z\) is
locally Hausdorff.  Proposition~\ref{pro:orbit_principal_lc_groupoid}
says that the only topological spaces~\(Z\) that are equivalent to
locally compact Hausdorff groupoids are the locally Hausdorff, locally
quasi-compact ones; for them, Example~\ref{exa:Cech} gives such an
equivalence, where the groupoid is even étale.  We need some
preparation in order to prove
Proposition~\ref{pro:locally_Hausdorff_quotient}.

\begin{definition}[\cite{Bourbaki:Topologie_generale}*{I.3.3,
    Définition 2, Proposition 5}]
  \label{def:locally_closed}
  A subset~\(S\) of a topological space~\(X\) is \emph{locally closed}
  if it satisfies the following equivalent conditions:
  \begin{enumerate}
  \item any \(x\in S\) has a neighbourhood~\(U\) such that \(S\cap U\)
    is relatively closed in~\(U\);
  \item \(S\) is open in its closure;
  \item \(S\) is an intersection of an open and a closed subset
    of~\(X\).
  \end{enumerate}
\end{definition}

The following proposition generalises
\cite{Bourbaki:Topologie_generale}*{I.9.7, Propositions 12 and 13} to
the locally Hausdorff case.

\begin{proposition}
  \label{pro:locally_closed_compact}
  A subset~\(S\) of a locally Hausdorff, locally quasi-compact
  space~\(X\) is locally quasi-compact in the subspace topology if and
  only if it is locally closed.
\end{proposition}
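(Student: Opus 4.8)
The plan is to reduce the locally Hausdorff, locally quasi-compact case to the classical statement \cite{Bourbaki:Topologie_generale}*{I.9.7, Propositions 12 and 13}, which handles the Hausdorff case, by working locally and exploiting that both hypotheses in the statement are local properties. First I would prove the easy implication: if \(S\subseteq X\) is locally closed, then for each \(x\in S\) I pick a Hausdorff open neighbourhood~\(W\) of~\(x\) that is quasi-compact (such exists because~\(X\) is locally Hausdorff and locally quasi-compact, so every point has a compact neighbourhood). Then \(S\cap W\) is locally closed in~\(W\) by Definition~\ref{def:locally_closed}, and \(W\) is Hausdorff and locally compact, so by the Bourbaki result \(S\cap W\) is locally compact, hence locally quasi-compact; since \(S\cap W\) is a neighbourhood of~\(x\) in~\(S\), this shows~\(S\) is locally quasi-compact.

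For the converse, suppose~\(S\) is locally quasi-compact in the subspace topology; I want to show~\(S\) is locally closed in~\(X\). Fix \(x\in S\); using local quasi-compactness of~\(S\) and local Hausdorffness of~\(X\), I would choose a Hausdorff open neighbourhood~\(W\) of~\(x\) in~\(X\) and then a quasi-compact neighbourhood~\(K\) of~\(x\) in~\(S\) with \(K\subseteq W\). Since~\(W\) is Hausdorff, \(K\) is compact, hence closed in~\(W\). Let \(V\subseteq S\) be open in~\(S\) with \(x\in V\subseteq K\); write \(V = S\cap U\) for some open \(U\subseteq X\), and shrink so that \(U\subseteq W\). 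Then \(V = S\cap U \subseteq K\), and because \(K\) is closed in~\(W\supseteq U\), we get \(\overline{V}^{\,U}\subseteq K\subseteq S\), so \(V\) is relatively closed in~\(U\): indeed \(S\cap U = V\) is dense in~\(\overline{V}^{\,U}\subseteq S\cap U\), forcing \(\overline{V}^{\,U} = V\). Thus~\(x\) has a neighbourhood~\(U\) in~\(X\) such that \(S\cap U\) is relatively closed in~\(U\); by condition~(1) of Definition~\ref{def:locally_closed}, \(S\) is locally closed.

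The step I expect to be the main obstacle is the bookkeeping in the converse direction, namely producing a single open set~\(U\) in~\(X\) for which \(S\cap U\) is relatively closed. The subtlety is that local quasi-compactness of~\(S\) gives a quasi-compact \emph{neighbourhood} rather than an open one, so I must be careful to fit the compact set~\(K\) inside a Hausdorff open chart~\(W\) before concluding~\(K\) is closed there, and then to realise the intersection \(S\cap U\) as exactly an open subset of~\(S\) sitting inside~\(K\). Once that alignment is set up correctly, relative closedness is automatic; the remaining details (choosing \(W\), \(K\), \(U\) with the right inclusions) are routine and should be left to the reader. I would also remark explicitly that in the locally Hausdorff setting ``locally quasi-compact'' and ``having one quasi-compact neighbourhood'' coincide, as noted in Appendix~\ref{sec:preliminaries}, so there is no ambiguity in the hypotheses.
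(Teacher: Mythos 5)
Your proposal is correct up to one easily repaired misstatement, and its two halves relate to the paper's proof in different ways. The converse direction (locally quasi-compact \(\Rightarrow\) locally closed) is essentially the paper's own argument: place a quasi-compact neighbourhood \(K\) of \(x\) in \(S\) inside a Hausdorff open chart, note that it is then closed there, and deduce that \(S\cap U\) is relatively closed in a suitable neighbourhood \(U\) of \(x\); the paper does exactly this, except that it takes a (not necessarily open) neighbourhood \(V\) of \(x\) in \(X\) with \(S\cap V=K\) instead of shrinking to an open \(U\) with \(S\cap U\subseteq K\). The forward direction, by contrast, is a genuinely different route: you reduce to Bourbaki's I.9.7 inside a Hausdorff chart, while the paper argues directly — writing \(S=A\cap U\) with \(A\) closed and \(U\) open, it intersects the quasi-compact neighbourhoods of \(x\) contained in \(U\) with \(A\) to obtain a quasi-compact neighbourhood basis of \(x\) in \(S\). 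Note that the paper's argument for this direction uses no local Hausdorffness at all, so it is slightly more general; your reduction has the virtue of quoting the classical Hausdorff statement verbatim.

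The slip to fix: a ``Hausdorff open neighbourhood \(W\) of \(x\) that is quasi-compact'' need not exist — open quasi-compact subsets are rare (no bounded open interval in \(\R\) is compact), and having a compact neighbourhood does not produce one. What your argument actually needs is only that \(W\) be a Hausdorff \emph{open} neighbourhood and that \(W\) be locally compact as a subspace; the latter is automatic, since local quasi-compactness of \(X\) gives every point of \(W\) a quasi-compact (hence compact) neighbourhood inside \(W\) — this is the remark opening Appendix~\ref{sec:Banach_fields}. With that correction, Bourbaki's Proposition 12 applies to the locally closed subset \(S\cap W\) of the locally compact space \(W\), and since \(S\cap W\) is open in \(S\), the compact neighbourhood basis of \(x\) in \(S\cap W\) is also a neighbourhood basis in \(S\), which is exactly what the paper's definition of locally quasi-compact demands.
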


\begin{proof}
  First let~\(S\) be locally closed.  Write \(S=A\cap U\) with \(A\)
  closed and~\(U\) open in~\(X\).  Let \(x\in S\).  Since~\(X\) is
  locally quasi-compact, the quasi-compact neighbourhoods of~\(x\)
  in~\(X\) form a neighbourhood basis of~\(X\).  Since \(x\in U\),
  those quasi-compact neighbourhoods of~\(x\) that are contained
  in~\(U\) form a neighbourhood basis in~\(U\).  Their intersections
  with~\(A\) remain quasi-compact because~\(A\) is closed in~\(X\).
  They form a neighbourhood basis of~\(x\) in~\(S\), proving
  that~\(S\) is locally quasi-compact.

  Conversely, assume that~\(S\) is locally quasi-compact in the
  subspace topology.  Let \(x\in S\).  Let~\(U\) be a Hausdorff open
  neighbourhood of~\(x\) in~\(X\).  Then \(S\cap U\) is a
  neighbourhood of~\(x\) in~\(S\) and hence contains a quasi-compact
  neighbourhood~\(K\) of~\(x\) in~\(S\) because~\(S\) is locally
  quasi-compact.  We have \(K=S\cap V\) for some neighbourhood~\(V\)
  of~\(x\) in~\(X\), and we may assume \(V\subseteq U\) because
  \(K\subseteq U\).  The subset \(S\cap V\) is relatively closed
  in~\(V\) because \(U\supseteq V\) is Hausdorff and \(S\cap
  V\) is quasi-compact.  Thus~\(S\) is locally closed.
\end{proof}

\begin{proposition}
  \label{pro:locally_Hausdorff_quotient}
  Let \(f\colon X\to Z\) be a continuous, open surjection.  The
  equivalence relation \(X_{f,Z,f} X\subseteq X\times X\) defined
  by~\(f\) is locally closed if and only if~\(Z\) is locally
  Hausdorff.  In particular, if~\(X\) is locally quasi-compact and
  locally Hausdorff, then \(X_{f,Z,f} X\) is locally quasi-compact if
  and only if~\(Z\) is locally Hausdorff.
\end{proposition}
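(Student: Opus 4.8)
The plan is to prove the two directions of the biconditional separately, using Proposition~\ref{pro:locally_closed_compact} to translate ``locally closed'' into ``locally quasi-compact in the subspace topology'' whenever convenient. The second sentence of the statement is then immediate: if~\(X\) is locally Hausdorff and locally quasi-compact, so is \(X\times X\), and Proposition~\ref{pro:locally_closed_compact} says a subset of such a space is locally quasi-compact iff locally closed; combined with the first sentence, \(X\times_{f,Z,f} X\) is locally quasi-compact iff~\(Z\) is locally Hausdorff. So the work is entirely in the first sentence.

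For the direction ``\(Z\) locally Hausdorff \(\Rightarrow\) \(R\defeq X\times_{f,Z,f} X\) locally closed in \(X\times X\)'': take a point \((x_1,x_2)\in R\), so \(f(x_1)=f(x_2)=:z\). Choose a Hausdorff open neighbourhood~\(W\) of~\(z\) in~\(Z\) and set \(U_i\defeq f^{-1}(W)\), an open neighbourhood of~\(x_i\). On the open set \(U_1\times U_2\), I claim \(R\cap(U_1\times U_2)\) is relatively closed. Indeed, \(R\cap(U_1\times U_2) = \{(y_1,y_2)\in U_1\times U_2 : f(y_1)=f(y_2)\}\), and since \(f(y_1),f(y_2)\in W\) and~\(W\) is Hausdorff, this is the preimage of the diagonal~\(\Delta_W\subseteq W\times W\) under the continuous map \((f\times f)|_{U_1\times U_2}\colon U_1\times U_2\to W\times W\); as~\(\Delta_W\) is closed in \(W\times W\), its preimage is closed in \(U_1\times U_2\). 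By Definition~\ref{def:locally_closed}(1), \(R\) is locally closed.

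For the converse, suppose \(R\) is locally closed in \(X\times X\); I must show~\(Z\) is locally Hausdorff. Fix \(z\in Z\) and pick \(x\in f^{-1}(z)\). Locally closedness of~\(R\) at \((x,x)\) gives an open neighbourhood of \((x,x)\) in \(X\times X\), which we may shrink to a product \(U\times U\) with~\(U\) an open neighbourhood of~\(x\) in~\(X\), on which \(R\cap(U\times U)\) is relatively closed in \(U\times U\). Now \(V\defeq f(U)\) is an open neighbourhood of~\(z\) in~\(Z\) because~\(f\) is open; I claim~\(V\) is Hausdorff. Note \(f|_U\colon U\to V\) is an open surjection whose associated equivalence relation is exactly \(R\cap(U\times U)\), so~\(V\) is the quotient \(U/(R\cap(U\times U))\) and \(f|_U\) is the quotient map. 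Since~\(f|_U\) is open and the relation \(R\cap(U\times U)\) is closed in \(U\times U\), Proposition~\ref{pro:Top_open_orbit} (open surjections are biquotient, and the orbit/quotient space is Hausdorff iff the defining relation is closed in the square) gives that~\(V\) is Hausdorff. Hence every point of~\(Z\) has a Hausdorff open neighbourhood, i.e.\ \(Z\) is locally Hausdorff.

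The main obstacle is bookkeeping around the reduction to a product neighbourhood and making sure the restricted map \(f|_U\colon U\to f(U)\) really is an open surjection with the expected associated equivalence relation, so that Proposition~\ref{pro:Top_open_orbit} applies verbatim; this is routine but needs a little care because~\(f(U)\) carries the subspace topology from~\(Z\) and one must check it coincides with the quotient topology from~\(U\) (which follows since~\(f\) is open, hence~\(f|_U\) is open onto~\(f(U)\), hence a quotient map). Everything else is elementary manipulation of preimages of diagonals and the already-established equivalences in Proposition~\ref{pro:locally_closed_compact}.
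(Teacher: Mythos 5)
Your proof is correct and follows essentially the same route as the paper: the same two directions, the same shrinking to a product neighbourhood \(U\times U\), and the same key principle that for an open surjection the image is Hausdorff if and only if the relation is (relatively) closed in the square. The only cosmetic differences are that you argue the forward direction directly via the preimage of the closed diagonal of a Hausdorff neighbourhood and invoke Proposition~\ref{pro:Top_open_orbit} (via the covering groupoid of \(f|_U\)) for the converse, where the paper cites \cite{Meyer-Zhu:Groupoids}*{Proposition 9.15} in both places.
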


\begin{proof}
  Assume~\(Z\) to be locally Hausdorff first.  Let \((x_1,x_2)\in
  X\times_{f,Z,f} X\) and let \(U\subseteq Z\) be a Hausdorff open
  neighbourhood of \(f(x_1)=f(x_2)\).  Then \(f^{-1}(U)\subseteq X\)
  is an open subset such that \(f\colon f^{-1}(U)\to U\) is an open
  map onto a Hausdorff space.  Hence
  \[
  f^{-1}(U)\times_{f,U,f} f^{-1}(U)
  = \bigl(X\times_{f,Z,f} X\bigr) \cap
  \bigl(f^{-1}(U)\times f^{-1}(U)\bigr)
  \]
  is relatively closed in \(f^{-1}(U)\times f^{-1}(U)\) by
  \cite{Meyer-Zhu:Groupoids}*{Proposition 9.15}.
  Thus~\(X\times_{f,Z,f} X\) is locally closed in~\(X\times X\).

  Conversely, assume~\(X\times_{f,Z,f} X\) to be locally closed
  in~\(X\times X\).  Let \(x\in X\).  Then \((x,x)\) has a
  neighbourhood in~\(X\times X\) so that~\(X\times_{f,Z,f} X\)
  restricted to it is relatively closed.  Shrinking this
  neighbourhood, we may assume that it is of the form \(U\times U\)
  for an open neighbourhood of~\(x\), by the definition of the product
  topology on~\(X\times X\).  The map \(f|_U\colon U\to f(U)\) is
  open, and \((X\times_{f,Z,f} X)\cap (U\times U) =
  U\times_{f|_U,f(U),f|_U} U\).  Since this is relatively closed by
  assumption, \cite{Meyer-Zhu:Groupoids}*{Proposition 9.15} shows
  that~\(f(U)\) is Hausdorff.  Since~\(x\) was arbitrary, this means
  that~\(Z\) is locally Hausdorff.

  The last sentence follows from the first one and
  Proposition~\ref{pro:locally_closed_compact}.
\end{proof}

\begin{corollary}
  \label{cor:Hausdorff_diagonal_closed}
  A topological space~\(X\) is locally Hausdorff if and only if the
  diagonal \(\{(x,x)\mid x\in X\}\) is a locally closed subset
  in~\(X\times X\).
\end{corollary}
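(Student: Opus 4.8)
The plan is to obtain this as the special case of Proposition~\ref{pro:locally_Hausdorff_quotient} in which the open surjection is the identity map. Concretely, I would set $f\defeq \Id_X\colon X\to X$, which is a continuous open surjection (indeed a homeomorphism, hence certainly open and surjective). The associated fibre product is then
\[
X\times_{f,X,f} X = \{(x_1,x_2)\in X\times X\mid \Id_X(x_1)=\Id_X(x_2)\}
= \{(x_1,x_2)\in X\times X\mid x_1=x_2\},
\]
which is exactly the diagonal of $X\times X$. Proposition~\ref{pro:locally_Hausdorff_quotient} applied to this~$f$ asserts that $X\times_{f,X,f} X$ is locally closed in $X\times X$ if and only if the target space --- here again $X$ --- is locally Hausdorff. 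Substituting the identification of the fibre product with the diagonal yields precisely the claimed equivalence.

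No genuine obstacle arises, since all the work has already been carried out in Proposition~\ref{pro:locally_Hausdorff_quotient}; the only things to check are the (trivial) observations that $\Id_X$ satisfies the hypotheses of that proposition and that the fibre product of~$X$ with itself over $\Id_X$ is the diagonal. If one prefers a self-contained argument avoiding the auxiliary space~$Z$, one could instead transcribe the two directions of the proof of Proposition~\ref{pro:locally_Hausdorff_quotient} directly for the diagonal: for the forward direction, cover $X$ by Hausdorff open sets~$U$ and use that the diagonal of a Hausdorff space is closed to see that the diagonal intersected with $U\times U$ is relatively closed in $U\times U$; for the converse, given $x\in X$, shrink a product neighbourhood $U\times U$ of $(x,x)$ on which the diagonal is relatively closed and conclude from \cite{Meyer-Zhu:Groupoids}*{Proposition 9.15} that $U$ is Hausdorff. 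But routing through Proposition~\ref{pro:locally_Hausdorff_quotient} is cleaner, and that is the route I would take.
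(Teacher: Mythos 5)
Your proposal is correct and is exactly the paper's own proof: the paper's argument is the one-line "apply Proposition~\ref{pro:locally_Hausdorff_quotient} to the identity map," and your verification that $\Id_X$ is a continuous open surjection with $X\times_{\Id,X,\Id}X$ equal to the diagonal is precisely the (trivial) content of that application.
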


\begin{proof}
  Apply Proposition~\ref{pro:locally_Hausdorff_quotient} to the
  identity map.
\end{proof}

\begin{proposition}
  \label{pro:orbit_principal_lc_groupoid}
  Let~\(G\) be a locally quasi-compact, locally Hausdorff groupoid and
  let~\(X\) be a basic right \(G\)\nb-action.  Then~\(X/G\) is locally
  quasi-compact and locally Hausdorff.

  If~\(X\) is an equivalence from a space~\(Z\) to~\(G\), then
  \(Z\cong X/G\) is locally quasi-compact and locally Hausdorff.
\end{proposition}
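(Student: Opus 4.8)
The plan is to reduce \(X/G\) to a covering-groupoid quotient and then apply Proposition~\ref{pro:locally_Hausdorff_quotient}. First I would set \(Z\defeq X/G\) and let \(f\colon X\to Z\) be the orbit space projection; this is an open continuous surjection by Proposition~\ref{pro:Top_open_orbit}. Since the \(G\)\nb-action is basic, the map in~\eqref{eq:basic_map}, \(X\times_{\s,G^0,\rg} G^1\to X\times X\), is a homeomorphism onto its image with the subspace topology, and its image is exactly the orbit equivalence relation \(X\times_{f,Z,f} X\subseteq X\times X\). Hence \(X\times_{f,Z,f} X\) is homeomorphic to \(X\times_{\s,G^0,\rg} G^1\). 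Because \(G^1\) and~\(X\) are locally quasi-compact and locally Hausdorff and the source map is continuous, the fibre product \(X\times_{\s,G^0,\rg} G^1\) is a subspace of \(X\times G^1\); a quick local check (around any point, choose Hausdorff quasi-compact neighbourhoods in \(X\) and in \(G^1\) on which \(\s,\rg\) behave well) shows this fibre product is again locally quasi-compact and locally Hausdorff.

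Next I would feed this into Proposition~\ref{pro:locally_Hausdorff_quotient}. That proposition says \(X\times_{f,Z,f} X\) is locally closed in \(X\times X\) if and only if~\(Z\) is locally Hausdorff, and moreover, when~\(X\) is locally quasi-compact and locally Hausdorff, \(X\times_{f,Z,f} X\) is locally quasi-compact if and only if~\(Z\) is locally Hausdorff. Since we just argued that \(X\times_{f,Z,f} X \cong X\times_{\s,G^0,\rg} G^1\) is locally quasi-compact, the proposition immediately gives that \(Z = X/G\) is locally Hausdorff. It remains to see that \(X/G\) is also locally quasi-compact: given \([x]\in X/G\), pick a quasi-compact neighbourhood \(K\) of~\(x\) in~\(X\); then \(f(K)\) is a neighbourhood of \([x]\) (as~\(f\) is open) and is quasi-compact as a continuous image of a quasi-compact set. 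Running this for a neighbourhood basis of~\(x\) yields a neighbourhood basis of \([x]\) by quasi-compact sets, so \(X/G\) is locally quasi-compact.

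For the second statement, suppose \(X\) is an equivalence from a space~\(Z\) (viewed as a groupoid with only identity arrows) to~\(G\). By Proposition~\ref{pro:equivalence}, the anchor map \(\s\colon X\to Z\) descends to a homeomorphism \(X/H\cong G^0\) on one side and, on the \(G\)\nb-side, \(\rg\) induces a homeomorphism \(X/G\cong\). More precisely, the right \(G\)\nb-action on~\(X\) is basic with \(X/G\cong Z\) by the general properties of equivalences recorded in Proposition~\ref{pro:equivalence} (the left anchor map \(\rg\colon X\to Z\) is the quotient map for the \(G\)\nb-action). So this case is just the first statement applied to the basic \(G\)\nb-action underlying the equivalence, giving that \(Z\cong X/G\) is locally quasi-compact and locally Hausdorff.

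The main obstacle I anticipate is the bookkeeping in the very first reduction: verifying carefully that the image of the basic-action map~\eqref{eq:basic_map} really is the orbit equivalence relation \(X\times_{f,Z,f} X\) with its subspace topology (this uses that~\(f\) is the orbit projection and that two points lie in the same orbit exactly when related by some \(g\)), and that the homeomorphism \(X\times_{f,Z,f} X\cong X\times_{\s,G^0,\rg} G^1\) transports the ``locally quasi-compact'' property correctly. Once that identification is in place, both conclusions are formal consequences of Proposition~\ref{pro:locally_Hausdorff_quotient} together with openness of the orbit projection; no genuinely new estimate is needed.
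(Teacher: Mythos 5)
Your argument is correct and follows essentially the same route as the paper: identify $X\times_{f,Z,f}X$ with $X\times_{\s,G^0,\rg}G^1$ via the basic-action homeomorphism, deduce local Hausdorffness of $X/G$ from Proposition~\ref{pro:locally_Hausdorff_quotient}, obtain local quasi-compactness from openness of the orbit projection, and reduce the equivalence case to the first statement. Your ``quick local check'' that the fibre product is locally quasi-compact and locally Hausdorff is precisely what the paper formalises via Corollary~\ref{cor:Hausdorff_diagonal_closed} and Proposition~\ref{pro:locally_closed_compact}: since $G^0$ is locally Hausdorff, the fibre product is the preimage of a locally closed diagonal, hence locally closed in $X\times G^1$.
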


\begin{proof}
  Since \(G\) and~\(X\) are locally quasi-compact and locally
  Hausdorff, so is their product \(X\times G^1\).  Since~\(G^0\) is
  locally Hausdorff, the diagonal in~\(G^0\) is locally closed by
  Corollary~\ref{cor:Hausdorff_diagonal_closed}.  The fibre product
  \(X\times_{\s,G^0,\rg} G^1\) is the preimage of the diagonal
  in~\(G^0\times G^0\) under the continuous map \(\rg\times\s\colon
  X\times G^1 \to G^0\times G^0\); hence \(X\times_{\s,G^0,\rg} G^1\)
  is locally closed in~\(X\times G^1\).  Thus \(X\times_{\s,G^0,\rg}
  G^1\) is locally quasi-compact and locally Hausdorff by
  Proposition~\ref{pro:locally_closed_compact}.

  Since the \(G\)\nb-action on~\(X\) is basic, \(X\times_{\s,G^0,\rg}
  G^1\) is homeomorphic to the subset \(X\times_{X/G} X\subseteq
  X\times X\).  Now Proposition~\ref{pro:locally_closed_compact} shows
  that \(X\times_{X/G} X\) is locally closed in~\(X\times X\).
  Then~\(X/G\) is locally Hausdorff by
  Proposition~\ref{pro:locally_Hausdorff_quotient}.  Since continuous
  images of quasi-compact subsets are again quasi-compact, \(X/G\)~is
  also locally quasi-compact.

  An equivalence from a space~\(Z\) to~\(G\) is the same as a basic
  \(G\)\nb-action with a homeomorphism \(X/G\cong Z\).  If this
  exists, then~\(Z\) must be locally Hausdorff and locally
  quasi-compact by the above argument.
\end{proof}

\section{Fields of Banach spaces over locally Hausdorff spaces}
\label{sec:Banach_fields}

Let~\(X\) be a locally quasi-compact, locally Hausdorff space.  Thus
any Hausdorff open subset of~\(X\) is locally compact.

\begin{definition}[see~\cite{Muhly-Williams:Renaults_equivalence} and
  the references there]
  \label{def:continuous_Banach_field}
  An \emph{upper semicontinuous field of Banach spaces} on~\(X\) is a
  family of Banach spaces~\((\Banb_x)_{x\in X}\) with a topology on
  \(\Banb=\bigsqcup_{x\in X} \Banb_x\) such that, for each Hausdorff
  open subset~\(U\) of~\(X\), \(\Banb|_U\) is an upper semicontinuous
  field of Banach spaces on~\(U\).  In particular, the norm of any
  continuous section of~\(\Banb|_U\) is an upper semicontinuous
  scalar-valued function on~\(U\).

  Let \(\Sect(U,\Banb)\) denote the vector space of continuous,
  compactly supported sections of~\(\Banb|_U\).  This is the union
  (hence inductive limit) of the subspaces \(\Sect_0(K,\Banb)\) of
  continuous sections on~\(K\) vanishing on~\(\partial K\),
  where~\(K\) runs through the directed set of compact subsets
  of~\(U\) and \(\partial K=K\cap \cl{U\setminus K}\) is the boundary
  of~\(K\) in~\(U\).  Each \(\Sect_0(K,\Banb)\) is a Banach space for
  the supremum norm
  \[
  \norm{f}_\infty \defeq \sup \{\norm{f(x)}\mid x\in K\}.
  \]
  We call a subset of \(\Sect(U,\Banb)\) \emph{bounded} if it is the
  image of a norm-bounded subset of \(\Sect_0(K,\Banb)\) for
  some~\(K\).

  If \(f\in \Sect(U,\Banb)\) for a Hausdorff open subset~\(U\)
  of~\(X\), then we always extend~\(f\) to a section of~\(\Banb\) on
  all of~\(X\) by taking \(f(x)\defeq 0\) for \(x\notin U\).  Let
  \(\Sect(X,\Banb)\) be the vector space of all sections
  of~\(\Banb\) that may be written as finite linear combinations
  \(\sum_{i=1}^m f_i\) for \(f_i\in \Sect(U_i,\Banb)\) and Hausdorff
  open subset~\(U_i\) of~\(X\).  We call such sections of~\(\Banb\)
  \emph{quasi-continuous}.

  A subset~\(A\) of \(\Sect(X,\Banb)\) is \emph{bounded} if there are
  Hausdorff open subsets \(U_1,\dotsc,U_m\) of~\(X\) and bounded
  subsets \(A_i\subseteq \Sect(U_i,\Banb)\) for \(i=1,\dotsc,m\) such
  that every element of~\(A\) may be written as a sum \(\sum_{i=1}^m
  f_i\) with \(f_i\in A_i\) for \(i=1,\dotsc,m\).
\end{definition}

To simplify our proofs, we use bornological language, that is, we
speak of \emph{bounded} instead of open subsets.  For a Hausdorff
locally compact space~\(X\), \(\Sect(X,\Banb)\) with its usual
topology is an inductive limit of Banach spaces.  The inductive
limit topology is determined by its continuous seminorms.  A
seminorm is continuous if and only if it is bounded in the sense
that its supremum over each bounded subset is finite; this is so
because a seminorm on a Banach space is continuous if and only if it
is bounded.  For locally Hausdorff~\(X\), the bounded seminorms are
those that restrict to bounded seminorms on all the subspaces
\(\Sect(U,\Banb)\) for \(U\subseteq X\) open and Hausdorff; this is
the same as the quotient topology from the map \(\bigoplus_U
\Sect(U,\Banb)\to\Sect(X,\Banb)\), where~\(U\) runs through the
Hausdorff open subsets of~\(X\).  Thus the usual topology on
\(\Sect(X,\Banb)\) -- which is the quotient topology induced by the
inductive limit topologies on the direct sums of the spaces
\(\Sect(U,\Banb)\) -- is
the topology generated by all bounded seminorms.

Let~\(\OCover\) be a family of open subsets of~\(X\) with the
following two properties:
\begin{enumerate}
\item \(X=\bigcup_{U\in \OCover} U\), that is, for each \(x\in
  X\) there is \(U\in \OCover\) with \(x\in U\);
\item \(U_1\cap U_2 = \bigcup \{U\in \OCover \mid U\subseteq U_1\cap
  U_2\}\) for all \(U_1, U_2\in \OCover\); that is, if \(x\in U_1\cap
  U_2\), then there is \(U\in \OCover\) with \(U\subseteq U_1\cap
  U_2\) and \(x\in U\).
\end{enumerate}
In our main application, the open subsets in~\(\OCover\) will not be
Hausdorff.  Thus \(\Sect(U,\Banb)\) for \(U\in\OCover\) is defined in
the same way as \(\Sect(X,\Banb)\), by taking finite linear
combinations of continuous compactly supported sections on Hausdorff
open subsets of~\(U\).  We view \(\Sect(U,\Banb)\) as a subspace in
\(\Sect(X,\Banb)\) by extending functions on~\(U\) by~\(0\)
outside~\(U\).  This gives an injective, bounded linear map
\(\Sect(U,\Banb)\to\Sect(X,\Banb)\).  Being bounded means that it maps
bounded subsets to bounded subsets.

Let \(\iota_U\colon \Sect(U,\Banb)\to \bigoplus_{U\in\OCover}
\Sect(U,\Banb)\) for \(U\in\OCover\) denote the inclusion map of the
\(U\)\nb-summand.  We call a subset~\(A\) of \(\bigoplus_{U\in\OCover}
\Sect(U,\Banb)\) bounded if there are finitely many
\(U_1,\dotsc,U_m\in\OCover\) and bounded subsets~\(A_i\)
of~\(\Sect(U_i,\Banb)\) such that any element of~\(A\) may be written
as \(\sum_{i=1}^m \iota_{U_i}(f_i)\) with \(f_i\in A_i\).

\begin{proposition}
  \label{pro:Banb_and_cover}
  The map
  \[
  E\colon \bigoplus_{U\in\OCover} \Sect(U,\Banb)\to\Sect(X,\Banb)
  \]
  is bounded linear and a bornological quotient map in the sense that
  any bounded subset of \(\Sect(X,\Banb)\) is the image of a bounded
  subset of~\(\bigoplus_{U\in\OCover} \Sect(U,\Banb)\); in particular,
  it is surjective.

  The kernel of~\(E\) is the closed linear span of the set of elements
  of the form \(\iota_U(f)-\iota_V(f)\) for \(f\in\Sect(U,\Banb)\),
  \(U,V\in\OCover\) with \(U\subseteq V\).
\end{proposition}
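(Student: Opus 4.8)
The plan is to prove the two statements of Proposition~\ref{pro:Banb_and_cover} separately, starting with the quotient-map property of~\(E\) and then identifying its kernel.

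First I would prove boundedness and surjectivity of~\(E\). Boundedness is immediate from the definitions: a bounded subset of \(\bigoplus_{U\in\OCover}\Sect(U,\Banb)\) is, by definition, built from finitely many bounded \(A_i\subseteq\Sect(U_i,\Banb)\), and \(E\) sends \(\sum\iota_{U_i}(f_i)\) to \(\sum f_i\), which lies in a bounded subset of \(\Sect(X,\Banb)\) by the definition of ``bounded'' there. For the bornological quotient property, take a bounded \(A\subseteq\Sect(X,\Banb)\); by definition there are Hausdorff open \(V_1,\dots,V_n\subseteq X\) and bounded \(A_j\subseteq\Sect(V_j,\Banb)\) with every element of~\(A\) a sum \(\sum f_j\), \(f_j\in A_j\). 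The key point is to express each \(f_j\in\Sect(V_j,\Banb)\) as a finite sum of sections supported in members of~\(\OCover\), in a way that stays uniformly bounded as \(f_j\) ranges over~\(A_j\). Since \(A_j\) is the image of a norm-bounded subset of some \(\Sect_0(K_j,\Banb)\) with \(K_j\subseteq V_j\) compact, I cover the compact set~\(K_j\) by finitely many \(U\in\OCover\) (using property~(1) of \(\OCover\) together with the fact that such \(U\) restricted to the Hausdorff open \(V_j\) can be shrunk into Hausdorff open subsets), choose a subordinate partition of unity \((\chi_i)\) with \(\chi_i\in\Contc\) supported in \(U_i\cap V_j\) and \(\sum\chi_i\equiv1\) on~\(K_j\), and write \(f_j=\sum_i\chi_i f_j\) with \(\chi_i f_j\in\Sect(U_i,\Banb)\). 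Since \(\norm{\chi_i f_j}_\infty\le\norm{f_j}_\infty\), the sets \(\{\chi_i f_j: f_j\in A_j\}\) are bounded, so \(A\) is the image of a bounded subset of the direct sum. This also gives surjectivity.

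Next I would handle the kernel. Let~\(N\) be the closed linear span of the elements \(\iota_U(f)-\iota_V(f)\) for \(U\subseteq V\) in~\(\OCover\), \(f\in\Sect(U,\Banb)\) (here \(f\in\Sect(U,\Banb)\) is also viewed in \(\Sect(V,\Banb)\) via extension by zero, which is legitimate since \(U\subseteq V\)). The inclusion \(N\subseteq\ker E\) is clear since \(E(\iota_U(f)-\iota_V(f))=f-f=0\). For the reverse inclusion I would show that the quotient \(\bigl(\bigoplus_U\Sect(U,\Banb)\bigr)/N\) maps isomorphically onto \(\Sect(X,\Banb)\); equivalently, that every element of \(\ker E\) lies in the closure of the algebraic span of these difference elements. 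Given \(\xi=\sum_{i=1}^m\iota_{U_i}(f_i)\in\ker E\), so \(\sum f_i=0\) in \(\Sect(X,\Banb)\), I would use a partition-of-unity argument localised to members of~\(\OCover\) finer than all the \(U_i\): cover the (compact) union of supports by finitely many \(W_k\in\OCover\) each contained in those \(U_i\) to which the corresponding point belongs, using property~(2) of~\(\OCover\), pick a subordinate partition of unity \((\psi_k)\), and rewrite each \(\iota_{U_i}(f_i)=\sum_k\iota_{U_i}(\psi_k f_i)\). Modulo~\(N\), \(\iota_{U_i}(\psi_k f_i)\equiv\iota_{W_k}(\psi_k f_i)\) whenever \(W_k\subseteq U_i\), so \(\xi\) becomes congruent mod~\(N\) to \(\sum_k\iota_{W_k}\bigl(\psi_k\sum_i f_i\bigr)=\sum_k\iota_{W_k}(0)=0\). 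Thus \(\xi\in N\).

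The main obstacle I expect is the bookkeeping for boundedness throughout the partition-of-unity manipulations: one must check that the partitions of unity can be chosen with \(\Contc\) cutoff functions bounded by~\(1\) on Hausdorff open pieces, so that all the rewritten summands stay in a fixed bounded set, and that finitely many members of~\(\OCover\) suffice on each relevant compact set (which needs local quasi-compactness of~\(X\) and properties~(1)--(2) of~\(\OCover\) to intersect \(\OCover\)-members down into Hausdorff open subsets contained in the given \(U_i\)). The algebraic content of the kernel computation is straightforward; the care lies in making each step compatible with the bornology so the quotient statement is genuinely bornological and not merely algebraic. I would also remark that, as in \cite{Muhly-Williams:Renaults_equivalence}, one may reduce to the case where each \(U\in\OCover\) and each support is contained in a single Hausdorff open set, which makes the partition-of-unity arguments literally classical.
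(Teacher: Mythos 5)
Your first half — boundedness of \(E\) and the bornological quotient property, obtained by covering each compact support \(K_j\subseteq V_j\) by finitely many members of \(\OCover\) and taking a partition of unity subordinate to this cover on the compact \emph{Hausdorff} set \(K_j\) — is correct and is essentially the paper's argument.

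The kernel computation, however, has a genuine gap, and it is not the boundedness bookkeeping you flag at the end. Your reduction needs a finite family \((\psi_k)\) with \(\sum_k\psi_k=1\) on the union of the supports of the \(f_i\), each \(\psi_k\) compactly supported in some \(W_k\in\OCover\), and \(W_k\subseteq U_i\) whenever \(\psi_k f_i\neq0\). Such a family need not exist, because the union of the supports is in general \emph{not} Hausdorff and the sets in \(\OCover\) need not be Hausdorff either; a compact non-Hausdorff set covered by open sets admits no subordinate partition of unity of this kind. Concretely, take the example in the remark following the proposition: \(X=[0,1]\sqcup_{(0,1]}[0,1]\) with the trivial bundle, \(\OCover=\{U_1,U_2,U_1\cap U_2\}\) with \(U_1,U_2\) the two copies of \([0,1]\), and \(\xi=\iota_{U_1}(f)-\iota_{U_2}(f)\) with \(f(x)=x\). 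Then \(\xi\in\ker E\), but any \(\psi_k\) with \(\psi_k(0^+)\neq0\) must have \(W_k=U_1\), hence must satisfy \(\psi_k f=0\) on \((0,1]\) for the step \(\iota_{U_2}(\psi_k f)\equiv\iota_{W_k}(\psi_k f)\) to be allowed; since \(f\neq0\) on \((0,1]\), continuity on \(U_1\) forces \(\psi_k(0^+)=0\), a contradiction, so \(\sum_k\psi_k=1\) fails on \(\supp f\). More decisively: if your argument worked, it would prove that \(\ker E\) is the \emph{algebraic} span of the elements \(\iota_U(f)-\iota_V(f)\), without closure; this is false, since the above \(\xi\) lies in the closed span but not in the algebraic span (that would force \(f\in\Contc((0,1])\)). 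This is exactly why the closure appears in the statement, and why the paper's proof does not attempt a one-shot partition-of-unity reduction: it runs an iterative correction scheme, adding elements of the algebraic span step by step so that the sup-norms of the components drop below any prescribed \(\epsilon\) while their supports stay in fixed compacta, with the partitions of unity taken only on compact subsets \(A_{j+1}\) of the Hausdorff sets \(V_{j+1}\) (where vanishing of \(E(f)\) forces other components to be nonzero), and it concludes only membership in the bornological closure. For the same reason your final remark — reducing to the case where each member of \(\OCover\) is contained in a single Hausdorff open set — is not available: in the intended application the \(U\in\OCover\) are the sets \(L_t\), which are not Hausdorff, and the kernel statement is sensitive to precisely this failure.
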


The ``closure'' in the description of the kernel is the bornological
one, defined using Mackey's notion of convergence in a bornological
vector space.  For any element \(g\in \ker E\), we will find a bounded
subset \(A\subseteq \bigoplus_{U\in\OCover} \Sect(U,\Banb)\), and
linear combinations~\(g_n\) of \(\iota_U(f)-\iota_V(f)\) for
\(f\in\Sect(U,\Banb)\), \(U,V\in\OCover\) with \(U\subseteq V\) such
that \(g-g_n\in 2^{-n}\cdot A\).  This implies convergence in any
bounded seminorm.

\begin{remark}
  Proposition~\ref{pro:Banb_and_cover} implies that~\(E\) is a
  quotient map with respect to the canonical topologies on the spaces
  involved.  That is, a seminorm~\(p\) on~\(\Sect(X,\Banb)\) is
  continuous if and only if \(p\circ E\) is a continuous seminorm on
  \(\bigoplus_{U\in \OCover}\Sect(U,\Banb)\).  The proof uses that
  continuity and boundedness are equivalent for seminorms on both
  spaces and
  that~\(E\) is a bornological quotient map.  It seems inconvenient,
  however, to prove this directly without bornological language.
\end{remark}

\begin{proof}
  In the proof, we abbreviate \(\Sect(U)\defeq\Sect(U,\Banb)\) because
  the Banach space bundle is fixed throughout.  We first show
  that~\(E\) is a bornological quotient map.

  Let \(A\subseteq\Sect(X)\) be bounded.  By definition, there are
  finitely many Hausdorff open subsets \(V_1,\dotsc,V_m\subseteq X\),
  compact subsets \(K_i\subseteq V_i\) and scalars \(C_i>0\) such that
  any \(f\in A\) may be written as \(\sum_{i=1}^m f_i\) with
  \(f_i\in\Sect_0(K_i)\) having \(\norm{f_i}_\infty\le C_i\).

  Since the subsets \(U\in\OCover\) cover~\(X\), they cover the
  compact subset~\(K_i\).  Since compact spaces are paracompact, there
  is a finite subordinate partition of unity
  \((\psi_{i,U})_{U\in\OCover}\), that is, \(\psi_{i,U}\colon K_i\to
  [0,1]\) is continuous and has compact support~\(L_{i,U}\) contained
  in \(U\cap K_i\), only finitely many~\(\psi_{i,U}\) are non-zero, and
  \(\sum_{U\in\OCover} \psi_{i,U}(x)=1\).  If \(f_i\in\Sect_0(K_i)\),
  then \(f_i\cdot\psi_{i,U}\in\Sect_0(K_i \cap L_{i,U})\subseteq
  \Sect(V_i\cap U)\) and \(\norm{f_i\cdot\psi_{i,U}}_\infty\le
  \norm{f_i}_\infty\).

  Now write \(f\in A\) first as \(\sum_{i=1}^m f_i\) with
  \(f_i\in\Sect_0(K_i)\) having \(\norm{f_i}_\infty\le C_i\), and
  then as \(\sum_{i=1}^n \sum_{U\in\OCover} f_i\cdot \psi_{i,U}\).
  This sum is still finite because only finitely many~\(\psi_{i,U}\)
  are non-zero for each~\(i\), and each summand \(f_i\cdot\psi_{i,U}\)
  runs through
  a bounded subset of \(\Sect(V_i\cap U)\) and hence of~\(\Sect(U)\)
  because we have uniform control on the supports \(\supp
  f_i\psi_{i,U}\subseteq K_i\cap L_{i,U}\) and norms
  \(\norm{f_i\cdot\psi_{i,U}}_\infty\le C_i\) of the summands.
  Hence~\(A\) is contained in the \(E\)\nb-image of a bounded
  subset in \(\bigoplus \Sect(U)\).

  Now we describe the kernel of~\(E\).  Let~\(N\) be the linear span
  of elements of the form \(\iota_U(f)-\iota_V(f)\) for all
  \(f\in\Sect(U)\), \(U,V\in\OCover\) with \(U\subseteq V\).  Since
  \(E(\iota_U(f)-\iota_V(f))=0\), we have \(N\subseteq \ker E\).  If
  \(U_1,U_2,V\in\OCover\) satisfy \(V\subseteq U_1\cap U_2\) and
  \(f\in\Sect(V)\), then
  \[
  \iota_{U_1}(f)-\iota_{U_2}(f)
  = -(\iota_V(f)-\iota_{U_1}(f)) + (\iota_V(f)-\iota_{U_2}(f)) \in N.
  \]
  We are going to modify a given element of~\(\ker E\) by adding
  elements of~\(N\) so that the norms of its constituents become
  arbitrarily small, without enlarging their supports.

  A generic element \(f\in \bigoplus_{U\in\OCover} \Sect(U)\) is of
  the form \(f=\sum \iota_U(f_U)\) with \(f_U\in\Sect(U)\) and
  \(f_U=0\) for all but finitely many~\(U\).  Each non-zero~\(f_U\) is
  a sum \(f_U = \sum_{j=1}^{k_U} f_{U,j}\) with
  \(f_{U,j}\in\Sect(V_{U,j})\) for finitely many Hausdorff open
  subsets \(V_{U,1},\dotsc,V_{U,k_U}\subseteq U\).  We renumber the
  finitely many Hausdorff open subsets~\(V_{U,j}\) consecutively as
  \(V_1,\dotsc,V_m\) and relabel our sections \(f_i\in\Sect(V_i)\)
  accordingly.  Let \(U_i\in\OCover\) for \(i=1,\dotsc,m\) be such
  that \(f=\sum_{i=1}^m \iota_{U_i}(f_i)\); so \(V_i\subseteq U_i\).
  Let \(K_i\defeq \supp f_i \subseteq V_i\) and let~\(K_i^\circ\) be
  the interior
  of~\(K_i\) inside~\(V_i\); thus \(x\in K_i^\circ\) for all \(x\in
  X\) with \(f_i(x)\neq0\).

  Now assume \(f\in \ker(E)\) and let \(\epsilon>0\).  We will
  construct a finite sequence \(f^{(j)} =
  \sum_{i=1}^m \iota_{U_i}(f_i^{(j)})\) with \(f^{(0)}=f\),
  \(f^{(j+1)}-f^{(j)}\in N\), and \(\norm{f_i^{(m)}}<\epsilon\) for
  all \(i=1,\dotsc,m\).  Furthermore, our construction ensures that
  the support of~\(f_i^{(j)}\) is contained in~\(K_i\) for all
  \(i,j\).  Letting~\(\epsilon\) run through a sequence going
  to~\(0\), the differences \(f-f^{(m)}\) in~\(N\) will converge
  to~\(f\) in the sense explained above because each constituent
  \(f_i-f_i^{(m)}\) converges to~\(f_i\) in the normed
  space~\(\Sect_0(K_i)\).  Our construction will be such that
  \(f_i^{(j)} = f_i^{(i)}\) for \(j\ge i\), that is, in the \(j\)th
  step we keep \(f_1,\dotsc,f_{j-1}\) fixed.  To make the following
  steps possible, we aim for stronger norm estimates
  \(\norm{f_i^{(j)}}< 2^{j-m}\epsilon\).  Assume that we have already
  constructed \(f^{(j)} = \sum_{i=1}^m \iota_{U_i}(f_i^{(j)})\)
  with \(f-f^{(j)}\in N\) and \(\norm{f_i^{(j)}}<2^{j-m}
  \epsilon\) for \(i=1,\dotsc,j\); for \(j=0\), this is satisfied
  for \(f^{(0)}=f\).  We are going to construct \(f^{(j+1)} =
  \sum_{i=1}^m \iota_{U_i}(f_i^{(j+1)})\) with \(f^{(j)}-f^{(j+1)}\in
  N\) and hence \(f-f^{(j+1)}\in N\), with \(f_i^{(j+1)}=f_i^{(j)}\) for
  \(i=1,2,\dotsc,j\), and \(\norm{f_{j+1}^{(j+1)}}<2^{j+1-m} \epsilon\).

  Let \(A_{j+1} = \{x\in V_{j+1} \mid \norm{f_{j+1}^{(j)}(x)}\ge
  2^{j+1-m}\epsilon\}\).  This is a closed subset of~\(K_{j+1}^\circ\)
  because the norm function is upper semicontinuous.  Since~\(K_{j+1}\)
  is compact, \(A_{j+1}\) is compact.  Since \(E(f)=0\) and
  \(E(f-f^{(j)})=0\), we have \(\sum_{i=1}^m f_i^{(j)}(x)=0\)
  for all \(x\in X\).  If \(x\in A_{j+1}\), then this gives
  \[
  \biggl\lVert
    \sum_{i=j+2}^m f_i^{(j)}(x)
  \biggr\rVert
  =
  \biggl\lVert
    \sum_{i=1}^{j+1} f_i^{(j)}(x)
  \biggr\rVert
  \ge
  \norm{f_{j+1}^{(j)}(x)} - \sum_{i=1}^{j} \norm{f_i^{(j)}}_\infty
  >0.
  \]
  Hence there must be \(i>j+1\) with \(f_i^{(j)}(x)\neq0\), so that
  \(x\in K_i^\circ\).  Thus the open subsets~\(K_i^\circ\) for
  \(i>j+1\) cover~\(A_{j+1}\).  If \(x\in A_{j+1}\cap K_i^\circ\), then \(x\in
  U_i\cap U_{j+1}\).  By our assumption on~\(\OCover\), there is
  \(U\in\OCover\) with \(x\in U\) and \(U\subseteq U_i\cap U_{j+1}\).
  Thus the open subsets \(K_i^\circ\cap U\) for \(i>j+1\) and
  \(U\in\OCover\) with \(U\subseteq U_i\cap U_{j+1}\) cover~\(A_{j+1}\).

  Since~\(A_{j+1}\) is compact and contained in the Hausdorff locally
  compact space \(V_{j+1}\), there is a subordinate finite partition of
  unity~\((\psi_{i,U})\).  That is, all but finitely
  many~\(\psi_{i,U}\) are non-zero, \(\psi_{i,U}\colon A_{j+1}\to
  [0,1]\) is a continuous function with compact support contained in
  \(K_i^\circ\cap U\), and \(\sum \psi_{i,U}(x)=1\) for \(x\in
  A_{j+1}\).  We may extend each non-zero~\(\psi_{i,U}\) from~\(A_{j+1}\) to
  a continuous function \(\bar\psi_{i,U}\colon K_{j+1}\to[0,1]\)
  vanishing in a neighbourhood of~\(\partial K_{j+1}\) and on
  \(K_{j+1}\setminus (K_i^\circ\cap U)\) because these two compact
  subsets of~\(A_{j+1}\) are disjoint from the compact support
  of~\(\psi_{i,U}\) in \(K_i^\circ\cap U\).  If necessary, we
  multiply all~\(\bar\psi_{i,U}\) with a suitable cut-off function
  so that \(\sum \bar\psi_{i,U}(x)\le1\) for all \(x\in K_{j+1}\).

  Now we let
  \[
  f^{(j+1)} = f^{(j)} + \sum_{i, U}
  \iota_{U_i}(f_{j+1}^{(j)}\bar\psi_{i,U})
  - \iota_{U_{j+1}}(f_{j+1}^{(j)}\bar\psi_{i,U}).
  \]
  By construction, \(f_{j+1}^{(j)}\bar\psi_{i,U}\) is continuous and
  supported in a compact subset of \(K_i^\circ\cap U\) with
  \(U\subseteq U_i\cap U_{j+1}\), \(U\in\OCover\).  Hence
  \(\iota_{U_i}(f_{j+1}^{(j)}\bar\psi_{i,U}) -
  \iota_{U_{j+1}}(f_{j+1}^{(j)}\bar\psi_{i,U})\in N\), so
  \(f^{(j+1)}-f^{(j)}\in N\) as desired.  Since only \(i>j+1\) appear in
  the sum, \(f_i^{(j+1)}=f_i^{(j)}\) for \(i<j+1\).  We get
  \[
  f^{(j+1)}_{j+1}(x) = f^{(j)}_{j+1}(x)\cdot \biggl(1- \sum_{i,U}
    \bar\psi_{i,U}(x)\biggr).
  \]
  This has supremum norm less than \(2^{j+1-m}\epsilon\) because \(1-
  \sum_{i,U} \bar\psi_{i,U}(x)\) vanishes where
  \(\norm{f^{(j)}_{j+1}(x)}\ge 2^{j+1-m}\epsilon\) and is at most~\(1\)
  everywhere else.  The support of~\(f^{(j+1)}_{j+1}\) is still contained
  in~\(K_{j+1}\) by construction.

  For \(i>j+1\), we get
  \[
  f^{(j+1)}_i = f^{(j)}_i +  \sum_U f^{(j)}_{j+1}\cdot\bar\psi_{i,U}.
  \]
  This still has support~\(K_i\) because \(\bar\psi_{i,U}\) is
  supported there.  This completes the induction step and thus the
  proof.
\end{proof}

\begin{remark}
  If~\(X\) is Hausdorff, then a partition-of-unity argument as in the
  proof of
  \cite{BussExel:Fell.Bundle.and.Twisted.Groupoids}*{Theorem~2.13}
  shows that~\(\ker(E)\) is the linear span \emph{without closure} of
  \(\iota_U(f)-\iota_V(f)\) with \(U,V\in \OCover\).  Hence this
  linear span is already closed for the natural topology on
  \(\bigoplus_{U\in\OCover} \Sect(U,\Banb)\).  Convergent infinite
  series are needed to generate~\(\ker E\) from
  \(\iota_U(f)-\iota_V(f)\) with \(U,V\in \OCover\). This happens in simple
  examples, such as the space \(X=[0,1]\sqcup_{(0,1]}[0,1]\) discussed
  in Section~\ref{sec:explicit_example} with the trivial bundle~\(\C\)
  and the standard open cover by two Hausdorff open subsets with their
  intersection \((0,1]\).
\end{remark}

\subsection{Proof of Theorem~\ref{the:iterated_crossed_2}}
\label{sec:proof_iterated_crossed}

We apply Proposition~\ref{pro:Banb_and_cover} to \(X=L\),
the cover \((L_t)_{t\in S}\), and the given Fell bundle~\(\Banb\) as
in the statement of Theorem~\ref{the:iterated_crossed_2}.  The
subsets \(B^*\) and \(B_1*B_2\) for bounded subsets
\(B,B_1,B_2\subseteq\Sect(L,\Banb)\) are again bounded; this is
routine to check.  Thus \(\Sect(L,\Banb)\) is a bornological
\Star{}algebra.  (The continuity of the operations for the
``inductive limit topology'' is also known but somewhat more
difficult.)

We are going to cite some results of~\cite{Renault:Representations}
below, which follow from the Disintegration Theorem and the Morita
Equivalence Theorem.  We assume that they hold for the Fell
bundle~\(\Banb\) in question and its restriction to~\(G\); this is not
yet proved in the literature, see the discussion before
Theorem~\ref{the:iterated_crossed_2}.  Remark~\ref{rem:I-norm}
sketches a slightly more complicated proof that uses only the Morita
Equivalence Theorem, that is, the assumptions in
Theorem~\ref{the:iterated_crossed_2}.

\begin{lemma}
  \label{lem:Cstar-completion}
  The \(\Cst\)\nb-algebra \(\Cst(L,\Banb)\) is the completion
  of~\(\Sect(L,\Banb)\) in the maximal bounded \(\Cst\)\nb-seminorm.
\end{lemma}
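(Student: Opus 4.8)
The statement asks us to identify the groupoid $\Cst$\nb-algebra $\Cst(L,\Banb)$ with the completion of $\Sect(L,\Banb)$ in the maximal bounded $\Cst$\nb-seminorm. The plan is to compare two a priori different constructions: the usual one, where $\Cst(L,\Banb)$ is defined as the completion of $\Sect(L,\Banb)$ in the supremum over all bounded $*$\nb-representations on Hilbert spaces (the ``maximal $\Cst$\nb-norm'' in the sense of Renault's disintegration machinery), and the bornological one, where we take the supremum over all $\Cst$\nb-seminorms $p$ on the bornological $*$\nb-algebra $\Sect(L,\Banb)$ that are bounded, i.e.\ finite on every bounded subset. Since the excerpt has already established that $\Sect(L,\Banb)$ is a bornological $*$\nb-algebra (the subsets $B^*$ and $B_1*B_2$ of bounded subsets are bounded), the only thing to check is that these two families of seminorms have the same supremum, and that this supremum is finite (so that the completion is genuinely a $\Cst$\nb-algebra rather than the zero algebra).

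First I would observe that any bounded $*$\nb-representation $\pi$ of $\Sect(L,\Banb)$ on a Hilbert space yields a $\Cst$\nb-seminorm $f\mapsto\norm{\pi(f)}$, and this seminorm is bounded precisely because $\pi$ is; hence the bornological maximal seminorm dominates the representation-theoretic one. For the reverse inequality, I would use that any bounded $\Cst$\nb-seminorm $p$ on $\Sect(L,\Banb)$ factors through a (bounded) $*$\nb-homomorphism into a $\Cst$\nb-algebra, which by the GNS construction is a direct sum of cyclic bounded $*$\nb-representations on Hilbert spaces; so $p$ is dominated by the supremum over bounded Hilbert space representations. This is the standard argument relating $\Cst$\nb-seminorms to representations and causes no trouble once boundedness is tracked through the GNS construction. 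The point where some care is needed is the \emph{finiteness} of the maximal bounded seminorm: one must check that there exists \emph{some} bounded $\Cst$\nb-seminorm that is not identically zero and, more importantly, that the supremum is finite on each generator. Here I would invoke Proposition~\ref{pro:Banb_and_cover}: the bornological quotient map $E\colon\bigoplus_{t\in S}\Sect(L_t,\Banb)\to\Sect(L,\Banb)$ lets us reduce boundedness estimates to the Hausdorff pieces $L_t$, and on a Hausdorff open subset $U\subseteq L_t$ the $I$\nb-norm (total-variation norm along the range fibres) gives a bounded seminorm dominating every $\Cst$\nb-seminorm, exactly as in the classical Hausdorff theory of~\cite{Renault:Representations}. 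Thus the maximal bounded $\Cst$\nb-seminorm is finite on $\Sect(U,\Banb)$ for each such $U$, hence finite on all of $\Sect(L,\Banb)$ since every element is a finite sum of such pieces.

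The main obstacle will be the bookkeeping in the previous step: the topology on $\Sect(L,\Banb)$ in the non-Hausdorff case is the quotient topology from the Hausdorff pieces, and one must make sure that the $\Cst$\nb-seminorm arising from the disintegration theory of~\cite{Renault:Representations} is bounded \emph{in the bornological sense used in Appendix~\ref{sec:Banach_fields}}, not merely in some weaker sense. This is essentially a matter of checking that Renault's representations are ``$I$\nb-norm bounded'' on each Hausdorff open subset and that $I$\nb-norm boundedness on the pieces is the same as bornological boundedness on $\Sect(L,\Banb)$; both statements are routine given Proposition~\ref{pro:Banb_and_cover} but require a careful statement. Once this is in place, the two completions of $\Sect(L,\Banb)$ — the one defining $\Cst(L,\Banb)$ and the one in the maximal bounded $\Cst$\nb-seminorm — coincide, since they are completions in the same norm. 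I would close by remarking (as the excerpt's Remark~\ref{rem:I-norm} already anticipates) that if one wishes to avoid invoking the disintegration theorem for $\Banb$ one can instead bound $\Cst$\nb-seminorms directly via the Morita Equivalence Theorem applied to the partial equivalences $L_t$, which is exactly the hypothesis imposed in Theorem~\ref{the:iterated_crossed_2}.
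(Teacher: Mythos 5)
Your plan is correct and follows essentially the paper's own proof: both identify bornological boundedness of a \(\Cst\)\nb-seminorm on \(\Sect(L,\Banb)\) with continuity for Renault's ``inductive limit topology'' (the quotient topology from the Hausdorff pieces, via Proposition~\ref{pro:Banb_and_cover}) and then invoke Renault's Corollaire~4.8 --- the disintegration machinery, assumed to hold for~\(\Banb\) as stated just before the lemma --- to conclude that these are exactly the \(I\)\nb-norm bounded seminorms, so the two maximal seminorms and hence the two completions coincide. Just phrase the key domination correctly: the \(I\)\nb-norm dominates the \emph{bounded} (inductive-limit continuous) \(\Cst\)\nb-seminorms, not literally every \(\Cst\)\nb-seminorm, and this is precisely where the disintegration input enters; with that in place the GNS detour in your reverse inequality is unnecessary.
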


\begin{proof}
  Usually, \(\Cst(L,\Banb)\) is defined as the completion
  of~\(\Sect(L,\Banb)\) in the maximal \(\Cst\)\nb-seminorm that is
  bounded with respect to the \(I\)\nb-norm, a certain norm
  on~\(\Sect(L,\Banb)\).  \cite{Renault:Representations}*{Corollaire
    4.8} shows that a representation of~\(\Sect(L,\Banb)\) that is
  continuous with respect to the ``inductive limit topology'' is
  bounded for the \(I\)\nb-norm.  Hence a \(\Cst\)\nb-seminorm
  on~\(\Sect(L,\Banb)\) is continuous with respect to the
  ``inductive limit topology'' if and only if it is bounded with
  respect to the \(I\)\nb-norm.  The topology on~\(\Sect(L,\Banb)\)
  called ``inductive limit topology''
  in~\cite{Renault:Representations} is really the quotient topology
  induced by the inductive limit topology on
  \(\bigoplus_{U\in\OCover} \Sect(U,\Banb)\),
  where~\(\OCover\) is the set of all Hausdorff open subsets
  of~\(L\) and \(\bigoplus_{U\in\OCover} \Sect(U,\Banb)\) is
  viewed as the inductive limit of the Banach subspaces
  \(\bigoplus_{U\in F} \Sect_0(K_U,\Banb)\) where~\(F\) is a finite
  subset of~\(\OCover\) and \(K_U\subseteq U\) for \(U\in F\)
  are compact subsets.  As we remarked above, a seminorm is
  continuous in this sense if and only if it is bounded in the
  canonical bornology on~\(\Sect(L,\Banb)\) introduced in
  Appendix~\ref{sec:Banach_fields}.
\end{proof}

Let \(D\defeq \bigoplus_{t\in S} \Sect(L_t,\Banb)\).  This carries a
canonical direct sum bornology as in
Appendix~\ref{sec:Banach_fields}.  The Fell bundle operations turn
it into a \Star{}algebra.  The multiplication and involution are
bounded, so we even have a bornological \Star{}algebra.  The map
\(E\colon D\to \Sect(L,\Banb)\) from
Proposition~\ref{pro:Banb_and_cover} is a bounded
\Star{}homomorphism.

Since~\(E\) is a bornological quotient map by
Proposition~\ref{pro:Banb_and_cover}, a
\(\Cst\)\nb-seminorm~\(p\) on~\(\Sect(L,\Banb)\) is bounded if and
only if \(p\circ E\) is a bounded \(\Cst\)\nb-seminorm on~\(D\).  A
bounded \(\Cst\)\nb-seminorm on~\(D\) is of the form \(p\circ E\) for
a \(\Cst\)\nb-seminorm~\(p\) on~\(\Sect(L,\Banb)\) if and only if it
vanishes on the kernel of~\(E\).  By
Proposition~\ref{pro:Banb_and_cover}, a bounded seminorm on~\(D\)
vanishes on~\(\ker E\) if and only if it vanishes on
\(\iota_t(f)-\iota_u(f)\) for all \(f\in\Sect(L_t,\Banb)\), \(t,u\in
S\), \(t\le u\).  Thus \(\Cst(L,\Banb)\) is isomorphic to the
completion of~\(D\) in the maximal \(\Cst\)\nb-seminorm~\(q\) on~\(D\)
that is bounded and vanishes on \(\iota_t(f)-\iota_u(f)\) for all
\(f,t,u\) as above.

The restriction of this \(\Cst\)\nb-seminorm~\(q\) to
\(\Sect(G,\Banb)\subseteq D\) is a bounded \(\Cst\)\nb-seminorm.
Since
\(\Cst(G,\Banb)\) is defined as the completion of~\(\Sect(G,\Banb)\)
with respect to the maximal bounded \(\Cst\)\nb-seminorm
on~\(\Sect(G,\Banb)\), \(q\) extends to a \(\Cst\)\nb-seminorm
on~\(\Cst(G,\Banb)\).  Since \(q(f)^2 = q(f^**f)\) for
\(f\in\Sect(L_t,\Banb)\), the restriction of~\(q\) to
\(\Sect(L_t,\Banb)\) is dominated by the Hilbert module norm from
\(\Cst(L_t,\Banb)\).  Thus~\(q\) automatically extends to the sum
\(\bigoplus_{t\in S} \Cst(L_t,\Banb)_{t\in S}\).  Furthermore, \(q\)
still annihilates \(\iota_t(f)-\iota_u(f)\) for all
\(f\in\Cst(L_t,\Banb)\), \(t,u\in S\), \(t\le u\) because
\(\Sect(L_t,\Banb)\) is dense in \(\Cst(L_t,\Banb)\).  Conversely, a
\(\Cst\)\nb-seminorm on \(\bigoplus_{t\in S} \Cst(L_t,\Banb)\) that
annihilates \(\iota_t(f)-\iota_u(f)\) for all
\(f\in\Cst(L_t,\Banb)\), \(t,u\in S\), \(t\le u\) restricts to a
\(\Cst\)\nb-seminorm~\(q\) on~\(D\) that annihilates
\(\iota_t(f)-\iota_u(f)\) for all \(f\in\Sect(L_t,\Banb)\), \(t,u\in
S\), \(t\le u\).  Since~\(D\) is dense in \(\bigoplus_{t\in S}
\Cst(L_t,\Banb)\), this says that \(\Cst(L,\Banb)\) is isomorphic to
the completion of \(\bigoplus_{t\in S} \Cst(L_t,\Banb)\) in the
maximal \(\Cst\)\nb-seminorm that annihilates
\(\iota_t(f)-\iota_u(f)\) for all \(f\in\Cst(L_t,\Banb)\), \(t,u\in
S\), \(t\le u\).  This is exactly the definition of the section
\(\Cst\)\nb-algebra of the Fell bundle \(\Cst(L_t,\Banb)_{t\in S}\)
over~\(S\).  This finishes the proof of
Theorem~\ref{the:iterated_crossed_2}.

\begin{remark}
  \label{rem:I-norm}
  We may also prove Theorem~\ref{the:iterated_crossed_2} without
  Lemma~\ref{lem:Cstar-completion}, using the usual definition of
  \(\Cst(L,\Banb)\) involving the \(I\)\nb-norm on~\(D\).  This
  variant of the proof has the advantage that it does not require the
  Disintegration Theorem.  We still need the Morita equivalence
  theorem for our Fell bundles, however, so that our inner
  products are positive and generate the expected ideals.

  We only
  explain the new points in this alternative proof.  The
  \(I\)\nb-norm on~\(\Sect(L,\Banb)\) restricts to the \(I\)\nb-norm
  on~\(\Sect(G,\Banb)\).  Consider a \(\Cst\)\nb-seminorm~\(q\)
  on~\(D\) that annihilates \(\iota_t(f)-\iota_u(f)\) for all
  \(f\in\Sect(L_t,\Banb)\), \(t,u\in S\), \(t\le u\) and satisfies
  \(q(f)\le \norm{f}_I\) for all \(f\in \Sect(G,\Banb)\).  Then
  \(q(f) = q(f^**f)^{1/2} \le \norm{f^**f}_I^{1/2} \le \norm{f}_I\)
  for all \(f\in\Sect(L_t,\Banb)\), \(t\in S\).  Thus~\(q\) is
  bounded with respect to our bornology as well, so it factors as
  \(\dot{q}\circ E\) for a bounded seminorm~\(\dot{q}\)
  on~\(\Sect(L,\Banb)\) by Proposition~\ref{pro:Banb_and_cover}.
  This seminorm satisfies \(\dot{q}(f)\le \norm{f}_I\) for all
  \(f\in\Sect(L_t,\Banb)\), \(t\in S\).  But then \(\dot{q}(f)\le
  \norm{f}_I\) follows for all \(f\in\Sect(L,\Banb)\), \(t\in S\).
\end{remark}

\begin{bibdiv}
  \begin{biblist}
\bib{Benabou:Bicategories}{article}{
  author={B\'enabou, Jean},
  title={Introduction to bicategories},
  conference={ title={Reports of the Midwest Category Seminar}, },
  book={ publisher={Springer}, place={Berlin}, },
  date={1967},
  pages={1--77},
  review={\MRref {0220789}{36\,\#3841}},
  doi={10.1007/BFb0074299},
}

\bib{Bourbaki:Topologie_generale}{book}{
  author={Bourbaki, Nicolas},
  title={Topologie g\'en\'erale. Chapitres 1 \`a 4},
  series={\'El\'ements de math\'ematique},
  publisher={Hermann},
  place={Paris},
  date={1971},
  pages={xv+357 pp. (not consecutively paged)},
  isbn={978-3-540-33982-3},
  review={\MRref {0358652}{50\,\#11111}},
}

\bib{BussExel:Regular.Fell.Bundle}{article}{
  author={Buss, Alcides},
  author={Exel, Ruy},
  title={Twisted actions and regular Fell bundles over inverse semigroups},
  journal={Proc. Lond. Math. Soc. (3)},
  volume={103},
  date={2011},
  number={2},
  pages={235--270},
  issn={0024-6115},
  review={\MRref {2821242}{}},
  doi={10.1112/plms/pdr006},
}

\bib{BussExel:Fell.Bundle.and.Twisted.Groupoids}{article}{
  author={Buss, Alcides},
  author={Exel, Ruy},
  title={Fell bundles over inverse semigroups and twisted \'etale groupoids},
  journal={J. Operator Theory},
  volume={67},
  date={2012},
  number={1},
  pages={153--205},
  issn={0379-4024},
  review={\MRref {2881538}{}},
  eprint={http://www.theta.ro/jot/archive/2012-067-001/2012-067-001-007.pdf},
}

\bib{Buss-Meyer:Crossed_products}{article}{
  author={Buss, Alcides},
  author={Meyer, Ralf},
  title={Crossed products for actions of crossed modules on \(\textup C^*\)\nobreakdash-algebras},
  status={accepted},
  journal={J. Noncommut. Geom.},
  issn={1661-6952},
  note={\arxiv{1304.6540}},
  date={2016},
}

\bib{Buss-Meyer-Zhu:Higher_twisted}{article}{
  author={Buss, Alcides},
  author={Meyer, Ralf},
  author={Zhu, {Ch}enchang},
  title={A higher category approach to twisted actions on \(\textup C^*\)\nobreakdash -algebras},
  journal={Proc. Edinb. Math. Soc. (2)},
  date={2013},
  volume={56},
  number={2},
  pages={387--426},
  issn={0013-0915},
  doi={10.1017/S0013091512000259},
  review={\MRref {3056650}{}},
}

\bib{Chabert-Echterhoff:Twisted}{article}{
  author={Chabert, J\'er\^ome},
  author={Echterhoff, Siegfried},
  title={Twisted equivariant $KK$-theory and the Baum--Connes conjecture for group extensions},
  journal={$K$\nobreakdash -Theory},
  volume={23},
  date={2001},
  number={2},
  pages={157--200},
  issn={0920-3036},
  review={\MRref {1857079}{2002m:19003}},
  doi={10.1023/A:1017916521415},
}

\bib{Clark-Huef-Raeburn:Fell_algebras}{article}{
  author={Clark, Lisa Orloff},
  author={an Huef, Astrid},
  author={Raeburn, Iain},
  title={The equivalence relations of local homeomorphisms and Fell algebras},
  journal={New York J. Math.},
  volume={19},
  date={2013},
  pages={367--394},
  issn={1076-9803},
  review={\MRref {3084709}{}},
  eprint={http://nyjm.albany.edu/j/2013/19_367.html},
}

\bib{Deaconu:Endomorphisms}{article}{
  author={Deaconu, Valentin},
  title={Groupoids associated with endomorphisms},
  journal={Trans. Amer. Math. Soc.},
  volume={347},
  date={1995},
  number={5},
  pages={1779--1786},
  issn={0002-9947},
  review={\MRref {1233967}{95h:46104}},
  doi={10.2307/2154972},
}

\bib{Debord:Holonomy}{article}{
  author={Debord, Claire},
  title={Holonomy groupoids of singular foliations},
  journal={J. Differential Geom.},
  volume={58},
  date={2001},
  number={3},
  pages={467--500},
  issn={0022-040X},
  review={\MRref {1906783}{2003g:58027}},
  eprint={http://projecteuclid.org/euclid.jdg/1090348356},
}

\bib{Exel:Partial_actions}{article}{
  author={Exel, Ruy},
  title={Partial actions of groups and actions of inverse semigroups},
  journal={Proc. Amer. Math. Soc.},
  volume={126},
  date={1998},
  number={12},
  pages={3481--3494},
  issn={0002-9939},
  review={\MRref {1469405}{99b:46102}},
  doi={10.1090/S0002-9939-98-04575-4},
}

\bib{Exel:Inverse_combinatorial}{article}{
  author={Exel, Ruy},
  title={Inverse semigroups and combinatorial $C^*$\nobreakdash -algebras},
  journal={Bull. Braz. Math. Soc. (N.S.)},
  volume={39},
  date={2008},
  number={2},
  pages={191--313},
  issn={1678-7544},
  review={\MRref {2419901}{2009b:46115}},
  doi={10.1007/s00574-008-0080-7},
}

\bib{Exel:noncomm.cartan}{article}{
  author={Exel, Ruy},
  title={Noncommutative Cartan subalgebras of $C^*$\nobreakdash -algebras},
  journal={New York J. Math.},
  issn={1076-9803},
  volume={17},
  date={2011},
  pages={331--382},
  eprint={http://nyjm.albany.edu/j/2011/17-17.html},
  review={\MRref {2811068}{2012f:46131}},
}

\bib{Green:Local_twisted}{article}{
  author={Green, Philip},
  title={The local structure of twisted covariance algebras},
  journal={Acta Math.},
  volume={140},
  date={1978},
  number={3-4},
  pages={191--250},
  issn={0001-5962},
  review={\MRref {0493349}{58\,\#12376}},
  doi={10.1007/BF02392308},
}

\bib{Holkar:Thesis}{thesis}{
  author={Holkar, Rohit Dilip},
  title={Topological construction of \(\textup {C}^*\)\nobreakdash -correspondences for groupoid \(\textup {C}^*\)\nobreakdash -algebras},
  type={phdthesis},
  institution={Georg-August-Universit\"at G\"ottingen},
  date={2014},
}

\bib{Kasparov-Skandalis:Buildings}{article}{
  author={Kasparov, Gennadi G.},
  author={Skandalis, Georges},
  title={Groups acting on buildings, operator \(K\)\nobreakdash -theory, and Novikov's conjecture},
  journal={\(K\)\nobreakdash -Theory},
  volume={4},
  date={1991},
  number={4},
  pages={303--337},
  issn={0920-3036},
  review={\MRref {1115824}{92h:19009}},
  doi={10.1007/BF00533989},
}

\bib{Kumjian:Fell_bundles}{article}{
  author={Kumjian, Alex},
  title={Fell bundles over groupoids},
  journal={Proc. Amer. Math. Soc.},
  volume={126},
  date={1998},
  number={4},
  pages={1115--1125},
  issn={0002-9939},
  review={\MRref {1443836}{98i:46055}},
  doi={10.1090/S0002-9939-98-04240-3},
}

\bib{Lawson:InverseSemigroups}{book}{
  author={Mark V. Lawson},
  title={Inverse semigroups: the theory of partial symmetries},
  publisher={World Scientific Publishing Co.},
  place={River Edge, NJ},
  date={1998},
  pages={xiv+411},
  isbn={981-02-3316-7},
}

\bib{Leinster:Basic_Bicategories}{article}{
  author={Leinster, Tom},
  title={Basic Bicategories},
  date={1998},
  status={eprint},
  note={\arxiv {math/9810017}},
}

\bib{Meyer-Nest:Bootstrap}{article}{
  author={Meyer, Ralf},
  author={Nest, Ryszard},
  title={\(C^*\)\nobreakdash -Algebras over topological spaces: the bootstrap class},
  journal={M\"unster J. Math.},
  volume={2},
  date={2009},
  pages={215--252},
  issn={1867-5778},
  review={\MRref {2545613}{2011a:46105}},
  eprint={http://nbn-resolving.de/urn:nbn:de:hbz:6-10569452982},
}

\bib{Meyer-Zhu:Groupoids}{article}{
  author={Meyer, Ralf},
  author={Zhu, {Ch}enchang},
  title={Groupoids in categories with pretopology},
  journal={Theory Appl. Categ.},
  volume={30},
  date={2015},
  pages={1906--1998},
  issn={1201-561X},
  eprint={http://www.tac.mta.ca/tac/volumes/30/55/30-55abs.html},
}

\bib{Moerdijk:Orbifolds_groupoids}{article}{
  author={Moerdijk, Ieke},
  title={Orbifolds as groupoids: an introduction},
  conference={ title={Orbifolds in mathematics and physics}, address={Madison, WI}, date={2001}, },
  book={ series={Contemp. Math.}, volume={310}, publisher={Amer. Math. Soc.}, place={Providence, RI}, },
  date={2002},
  pages={205--222},
  review={\MRref {1950948}{2004c:22003}},
  doi={10.1090/conm/310/5405},
}

\bib{Muhly-Renault-Williams:Equivalence}{article}{
  author={Muhly, Paul S.},
  author={Renault, Jean N.},
  author={Williams, Dana P.},
  title={Equivalence and isomorphism for groupoid \(C^*\)\nobreakdash -algebras},
  journal={J. Operator Theory},
  volume={17},
  date={1987},
  number={1},
  pages={3--22},
  issn={0379-4024},
  review={\MRref {873460}{88h:46123}},
  eprint={http://www.theta.ro/jot/archive/1987-017-001/1987-017-001-001.pdf},
}

\bib{Muhly-Williams:Equivalence.FellBundles}{article}{
  author={Muhly, Paul S.},
  author={Williams, Dana P.},
  title={Equivalence and disintegration theorems for Fell bundles and their \(C^*\)\nobreakdash -algebras},
  journal={Dissertationes Math. (Rozprawy Mat.)},
  volume={456},
  date={2008},
  pages={1--57},
  issn={0012-3862},
  review={\MRref {2446021}{2010b:46146}},
  doi={10.4064/dm456-0-1},
}

\bib{Muhly-Williams:Renaults_equivalence}{book}{
  author={Muhly, Paul S.},
  author={Williams, Dana P.},
  title={Renault's equivalence theorem for groupoid crossed products},
  series={NYJM Monographs},
  volume={3},
  publisher={State University of New York University at Albany},
  place={Albany, NY},
  date={2008},
  pages={87},
  review={\MRref {2547343}{2010h:46112}},
  eprint={http://nyjm.albany.edu/m/2008/3.htm},
}

\bib{Nilsen:Bundles}{article}{
  author={Nilsen, May},
  title={\(C^*\)\nobreakdash -bundles and \(C_0(X)\)-algebras},
  journal={Indiana Univ. Math. J.},
  volume={45},
  date={1996},
  number={2},
  pages={463--477},
  issn={0022-2518},
  review={\MRref {1414338}{98e:46075}},
  doi={10.1512/iumj.1996.45.1086},
}

\bib{Popescu:Equivariant_E}{article}{
  author={Popescu, Radu},
  title={Equivariant \(E\)\nobreakdash -theory for groupoids acting on \(C^*\)\nobreakdash -algebras},
  journal={J. Funct. Anal.},
  volume={209},
  date={2004},
  number={2},
  pages={247--292},
  issn={0022-1236},
  review={\MRref {2044224}{2004m:46158}},
  doi={10.1016/j.jfa.2003.04.001},
}

\bib{Sieben-Quigg:ActionsOfGroupoidsAndISGs}{article}{
  author={Quigg, John},
  author={Sieben, N\'andor},
  title={$C^*$\nobreakdash -actions of $r$\nobreakdash -discrete groupoids and inverse semigroups},
  journal={J. Austral. Math. Soc. Ser. A},
  volume={66},
  date={1999},
  number={2},
  pages={143--167},
  issn={0263-6115},
  review={\MRref {1671944}{2000k:46097}},
  doi={10.1017/S1446788700039288},
}

\bib{Raeburn-Williams:Morita_equivalence}{book}{
  author={Raeburn, Iain},
  author={Williams, Dana P.},
  title={Morita equivalence and continuous-trace $C^*$\nobreakdash -algebras},
  series={Mathematical Surveys and Monographs},
  volume={60},
  publisher={American Mathematical Society},
  place={Providence, RI},
  date={1998},
  pages={xiv+327},
  isbn={0-8218-0860-5},
  review={\MRref {1634408}{2000c:46108}},
}

\bib{Renault:Groupoid_Cstar}{book}{
  author={Renault, Jean},
  title={A groupoid approach to $\textup C^*$\nobreakdash -algebras},
  series={Lecture Notes in Mathematics},
  volume={793},
  publisher={Springer},
  place={Berlin},
  date={1980},
  pages={ii+160},
  isbn={3-540-09977-8},
  review={\MRref {584266}{82h:46075}},
  doi={10.1007/BFb0091072},
}

\bib{Renault:Representations}{article}{
  author={Renault, Jean},
  title={Repr\'esentation des produits crois\'es d'alg\`ebres de groupo\"\i des},
  journal={J. Operator Theory},
  volume={18},
  date={1987},
  number={1},
  pages={67--97},
  issn={0379-4024},
  review={\MRref {912813}{89g:46108}},
  eprint={http://www.theta.ro/jot/archive/1987-018-001/1987-018-001-005.pdf},
}

\bib{Renault:Cartan.Subalgebras}{article}{
  author={Renault, Jean},
  title={Cartan subalgebras in $C^*$\nobreakdash -algebras},
  journal={Irish Math. Soc. Bull.},
  number={61},
  date={2008},
  pages={29--63},
  issn={0791-5578},
  review={\MRref {2460017}{2009k:46135}},
  eprint={http://www.maths.tcd.ie/pub/ims/bull61/S6101.pdf},
}

\bib{Sieben:crossed.products}{article}{
  author={Sieben, N\'andor},
  title={$C^*$\nobreakdash -crossed products by partial actions and actions of inverse semigroups},
  journal={J. Austral. Math. Soc. Ser. A},
  volume={63},
  date={1997},
  number={1},
  pages={32--46},
  issn={0263-6115},
  review={\MRref {1456588}{2000b:46124}},
  doi={10.1017/S1446788700000306},
}

\bib{SiebenTwistedActions}{article}{
  author={Sieben, N\'andor},
  title={$C^*$\nobreakdash -crossed products by twisted inverse semigroup actions},
  journal={J. Operator Theory},
  volume={39},
  date={1998},
  number={2},
  pages={361--393},
  issn={0379-4024},
  review={\MRref {1620499}{2001e:46116}},
  eprint={http://www.theta.ro/jot/archive/1998-039-002/1998-039-002-009.pdf},
}

\bib{Trentinaglia:Thesis}{thesis}{
  author={Trentinaglia, Giorgio},
  title={Tannaka duality for proper Lie groupoids},
  institution={Utrecht University},
  type={phdthesis},
  date={2008},
  note={\arxiv {0809.3394}},
}

\bib{Tu:Non-Hausdorff}{article}{
  author={Tu, Jean-Louis},
  title={Non-Hausdorff groupoids, proper actions and $K$\nobreakdash -theory},
  journal={Doc. Math.},
  volume={9},
  date={2004},
  pages={565--597},
  issn={1431-0635},
  review={\MRref {2117427}{2005h:22004}},
  eprint={http://www.mathematik.uni-bielefeld.de/documenta/vol-09/26.html},
}

\bib{Weinstein:Linearization}{article}{
  author={Weinstein, Alan},
  title={Linearization of regular proper groupoids},
  journal={J. Inst. Math. Jussieu},
  volume={1},
  date={2002},
  number={3},
  pages={493--511},
  issn={1474-7480},
  review={\MRref {1956059}{2004h:58028}},
  doi={10.1017/S1474748002000130},
}

\bib{Zung:Proper_linearization}{article}{
  author={Zung, Nguyen Tien},
  title={Proper groupoids and momentum maps: linearization, affinity, and convexity},
  journal={Ann. Sci. \'Ecole Norm. Sup. (4)},
  volume={39},
  date={2006},
  number={5},
  pages={841--869},
  issn={0012-9593},
  review={\MRref {2292634}{2008e:53166}},
  doi={10.1016/j.ansens.2006.09.002},
}
  \end{biblist}
\end{bibdiv}
\end{document}